\title{\'{E}tale difference algebraic groups}
\author{Michael Wibmer}
\address{Michael Wibmer, Institute of Analysis and Number Therory, Graz University of Technology, Kopernikusgasse~24, 8010 Graz, Austria, \url{https://sites.google.com/view/wibmer}}
\email{wibmer@math.tugraz.at}
\thanks{This work was supported by the NSF grants DMS-1760212, DMS-1760413, DMS-1760448 and the Lise Meitner grant M 2582-N32 of the Austrian Science Fund FWF}
\subjclass[2020]{14L15, 12H10, 37B05}
\keywords{Difference algebraic group, \'{e}tale algebraic group, expansive endomorphism, profinite group}
\date{\today}
\newtheorem{theo}{Theorem}[section]
\newtheorem{lemma}[theo]{Lemma}
\newtheorem{prop}[theo]{Proposition}
\newtheorem{cor}[theo]{Corollary}
\newtheorem{defi}[theo]{Definition}
\newtheorem{rem}[theo]{Remark}
\newtheorem*{theononumber}{Theorem}
\theoremstyle{definition}
\newtheorem{ex}[theo]{Example}
\newcommand{\f}{\phi}
\newcommand{\ida}{\mathfrak{a}}
\newcommand{\p}{\mathfrak{p}}
\newcommand{\m}{\mathfrak{m}}
\newcommand{\spec}{\operatorname{Spec}}
\newcommand{\Z}{\mathbb{Z}}
\newcommand{\N}{\mathcal{N}}
\newcommand{\G}{\mathcal{G}}
\newcommand{\HH}{\texttt{H}}
\newcommand{\gal}{\operatorname{Gal}}
\newcommand{\Gl}{\operatorname{GL}}
\newcommand{\Hom}{\operatorname{Hom}}
\newcommand{\V}{\mathbb{V}}
\newcommand{\id}{\operatorname{id}}
\newcommand{\s}{\sigma}
\newcommand{\de}{\delta}
\newcommand{\sdim}{\sigma\text{-}\dim}
\newcommand{\ld}{\operatorname{ld}}
\newcommand{\ks}{$k$\=/$\s$}
\newcommand{\ord}{\operatorname{ord}}
\newcommand{\hs}{^\sigma\!}
\newcommand{\hsr}{^{\sigma^r}\!}
\newcommand{\hsi}{^{\sigma^i}\!}
\newcommand{\nn}{\mathbb{N}}
\def\H{\mathcal{H}}
\renewcommand{\sc}{{\sigma o}}
\newcommand{\Gm}{\mathbb{G}_m}
\newcommand{\ssetale}{{strongly $\sigma$-\'{e}tale}}
\newcommand{\I}{\mathbb{I}}
\newcommand{\GG}{\mathtt{G}}
\newcommand{\NN}{\mathtt{N}}
\newcommand{\Gal}{\mathscr{G}}
\newcommand{\Ga}{\mathbb{G}_a}
\newcommand{\pis}{\pi_0^\sigma}
\newcommand{\C}{\mathbb{C}}
\newcommand{\XX}{\mathtt{X}}
\newcommand{\X}{\mathcal{X}}
\newcommand{\VV}{\mathcal{V}}
\begin{document}

\maketitle


\begin{abstract}
	\'{E}tale difference algebraic groups are a difference analog of \'{e}tale algebraic groups. Our main result is a Jordan-H\"{o}lder type decomposition theorem for these groups. Roughly speaking, it shows that any \'{e}tale difference algebraic group can be build up from simple \'{e}tale algebraic groups and two finite \'{e}tale difference algebraic groups. The simple \'{e}tale algebraic groups occurring in this decomposition satisfy a certain uniqueness property.
\end{abstract}

\section{Introduction}

Affine difference algebraic groups are a generalization of affine algebraic groups. Instead of just algebraic equations, one allows difference algebraic equations as the defining equations. Alternatively, affine difference algebraic groups can be described as affine group schemes with an additional structure (the difference structure). In algebraic terms, to specify an affine difference algebraic group $G$ over a difference field $k$ (i.e., $k$ is a field equipped with an endomorphism $\s\colon k\to k$) is equivalent to specifying a Hopf algebra $k\{G\}$ over $k$ together with a ring endomorphism $\s\colon k\{G\}\to k\{G\}$ extending $\s\colon k\to k$ such that the Hopf algebra structure maps commute with $\s$. It is also required that $G$ is ``of finite $\s$-type'', i.e., there exists a finite subset $B$ of $k\{G\}$ such that $B,\s(B),\s^2(B),\ldots$ generates $k\{G\}$ as a $k$-algebra.

\'{E}tale difference algebraic groups are a difference analog of \'{e}tale algebraic groups. Algebraically, they can be described as the affine difference algebraic groups $G$ such that every element of $k\{G\}$ satisfies a separable polynomial over $k$. For example, the difference algebraic equations $x^n=1$, $\s(x)x=1$ define an \'{e}tale difference algebraic subgroup of the multiplicative group $\Gm$, as long as $n$ is not divisible by the characteristic of $k$. By interpreting algebraic equations as difference algebraic equations, any \'{e}tale algebraic group $\G$ over $k$ defines an \'{e}tale difference algebraic group $[\s]_k\G$ over $k$.
The Hopf algebra corresponding to $[\s]_k\G$ is the Hopf algebra of the affine group scheme $\G\times{\hs\G}\times{^{\s^2}\!\G}\times\ldots$, where ${\hsi\G}$ is the base change of $\G$ along $\s^i\colon k\to k$.


\'{E}tale difference algebraic groups feature prominently in the general structure theory of affine difference algebraic groups, as any affine difference algebraic group $G$ fits into a short exact sequence
$$1\to G^o\to G\to\pi_0(G)\to 1,$$ 
with $G^o$ a connected affine difference algebraic group (the identity component of $G$) and $\pi_0(G)\simeq G/G^o$ an \'{e}tale difference algebraic group (the group of connected components of $G$).

The components referred to here are the components of the underlying group scheme of $G$, i.e., the affine group scheme represented by $k\{G\}$. Typically, there are infinitely many such components, i.e., $k\{\pi_0(G)\}$ is an infinite dimensional $k$-vector space. In this article we also study a true difference analog of the identity component of an algebraic group. The \emph{$\s$\=/identity component} $G^\sc$ of an affine difference algebraic group $G$ is such that $G/G^\sc$ is finite, i.e., $k\{G/G^\sc\}$ is a finite dimensional $k$-vector space.  A \emph{$\s$-infinitesimal} \'{e}tale difference algebraic group is an \'{e}tale difference algebraic group $G$ such that $G(R)=1$ whenever $\s\colon R\to R$ is injective. A $\s$-infinitesimal \'{e}tale difference algebraic group is automatically finite.

Our main result is a Jordan-H\"{o}lder type decomposition theorem for \'{e}tale difference algebraic groups.

\begin{theononumber}[Theorem \ref{theo: Babbitt}]
	Let $G$ be an \'{e}tale difference algebraic group. Then there exists a subnormal series
	$$G\supseteq G_1\supseteq G_2\supseteq \cdots\supseteq G_n\supseteq 1$$
	of difference algebraic subgroups of $G$ such that $G_1=G^\sc$, $G_n$ is $\s$-infinitesimal and $G_i/G_{i+1}\simeq [\s]_k\G_i$ for some $\s$-stably simple \'{e}tale algebraic group $\G_i$ for $i=1,\ldots,n-1$. 
	
	If $$G\supseteq H_1\supseteq H_2\supseteq \cdots\supseteq H_m\supseteq 1$$
	is another subnormal series such that $H_1=G^\sc$, $H_m$ is $\s$-infinitesimal and $H_i/H_{i+1}\simeq [\s]_k\H_i$ for some $\s$-stably simple \'{e}tale algebraic group $\H_i$ for $i=1,\ldots,m-1$, then $m=n$ and there exists a permutation $\tau$ such that $\G_i$ and $\H_{\tau(i)}$ are $\s$-stably equivalent.
\end{theononumber}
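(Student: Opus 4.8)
The plan is to reduce both assertions to a Jordan--H\"{o}lder analysis of the $\s$-identity component. Since $G/G^\sc$ is finite by the very definition of $G^\sc$, the first step $G\supseteq G_1=G^\sc$ of either series is forced, and the finite factor $G/G^\sc$ carries no ambiguity. Thus the entire content of the theorem concerns the chain $G^\sc\supseteq G_2\supseteq\cdots\supseteq G_n$, whose successive factors are of the form $[\s]_k\G_i$ and whose bottom term $G_n$ is $\s$-infinitesimal. I would therefore first isolate a cleaner statement for a $\s$-connected étale difference algebraic group $H$ (i.e.\ $H=H^\sc$): there is a subnormal series from $H$ down to a $\s$-infinitesimal group with all factors of the form $[\s]_k\G_i$, unique up to the stated equivalence. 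The theorem then follows by applying this to $H=G^\sc$.

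For existence I would work at the level of Hopf algebras, via the correspondence between étale difference algebraic groups and the associated profinite-type objects carrying a $\s$-action. The key construction is to produce, for any $\s$-connected $H$ that is not $\s$-infinitesimal, a normal difference algebraic subgroup $N\trianglelefteq H$ with $H/N\simeq[\s]_k\G$ for some $\s$-stably simple $\G$. This is the group-theoretic counterpart of Babbitt's decomposition of a finitely generated difference field extension into benign layers: the quotient $[\s]_k\G$, generated over the base by the $\s$-orbit of a single étale algebraic group $\G$, plays the role of a benign layer, and $\s$-stable simplicity of $\G$ is what guarantees this layer cannot be further split. Iterating this, and noting that a suitable numerical invariant (such as the limit degree) strictly decreases at each stage, produces a finite series terminating at a $\s$-infinitesimal group.

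For uniqueness I would run a Schreier--Zassenhaus refinement argument adapted to this category. Given two series as in the statement, the butterfly lemma produces equivalent refinements, so it suffices to establish two facts. First, that each factor $[\s]_k\G$ with $\G$ $\s$-stably simple is a simple object in the relevant sense, admitting no proper normal difference algebraic subgroup whose quotient is again of the form $[\s]_k\G'$; this ensures the constructed series admits no proper refinement. Second, a rigidity statement: if $[\s]_k\G\simeq[\s]_k\H$ as difference algebraic groups, then $\G$ and $\H$ are $\s$-stably equivalent. Combining the refinement step with these two facts yields $m=n$ together with the permutation $\tau$ matching the factors up to $\s$-stable equivalence.

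The main obstacle will be the rigidity statement, namely recovering the $\s$-stable equivalence class of the algebraic group $\G$ from the isomorphism type of the difference algebraic group $[\s]_k\G$. Since $[\s]_k\G$ is built from the infinite product $\G\times{\hs\G}\times{^{\s^2}\!\G}\times\cdots$, an abstract difference-group isomorphism need not respect this factorization, so one must show that the ``first factor'' $\G$ is canonically determined up to the allowed twists. I expect to handle this by isolating an intrinsic invariant of $[\s]_k\G$ --- for instance, an associated graded object or a minimal generating subgroup scheme --- that reconstructs $\G$ up to $\s$-stable equivalence. This is where the bulk of the technical work, and the precise role played by the definition of $\s$-stable equivalence, will lie.
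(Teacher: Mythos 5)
Your overall architecture---reduce to $G^\sc$, peel off $\s$-stably simple benign quotients while a numerical invariant drops, and use Schreier refinement plus a rigidity statement for uniqueness---does match the paper's strategy at a high level, but two of your central claims are either unproved or false as stated. First, the existence of a normal $N\unlhd H$ with $H/N\simeq[\s]_k\G$ for some $\s$-stably simple $\G$ is precisely the hard part, and you give no construction. The paper obtains it only under the extra hypothesis that $H$ has no normal $\s$-closed subgroup of intermediate limit degree, via a delicate analysis of a \emph{minimal substandard embedding} $\f\colon H\to[\s]_k\G$ (the whole of Step~1 is devoted to proving $\ld(H)=|\G|$ there). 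In the general case the paper instead chooses a normal $N$ with $1<\ld(N)<\ld(G)$, applies induction to $G/N$ and to $N^\sc$, and then must \emph{repair} the spliced series: the splice leaves a $\s$-infinitesimal factor in the middle, which is pushed to the bottom by pulling the lower series back along the $\s$-Frobenius $F_G^r$ (Proposition \ref{prop: exchange sinfinitesimal for Babbitt}). Your iteration ignores both issues; in particular the kernels in your chain need not be $\s$-connected (Proposition \ref{prop: sconnected exact sequence} controls quotients, not kernels), so terminating at limit degree one does not by itself yield a $\s$-infinitesimal bottom term.

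Second, your uniqueness step rests on the claim that $[\s]_k\G$, for $\G$ $\s$-stably simple, admits no proper normal difference algebraic subgroup with quotient again of the form $[\s]_k\G'$; this is false. The $\s$-Frobenius kernels $([\s]_k\G)_{(r)}$ are proper normal $\s$-closed subgroups with quotient $[\s]_k{\hsr\G}$, and for commutative $\G$ there are further examples, so the series does admit proper refinements. What actually makes the count work is that every \emph{proper} normal $\s$-closed subgroup of $[\s]_k\G$ has limit degree one (Corollary \ref{cor: normal ssubgroup of simple has ld 1}), so the number of factors of limit degree $>1$ in any refinement equals $n-1$, combined with the fact that the quotient of $[\s]_k\G$ by any proper normal subgroup is again $[\s]_k\H$ with $\H$ $\s$-stably equivalent to $\G$ (Proposition \ref{prop: key to uniqueness}). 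Your rigidity statement is correct and genuinely needed; the paper proves the stronger assertion that $[\s]_k\G\simeq[\s]_k\H$ forces $\G\simeq\H$ by reading off $\G$ as the underlying group scheme of the first $\s$-Frobenius kernel---essentially the ``intrinsic invariant'' you anticipated, so that part of your plan is sound.
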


Here an \'{e}tale algebraic group $\G$ is called \emph{simple} if its only closed normal subgroups are $1$ and $\G$ and \emph{$\s$-stably simple} if ${\hsi\G}$ is simple for every $i\in \nn$. Finally, two \'{e}tale algebraic groups $\G$ and $\H$ are \emph{$\s$-stably equivalent} if there exist $i,j\in\nn$ such that ${\hsi\G}$ and ${^{\s^j}\!\H}$ are isomorphic.

\medskip

The category of \'{e}tale algebraic groups over $k$ is equivalent to the category of finite groups equipped with a continuous action of the absolute Galois group of $k$. There is a similar combinatorial-arithmetic description of \'{e}tale difference algebraic groups (Theorem \ref{theo: equivalence}): Fix an extension $\s\colon k_s\to k_s$ of $\s\colon k\to k$ to the separable algebraic closure $k_s$ of $k$. Then there exists a unique endomorphism $\s\colon\Gal\to \Gal$ of the absolute Galois group $\Gal=\operatorname{Gal}(k_s/k)$ of $k$ such that for every $\tau\in\Gal$ the diagram
$$
\xymatrix{
k_s \ar^{\s(\tau)}[r] \ar_\s[d] & k_s \ar^\s[d]\\
k_s \ar^\tau[r] & k_s	
}
$$
commutes.  Recall that an endomorphism $\s\colon \GG\to \GG$ of a profinite group $\GG$ is called \emph{expansive} if there exists a normal open subgroup $\NN$ of $\GG$ such that $\bigcap_{i\in\nn}\s^{-i}(\NN)=1$. The category of \'{e}tale difference algebraic groups over $k$ is equivalent to the category of profinite groups $\GG$ equipped with an expansive endomorphism $\s\colon\GG\to\GG$ and a continuous action of $\Gal$ that is compatible with $\s$ in the sense that $\s(\tau(g))=\s(\tau)(\s(g))$ for $\tau\in\Gal$ and $g\in\GG$.

In particular, if $k$ is separably algebraically closed, the category of \'{e}tale difference algebraic groups over $k$ is equivalent to the category of profinite groups equipped with an expansive endomorphism. The study of 
expansive endo- or automorphisms of profinite groups or more generally totally disconnected locally compact groups is an interesting topic in its own right. See e.g., \cite{Kitchens:ExpansiveDynamicsOnZeroDimensionalGroups} \cite{Willis:TheNubOfAnAutomorphismOfaTotallyDisconnectedLocallyCompactGroup}
\cite{GloecknerRaja:ExpansiveAutomorphismsOfTotallyDisconnectedLocallyCompactGroups}, \cite{Willis:TheScaleAndTidySubgroupsForEndomorphismsOfTotallyDisconnectedLocallyCompactGroups}. When translated to profinite groups (via Theorem~\ref{theo: equivalence}) our decomposition theorem recovers results proved by G. Willis (\cite[Section 6]{Willis:TheNubOfAnAutomorphismOfaTotallyDisconnectedLocallyCompactGroup}) for expansive automorphism of profinite groups. (The case of expansive endomorphism of profintite groups is somewhat more complicated than the case of expansive automorphisms but similar ideas and techniques apply.) A restatement and proof of Theorem \ref{theo: Babbitt} in the language of profinite groups can be found in \cite{Wibmer:ExpansiveDynamicsOnProfiniteGroups}.

Our decomposition theorem has some formal similarities with Babbitt's decomposition theorem 
 (\cite[Theorem 5.4.13]{Levin}), an important structure theorem for finitely generated extensions of difference fields whose underlying field extension is Galois. This similarity is no coincidence, in fact, as detailed in \cite[Section 6]{Wibmer:ExpansiveDynamicsOnProfiniteGroups}, one can deduce Babbitt's decomposition theorem from the profinite group version of Theorem \ref{theo: Babbitt} via the Galois correspondence. 
 
 \medskip

The theory of difference algebraic groups is still in its infancy, at least compared to the sister theory of differential algebraic groups, where a large body of foundational material was developed well before the turn of the century.  (See the text books \cite{Kolchin:differentialalgebraicgroups}, \cite{Buium:DifferntialAlgebraicGroupsOfFiniteDimension} and the references given there.)
Therefore, a goal of this article is also to provide some foundational results and ideas to pave the way for a further comprehensive study of affine difference algebraic groups. In this regard, our main contributions are
\begin{itemize}
	\item our study of the difference identity component of an affine difference algebraic group and the associated group of difference connected components,
	\item our study of $\s$-infinitesimal difference algebraic groups (a difference analog of infinitesimal algebraic groups),
	\item our study of the $\s$-Frobenius morphism (a difference analog of the Frobenius morphism of an algebraic group). 
\end{itemize}
Note that, while infinitesimal algebraic groups and the Frobenius morphism only make sense over a field of positive characteristic, $\s$-infinitesimal difference algebraic groups and the $\s$-Frobenius morphism make sense over an arbitrary difference field. Roughly speaking, the idea is that the abstract endomorphism $\s$ assumes the role played, in the study of algebraic groups, by the Frobenius endomorphism $a\mapsto a^p$ in characteristic $p$.

A main motivation for developing the theory of affine difference algebraic groups, is that these groups 
can be used, via appropriate Galois theories (\cite{DiVizioHardouinWibmer:DifferenceGaloisofDifferential}, \cite{OvchinnikovWibmer:SGaloisTheoryOfLinearDifferenceEquations}), to study the difference algebraic relations among the solutions of linear differential and difference equations. In this context, Theorem \ref{theo: Babbitt} sheds light on the possible difference algebraic relations among algebraic solutions of linear differential or difference equations.

\medskip

We conclude the introduction with an overview of the article. In Section \ref{sec: preliminaries and notation} we go through the details of the definition of affine difference algebraic groups and we review the known results on affine difference algebraic groups relevant for our purpose. We then embark on a general study of the difference identity component of a difference algebraic group and the associated group of difference connected components in Section \ref{sec: the difference identity component}. After a brief discussion of basic properties of \'{e}tale difference algebraic groups in Section \ref{sec: basic properties of etale difference algebraic groups}, we establish the combinatorial-arithmetic description of the category of \'{e}tale difference algebraic groups in Section \ref{sec: Expansive endomorphisms and etale difference algebraic groups}. Finally, in Section \ref{sec: decomposition theorem} we prove our decomposition theorem.

\section{Preliminaries and notation} \label{sec: preliminaries and notation}

In this preliminary section we recall the basic definitions and constructions from difference algebra. We also review the required results from \cite{Wibmer:FinitenessPropertiesOfAffineDifferenceAlgebraicGroups} and \cite{Wibmer:AlmostSimpleAffineDifferenceAlgebraicGroups} concerning affine difference algebraic groups.

\medskip

All rings are assumed to be commutative and unital. The natural numbers $\nn$ contain $0$.

\subsection{Difference algebra} \label{subsec: Difference algebra}
 
 Difference algebra is the study of difference equations from an algebraic perspective. Standard references are \cite{Cohn:difference} and \cite{Levin}.
 
 A \emph{difference ring}, or \emph{$\s$-ring} for short, is a ring $R$ together with an endomorphism $\s\colon R\to R$.  A \emph{difference field}, or \emph{$\s$-field} for short, is a difference ring whose underlying ring is a field. A \emph{$\s$-subring} of a $\s$-ring $R$ is a subring $S$ of $R$ such that $\s(S)\subseteq S$. In case $S$ and $R$ are fields, we call $R$ a \emph{$\s$-field extension} of $S$.

  A \emph{morphism of $\s$-rings} $\psi\colon R\to S$ is a morphism of rings such that
 $$
 \xymatrix{
 R \ar^-\psi[r] \ar_\s[d] & S \ar^\s[d] \\
 R \ar^-\psi[r] & S	
 }
 $$  
 commutes. In this situation, we also say that $S$ is an \emph{$R$-$\s$-algebra} or that $S$ is a \emph{difference algebra over $R$}. An \emph{$R$-$\s$-subalgebra} of an $R$-$\s$-algebra $S$ is an $R$-subalgebra of $S$ that is a $\s$-subring. A \emph{morphism of $R$-$\s$-algebras} is a morphism of $\s$-rings that also is a morphism of $R$-algebras. If $S_1$ and $S_2$ are $R$-$\s$-algebras, the tensor product $S_1\otimes_R S_2$ is an $R$-$\s$-algebra via $\s(s_1\otimes s_2)=\s(s_1)\otimes \s(s_2)$. This is, in fact, the coproduct in the category of $R$-$\s$-algebras.
 
 An $R$-$\s$-algebra $S$ is \emph{finitely $\s$-generated} (over $R$) if there exists a finite subset $B$ of $S$ such that $B,\s(B),\s^2(B),\ldots$ generates $S$ as an $R$-algebra.

 A \emph{difference ideal}, or \emph{$\s$-ideal} for short, of a $\s$-ring $R$ is an ideal $I$ of $R$ such that $\s(I)\subseteq I$. In this case, $R/I$ naturally carries the structure of a $\s$-ring such that the canonical map $R\to R/I$ is a morphism of $\s$-rings. For $F\subseteq R$, the smallest $\s$-ideal of $R$ that contains $F$ is called the \emph{$\s$-ideal $\s$-generated by $F$}. It is denoted by $[F]$. Explicitly, we have $[F]=(F,\s(F),\s^2(F),\ldots)$.

 As a matter of convenience, we usually suppress the endomorphism $\s$ in the notation, e.g., we speak of the $\s$-ring $R$, rather than the $\s$-ring $(R,\s)$. In case we have need to indicate that we consider the underlying ring without the endomorphism, we will write $R^\sharp$.
 
 Let $k$ be $\s$-field. The functor $R\rightsquigarrow R^\sharp$ from the category of \ks-algebras to the category of $k$-algebras has a right adjoint $T\rightsquigarrow [\s]_kT$ (\cite[Lemma 1.7]{Wibmer:FinitenessPropertiesOfAffineDifferenceAlgebraicGroups}). Explicitly, for a $k$-algebra $T$, the \ks-algebra $[\s]_kT$ can be described as follows. For $i\in \nn$ let ${\hsi T}=T\otimes_k k$ denote the $k$-algebra obtained from $T$ by base change via $\s^i\colon k\to k$. Set
 $$T[i]=T\otimes_k {\hs T}\otimes_k\ldots\otimes_k {\hsi T}$$
 and let $[\s]_k T$ be the union the $T[i]$'s. The endomorphism $\s\colon [\s]_kT\to [\s]_kT$ is given by
 $$\s((t_0\otimes\lambda_0)\otimes\ldots\otimes (t_i\otimes\lambda_i))=(1\otimes 1)\otimes (t_0\otimes\s(\lambda_0))\otimes\ldots\otimes(t_i\otimes\s(\lambda_i))\in T[i+1]$$
 for $(t_0\otimes\lambda_0)\otimes\ldots\otimes (t_i\otimes\lambda_i)\in{^{\s^0\!}T}\otimes_k\ldots\otimes_k{\hsi T}=T[i]$.
 Note that if $B\subseteq T$ generates $T$ as a $k$-algebra, then $B\subseteq [\s]_kT$ $\s$-generated $[\s]_k T$ as a \ks-algebra.

 \subsection{Affine difference algebraic geometry} \label{subsec: affine difference algebraic geometry}

Let $k$ be a $\s$-field. The \emph{$\s$-polynomial ring} 
$$k\{y_1,\ldots,y_n\}=k[y_1,\ldots,y_n,\s(y_1),\ldots,\s(y_n),\s^2(y_1),\ldots,\s^2(y_n),\ldots]$$
over $k$ in the $\s$-variables $y_1,\ldots,y_n$
is the polynomial ring over $k$ in the variables $\s^i(y_j)$ $(i\in \nn, 1\leq j\leq n)$ equipped with the action of $\s$ that extends $\s\colon k\to k$ and acts on the variables as suggested by their names. If $f\in k\{y_1,\ldots,y_n\}$ is a $\s$-polynomial and $a=(a_1,\ldots,a_n)\in R^n$, where $R$ is a \ks-algebra, then $f(a)\in R$ is defined by substituting $\s^i(y_j)$ with $\s^i(a_j)$. For $F\subseteq k\{y_1,\ldots,y_n\}$, the set of $R$-valued solutions of $F$ is 
$$\V_R(F)=\{a\in R^n|\ f(a)=0\ \forall \ f\in F\}.$$
Note that $R\rightsquigarrow \V_R(F)$ is naturally a functor from the category of \ks-algebras to the category of sets.

\begin{defi}
	An \emph{affine difference variety}, or \emph{affine $\s$-variety} for short, over $k$ is a functor from the category of \ks-algebras to the category of sets that is isomorphic to a functor of the form $R\rightsquigarrow \V_R(F)$ for some $n\geq 1$ and $F\subseteq k\{y_1,\ldots,y_n\}$.
\end{defi}
All difference varieties in this article are affine and for the sake of brevity we shall henceforth drop the attribute affine. A \emph{morphism of $\s$-varieties} (over $k$) is a morphism of functors, i.e., a natural transformation.

The functor $R\rightsquigarrow \V_R(F)$ is represented by the finitely $\s$-generated \ks-algebra $k\{y_1,\ldots,y_n\}/[F]$.  Indeed,
$$\Hom(k\{y_1,\ldots,y_n\}/[F], R) \to \V_R(F),\ \psi\mapsto (\psi(\overline{y_1}),\ldots,\psi(\overline{y_n}))$$
is a bijection that is functorial in $R$. As any finitely $\s$-generated \ks-algebra can be written in the form $k\{y_1,\ldots,y_n\}/[F]$, it follows that a functor from the category of \ks-algebras to the category of sets is a $\s$-variety if and only if it is representable by a finitely $\s$-generated \ks-algebra. Thus, from the Yoneda lemma we obtain:

\begin{rem} \label{rem: equivalence svarieties salgebras}
	The category of $\s$-varieties over $k$ is anti-equivalent to the category of finitely $\s$-generated \ks-algebras.
\end{rem}

For a $\s$-variety $X$ we denote its representing \ks-algebra with $k\{X\}$ and call it the \emph{coordinate ring} of $X$. We will usually identify $X$ with the functor $R\rightsquigarrow\Hom(k\{X\},R)$. For a morphism $\f\colon X\to Y$ of $\s$-varieties, the corresponding morphism $\f^*\colon k\{Y\}\to k\{X\}$ of \ks-algebras is called the \emph{morphism dual to $\f$}.

A \emph{$\s$-closed $\s$-subvariety $X$} of a $\s$-variety $Y$ is a subfunctor $X$ of $Y$ that is defined by a $\s$-ideal $\I(X)$ of $k\{Y\}$. In more detail, the requirement is that for any \ks-algebra $R$, the bijection $Y(R)\simeq \Hom(k\{Y\},R)$ maps $X(R)$ onto $\{\psi \in \Hom(k\{Y\},R)|\ \psi(\I(X))=0\}$. 
We call $\I(X)$ the \emph{defining ideal of $X$} (in $k\{Y\}$). We may write $X\subseteq Y$ to indicate that $X$ is a $\s$-closed $\s$-subvariety of $Y$.

Note that a $\s$-closed $\s$-subvariety is a $\s$-variety it its own right; it is represented by $k\{X\}=k\{Y\}/\I(X)$. The canonical map $k\{Y\}\to k\{Y\}/\I(X)$ is the dual of the inclusion morphism $X\to Y$.
The $\s$-closed $\s$-subvarieties of $Y$ are in bijection with the $\s$-ideals of $k\{Y\}$ (\cite[Lemma~1.4]{Wibmer:FinitenessPropertiesOfAffineDifferenceAlgebraicGroups}).

A morphism $\f\colon X\to Y$ of $\s$-varieties is a \emph{$\s$-closed embedding} if it induces an isomorphism between $X$ and a $\s$-closed $\s$-subvariety of $Y$. This is equivalent to $\f^*\colon k\{Y\}\to k\{X\}$ being surjective (\cite[Lemma 1.6]{Wibmer:FinitenessPropertiesOfAffineDifferenceAlgebraicGroups}).

Let $\f\colon X\to Y$ be a morphism of $\s$-varieties and let $Z$ be a $\s$-closed $\s$-subvariety of $Y$. We define a subfunctor $\f^{-1}(Z)$ of $X$ by $\f^{-1}(Z)(R)=\f_R^{-1}(Z(R))$, where $\f_R\colon X(R)\to Y(R)$ for any \ks-algebra $R$. As 
\begin{align*}
 \f^{-1}(Z)(R)&=\{\psi\in\Hom(k\{X\},R)|\ \I(Z)\subseteq\ker(\psi\f^*)\}\\
 &=\{\psi\in\Hom(k\{X\},R)|\ \f^*(\I(Z))\subseteq \ker(\psi)\},
\end{align*}
we see that $\f^{-1}(Z)$ is the $\s$-closed subscheme of $X$ defined by $\I(\f^{-1}(Z))=[\f^*(\I(Z))]\subseteq k\{X\}$.

For an affine scheme $\mathcal{X}$ of finite type over $k$, the functor $[\s]_k\X$ defined by $([\s]_k\X)(R)=\X(R)$ for any \ks-algebra $R$ is a $\s$-variety over $k$. Indeed, if $k[\X]$ is the coordinate ring of $\X$, then $\Hom(k[\X],R^\sharp)\simeq \Hom([\s]_kk[\X],R)$ for any \ks-algebra $R$. So $k\{[\s]_k\X\}=[\s]_kk[\X]$. For simplicity, we will write $k\{\X\}$ for $k\{[\s]_k\X\}=[\s]_kk[\X]$. By a \emph{$\s$-closed $\s$-subvariety of $\X$} we mean a $\s$-closed $\s$-subvariety of $[\s]_k\X$.

Let $Y$ be a $\s$-closed $\s$-subvariety of $\X$. For $i\in\nn$ let ${\hsi\X}$ be the base change of $\X$ via $\s^i\colon k\to k$ and set
$$\X[i]=\X\times{\hs\X}\times\ldots\times{\hsi\X}.$$
The \emph{$i$-th order Zariski closure $Y[i]$ of $Y$ in $\X$} is the scheme theoretic image of the morphism $Y^\sharp\to \X[i]$ of affine schemes corresponding to the morphism $k[\X[i]]\hookrightarrow k\{\X\}\to k\{Y\}$ of $k$\=/algebras. In other words, if $\I(Y)\subseteq k\{\X\}=[\s]_k k[\X]=\bigcup_{i\in\nn} k[\X[i]]$ is the defining ideal of $Y$ in $[\s]_k\X$, then $Y[i]$ is the closed subscheme of $\X[i]$ defined by the ideal $\I(Y)\cap k[\X[i]]$ of $k[\X[i]]$. We say that $Y$ is \emph{Zariski dense} in $\X$ if $Y[0]=\X$ and we refer to $Y[0]$ as the Zariski closure of $Y$ in $\X$.

Note that the projections $\pi_i\colon\X[i]\to \X[i-1],\ (x_0,\ldots,x_i)\mapsto (x_0,\ldots,x_{i-1})$ restrict to projections $\pi_i\colon Y[i]\to Y[i-1]$.

\subsection{Difference algebraic groups}

The category of $\s$-varieties over $k$ has products. Indeed, if $X$ and $Y$ are $\s$-varieties over $k$, the functor $X\times Y$ defined by $(X\times Y)(R)=X(R)\times Y(R)$ for any \ks-algebra $R$, is a product of $X$ and $Y$. It is represented by $k\{X\}\otimes_k k\{Y\}$. There also is a terminal object, namely the $\s$-variety represented by the \ks-algebra $k$. Therefore we can make the following definition.

\begin{defi}
	A \emph{$\s$-algebraic group} (over $k$) is a group object in the category of $\s$-varieties (over $k$). 
\end{defi}
In other words, a $\s$-algebraic group over $k$ is a functor from the category of \ks-algebras to the category of groups such that the correspond functor to the category of sets is representable by a finitely $\s$-generated \ks-algebra.

A morphism of $\s$-algebraic groups $\f\colon G\to H$ is a morphism of $\s$-varieties such that $\f_R\colon G(R)\to H(R)$ is a morphism of groups for any \ks-algebra. See \cite[Section 2]{Wibmer:FinitenessPropertiesOfAffineDifferenceAlgebraicGroups} for a list of examples of $\s$-algebraic groups.

A \emph{$\s$-closed subgroup} of a $\s$-algebraic group $G$ is a $\s$-closed $\s$-subvariety $H$ of $G$ such that $H(R)$ is a subgroup of $G(R)$ for any \ks-algebra $R$. We may write $H\leq G$ to indicate that $H$ is a $\s$-closed subgroup of $G$. A \emph{$\s$-closed embedding of $\s$-algebraic} is a morphism of $\s$-algebraic groups that is a $\s$-closed embedding of $\s$-varieties.

A \emph{\ks-Hopf algebra} is a \ks-algebra $R$ equipped with the the structure of a Hopf algebra such that the Hopf algebra structure maps (the comultiplication $\Delta\colon R\to R\otimes_k R$, the counit $\varepsilon\colon R\to k$ and the antipode $S\colon R\to R$) are morphisms of \ks-algebras.
From Remark \ref{rem: equivalence svarieties salgebras} we obtain:
\begin{rem}
	The category of $\s$-algebraic groups over $k$ is anti-equivalent to the category of finitely $\s$-generated \ks-Hopf algebras.
\end{rem}

For a $\s$-algebraic group $G$, we write $\m_G$ for the kernel of the counit $\varepsilon\colon k\{G\}\to k$. Note that $\m_G$ defines the trivial subgroup $1$ of $G$.

\begin{lemma}[{\cite[Lemma 2.15]{Wibmer:FinitenessPropertiesOfAffineDifferenceAlgebraicGroups}}] \label{lemma: Hopf induced}
	Let $R$ be a \ks-Hopf algebra and $S$ a $k$-Hopf algebra. If $S\to R$ is a morphism of $k$-Hopf algebras, the induced morphism $[\s]_k S\to R$ is a morphism of \ks-Hopf algebras. 
\end{lemma}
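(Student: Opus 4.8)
The plan is to argue via the functor of points, using the anti-equivalence between $\s$-algebraic groups and finitely $\s$-generated \ks-Hopf algebras together with its classical analog for affine group schemes and $k$-Hopf algebras. Write $\G=\spec S$ for the affine group scheme with $k[\G]=S$, and let $G$ be the $\s$-algebraic group with $k\{G\}=R$. The induced morphism $\f\colon[\s]_kS\to R$ is the unique morphism of \ks-algebras extending $g\colon S\to R^\sharp$; its existence and uniqueness express the representing property $\Hom_{\ksalg}([\s]_kS,R')\simeq\Hom(S,R'^\sharp)$ (natural in the \ks-algebra $R'$), established earlier for coordinate rings, evaluated at $R'=R$ on the element $g$. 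In particular $\f$ is already a morphism of \ks-algebras, so the only point to prove is that it is a morphism of Hopf algebras. By the anti-equivalence this is equivalent to showing that the dual morphism $\f^*\colon G\to[\s]_k\G$ is a morphism of $\s$-algebraic groups, i.e.\ that $\f^*_{R'}\colon G(R')\to([\s]_k\G)(R')$ is a group homomorphism for every \ks-algebra $R'$.

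First I would make $\f^*_{R'}$ explicit. Under the identifications $G(R')=\Hom_{\ksalg}(R,R')$ and $([\s]_k\G)(R')=\Hom_{\ksalg}([\s]_kS,R')\simeq\Hom(S,R'^\sharp)=\G(R'^\sharp)$, the map $\f^*_{R'}$ sends $\psi$ to $\psi\circ\f$, which under restriction to $S\subseteq[\s]_kS$ becomes $(\psi\circ\f)|_S$. Since $\f$ restricts to $g$ on $S$, this equals $\psi^\sharp\circ g\in\Hom(S,R'^\sharp)$, where $\psi^\sharp$ denotes $\psi$ regarded as a morphism of $k$-algebras $R^\sharp\to R'^\sharp$. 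Thus $\f^*_{R'}$ is the assignment $\psi\mapsto\psi^\sharp\circ g$.

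Next I would factor this map as the composite
$$\Hom_{\ksalg}(R,R')\xrightarrow{\ \psi\mapsto\psi^\sharp\ }\Hom(R^\sharp,R'^\sharp)\xrightarrow{\ \theta\mapsto\theta\circ g\ }\Hom(S,R'^\sharp)$$
and check that each factor is a group homomorphism. The second factor is the map induced on $R'^\sharp$-points by the dual $\spec R^\sharp\to\G$ of $g$; since $g$ is a morphism of $k$-Hopf algebras this dual is a morphism of affine group schemes, so the second factor is a group homomorphism. For the first factor, the group law on $\Hom_{\ksalg}(R,R')$ is $\psi_1*\psi_2=m_{R'}\circ(\psi_1\otimes\psi_2)\circ\Delta_R$, and the group law on $\Hom(R^\sharp,R'^\sharp)$ is given by the same formula with $\Delta_R$ replaced by the comultiplication of $R^\sharp$; since these two comultiplications are literally the same map, $\psi\mapsto\psi^\sharp$ preserves the group operation. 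Composing, $\f^*_{R'}$ is a group homomorphism for every $R'$, as desired.

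The only genuine subtlety is the first factor, namely that forgetting $\s$-equivariance on points respects the group structures; everything else is bookkeeping through the identifications above. This point becomes immediate once one observes that the Hopf structure maps of $R^\sharp$ as a $k$-Hopf algebra are by definition the \ks-Hopf structure maps of $R$, so no compatibility has to be imposed --- it is built into the definition of a \ks-Hopf algebra.
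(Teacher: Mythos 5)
Your argument is correct. Note that the paper itself gives no proof of this lemma; it is imported verbatim from \cite[Lemma 2.15]{Wibmer:FinitenessPropertiesOfAffineDifferenceAlgebraicGroups}, so there is no internal proof to compare against. Your route --- dualize via the functor of points, identify $\f^*_{R'}$ with $\psi\mapsto\psi^\sharp\circ g$ using the adjunction $\Hom([\s]_kS,R')\simeq\Hom(S,R'^\sharp)$, and check that both factors of this composite preserve the convolution group laws --- is the natural Yoneda-style argument, and each step checks out: the second factor is a group homomorphism because $g$ is a bialgebra morphism, and the first is the inclusion of $\s$-equivariant maps into all $k$-algebra maps, which trivially respects convolution since the comultiplications of $R$ and $R^\sharp$ coincide. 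One small presentational remark: the statement imposes no finiteness hypotheses on $S$ or $R$, so you should not lean on the anti-equivalence with ($\s$-)algebraic groups, which the paper states only for finitely ($\s$-)generated Hopf algebras; but your actual argument never uses finiteness --- it only needs that a morphism of ($\s$-)Hopf algebras is the same as a morphism of algebras whose dual is a natural transformation of group-valued $\Hom$-functors, which is plain Yoneda and holds in general. The only input you take for granted, correctly, is that the Hopf structure on $[\s]_kS$ is by definition the one for which the adjunction bijection $\Hom_{\ksalg}([\s]_kS,R')\simeq\Hom(S,R'^\sharp)$ transports the convolution group law of $S$.
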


\begin{ex} \label{ex: finite constant}
	To any finite group $\GG$ equipped with an endomorphism $\s\colon \GG\to\GG$ one can associate a $\s$-algebraic group $G$. Since we will refer to this example later, we explain the details. For any \ks-algebra $R$, let $G(R)$ denote the set of all locally constant functions $f\colon \spec(R)\to \GG$ such that
	$$
	\xymatrix{
		\spec(R) \ar^-f[r] \ar_\s[d] & \GG \ar^\s[d] \\
		\spec(R) \ar^-f[r] & \GG
	}
	$$
	commutes, where $\s\colon \spec(R)\to\spec(R)$ is the continuous map induced by $\s\colon R\to R$. Then $G(R)$ is a group under pointwise multiplication.

	Let $k^\GG$ be the finite dimensional $k$-algebra of all maps from $\GG$ to $k$. As explained in \cite[Section 2.3]{Waterhouse:IntrotoAffineGroupSchemes} the $k$-algebra $k^\GG$ naturally has the structure of a $k$-Hopf algebra. Defining $\s\colon k^\GG\to k^\GG$ by $\s(h)(g)=\s(h(\s(g)))$ for $h\colon\GG\to k$ and $g\in\GG$ defines the structure of \ks-Hopf algebra on $k^\GG$. One can show (\cite[Example 2.14]{Wibmer:FinitenessPropertiesOfAffineDifferenceAlgebraicGroups}) that $G$ is represented by the \ks-Hopf algebra $k\{G\}=k^\GG$.
\end{ex}

For further examples of $\s$-algebraic groups see \cite[Section 2]{Wibmer:FinitenessPropertiesOfAffineDifferenceAlgebraicGroups}. Before further discussing $\s$-algebraic groups, let us agree on the following conventions. \medskip

{\bf Notation for algebraic groups:} We us the term ``algebraic group (over $k$)'' synonymous for ``affine group scheme of finite type (over $k$)''. The coordinate ring, i.e., the ring of global section of an algebraic group $\G$ is denoted by $k[\G]$. Following \cite[Def. 5.5]{Milne:AlgebraicGroupsTheTheoryofGroupSchemesOfFiniteTypeOverAField} a morphism $\f\colon \G\to \H$ of algebraic groups is a \emph{quotient map} if the dual map $\f^*\colon k[\H]\to k[\G]$ is injective (equivalently, faithfully flat). By a \emph{closed subgroup} of an algebraic group we mean a closed subgroup scheme.

\medskip

{\bf From now on and throughout this article $k$ denotes an arbitrary $\s$-field. All $\s$\=/varieties, $\s$-algebraic groups and algebraic groups are assumed to be over $k$ (unless the contrary is explicitly indicated).}

\medskip

If $\f\colon X\to Y$ is a morphism of $\s$-varieties, there exists a unique $\s$-closed $\s$-subvariety $\f(X)$ of $Y$ such that $\f$ factors through the inclusion $\f(X)\subseteq Y$ and if $Z$ is any $\s$-closed $\s$-subvariety of $Y$ such that $\f$ factors through $Z$, then $\f(X)\subseteq Z$ (\cite[Lemma 1.5]{Wibmer:FinitenessPropertiesOfAffineDifferenceAlgebraicGroups}). Indeed, $\f(X)$ is the $\s$-closed $\s$-subvariety of $Y$ defined by $\I(\f(X))=\ker(\f^*)$. If $\f\colon G\to H$ is a morphism of $\s$-algebraic groups, then $\f(G)$ is a $\s$-closed subgroup of $H$.

If $\G$ is an algebraic group over $k$, then $[\s]_k\G$ is a $\s$-algebraic group. By a \emph{$\s$-closed subgroup of $\G$}, we mean a $\s$-closed subgroup of $[\s]_k\G$. For $i\in\nn$ the affine scheme $\G[i]=\G\times\ldots\times {\hsi\G}$ is an algebraic group and if $G$ is a $\s$-closed subgroup $\G$, then $G[i]$ is a closed subgroup of $\G[i]$. The projections $\pi_i\colon G[i]\to G[i-1]$ are morphisms of algebraic groups. In fact, they are quotient maps.

A basic fact about $\s$-algebraic groups is that every $\s$-algebraic group is isomorphic to a $\s$-closed subgroup of an algebraic group.

\begin{prop}[{\cite[Prop. 2.16]{Wibmer:FinitenessPropertiesOfAffineDifferenceAlgebraicGroups}}] \label{prop: linearization}
	Let $G$ be a $\s$-algebraic group. Then there exists an algebraic group $\G$ and a $\s$-closed embedding $G\to [\s]_k\G$.
\end{prop}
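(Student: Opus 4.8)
The plan is to realize the coordinate ring $k\{G\}$, after forgetting its difference structure, as a directed union of finitely generated $k$-Hopf subalgebras, and then to promote one well-chosen such subalgebra to the coordinate ring of the desired algebraic group $\G$, using the adjunction $T\rightsquigarrow[\s]_kT$ together with Lemma \ref{lemma: Hopf induced}. Concretely, I would first forget the endomorphism $\s$ and regard $k\{G\}^\sharp$ as an ordinary commutative $k$-Hopf algebra. By the classical structure theorem for affine group schemes over a field (\cite[\S 3.3]{Waterhouse:IntrotoAffineGroupSchemes}), every commutative $k$-Hopf algebra is the directed union of its finitely generated Hopf subalgebras; in particular $k\{G\}^\sharp=\bigcup_\lambda S_\lambda$, where the $S_\lambda$ range over the finitely generated $k$-Hopf subalgebras of $k\{G\}^\sharp$ and form a directed system under inclusion.

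Next I would invoke the hypothesis that $G$ is of finite $\s$-type. Choose a finite subset $B\subseteq k\{G\}$ such that $B,\s(B),\s^2(B),\ldots$ generates $k\{G\}$ as a $k$-algebra. Since $B$ is finite and the $S_\lambda$ are directed with union $k\{G\}^\sharp$, there is a single finitely generated $k$-Hopf subalgebra $S:=S_{\lambda_0}$ containing $B$. Put $\G:=\spec(S)$; this is an algebraic group because $S$ is a finitely generated $k$-Hopf algebra, and $k[\G]=S$, so that $k\{[\s]_k\G\}=[\s]_kS$.

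Now I would apply Lemma \ref{lemma: Hopf induced} to the inclusion $S\hookrightarrow k\{G\}$ of $k$-Hopf algebras: the induced morphism $[\s]_kS\to k\{G\}$ is a morphism of \ks-Hopf algebras. Its image is the $\s$-stable $k$-subalgebra of $k\{G\}$ generated by $S$, that is, the $k$-subalgebra generated by $\bigcup_{i\in\nn}\s^i(S)$. Since $B\subseteq S$, this image contains $B,\s(B),\s^2(B),\ldots$ and hence equals all of $k\{G\}$. Thus $[\s]_kS\to k\{G\}$ is a surjective morphism of \ks-Hopf algebras, and dually it is a morphism of $\s$-algebraic groups $G\to[\s]_k\G$ whose dual map is surjective; by \cite[Lemma 1.6]{Wibmer:FinitenessPropertiesOfAffineDifferenceAlgebraicGroups} such a morphism is a $\s$-closed embedding, which is exactly what is required.

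The main obstacle is the opening reduction to ordinary Hopf algebra theory: once one knows that $k\{G\}^\sharp$ is exhausted by finitely generated Hopf subalgebras, everything afterwards is formal and is driven entirely by the adjunction defining $[\s]_k$ and by Lemma \ref{lemma: Hopf induced}. The one point deserving care is that the chosen $S$ must simultaneously be a Hopf subalgebra and contain a finite $\s$-generating set for $k\{G\}$; this is precisely what the directedness of the system of finitely generated Hopf subalgebras guarantees, since any finite subset of the union lies in a single member of the system.
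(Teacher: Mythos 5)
Your proposal is correct and follows essentially the same route as the proof in the cited reference (the present paper only quotes the result from \cite[Prop. 2.16]{Wibmer:FinitenessPropertiesOfAffineDifferenceAlgebraicGroups}): one picks a finitely generated $k$-Hopf subalgebra $S$ of $k\{G\}^\sharp$ containing a finite $\s$-generating set, which exists because every Hopf algebra is the directed union of its finitely generated Hopf subalgebras, and then the counit $[\s]_kS\to k\{G\}$ of the adjunction is a surjective morphism of \ks-Hopf algebras by Lemma \ref{lemma: Hopf induced}. All steps, including the identification of the image of $[\s]_kS\to k\{G\}$ with the $k$-subalgebra generated by $\bigcup_{i}\s^i(S)$ and the translation of surjectivity into a $\s$-closed embedding, are sound.
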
 

These $\s$-closed embeddings of $\s$-algebraic groups into algebraic groups can be used to define three numerical invariants of a $\s$-algebraic group $G$: the \emph{$\s$-dimension} $\sdim(G)$, the \emph{order} $\ord(G)$ and the \emph{limit degree} $\ld(G)$.

\begin{theo}[{\cite[Theorem 3.7]{Wibmer:FinitenessPropertiesOfAffineDifferenceAlgebraicGroups}}] \label{theo: existence of sdim and order}
	Let $G$ be a $\s$-closed subgroup of an algebraic group $\G$. For $i\in\nn$ let $d_i=\dim(G[i])$ denote the dimension of the $i$-th order Zariski closure of $G$ in $\G$. Then there exist $d,e\in \nn$ such that 
	$d_i=d(i+1)+e$ for all sufficiently large $i$. The integer $d$ only depends on $G$ (and not on the choice of the $\s$-closed embedding of $G$ into $\G$). Moreover, if $d=0$, the integer $e$ only depends on $G$.
\end{theo}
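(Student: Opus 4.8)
The plan is to split the statement into two independent tasks: first establishing the eventual linearity together with the existence of $d$ and $e$, and then proving independence from the chosen embedding. For the first task I would reduce everything to the kernels of the transition maps. Since $\pi_i\colon G[i]\to G[i-1]$ is a quotient map of algebraic groups, its kernel $N_i$ is a closed subgroup and the dimension formula for quotients gives $\dim G[i]=\dim G[i-1]+\dim N_i$. Hence the increments $\delta_i:=d_i-d_{i-1}=\dim N_i$ govern the growth of $d_i$, and it suffices to prove that $(\delta_i)$ is eventually constant.

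The key observation is that forgetting the first factor defines a \emph{forward} projection $q_{i+1}\colon G[i+1]\to\hs(G[i])$. This is well defined precisely because $\I(G)$ is a $\s$-ideal: the explicit formula for $\s$ on $k\{\G\}$ identifies $\s$ restricted to $k[\G[i]]$ with the twisting inclusion $k[\G[i]]\to\hs k[\G[i]]\subseteq k[\G[i+1]]$, so that $\s(\I(G)\cap k[\G[i]])\subseteq\I(G)\cap k[\G[i+1]]$ forces $G[i+1]$ to land inside the twist $\hs(G[i])$. Restricting $q_{i+1}$ to $N_{i+1}$, which is supported on the last factor ${}^{\s^{i+1}}\!\G$ that $q_{i+1}$ retains, yields a closed embedding $N_{i+1}\hookrightarrow\hs N_i$; as base change along the field embedding $\s\colon k\to k$ preserves dimension, $\delta_{i+1}=\dim N_{i+1}\le\dim\hs N_i=\dim N_i=\delta_i$. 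The same argument with $q_1$ gives $\delta_1\le\dim G[0]$. Thus $\dim G[0]\ge\delta_1\ge\delta_2\ge\cdots$ is a non-increasing sequence of non-negative integers, hence eventually equal to some $d\in\nn$. This gives $d_i=d(i+1)+e$ for all large $i$; moreover every increment is $\ge d$, so $d_i\ge(i+1)d$ and therefore $e\ge 0$.

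For the second task, let $G\hookrightarrow[\s]_k\G$ and $G\hookrightarrow[\s]_k\h$ be two $\s$-closed embeddings and write $d_i^\G,d^\G$ and $d_i^\h,d^\h$ for the associated data. Embed $G$ diagonally into $[\s]_k(\G\times\h)$ and view $k[G[i]^\G]$, $k[G[i]^\h]$ and $k[G[i]^{\G\times\h}]$ as $k$-subalgebras of $k\{G\}=\bigcup_m k[G[m]^\G]$. Because $G\hookrightarrow[\s]_k\G$ is a $\s$-closed embedding, $k[\G]$ $\s$-generates $k\{G\}$, so the finitely many algebra generators of $k[\h]$ have images lying in $k[G[r]^\G]$ for some fixed $r$; tracking the $\s$-shifts then gives the nesting $k[G[i]^\G]\subseteq k[G[i]^{\G\times\h}]\subseteq k[G[i+r]^\G]$ inside $k\{G\}$. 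Since $\dim$ is monotone for inclusions of finitely generated $k$-algebras, this yields $d_i^\G\le d_i^{\G\times\h}\le d_{i+r}^\G$. Dividing by $i$ and letting $i\to\infty$ forces $d^{\G\times\h}=d^\G$, and by symmetry $d^{\G\times\h}=d^\h$, so $d^\G=d^\h$. When $d=0$ all three functions are eventually constant, and the same squeeze immediately gives $e^\G=e^{\G\times\h}=e^\h$, which settles the independence of $e$ in this case.

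I expect the main obstacle to lie entirely in the independence step, and in two technical points feeding it. First, one must carefully verify that $q_{i+1}$ is well defined and that its restriction to $N_{i+1}$ is a closed embedding into $\hs N_i$; this is pure bookkeeping of the $\s$-action on the tensor factors of $k\{\G\}$, but it is where the $\s$-structure is genuinely used. Second, the dimension monotonicity $\dim B\le\dim C$ for an inclusion $B\subseteq C$ of finitely generated $k$-algebras must be justified in the possibly reducible or non-reduced setting; this follows from the dominance of $\spec C\to\spec B$ together with the fact that a dense open subset of $\spec B$ contains the generic point of each top-dimensional component, giving a prime of $C$ whose residue field has transcendence degree at least $\dim B$. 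Once these two points are in place, the monotone-increment argument and the squeeze are elementary.
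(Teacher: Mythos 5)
The paper itself contains no proof of this statement—it is imported directly from \cite[Theorem 3.7]{Wibmer:FinitenessPropertiesOfAffineDifferenceAlgebraicGroups}—so there is no in-paper argument to compare against. Your proposal is correct and follows what is essentially the standard route to this result: the eventual linearity comes from the non-increasing increments $\dim N_{i+1}\le\dim {\hs N_i}$ furnished by the forward shift $G[i+1]\to{\hs (G[i])}$ (the same mechanism the present paper records for the limit degree in Proposition \ref{prop: limit degree}), and independence of $d$ (and of $e$ when $d=0$) follows from the interleaving $k[G[i]^{\G}]\subseteq k[G[i]^{\G\times\H}]\subseteq k[G[i+r]^{\G}]$ plus the squeeze. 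The two technical points you flag are real but resolve exactly as you indicate, so I see no gap.
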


The integer $d$ of Theorem \ref{theo: existence of sdim and order} is called the $\s$-dimension of $G$. If $\sdim(G)=0$, the integer $e$ of Theorem \ref{theo: existence of sdim and order} is called the order of $G$.

For an algebraic group $\G$ we denote with $|\G|$ the dimension of $k[\G]$ as a $k$-vector space. (This is infinite if $\G$ has positive dimension.)

\begin{prop} \label{prop: limit degree}
	Let $G$ be a $\s$-closed subgroup of an algebraic group $\G$ and for $i\in\nn$ let $G[i]$ denote the $i$-th order Zariski closure of $G$ in $\G$. Let $\G_i$ denote the kernel of the projection $\pi_i\colon G[i]\to G[i-1]$. Then the sequence $(|\G_i|)_{i\in\nn}$ is non-increasing and therefore eventually constant. The eventual value $\lim_{i\to\infty}|\G_i|$ only depends on $G$ (and not on the choice of the $\s$-closed embedding of $G$ into $\G$).
\end{prop}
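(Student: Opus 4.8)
The plan is to reinterpret the integers $|\G_i|$ intrinsically in terms of the \ks-Hopf algebra $k\{G\}$ and then prove the two assertions separately. For the embedding $G\hookrightarrow[\s]_k\G$ I set $A_i=k[G[i]]$, regarded as a $k$-subalgebra of $k\{G\}$. Then $(A_i)_{i\in\nn}$ is an increasing chain of finitely $k$-generated Hopf subalgebras with $\bigcup_iA_i=k\{G\}$, $\s(A_i)\subseteq A_{i+1}$, and $A_i$ generated as a $k$-algebra by $A_0,\s(A_0),\dots,\s^i(A_0)$; since $\pi_i$ is a quotient map, $A_{i-1}\hookrightarrow A_i$ is faithfully flat and $|\G_i|=\dim_k(A_i\otimes_{A_{i-1}}k)$, the order of the kernel of $\spec A_i\to\spec A_{i-1}$ (an element of $\nn\cup\{\infty\}$). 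To see monotonicity I would use the shift: on $k\{\G\}=[\s]_kk[\G]$ the endomorphism $\s$ carries $k[\G[i]]$ into $k[\G[i+1]]$, identifying it $\s$-semilinearly with the base change $k[{}^\s(\G[i])]$ occupying the last $i+1$ tensor factors of $k[\G[i+1]]$; as $\I(G)$ is a $\s$-ideal, this shows the projection forgetting the first factor maps $G[i+1]$ into ${}^\s(G[i])$. Now $\G_{i+1}=\ker\pi_{i+1}$ is cut out scheme-theoretically by $x_0=\dots=x_i=1$ and lies in the last factor, so this projection restricts to a closed embedding of algebraic groups $\G_{i+1}\hookrightarrow{}^\s\G_i$. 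Base change along $\s\colon k\to k$ preserves $k$-dimension, so $|{}^\s\G_i|=|\G_i|$, and a closed embedding yields a surjection of coordinate rings; hence $|\G_{i+1}|\le|\G_i|$. A non-increasing sequence in $\nn\cup\{\infty\}$ is eventually constant.

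For independence, given a second embedding $G\hookrightarrow[\s]_k\H$ with chain $(B_i)$, I would compare both $(A_i)$ and $(B_i)$ to the chain $(C_i)$ arising from the product embedding $G\hookrightarrow[\s]_k(\G\times\H)$ (this is again a $\s$-closed embedding, since composing with the first projection recovers $G\hookrightarrow[\s]_k\G$), where $C_0=A_0B_0\subseteq k\{G\}$. By symmetry it suffices to show the eventual value for $(A_i)$ equals that for $(C_i)$. Because $A_i\subseteq C_i$, $\bigcup A_i=\bigcup C_i=k\{G\}$ and $C_0$ is finitely generated, there is $j$ with $C_0\subseteq A_j$, whence $\s^\ell(C_0)\subseteq A_{\ell+j}$ and we obtain the interleaving $A_i\subseteq C_i\subseteq A_{i+j}$ for all $i$. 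By Theorem~\ref{theo: existence of sdim and order} the two embeddings have the same $\s$-dimension $d$. If $d>0$, then eventually $\dim\G_i=\dim G[i]-\dim G[i-1]=d>0$ for both chains, so $|\G_i|=\infty$ eventually for both and the common eventual value is $\infty$.

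Assume now $d=0$. Then $\dim G[i]$ is eventually constant for each embedding, so, writing $a_i=|\G_i|$ for $(A_i)$ and $c_i$ for the analogous kernel order of $(C_i)$, all $a_i,c_i$ are finite for large $i$, with eventual values $a$ and $c$. Put $d_i=\dim_k(C_i\otimes_{A_i}k)$, the order of the kernel of $\spec C_i\to\spec A_i$. The interleaving gives $d_i\le\operatorname{rank}_{A_i}A_{i+j}=a_{i+1}\cdots a_{i+j}$, which equals $a^j$ for large $i$; thus $(d_i)$ is a bounded sequence of positive integers. Applying multiplicativity of kernel orders in towers of quotient maps to the two factorizations of $\spec C_i\to\spec A_{i-1}$ (through $\spec A_i$ and through $\spec C_{i-1}$) yields $a_id_i=c_id_{i-1}$ for all large $i$. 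Taking $i$ large enough that $a_i=a$ and $c_i=c$ gives $a\,d_i=c\,d_{i-1}$; if $c>a$ then $(d_i)$ is strictly increasing and unbounded, and if $c<a$ then $(d_i)$ is a strictly decreasing sequence of positive integers, both impossible. Hence $c=a$, so the eventual value depends only on $G$.

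The main obstacle is the independence statement, and within it the control of the comparison orders $d_i$: one must know they are finite and bounded. This rests on the interleaving $A_i\subseteq C_i\subseteq A_{i+j}$ together with the intrinsic vanishing of the $\s$-dimension from Theorem~\ref{theo: existence of sdim and order}, which forces $\sdim=0$ simultaneously for all embeddings. The remaining inputs — that an inclusion of coordinate rings is faithfully flat (the quotient-map criterion recalled above, after Milne) and that kernel orders are multiplicative in towers of quotient maps with finite kernels (transitivity of locally free ranks) — are standard, and I would only cite them.
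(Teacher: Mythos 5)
Your argument is correct, but it cannot be compared line by line with the paper's own proof, because the paper proves this proposition purely by citation ("combine Propositions 4.1 and 5.1 in the finiteness-properties paper"); you have instead reconstructed a complete, self-contained argument. Both halves check out. For monotonicity, your key observation — that $\s$ on $[\s]_kk[\G]$ identifies $k[\G[i]]$ semilinearly with the last $i+1$ factors of $k[\G[i+1]]$, and that $\I(G)$ being a $\s$-ideal forces the "forget the first factor" projection to carry $G[i+1]$ into ${}^\s(G[i])$ and hence $\G_{i+1}$ into ${}^\s\G_i$ by a closed immersion — is exactly the right mechanism, and $|{}^\s\G_i|=|\G_i|$ since base change along $\s\colon k\to k$ preserves $k$-dimension. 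For independence, reducing to a comparison with the product embedding via the interleaving $A_i\subseteq C_i\subseteq A_{i+j}$, splitting off the $\sdim>0$ case by Theorem~\ref{theo: existence of sdim and order}, and then running the two factorizations of $\spec C_i\to\spec A_{i-1}$ to get $a_id_i=c_id_{i-1}$ with $(d_i)$ bounded is the classical limit-degree argument (it parallels Cohn's treatment for difference field extensions), and the contradiction from $c\neq a$ is airtight. The only inputs you use without proof — that an inclusion of Hopf subalgebras of $k\{G\}$ is faithfully flat (Takeuchi, consistent with the paper's Milne-style definition of quotient map), that a quotient map of group schemes with finite kernel is finite locally free of constant rank equal to the kernel order, and that these ranks are multiplicative in towers — are standard and appropriately flagged as citations. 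One cosmetic point: the proposition's indexing starts at $i=0$, where $\G_0$ must be read as $G[0]$ (the convention used elsewhere in the paper); your monotonicity argument covers $|\G_1|\le|\G_0|$ with the same projection, so nothing is lost.
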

\begin{proof}
	This follows by combining Propositions 4.1 and 5.1 in \cite{Wibmer:FinitenessPropertiesOfAffineDifferenceAlgebraicGroups}. 
\end{proof}

The value $\ld(G)=\lim_{i\to\infty}|\G_i|$ from Proposition \ref{prop: limit degree} is called the limit degree of $G$. Note that $\ld(G)$ is finite if and only if $\sdim(G)=0$.
The following Lemma explains the meaning of $\ld(G)=1$.

\begin{lemma}[{\cite[Lemma 5.7]{Wibmer:FinitenessPropertiesOfAffineDifferenceAlgebraicGroups}}] \label{lemma: ld=1}
	Let $G$ be a $\s$-algebraic group. Then $\ld(G)=1$ if and only if $k\{G\}$ is finitely generated as a $k$-algebra.
\end{lemma}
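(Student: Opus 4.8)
The plan is to express $k\{G\}$ as an increasing union of the coordinate rings of the order Zariski closures $G[i]$ and to translate the stabilization of this union into the numerical condition $\ld(G)=1$.

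First I would invoke Proposition \ref{prop: linearization} to fix a $\s$-closed embedding $G\hookrightarrow[\s]_k\G$ into an algebraic group $\G$. Since $\ld(G)$ is independent of this choice by Proposition \ref{prop: limit degree}, I may compute everything relative to it. Using the identity $k\{\G\}=[\s]_kk[\G]=\bigcup_{i\in\nn}k[\G[i]]$ recorded in Section \ref{subsec: affine difference algebraic geometry} and passing to the quotient by the $\s$-ideal $\I(G)$, the coordinate ring decomposes as the increasing union
$$k\{G\}=\bigcup_{i\in\nn}k[G[i]],\qquad k[G[i]]=k[\G[i]]/\big(\I(G)\cap k[\G[i]]\big),$$
where the transition maps $k[G[i]]\hookrightarrow k[G[i+1]]$ are precisely the injections dual to the quotient maps $\pi_{i+1}\colon G[i+1]\to G[i]$.

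Next I would record the elementary observation that $k\{G\}$ is finitely generated as a $k$-algebra if and only if this union stabilizes. Indeed, any finite generating set lies in a single $k[G[i]]$, forcing $k\{G\}=k[G[i]]$ and hence $k[G[j]]=k[G[i]]$ for all $j\geq i$; conversely, if the union stabilizes then $k\{G\}$ coincides with some $k[G[i]]$, which is finitely generated because $G[i]$ is an algebraic group. As every transition map is already injective, stabilization is equivalent to $\pi_{i+1}^*$ being an isomorphism --- that is, to $\pi_{i+1}$ being an isomorphism of algebraic groups --- for all sufficiently large $i$.

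The crux is to recognize when $\pi_{i+1}$ is an isomorphism in terms of its kernel $\G_{i+1}$. Since $\pi_{i+1}$ is a quotient map, it is an isomorphism exactly when its kernel is trivial, i.e.\ when $|\G_{i+1}|=1$: the forward implication is immediate, while the reverse relies on the standard fact that a faithfully flat homomorphism of affine group schemes with trivial kernel is an isomorphism (because $G[i]\cong G[i+1]/\ker(\pi_{i+1})$ and $G[i+1]/1=G[i+1]$). This appeal to the theory of quotients of affine group schemes is the one genuinely non-formal step, and hence where I expect the main obstacle to lie. Finally, since the sequence $(|\G_i|)_{i\in\nn}$ is non-increasing with eventual value $\ld(G)$ by Proposition \ref{prop: limit degree} and is bounded below by $1$, the condition that $|\G_{i+1}|=1$ for all large $i$ is equivalent to $\ld(G)=1$. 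Chaining these equivalences yields that $k\{G\}$ is finitely generated over $k$ if and only if $\ld(G)=1$.
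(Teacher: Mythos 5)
Your argument is correct. The paper itself does not prove this lemma but imports it from \cite[Lemma 5.7]{Wibmer:FinitenessPropertiesOfAffineDifferenceAlgebraicGroups}, so there is no internal proof to compare against; your chain of equivalences (finite generation $\Leftrightarrow$ stabilization of $\bigcup_i k[G[i]]$ $\Leftrightarrow$ the quotient maps $\pi_{i+1}$ having trivial kernel for large $i$ $\Leftrightarrow$ $\ld(G)=1$ via Proposition \ref{prop: limit degree}) is exactly the natural route, and the one non-formal ingredient you flag --- that a faithfully flat morphism of affine group schemes with trivial kernel is an isomorphism --- is a standard consequence of the isomorphism theorem for affine group schemes, so no gap remains.
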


We next discuss quotients. A $\s$-closed subgroup $N$ of a $\s$-algebraic group $G$ is \emph{normal} if $N(R)$ is a normal subgroup of $G(R)$ for any \ks-algebra $R$. We may write $N\unlhd G$ to indicate that $N$ is a normal $\s$-closed subgroup of $G$.

 The \emph{kernel} $\ker(\f)$ of a morphism $\f\colon G\to H$ of $\s$-algebraic groups is defined by $\ker(\f)(R)=\ker(\f_R)$ for any \ks-algebra $R$. Since $\ker(\f)=\f^{-1}(1)$, where $1$ is the trivial subgroup of $H$ (defined by the kernel $\m_H$ of the counit $k\{H\}\to k$), we see that $\ker(\f)$ is the normal $\s$-closed subgroup of $G$ defined by $\I(\ker(\f))=(\f^*(\m_H))\subseteq k\{G\}$.

 A \emph{quotient of $G$ mod $N$} is a $\s$-algebraic group $G/N$ together with a morphism $\pi\colon G\to G/N$ of $\s$-algebraic groups such that $N\subseteq\ker(\pi)$ and for any other morphism $\f\colon G\to H$ of $\s$-algebraic groups such that $N\subseteq\ker(\f)$ there exists a unique morphism $\f'\colon G/N\to H$ such that
 $$
 \xymatrix{
 G \ar^-\pi[rr] \ar_-\f[rd] & & G/N \ar^-{\f'}@{..>}[ld] \\
 & H &	
 }
 $$
commutes.

\begin{theo}[{\cite[Theorem 3.3]{Wibmer:AlmostSimpleAffineDifferenceAlgebraicGroups}}] \label{theo: existence of quotients}
	Let $G$ be $\s$-algebraic group and $N$ a normal $\s$-closed subgroup of $G$. Then a quotient of $G$ mod $N$ exists. Moreover, a morphism $\pi\colon G\to G/N$ of $\s$-algebraic groups is a quotient of $G$ mod $N$ if and only if $N=\ker(\pi)$ and $\pi^*\colon k\{G/N\}\to k\{G\}$ is injective.
\end{theo}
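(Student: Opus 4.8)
The plan is to construct $k\{G/N\}$ as a \ks-Hopf subalgebra of $A=k\{G\}$, namely the subalgebra of invariants for the right translation coaction, and then to deduce both existence and the ``moreover'' characterization from the properties of this subalgebra. Write $I=\I(N)\subseteq A$ for the defining $\s$-Hopf ideal of $N$, so $k\{N\}=A/I$, and form
$$\rho=(\id\otimes\bar\pi)\circ\Delta\colon A\to A\otimes_k(A/I),$$
where $\bar\pi\colon A\to A/I$ is the projection. Since $\Delta$ and $\bar\pi$ are morphisms of \ks-algebras, $\rho$ is as well, so $B=\{a\in A\mid \rho(a)=a\otimes 1\}$ is a $k$-subalgebra stable under $\s$ (if $\rho(a)=a\otimes 1$ then $\rho(\s(a))=\s(a)\otimes 1$). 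Normality of $N$ is exactly what is needed to verify the additional relations $\Delta(B)\subseteq B\otimes_k B$ and $S(B)\subseteq B$, making $B$ a \ks-Hopf subalgebra.

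The substantial step is to show that $B$ is \emph{finitely $\s$-generated}, so that it really represents a $\s$-algebraic group and not merely a $\s$-group functor. Here I would descend to finite levels via Proposition~\ref{prop: linearization}: fix a $\s$-closed embedding $G\to[\s]_k\G$, so $A=\bigcup_i k[G[i]]$. One first checks that the $i$-th order Zariski closure $N[i]$ is a normal closed subgroup of $G[i]$ — this holds because $G$ normalizes $N[i]$ (normalizing is a closed condition) and $G$ is Zariski dense in $G[i]$, so the closed normalizer of $N[i]$ contains $G[i]$. The classical theory of quotients of affine group schemes (\cite{Milne:AlgebraicGroupsTheTheoryofGroupSchemesOfFiniteTypeOverAField}) then produces algebraic groups $Q_i=G[i]/N[i]$ with faithfully flat quotient maps, compatible with the projections $\pi_i$, and a routine flatness computation identifies $B\cap k[G[i]]=k[Q_i]$, whence $B=\bigcup_i k[Q_i]$. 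Finite $\s$-generation of $B$ then reduces to showing that the tower $(Q_i)$ is generated in bounded order, which I would extract from the eventual constancy of the degrees $|\ker(Q_i\to Q_{i-1})|$, by the finiteness arguments underlying Proposition~\ref{prop: limit degree}; the finite-level faithful flatness then passes to the direct limit to give that $B\hookrightarrow A$ is faithfully flat.

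With $B=k\{G/N\}$ in place, let $\pi\colon G\to G/N$ be dual to the inclusion $B\hookrightarrow A$; by construction $\pi^*$ is injective. Faithful flatness is what guarantees that the ideal of $A$ generated by $B^+=B\cap\m_G$ recovers $I$, giving $\ker(\pi)=N$ (recall $\I(\ker\pi)=(\pi^*(\m_{G/N}))=(B^+)$). For the universal property, given any $\f\colon G\to H$ with $N\subseteq\ker(\f)$, the relation $\f(gn)=\f(g)$ dualizes to $\bar\pi\circ\f^*=\varepsilon_H(\cdot)\,1$, which forces $\im(\f^*)\subseteq B$; hence $\f^*$ factors through $B\hookrightarrow A$, uniquely since that inclusion is injective, yielding the required $\f'\colon G/N\to H$. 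This shows the constructed $\pi$ is a quotient satisfying $N=\ker(\pi)$ with $\pi^*$ injective. For the converse direction of the ``moreover'' part, any $\pi'\colon G\to H$ with $N=\ker(\pi')$ and $\pi'^*$ injective has $\im(\pi'^*)\subseteq B$ by the same invariance argument, and the reverse inclusion $\im(\pi'^*)=B$ follows from $N=\ker(\pi')$ via the correspondence between normal $\s$-closed subgroups and the \ks-Hopf subalgebras over which $A$ is faithfully flat; thus $\pi'$ enjoys the same universal property and is a quotient.

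The main obstacle is the finiteness statement of the second paragraph: a \ks-subalgebra of a finitely $\s$-generated \ks-algebra need not be finitely $\s$-generated, so one genuinely needs the structure of the tower $(Q_i)$ and the stabilization of its limit degrees to conclude that $G/N$ is a $\s$-algebraic group. Everything else is either a direct transcription of affine group scheme quotient theory to the finite levels $G[i]$ or a formal consequence of faithful flatness.
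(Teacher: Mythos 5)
This theorem is not proved in the present paper; it is imported from \cite[Theorem 3.3]{Wibmer:AlmostSimpleAffineDifferenceAlgebraicGroups}, so the comparison is with that proof. Your strategy --- take $B=\{a\in k\{G\}\mid \rho(a)=a\otimes 1\}$ with $\rho=(\id\otimes\bar\pi)\circ\Delta$, show that $B$ is a \ks-Hopf subalgebra, let $\pi$ be dual to the inclusion $B\hookrightarrow k\{G\}$, and use faithful flatness to obtain $\ker(\pi)=N$ and the universal property --- is the standard one and is essentially the route of the cited proof.

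Two steps are not sound as written. First, for the finite $\s$-generation of $B$ you do not need the tower $(Q_i)$ at all: once $B$ is known to be a \ks-Hopf subalgebra of the finitely $\s$-generated \ks-Hopf algebra $k\{G\}$, Theorem \ref{theo: ksHopfsubalgebra finitely sgenerated} gives the conclusion directly. Your substitute argument is genuinely incomplete: eventual constancy of the sequence $|\ker(Q_i\to Q_{i-1})|$ (which may be constantly infinite) does not by itself imply that $k[Q_{i+1}]$ is generated by $k[Q_i]$ together with $\s(k[Q_i])$ for large $i$, which is what finite $\s$-generation of $B=\bigcup_i k[Q_i]$ actually requires; supplying that implication would amount to reproving Theorem \ref{theo: ksHopfsubalgebra finitely sgenerated}. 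Second, faithful flatness of $k\{G\}$ over $B$ does not ``pass to the direct limit'' formally: each $k[G[i]]$ is faithfully flat only over $k[Q_i]$, not over $B$, and a filtered union of faithfully flat extensions with varying base is not automatically faithfully flat over the union of the bases. The clean statement to invoke is Takeuchi's theorem that a commutative Hopf algebra over a field is faithfully flat over every Hopf subalgebra; that theorem also yields the identity $(k\{G\}\cdot B^+)=\I(N)$ and the injectivity of the correspondence between Hopf subalgebras and kernels that you use, without further justification, in the converse direction of the ``moreover'' part. With these two repairs the argument is correct.
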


A morphism $\f\colon G\to H$ of $\s$-algebraic groups is a \emph{quotient map} if it is a quotient of $G$ mod $N$ for some normal $\s$-closed subgroup of $G$. Equivalently, $\f(G)=H$, i.e., $\f^*\colon k\{H\}\to k\{G\}$ is injective. See \cite[Prop. 4.10]{Wibmer:AlmostSimpleAffineDifferenceAlgebraicGroups} for further characterizations of quotient maps.

A sequence $1\to N\xrightarrow{\alpha} G\xrightarrow{\beta} H\to 1$ of morphisms of $\s$-algebraic groups is \emph{exact} if $\alpha$ is a $\s$-closed embedding, $\beta$ is a quotient map and $\alpha(N)=\ker(\beta)$.

A \emph{\ks-Hopf subalgebra} of a \ks-Hopf algebra is a Hopf subalgebra that is also a \ks-subalgebra.

\begin{theo}[{\cite[Theorem 4.5]{Wibmer:FinitenessPropertiesOfAffineDifferenceAlgebraicGroups}}] \label{theo: ksHopfsubalgebra finitely sgenerated}
	A \ks-Hopf subalgebra of a finitely $\s$-generated \ks\=/Hopf algebra is finitely $\s$-generated.
\end{theo}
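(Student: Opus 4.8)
The statement is purely about \ks-Hopf algebras, so let $R$ be a finitely $\s$-generated \ks-Hopf algebra, write $G$ for the $\s$-algebraic group with $k\{G\}=R$, and let $S\subseteq R$ be a \ks-Hopf subalgebra. The plan is to build a filtration of $S$ by finitely generated ordinary Hopf algebras that is eventually ``saturated'' under $\s$, and to read off a finite $\s$-generating set from it. By Proposition \ref{prop: linearization} I may embed $G$ as a $\s$-closed subgroup of some $[\s]_k\G$. Let $A_i=k[G[i]]$ be the coordinate ring of the $i$-th order Zariski closure $G[i]\subseteq \G[i]$. Then each $A_i$ is a finitely generated $k$-Hopf algebra, $A_i\subseteq A_{i+1}$, $\s(A_i)\subseteq A_{i+1}$, $R=\bigcup_i A_i$, and by construction the filtration is saturated: $A_{i+1}=k[A_i,\s(A_i)]$.

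Now set $B_i=S\cap A_i$. First I would check that each $B_i$ is a Hopf subalgebra of $A_i$: since we work over the field $k$, one has $(S\otimes_k S)\cap(A_i\otimes_k A_i)=B_i\otimes_k B_i$, so the comultiplication restricts to $B_i$, and the counit and antipode obviously do as well. By the classical (finite type) case of the theorem --- a Hopf subalgebra of a finitely generated commutative Hopf algebra over a field is finitely generated (Takeuchi) --- each $B_i$ is finitely generated, so $H_i=\spec(B_i)$ is an algebraic group, and the inclusions $B_i\hookrightarrow A_i$ and $B_i\hookrightarrow B_{i+1}$ dualize to quotient maps $G[i]\to H_i$ and $H_{i+1}\to H_i$ forming commutative squares with the projections $G[i+1]\to G[i]$. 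Moreover $\s(B_i)\subseteq B_{i+1}$ and $\bigcup_i B_i=S$, and one always has $k[B_i,\s(B_i)]\subseteq B_{i+1}$. The key reduction is then: if $B_{i+1}=k[B_i,\s(B_i)]$ for all $i\geq n$, then any finite set generating $B_n$ as a $k$-algebra $\s$-generates $S$, and we are done. So everything comes down to proving that the filtration $(B_i)$ is \emph{eventually saturated}.

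Proving eventual saturation is the main obstacle. The idea is to compare the tower $(H_i)$ with the saturated tower $(G[i])$ and to exhibit a monotone integer invariant that forces the ``defect'' $\ker\big(H_{i+1}\to \spec k[B_i,\s(B_i)]\big)$ to become trivial for large $i$. On the $G$-side, saturation gives a closed embedding $G[i+1]\hookrightarrow G[i]\times{\hs G[i]}$ under which $\ker(G[i+1]\to G[i])$ embeds, via the second projection, into the $\s$-twist of $\ker(G[i]\to G[i-1])$; hence the orders $|\ker(G[i+1]\to G[i])|$ are non-increasing (this is the monotonicity underlying the limit degree, Proposition \ref{prop: limit degree}), and the quotient maps $G[i]\to H_i$ induce compatible surjections on these kernels. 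Transporting this along the factorization $H_{i+1}\to \spec k[B_i,\s(B_i)]\to H_i$ and the embedding $\spec k[B_i,\s(B_i)]\hookrightarrow H_i\times{\hs H_i}$, one should obtain a non-increasing, bounded-below numerical invariant for the $H$-tower whose eventual constancy is equivalent to the vanishing of the defect. The delicate point is to package both the linear growth of $\dim(H_i)$ (the $\s$-dimension contribution) and the kernel orders into a single invariant, so that positive $\s$-dimension is handled and not merely the limit-degree part; this is exactly the bookkeeping that the order/limit-degree machinery of Theorem \ref{theo: existence of sdim and order} and Proposition \ref{prop: limit degree} is designed for.

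A conceptually shorter route is available if one is willing to invoke the existence of quotients. The augmentation ideal $S^+=S\cap\m_G$ is $\s$-stable, so $S^+R$ is a \ks-Hopf ideal defining a normal $\s$-closed subgroup $N\unlhd G$ with $\I(N)=S^+R$; by Theorem \ref{theo: existence of quotients} the quotient $G/N$ is a $\s$-algebraic group, and $\pi^*(k\{G/N\})$ is the Hopf subalgebra $T$ of $R$ with $T^+R=\I(\ker\pi)=\I(N)=S^+R$. Takeuchi's correspondence (a commutative Hopf algebra over a field is faithfully flat over any Hopf subalgebra, which recovers the subalgebra as the coinvariants of its quotient) then forces $S=T=\pi^*(k\{G/N\})$, which is finitely $\s$-generated. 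The caveat is that the existence-of-quotients theorem may itself depend on the present finiteness statement, so this route risks circularity and I would adopt it only after checking the logical dependencies; absent that, I would carry out the self-contained filtration argument above.
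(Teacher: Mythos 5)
First, note that the paper does not prove this statement: Theorem \ref{theo: ksHopfsubalgebra finitely sgenerated} is imported verbatim from the cited reference, so there is no in\-/paper argument to compare against; judged on its own terms, your proposal has a genuine gap. The filtration setup is sound: $B_i=S\cap A_i$ is a Hopf subalgebra of $A_i$ (the identity $(S\otimes_kS)\cap(A_i\otimes_kA_i)=B_i\otimes_kB_i$ does hold over a field), each $B_i$ is finitely generated by the classical theorem, and the reduction to eventual saturation $B_{i+1}=k[B_i,\s(B_i)]$ for $i\gg0$ is correct. But eventual saturation \emph{is} the theorem, and you do not prove it. The proposed mechanism --- transporting monotone kernel orders from the $G$\-/tower to the $H$\-/tower and packaging them with the dimension growth into ``a single invariant'' --- is not carried out, and it is unclear that it can be: the monotonicity behind Proposition \ref{prop: limit degree} comes from the closed embedding $G[i+1]\hookrightarrow G[i]\times{\hs(G[i])}$ supplied by saturation of the $G$\-/tower, which is exactly what is unavailable for $(H_i)$ before saturation is known. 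The surjections $G[i]\to H_i$ only bound the $H$\-/tower from above; nothing in your sketch rules out a new ``independent'' piece of $S$ appearing at every level, so the defect $\ker\big(H_{i+1}\to\spec k[B_i,\s(B_i)]\big)$ is not forced to vanish.

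Your second route is essentially the right idea but, as you yourself suspect, circular as formulated: the existence of quotients (Theorem \ref{theo: existence of quotients}) relies on precisely the finiteness statement at issue, since one must know that $\pi^*(k\{G/N\})$ is finitely $\s$-generated in order to obtain $G/N$ as a $\s$-algebraic group (cf.\ the proof of Corollary \ref{cor: quotients correspond to ksHopfsubalgebras}). The way to de\-/circularize it --- and this is close to how the cited source argues --- is to stay entirely at the level of Hopf algebras: write $S$ as the directed union of its finitely $\s$-generated \ks-Hopf subalgebras $S_\alpha$ (every element lies in a finite\-/dimensional subcoalgebra stable under the antipode, and the $\s$-subalgebra it generates is a finitely $\s$-generated \ks-Hopf subalgebra); the ascending chain of Hopf $\s$-ideals $S_\alpha^+R$ stabilizes by the difference basis theorem for Hopf $\s$-ideals (Theorem 4.1 of the same reference, which is independent of the present statement and is already invoked in this paper for the finite $\s$-generation of $\m_G$); and Takeuchi's bijection between Hopf subalgebras and normal Hopf ideals of a commutative Hopf algebra over a field then yields $S=S_\alpha$ for some $\alpha$. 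With that substitution your second route closes; without it, neither route does.
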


\begin{cor} \label{cor: quotients correspond to ksHopfsubalgebras}
	Let $G$ be a $\s$-algebraic group. There is a bijection between the normal $\s$-closed subgroups of $G$ and the \ks-Hopf subalgebras of $k\{G\}$.
\end{cor}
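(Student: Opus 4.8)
The plan is to construct two mutually inverse maps between the normal $\s$-closed subgroups of $G$ and the \ks-Hopf subalgebras of $k\{G\}$. In one direction, to a normal $\s$-closed subgroup $N\unlhd G$ I would associate the \ks-Hopf subalgebra $\pi^*(k\{G/N\})$ of $k\{G\}$: by Theorem~\ref{theo: existence of quotients} a quotient $\pi\colon G\to G/N$ exists and $\pi^*\colon k\{G/N\}\to k\{G\}$ is an injective morphism of \ks-Hopf algebras, so its image is a \ks-Hopf subalgebra, which I identify with $k\{G/N\}$. In the other direction, to a \ks-Hopf subalgebra $B\subseteq k\{G\}$ I would associate $\ker(\pi_B)$, where $\pi_B\colon G\to H$ is the morphism of $\s$-algebraic groups dual to the inclusion $B\hookrightarrow k\{G\}$ and $H$ is the $\s$-algebraic group with $k\{H\}=B$.

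For this second assignment to even make sense I must know that $H=\spec(B)$ is a genuine $\s$-algebraic group, i.e., that $B$ is a \emph{finitely} $\s$-generated \ks-Hopf algebra. This is exactly the content of Theorem~\ref{theo: ksHopfsubalgebra finitely sgenerated}: a \ks-Hopf subalgebra of the finitely $\s$-generated \ks-Hopf algebra $k\{G\}$ is again finitely $\s$-generated. Granting this, $H$ is a $\s$-algebraic group, $\pi_B$ is a morphism of $\s$-algebraic groups whose dual $\pi_B^*$ (the inclusion $B\hookrightarrow k\{G\}$) is injective, and $\ker(\pi_B)$ is a normal $\s$-closed subgroup of $G$, as recorded in the discussion of kernels preceding the corollary.

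It then remains to verify that the two assignments are mutually inverse, and here the characterization of quotients in Theorem~\ref{theo: existence of quotients} does all the work. Starting from $N$, the quotient map $\pi\colon G\to G/N$ satisfies $\ker(\pi)=N$ by that characterization, so feeding $B=k\{G/N\}$ into the second assignment returns $N$. Conversely, starting from a \ks-Hopf subalgebra $B$ with associated $\pi_B\colon G\to H$ and $N=\ker(\pi_B)$: since $\pi_B^*$ is injective and $N=\ker(\pi_B)$, Theorem~\ref{theo: existence of quotients} shows that $\pi_B$ is a quotient of $G$ mod $N$. By the universal property the quotient is unique up to unique isomorphism, so there is an isomorphism $H\simeq G/N$ identifying $\pi_B$ with the canonical map $G\to G/N$; under this identification $B=k\{H\}=k\{G/N\}$ as \ks-Hopf subalgebras of $k\{G\}$, so the first assignment applied to $N$ returns $B$.

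The only nonformal ingredient, and the step I expect to be the real obstacle, is the finiteness statement of Theorem~\ref{theo: ksHopfsubalgebra finitely sgenerated}: without it $\spec(B)$ need not be of finite $\s$-type, and the reverse map would not be defined. Once that finiteness is available, bijectivity is an immediate consequence of the quotient formalism and the uniqueness of quotients.
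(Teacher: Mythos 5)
Your proposal is correct and follows exactly the paper's argument: the same two assignments ($N\mapsto \pi^*(k\{G/N\})$ and $B\mapsto\ker(\pi_B)$), with Theorem~\ref{theo: ksHopfsubalgebra finitely sgenerated} supplying the finite $\s$-generation needed for the reverse map and Theorem~\ref{theo: existence of quotients} showing the two constructions are mutually inverse. You have merely spelled out the inverse-verification in more detail than the paper does.
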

\begin{proof}
	If $N$ is a normal $\s$-closed subgroup of $G$ and $\pi^*\colon G\to G/N$ the corresponding quotient, then $\pi^*(k\{G/N\})$ is \ks-Hopf subalgebra of $k\{G\}$. Conversely, if $R$ is a \ks-Hopf subalgebra of $k\{G\}$, then $R$ is finitely $\s$-generated by Theorem \ref{theo: ksHopfsubalgebra finitely sgenerated}. So $R=k\{H\}$ for some $\s$-algebraic group $H$. The kernel $N$ of the morphism $G\to H$ of $\s$-algebraic groups corresponding to the inclusion $R\subseteq k\{G\}$ is a normal $\s$-closed subgroup of $G$. By Theorem \ref{theo: existence of quotients} these two constructions are inverse to each other.
\end{proof}

The three numerical invariants are well-behaved under quotients.

\begin{prop} \label{prop: invariants and quotients}
	Let $N$ be a normal $\s$-closed subgroup of a $\s$-algebraic group $G$. Then
	$$\sdim(G)=\sdim(N)+\sdim(G/N),$$
	$$\ord(G)=\ord(N)+\ord(G/N)$$ and
	$$\ld(G)=\ld(G/N)\cdot\ld(N).$$
\end{prop}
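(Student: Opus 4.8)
The plan is to reduce all three formulas to a single structural statement about the towers of Zariski closures, and then feed that statement into the corresponding additivity and multiplicativity properties for ordinary algebraic groups. Using Proposition \ref{prop: linearization} I would fix a $\s$-closed embedding $G\iar[\s]_k\G$; composing with $N\leq G$ realizes both $N$ and $G$ as $\s$-closed subgroups of $\G$. Recall that all three invariants are read off from the tower $G[i]$: by Theorem \ref{theo: existence of sdim and order} the numbers $d_i=\dim(G[i])$ satisfy $d_i=\sdim(G)\cdot(i+1)+\ord(G)$ for $i\gg 0$, while by Proposition \ref{prop: limit degree} we have $\ld(G)=\lim_i|\G_i|$ with $\G_i=\ker(\pi_i\colon G[i]\to G[i-1])$. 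Thus the goal is to produce, for all sufficiently large $i$ and compatibly with the projections $\pi_i$, short exact sequences of algebraic groups
$$1\to N[i]\to G[i]\to (G/N)[i]\to 1,$$
which I denote $(\star)$.

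Granting the sequences $(\star)$, the three formulas follow from standard facts about algebraic groups. Dimension is additive in short exact sequences, so $d_i(G)=d_i(N)+d_i(G/N)$; comparing the eventual linear expressions yields $\sdim(G)=\sdim(N)+\sdim(G/N)$ and, when these $\s$-dimensions vanish, $\ord(G)=\ord(N)+\ord(G/N)$. For the limit degree I would place the level-$i$ and level-$(i-1)$ copies of $(\star)$ in a commutative ladder whose vertical maps are the projections, all of which are quotient maps. A diagram chase (valid functorially on $R$-points) then produces an exact sequence $1\to\G_i(N)\to\G_i(G)\to\G_i(G/N)\to 1$ of the kernels of the vertical maps, the surjectivity on the right coming from surjectivity of the projections on $G[i]$ and $(G/N)[i]$. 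Multiplicativity of $\dim_k$ of the coordinate rings in short exact sequences of affine group schemes then gives $|\G_i(G)|=|\G_i(N)|\cdot|\G_i(G/N)|$, and passing to the eventual value yields $\ld(G)=\ld(N)\cdot\ld(G/N)$ (with the convention $\infty=\infty$ when some $\s$-dimension is positive).

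It remains to construct $(\star)$, and this is where the real work lies. Normality of $N[i]$ in $G[i]$ should follow by passing the conjugation morphism $G\times N\to N$ to Zariski closures, using that the closure of a product is the product of the closures together with continuity of conjugation by $G[i]$; some scheme-theoretic care is needed in positive characteristic. The natural candidate for $(G/N)[i]$ is $Q_i:=G[i]/N[i]$, whose coordinate ring is the Hopf subalgebra $k\{G/N\}\cap k[G[i]]$ of $k[G[i]]$. These are finitely generated $k$-algebras (a Hopf subalgebra of a finitely generated Hopf algebra is finitely generated), their directed union is $k\{G/N\}$ (which is the $\s$-Hopf subalgebra of $k\{G\}$ attached to $G/N$ via Corollary \ref{cor: quotients correspond to ksHopfsubalgebras} and Theorem \ref{theo: ksHopfsubalgebra finitely sgenerated}), and the $\pi_i$ restrict to quotient maps $Q_i\to Q_{i-1}$.

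The main obstacle is to verify that the tower $(Q_i)$ really is the Zariski-closure tower of $G/N$ for a genuine embedding of $G/N$ into an algebraic group (for instance into $[\s]_k(G[0]/N[0])$), equivalently that forming the quotient by $N$ commutes with taking Zariski closures. Concretely, using the defining recursion $k[G[i]]=k[G[i-1]]\cdot\s(k[G[i-1]])$ for Zariski closures (where $A\cdot B$ denotes the subalgebra generated by $A$ and $B$), one must establish
$$k\{G/N\}\cap\big(k[G[i-1]]\cdot\s(k[G[i-1]])\big)=\big(k\{G/N\}\cap k[G[i-1]]\big)\cdot\s\big(k\{G/N\}\cap k[G[i-1]]\big).$$
The inclusion $\supseteq$ is immediate, but $\subseteq$ is not: an element of the smaller Hopf algebra may a priori decompose only inside the larger one. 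I expect this to fail for small $i$ yet to hold for all $i\gg 0$, which is precisely what the eventual-value definitions of the invariants require. Making this stabilization precise, and controlling the attendant non-reducedness in characteristic $p$, is the crux of the argument; once the cofinal identification $Q_i=(G/N)[i]$ is in place, the reduction above closes the proof.
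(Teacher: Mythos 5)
You have correctly reduced all three formulas to a single structural claim, and the downstream steps are essentially fine (normality of $N[i]$ in $G[i]$ is \cite[Lemma 3.10]{Wibmer:AlmostSimpleAffineDifferenceAlgebraicGroups}; the surjectivity of $\G_i(G)\to\G_i(G/N)$ needs fppf\=/local lifting rather than a literal chase on $R$-points, but that is routine; additivity of dimension and multiplicativity of $|\cdot|$ along faithfully flat quotients are standard). But the proof does not close, and you say so yourself: the identification of the tower $Q_i=G[i]/N[i]$ with a genuine Zariski-closure tower of $G/N$ is exactly the content of the proposition, and it is left as an expectation. The displayed identity
$$k\{G/N\}\cap\big(k[G[i-1]]\cdot\s(k[G[i-1]])\big)=\big(k\{G/N\}\cap k[G[i-1]]\big)\cdot\s\big(k\{G/N\}\cap k[G[i-1]]\big)$$
is not proved for any $i$, and there is no argument that it holds for $i\gg0$; already for $G=[\s]_k\Gm$ and $N=\{g:\s(g)=g\}$ it fails at $i=1$, and I see no mechanism in your write-up forcing it to stabilize in general. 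This is a genuine gap, not a detail: Theorem \ref{theo: existence of sdim and order} and Proposition \ref{prop: limit degree} only assert independence of $\sdim$, $\ord$, $\ld$ across $\s$-closed embeddings into algebraic groups, i.e.\ across genuine Zariski-closure towers, so you cannot read the invariants of $G/N$ off the filtration $(Q_i)$ without a separate comparison lemma. Such a lemma would have to say that the invariants can be computed from any exhaustive filtration of $k\{G/N\}$ by finitely generated Hopf subalgebras $A_i$ with $A_i\cdot\s(A_i)\subseteq A_{i+1}$, and that statement needs its own proof (note in particular that $\ord$ is sensitive to reindexing the tower when $\sdim>0$, and $\ld$ is defined via the kernels of a specific tower, so cofinality alone is not obviously enough).

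For context: the paper does not prove this proposition either; it is quoted from \cite{Wibmer:FinitenessPropertiesOfAffineDifferenceAlgebraicGroups} (Corollaries 3.13 and 3.15), where the comparison you are missing is carried out with different Hopf-algebraic tools (faithful flatness of $k\{G\}$ over $k\{G/N\}$ and a filtration-independence result for the numerical invariants). So your outline is a reasonable reconstruction of how such a proof must go, but as written the crux is asserted rather than established, and the specific algebra identity you propose to prove it with is both unproved and stronger than what is actually true.
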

\begin{proof}
	This is Corollaries 3.13 and 3.15 in \cite{Wibmer:FinitenessPropertiesOfAffineDifferenceAlgebraicGroups}.
\end{proof}
The formulas in Proposition \ref{prop: invariants and quotients} are written in a form so that they still make sense in case infinite values are involved. For example, if $\ld(G)$ is finite, then also $\ld(N)$ and $\ld(G/N)$ are finite and $\ld(G/N)=\frac{\ld(G)}{\ld(N)}$.

If $X$ is a $\s$-variety over $k$ and $R$ a \ks-algebra, we denote with $X_R$ the functor from the category of $R$-$\s$-algebras to the category of sets such that $X_R(R')=X(R')$ for any $R$-$\s$-algebra $R'$. Note that $X_R$ is represented by $k\{X\}\otimes_k R$. In particular, if $R=k'$ is a $\s$-field extension of $k$, then $X_{k'}$ is a $\s$-variety over $k'$, called the \emph{base change of $X$} along $k\to k'$.

Quotients have all the expected good properties, for example:
\begin{lemma}[{\cite[Lemma 3.9]{Wibmer:AlmostSimpleAffineDifferenceAlgebraicGroups}}] \label{lemma: quotients and base change}
	Let $N$ be a normal $\s$-closed subgroup of a $\s$-algebraic group $G$ and let $k'$ be a $\s$-field extension of $k$. Then $(G/N)_{k'}=G_{k'}/N_{k'}$.
\end{lemma}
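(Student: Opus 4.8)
The plan is to reduce everything to the Hopf-algebraic characterization of quotients in Theorem~\ref{theo: existence of quotients} together with the flatness of the field extension $k\to k'$. Write $\pi\colon G\to G/N$ for the quotient morphism, so that $N=\ker(\pi)$ and $\pi^*\colon k\{G/N\}\to k\{G\}$ is injective. Base change along $k\to k'$ turns $\pi$ into a morphism $\pi_{k'}\colon G_{k'}\to (G/N)_{k'}$ of $\s$-algebraic groups over $k'$, whose dual is $\pi^*\otimes_k k'\colon k\{G/N\}\otimes_k k'\to k\{G\}\otimes_k k'$; here $G_{k'}$, $N_{k'}$ and $(G/N)_{k'}$ are represented by $k\{G\}\otimes_k k'$, $k\{N\}\otimes_k k'$ and $k\{G/N\}\otimes_k k'$ respectively, and $N_{k'}$ is a normal $\s$-closed subgroup of $G_{k'}$ since any $k'$-$\s$-algebra is in particular a $k$-$\s$-algebra. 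The goal is to show that $\pi_{k'}$ is a quotient of $G_{k'}$ mod $N_{k'}$; the claimed equality $(G/N)_{k'}=G_{k'}/N_{k'}$ then follows from the uniqueness of quotients.

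The key observation is that $k'$ is flat (in fact free) as a $k$-module, so $-\otimes_k k'$ is exact. First, applying exactness to the injection $\pi^*$ shows that $\pi^*\otimes_k k'$ is injective, so $\pi_{k'}$ is a quotient map by Theorem~\ref{theo: existence of quotients}. It remains to identify $\ker(\pi_{k'})$ with $N_{k'}$. Tensoring the exact sequence $0\to\m_{G/N}\to k\{G/N\}\to k\to 0$ with $k'$ identifies the augmentation ideal $\m_{(G/N)_{k'}}$ with $\m_{G/N}\otimes_k k'$. Now $\I(\ker(\pi_{k'}))$ is the ideal of $k\{G\}\otimes_k k'$ generated by $(\pi^*\otimes_k k')(\m_{(G/N)_{k'}})=\pi^*(\m_{G/N})\otimes_k k'$, while the ideal of $k\{G\}$ generated by $\pi^*(\m_{G/N})$ is $\I(\ker(\pi))=\I(N)$; flatness of $k'$ then gives $\I(\ker(\pi_{k'}))=\I(N)\otimes_k k'$. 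On the other hand $k\{N_{k'}\}=k\{N\}\otimes_k k'=(k\{G\}/\I(N))\otimes_k k'=(k\{G\}\otimes_k k')/(\I(N)\otimes_k k')$, again by flatness, so $\I(N_{k'})=\I(N)\otimes_k k'$ as well. Hence $\ker(\pi_{k'})=N_{k'}$, and by Theorem~\ref{theo: existence of quotients} the morphism $\pi_{k'}$ exhibits $(G/N)_{k'}$ as $G_{k'}/N_{k'}$.

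Throughout one should note that every map in sight is a morphism of $k'$-$\s$-algebras: $\pi^*\otimes_k k'$ commutes with $\s$ because $\pi^*$ does and because $\s$ acts diagonally on the tensor product, and the sub- and quotient objects produced by the flatness arguments inherit their $\s$-structure from the ambient $\s$-rings; likewise the ideals generated above are automatically $\s$-ideals since they are generated by $\s$-stable sets. The only real work is thus this bookkeeping — verifying that forming kernels, augmentation ideals and quotient Hopf algebras all commute with $-\otimes_k k'$ — and all of it rests on the exactness of tensoring a $k$-module with the field extension $k'$. I do not expect a genuine obstacle here; the statement is robust precisely because $k\to k'$ is faithfully flat.
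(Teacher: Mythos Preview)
Your proof is correct. Note that the paper does not actually prove this lemma; it merely cites it from \cite[Lemma~3.9]{Wibmer:AlmostSimpleAffineDifferenceAlgebraicGroups}. Your argument --- using the characterization of quotients from Theorem~\ref{theo: existence of quotients} (injectivity of $\pi^*$ and $\ker(\pi)=N$) together with flatness of $k\to k'$ to propagate both conditions through the base change --- is precisely the expected one and is essentially how the cited reference proceeds.
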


\begin{lemma}[{\cite[Cor. 3.4]{Wibmer:AlmostSimpleAffineDifferenceAlgebraicGroups}}] \label{lemma: quotient embedding}
	If $\f\colon G\to H$ is a morphism of $\s$-algebraic groups, the induced morphism $G/\ker(\f)\to H$ is a $\s$-closed embedding.
\end{lemma}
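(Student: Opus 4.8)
The plan is to factor $\f$ through its image and recognize the resulting surjection as a realization of the quotient $G/\ker(\f)$. Write $N=\ker(\f)$ and let $\f=\iota\circ\overline{\f}$ be the canonical factorization of $\f$ through its image, so that $\overline{\f}\colon G\to\f(G)$ is a morphism of $\s$-algebraic groups and $\iota\colon\f(G)\hookrightarrow H$ is the inclusion of the $\s$-closed subgroup $\f(G)$ of $H$. Since $\f(G)$ is the smallest $\s$-closed $\s$-subvariety of $H$ through which $\f$ factors, the image of $\overline{\f}$ is all of $\f(G)$, so $\overline{\f}$ is a quotient map and hence $\overline{\f}^*$ is injective by the characterization following Theorem \ref{theo: existence of quotients}.

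The first key step is to verify that $\ker(\overline{\f})=N$. This holds because $\iota$ is a $\s$-closed embedding and hence injective on $R$-points for every \ks-algebra $R$; thus $\ker(\overline{\f}_R)=\ker(\f_R)$ for all $R$, giving $\ker(\overline{\f})=\ker(\f)=N$. Combining this with the injectivity of $\overline{\f}^*$, the ``only if'' direction of Theorem \ref{theo: existence of quotients} identifies $\overline{\f}\colon G\to\f(G)$ as a quotient of $G$ mod $N$.

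By the uniqueness of quotients there is an isomorphism $\theta\colon G/N\xrightarrow{\sim}\f(G)$ compatible with the quotient maps, that is, $\theta\circ\pi=\overline{\f}$, where $\pi\colon G\to G/N$ is the quotient map. The induced morphism $\f'\colon G/N\to H$ is by definition the unique morphism with $\f'\circ\pi=\f$. Since $(\iota\circ\theta)\circ\pi=\iota\circ\overline{\f}=\f$, uniqueness forces $\f'=\iota\circ\theta$. As $\theta$ is an isomorphism and $\iota$ is a $\s$-closed embedding, the composition $\f'=\iota\circ\theta$ is a $\s$-closed embedding, as desired.

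I expect the only real content to be the identification of $\overline{\f}$ as a quotient of $G$ mod $N$; everything else is formal manipulation of the universal property of the quotient. Equivalently, one could argue directly on coordinate rings: from $\f^*=\pi^*\circ(\f')^*$ and the injectivity of $\pi^*$ (Theorem \ref{theo: existence of quotients}), it suffices to show $\im(\f^*)=\pi^*(k\{G/N\})$, and both of these are the \ks-Hopf subalgebra of $k\{G\}$ corresponding to $N$ under the bijection of Corollary \ref{cor: quotients correspond to ksHopfsubalgebras}; the surjectivity of $(\f')^*$, hence that $\f'$ is a $\s$-closed embedding, then follows.
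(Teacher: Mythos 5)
Your argument is correct: identifying $G\to\f(G)$ as the quotient of $G$ mod $\ker(\f)$ via Theorem \ref{theo: existence of quotients} and then composing the resulting isomorphism with the $\s$-closed embedding $\f(G)\hookrightarrow H$ is exactly the intended route (it is, in effect, Theorem \ref{theo: isom1} plus the inclusion of the image). The paper itself gives no proof here — the lemma is imported verbatim from \cite[Cor. 3.4]{Wibmer:AlmostSimpleAffineDifferenceAlgebraicGroups} — so there is nothing to contrast with; both your main argument and the coordinate-ring variant via Corollary \ref{cor: quotients correspond to ksHopfsubalgebras} are sound.
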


\begin{ex}[{\cite[Example 3.7]{Wibmer:AlmostSimpleAffineDifferenceAlgebraicGroups}}]
	If $\N$ is a normal closed subgroup of an algebraic group $\G$. Then $[\s]_k\G/[\s]_k\N=[\s]_k(\G/\N)$.
\end{ex}

The isomorphism theorems hold for $\s$-algebraic groups. In particular, we have the first isomorphism theorem:

\begin{theo}[{\cite[Theorem 5.2]{Wibmer:AlmostSimpleAffineDifferenceAlgebraicGroups}}] \label{theo: isom1}
	Let $\f\colon G\to H$ be a morphism of $\s$-algebraic groups. Then the induced morphism $G/\ker(\f)\to \f(G)$ is an isomorphism of $\s$-algebraic groups.
\end{theo}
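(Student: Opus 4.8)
The plan is to reduce everything to two facts already in place: that the induced morphism $G/\ker(\f)\to H$ is a $\s$-closed embedding (Lemma \ref{lemma: quotient embedding}), and the image--ideal dictionary recalled above, that for a morphism $\psi$ of $\s$-varieties the image $\psi(X)\subseteq Y$ is the $\s$-closed $\s$-subvariety defined by $\I(\psi(X))=\ker(\psi^*)$. Since $\ker(\f)$ is a normal $\s$-closed subgroup of $G$, Theorem \ref{theo: existence of quotients} gives the quotient $\pi\colon G\to G/\ker(\f)$ with $\pi^*\colon k\{G/\ker(\f)\}\to k\{G\}$ injective, and (as $\ker(\f)\subseteq\ker(\f)$) the universal property of $\pi$ produces a unique morphism $\f'\colon G/\ker(\f)\to H$ of $\s$-algebraic groups with $\f=\f'\circ\pi$. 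It is the corestriction of this $\f'$ that I must show is an isomorphism onto $\f(G)$.

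First I would identify the image of $\f'$. Dualizing $\f=\f'\circ\pi$ gives $\f^*=\pi^*\circ(\f')^*$ as maps $k\{H\}\to k\{G\}$, and since $\pi^*$ is injective this yields $\ker(\f^*)=\ker((\f')^*)$. By the image--ideal dictionary, $\f(G)$ is the $\s$-closed $\s$-subvariety of $H$ cut out by the ideal $\ker(\f^*)$ of $k\{H\}$, while $\f'(G/\ker(\f))$ is the one cut out by $\ker((\f')^*)$; equality of these ideals forces $\f(G)=\f'(G/\ker(\f))$ as $\s$-closed subgroups of $H$. By Lemma \ref{lemma: quotient embedding}, $\f'$ is a $\s$-closed embedding, hence it restricts to an isomorphism of $G/\ker(\f)$ onto $\f'(G/\ker(\f))=\f(G)$. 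This isomorphism $G/\ker(\f)\to\f(G)$ is precisely the induced morphism in the statement, which finishes the argument.

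I do not anticipate a real obstacle: all the needed inputs---existence of quotients together with the injectivity characterization, the existence and ideal-theoretic description of images, and the embedding property of the induced map---are supplied by the cited results, and the body of the proof is a short diagram chase on the dual $k$-$\s$-algebras. The one place deserving a sentence of care is checking that the isomorphism produced really is ``the induced morphism'': its composition with $\f(G)\hookrightarrow H$ is $\f'$, hence its composition with $\pi$ is $\f$, so it is the unique morphism $G/\ker(\f)\to\f(G)$ through which $\f$ factors. As an alternative endgame avoiding Lemma \ref{lemma: quotient embedding}, one can observe that the canonical factorization $\wtilde\f\colon G\to\f(G)$ of $\f$ through its image has injective dual (its image in $\f(G)$ is everything, by minimality of $\f(G)$) and kernel $\ker(\f)$ (because $\f(G)\hookrightarrow H$ is injective on $R$-points for every \ks-algebra $R$); by the ``if and only if'' in Theorem \ref{theo: existence of quotients}, $\wtilde\f$ is then itself a quotient of $G$ mod $\ker(\f)$, and uniqueness of quotients gives the isomorphism $G/\ker(\f)\to\f(G)$.
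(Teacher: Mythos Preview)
Your proof is correct. The paper itself does not supply a proof of this theorem: it is stated with a citation to \cite[Theorem 5.2]{Wibmer:AlmostSimpleAffineDifferenceAlgebraicGroups} and no further argument, so there is nothing in the paper to compare your reasoning against. Both the main route (via Lemma~\ref{lemma: quotient embedding} and the equality $\ker(\f^*)=\ker((\f')^*)$) and your alternative endgame (recognizing $G\to\f(G)$ directly as a quotient via the characterization in Theorem~\ref{theo: existence of quotients}) are valid derivations from the results already recalled in Section~\ref{sec: preliminaries and notation}.
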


As a corollary to the above theorem we obtain:

\begin{cor} \label{cor: sclosed and quotient is isom}
	A morphism of $\s$-algebraic groups that is a quotient map and a $\s$-closed embedding is an isomorphism. \qed
\end{cor}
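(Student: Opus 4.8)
The plan is to pass to the dual morphism of \ks-Hopf algebras and combine the two hypotheses there. Let $\f\colon G\to H$ be a morphism of $\s$-algebraic groups that is both a quotient map and a $\s$-closed embedding. As recorded in the discussion following Theorem \ref{theo: existence of quotients}, $\f$ being a quotient map is equivalent to the dual morphism $\f^*\colon k\{H\}\to k\{G\}$ being injective. On the other hand, $\f$ being a $\s$-closed embedding is equivalent to $\f^*$ being surjective (this was noted after Proposition \ref{prop: linearization}, via \cite[Lemma 1.6]{Wibmer:FinitenessPropertiesOfAffineDifferenceAlgebraicGroups}). Hence $\f^*$ is a bijective morphism of \ks-Hopf algebras.

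Next I would observe that the set-theoretic inverse $(\f^*)^{-1}\colon k\{G\}\to k\{H\}$ is automatically again a morphism of \ks-Hopf algebras: being a ring homomorphism, commuting with $\s$, and being compatible with the comultiplication, counit and antipode are all conditions that are inherited by the inverse of a bijective morphism satisfying them. Thus $\f^*$ is an isomorphism of \ks-Hopf algebras, and since the category of $\s$-algebraic groups is anti-equivalent to the category of finitely $\s$-generated \ks-Hopf algebras, $\f$ is an isomorphism of $\s$-algebraic groups.

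Alternatively, one can avoid the dual altogether and argue inside the category of $\s$-algebraic groups. Since $\f$ is a $\s$-closed embedding it is in particular injective on $R$-points for every \ks-algebra $R$, so $\ker(\f)(R)=\ker(\f_R)=1$ for all $R$, whence $\ker(\f)=1$; and since $\f$ is a quotient map, $\f(G)=H$. By Theorem \ref{theo: isom1} the induced morphism $G/\ker(\f)\to\f(G)$, which is just $\f\colon G\to H$, is an isomorphism. I expect there to be no genuine obstacle here: the only point deserving an explicit word is that the inverse of a bijective morphism of \ks-Hopf algebras is again such a morphism, and that is immediate.
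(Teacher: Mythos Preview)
Your proposal is correct. The paper presents this corollary as an immediate consequence of Theorem~\ref{theo: isom1} (the first isomorphism theorem), which is precisely your second, ``alternative'' argument: $\ker(\f)=1$ and $\f(G)=H$, so the induced isomorphism $G/\ker(\f)\to\f(G)$ is $\f$ itself.

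Your first argument, passing to the dual $\f^*$ and observing that a bijective morphism of \ks-Hopf algebras is an isomorphism, is an equally valid and in some sense more elementary route: it uses only the characterizations of quotient maps and $\s$-closed embeddings in terms of $\f^*$, together with the anti-equivalence of categories, and does not require the first isomorphism theorem (which in turn rests on the existence of quotients). One minor remark: the characterization of $\s$-closed embeddings via surjectivity of $\f^*$ is stated in Section~\ref{subsec: affine difference algebraic geometry}, not after Proposition~\ref{prop: linearization}, though your external citation to \cite[Lemma~1.6]{Wibmer:FinitenessPropertiesOfAffineDifferenceAlgebraicGroups} is correct.
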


The following is the third isomorphism theorem for $\s$-algebraic groups.

\begin{theo}[{\cite[Theorem 5.9]{Wibmer:AlmostSimpleAffineDifferenceAlgebraicGroups}}] \label{theo: isom 3}
	Let $N$ be a normal $\s$-closed subgroup of a $\s$-algebraic group $G$ with quotient map $\pi\colon G\to G/N$. Then the map $H\mapsto \pi(H)=H/N$ defines a bijection between the $\s$-closed subgroups $H$ of $G$ containing $N$ and the $\s$-closed subgroup of $G/N$. The inverse is $H'\mapsto \pi^{-1}(H')$. A $\s$-closed subgroup $H$ of $G$ containing $N$ is normal in $G$ if and only if $H/N$ is normal in $G/N$.  In this case the canonical morphism $G/H\to (G/N)/(H/N)$ is an isomorphism. 
\end{theo}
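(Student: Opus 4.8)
The plan is to transport everything to the anti-equivalent category of finitely $\s$-generated \ks-Hopf algebras (Remark~\ref{rem: equivalence svarieties salgebras}) and to reduce all three assertions to formal consequences of the isomorphism theorems, isolating the single genuinely geometric input. First I would fix notation: by Theorem~\ref{theo: existence of quotients} the dual $\pi^*$ is injective, so I identify $B:=k\{G/N\}$ with a \ks-Hopf subalgebra of $A:=k\{G\}$; then the description of kernels gives $\I(N)=\m_{G/N}A$. Under this identification a $\s$-closed subgroup $H\supseteq N$ is the same datum as a Hopf $\s$-ideal $\ida=\I(H)$ of $A$ with $\ida\subseteq\I(N)$, a $\s$-closed subgroup $H'$ of $G/N$ is the same as a Hopf $\s$-ideal $\idb=\I(H')$ of $B$, and unwinding the image and preimage constructions turns $H\mapsto\pi(H)$ into $\ida\mapsto\ida\cap B$ and $H'\mapsto\pi^{-1}(H')$ into $\idb\mapsto\idb A$. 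Applying Theorem~\ref{theo: isom1} to $\pi|_H$, whose kernel is $H\cap N=N$, identifies $\pi(H)\cong H/N$ and justifies the notation; well-definedness of both maps is immediate from the image/preimage constructions.

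With these translations, the claim that the two maps are mutually inverse becomes the pair of identities $(\ida\cap B)A=\ida$ for Hopf $\s$-ideals $\ida\subseteq\m_{G/N}A$, and $(\idb A)\cap B=\idb$ for all Hopf $\s$-ideals $\idb$ of $B$. These are exactly the faithfully flat descent statements for the Hopf inclusion $B\hookrightarrow A$: the second is faithful flatness of $A$ over $B$, and the first is the Hopf-ideal form of the subgroup correspondence, where the containment $\ida\subseteq\m_{G/N}A$ (that is, $N\subseteq H$) is precisely what makes it hold. This is where I expect the real work to sit and the main obstacle to lie: I would import the faithful flatness of $k\{G\}$ over $k\{G/N\}$ from the quotient theory (it belongs with the further characterizations of quotient maps cited after Theorem~\ref{theo: existence of quotients}) and then run the standard descent argument, noting that $\s$-equivariance is automatic since $\pi^*$ is a morphism of $\s$-rings and every ideal occurring is a $\s$-ideal.

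The normality criterion then follows formally from the fact that every kernel is a normal $\s$-closed subgroup. If $H\unlhd G$, the quotient map $G\to G/H$ kills $N\subseteq H$, so by the universal property of $\pi$ it factors as $\theta\circ\pi$ with $\theta\colon G/N\to G/H$; then $\pi^{-1}(\ker\theta)=\ker(\theta\circ\pi)=H$, and applying the bijection gives $\ker\theta=\pi(H)=H/N$, which is normal in $G/N$ as a kernel. Conversely, if $H/N\unlhd G/N$, I set $q\colon G/N\to(G/N)/(H/N)$ and compute $\ker(q\circ\pi)=\pi^{-1}(H/N)=\pi^{-1}(\pi(H))=H$ using the bijection, so $H$ is a kernel and hence normal in $G$.

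For the final isomorphism I keep this $q$: the composite $q\circ\pi$ has injective dual, being a composite of the injective duals of $q$ and $\pi$, so it is a quotient map, and its kernel is $H$ by the computation just made. Theorem~\ref{theo: isom1} applied to $q\circ\pi$ then shows that the induced canonical morphism $G/H\to(G/N)/(H/N)$ is an isomorphism. In summary, once the faithfully flat descent underlying the bijection is granted, the normality criterion and the third isomorphism theorem are purely formal manipulations with kernels, universal properties, and the first isomorphism theorem.
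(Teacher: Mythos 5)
First, a remark on the comparison you asked for implicitly: the paper does not prove this statement at all --- it is imported verbatim from Theorem 5.9 of the companion paper on almost simple affine difference algebraic groups, so there is no in-text argument to measure yours against. Judged on its own terms, your formal scaffolding is correct and complete: the dictionary $H\mapsto\ida=\I(H)$ with $\ida\subseteq\I(N)=\m_{G/N}A$, the identifications $\I(\pi(H))=\ida\cap B$ and $\I(\pi^{-1}(H'))=\idb A$, the identification $\pi(H)\simeq H/N$ via Theorem \ref{theo: isom1}, the derivation of the normality criterion from the universal property of quotients together with ``kernels are normal'', and the deduction of the final isomorphism by applying Theorem \ref{theo: isom1} to $q\circ\pi$ all go through exactly as you describe once the bijection is in hand.

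The gap is in the bijection itself. Of the two ideal identities you isolate, $(\idb A)\cap B=\idb$ does follow from faithful flatness of $A$ over $B$ (which is available: a commutative Hopf algebra over a field is faithfully flat over any Hopf subalgebra, and this is among the characterizations of quotient maps the paper points to). But $(\ida\cap B)A=\ida$ for Hopf $\s$-ideals $\ida\subseteq\m_{G/N}A$ is \emph{not} an instance of ``standard faithfully flat descent'': descent of ideals along $B\hookrightarrow A$ only ever yields the first identity, and the second fails for general ideals even with the containment hypothesis unless the Hopf structure and the normality of $N$ enter through a specific mechanism --- either the isomorphism $A\otimes_B A\simeq A\otimes_k(A/\m_{G/N}A)$, which is what actually lets one descend $\ida$ rather than merely test membership in $B$, or, on the functor side, the fact that $H\to H/N$ is an epimorphism after faithfully flat base change, so that any point of $\pi^{-1}(\pi(H))(R)$ lifts to $H(R')$ up to an element of $N(R')\subseteq H(R')$ and then descends because $H$ is a closed subfunctor. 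Declaring that you would ``run the standard descent argument'' leaves precisely this step --- the only nonformal step in the entire theorem --- unexecuted, and in the difference setting one must additionally verify that this descent interacts correctly with $\s$ (harmless, but it should be recorded, since you use that $\ida\cap B$ and $\idb A$ are $\s$-ideals). So the architecture is right and the difficulty is correctly localized, but the theorem's actual content is asserted rather than proven.
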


Let $G$ be a $\s$-algebraic group. A \emph{subnormal series} of $G$ is a sequence
\begin{equation} \label{eq: subnormal series}
G=G_0\supseteq G_1\supseteq\ldots\supseteq G_n=1
\end{equation}
of $\s$-closed subgroups of $G$ such that $G_{i+1}$ is normal in $G_i$ for $i=0,\ldots,n-1$. Another subnormal series 
\begin{equation} \label{eq: subnormal series 2}
G=H_0\supseteq H_1\supseteq\ldots\supseteq H_m=1
\end{equation}
is a refinement of (\ref{eq: subnormal series}) if $\{G_0,\ldots,G_n\}\subseteq \{H_1,\ldots,H_m\}$.
The subnormal series (\ref{eq: subnormal series}) and (\ref{eq: subnormal series 2}) are equivalent if $m=n$ and there exists a permutation $\pi$ such that the factor groups $G_i/G_{i+1}$ and $H_{\pi(i)}/H_{\pi(i)+1}$ are isomorphic for $i=0,\ldots,n-1$.

Our main main decomposition theorem (Theorem \ref{theo: Babbitt}) is reminiscent of the Jordan-H\"{o}der theorem.
The standard proof of the uniqueness part of the Jordan-H\"{o}der theorem proceeds through the Schreier refinement theorem. The following is the Schreier refinement theorem for $\s$-alegbraic groups.

\begin{theo}[{\cite[Theorem 7.5]{Wibmer:AlmostSimpleAffineDifferenceAlgebraicGroups}}] \label{theo: Schreier refinement}
	Any two subnormal series of a $\s$-algebraic group have equivalent refinements.
\end{theo}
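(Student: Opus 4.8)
The plan is to mimic the classical proof of the Schreier refinement theorem for abstract groups, replacing intersections and products of subgroups by their $\s$-algebraic group analogs and using the isomorphism theorems stated above. Given two subnormal series
$$G=G_0\supseteq G_1\supseteq\cdots\supseteq G_n=1,\qquad G=H_0\supseteq H_1\supseteq\cdots\supseteq H_m=1,$$
I would insert between $G_i$ and $G_{i+1}$ the chain of $\s$-closed subgroups $G_{i,j}:=G_{i+1}\cdot(G_i\cap H_j)$ for $j=0,\ldots,m$, so that $G_{i,0}=G_i$ and $G_{i,m}=G_{i+1}$; symmetrically one inserts $H_{j,i}:=H_{j+1}\cdot(H_j\cap G_i)$ between $H_j$ and $H_{j+1}$. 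The heart of the argument is the Zassenhaus (butterfly) lemma for $\s$-algebraic groups, which gives an isomorphism $G_{i,j}/G_{i,j+1}\simeq H_{j,i}/H_{j,i+1}$, and hence shows the two refined series are equivalent.

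The first step is therefore to establish that these constructions make sense in the category of $\s$-algebraic groups. For this I need: (a) that the intersection $A\cap B$ of two $\s$-closed subgroups is a $\s$-closed subgroup (clear, since it is defined by the $\s$-ideal generated by $\I(A)$ and $\I(B)$, or functorially as the obvious subfunctor); (b) that for $A$ normalizing $B$ (i.e.\ $A(R)$ normalizes $B(R)$ for all $R$), the product $BA$, defined as the image of the multiplication morphism $B\times A\to G$, is a $\s$-closed subgroup of $G$ with $B$ normal in it — here I would use that the image $\f(X)$ of a morphism of $\s$-varieties is a well-defined $\s$-closed $\s$-subvariety (cited above from \cite[Lemma 1.5]{Wibmer:FinitenessPropertiesOfAffineDifferenceAlgebraicGroups}) and that the image of a morphism of $\s$-algebraic groups is a $\s$-closed subgroup; and (c) the second isomorphism theorem $BA/B\simeq A/(A\cap B)$ when $B$ is normal in $BA$. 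Assuming the isomorphism theorems hold in the form cited (first isomorphism theorem, Theorem \ref{theo: isom1}, plus the third, Theorem \ref{theo: isom 3}), the second isomorphism theorem follows by applying Theorem \ref{theo: isom1} to the composite $A\to BA\to BA/B$, whose kernel is $A\cap B$.

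With the Zassenhaus lemma in hand the rest is formal: one checks $G_{i,j}\supseteq G_{i,j+1}$ with $G_{i,j+1}$ normal in $G_{i,j}$ (this is where one applies Zassenhaus to the two normal inclusions $G_{i+1}\unlhd G_i$ and $H_{j+1}\unlhd H_j$), so the refined series is again subnormal and has length $nm$; then the factor isomorphisms $G_{i,j}/G_{i,j+1}\simeq H_{j,i}/H_{j,i+1}$ furnish the required length-preserving bijection between the two sets of factors, i.e.\ the equivalence of the refinements. The main obstacle I anticipate is purely the bookkeeping needed to verify the Zassenhaus lemma in this setting: one must confirm that each of the subgroups appearing in the butterfly diagram is genuinely $\s$-closed and that normality holds at the level of $R$-points for every \ks-algebra $R$, and then identify the relevant kernels so that Theorem \ref{theo: isom1} applies. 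None of these steps is deep, but each uses that images, intersections, and products of $\s$-closed subgroups behave as expected, which is exactly what the cited structural results (existence of quotients, the isomorphism theorems, and Corollary \ref{cor: quotients correspond to ksHopfsubalgebras}) were set up to provide; so I expect the proof to reduce, after this verification, to a citation of the classical combinatorial argument.
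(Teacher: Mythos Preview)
The paper does not give its own proof of this statement; it simply quotes the result from \cite[Theorem 7.5]{Wibmer:AlmostSimpleAffineDifferenceAlgebraicGroups}. Your plan---build the Zassenhaus (butterfly) lemma for $\s$-algebraic groups from the first and second isomorphism theorems, then insert the groups $G_{i,j}=G_{i+1}(G_i\cap H_j)$ and $H_{j,i}=H_{j+1}(H_j\cap G_i)$ and read off the equivalence---is exactly the route taken in that reference, so your proposal is correct and matches the intended argument.

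One small caution worth flagging: in your derivation of the second isomorphism theorem you apply Theorem~\ref{theo: isom1} to $A\to BA\to BA/B$ and implicitly assume this composite is a quotient map. In the $\s$-algebraic setting ``surjective on $R$-points for all $R$'' is not the definition of quotient map (the definition is injectivity of the dual on coordinate rings), so this step does require a short argument; in \cite{Wibmer:AlmostSimpleAffineDifferenceAlgebraicGroups} the second isomorphism theorem and the butterfly lemma are established separately for precisely this reason. Once that is in hand, the rest of your outline goes through verbatim.
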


\section{The difference identity component} \label{sec: the difference identity component}

There are three types of ``identity components'' for a $\s$-algebraic group $G$. The \emph{identity component} $G^o$, the \emph{$\s$-identity component} $G^\sc$ and the \emph{strong identity component} $G^{so}$. The identity component and the strong identity component are discussed in \cite[Section 6]{Wibmer:AlmostSimpleAffineDifferenceAlgebraicGroups}. These are not relevant for the purpose of this article. However, it might be interesting to note that, more or less by definition, a $\s$-algebraic group $G$ is $\s$-\'{e}tale if and only if $G^o=1$.

In this section we discuss the $\s$-identity component. It plays a crucial role in our decomposition theorem (Theorem \ref{theo: Babbitt}). The $\s$-identity component was already used in \cite{BachmayrWibmer:AlgebraicGroupsAsDifferenceGaloisGroupsOfLinearDifferentialEquations} to derive a necessary condition on a $\s$-algebraic group to be a $\s$-Galois group over the difference-differential field $\C(x)$ with derivation $\de=\frac{d}{dx}$ and endomorphism $\s$ given by $\s(f(x))=f(x+1)$. Some difference algebraic results necessary to define the difference identity component also already appeared in \cite{TomasivWibmer:StronglyEtaleDifferenceAlgebras}.

We will give a difference topological interpretation of the difference identity component. Therefore, we begin with some general difference topological definitions and observations.

\subsection{The difference topology} \label{subsec: the difference topology}


Let $X$ be a topological space equipped with a continuous endomorphism $\s\colon X\to X$. In this situation we may call $X$ a \emph{$\s$-topological space}. A morphism of $\s$-topological spaces is a continuous map that commutes with the action of $\s$. A subset $V$ of $X$ is called \emph{$\s$-invariant} if $\s(V)\subseteq V$. We call $V$ \emph{$\s$-closed} if it is closed and $\s$-invariant. The \emph{$\s$-topology} on $X$ is the topology on $X$ whose closed sets are the $\s$-closed sets.  The \emph{$\s$-connected components} of $X$ are the connected components with respect to the $\s$-topology. We call $X$ \emph{$\s$-connected} if $X$ is connected with respect to the $\s$-topology.

%
%


We are mainly interested in the following example: For a $\s$-ring $R$,  $\spec(R)$ is a naturally a $\s$-topological space. The topology is the usual Zariski topology and
$$\sigma\colon \spec(R)\to\spec(R),\ \p\mapsto \s^{-1}(\p)$$
is the continuous endomorphism induced by $\s\colon R\to R$.

%
%

For a subset $B$ of $R$, let us denote with $\VV(B)=\{\p\in\spec(R)|\ B\subseteq \p\}$ the closed subset of $\spec(R)$ defined by $B$. The map $\ida\mapsto \VV(\ida)$ is a bijection between the set of radical $\s$-ideals of $R$ and the set of $\s$-closed subsets of $\spec(R)$.

%
%
%

Our first goal is to express the property that $\spec(R)$ is $\s$-connected via difference algebraic conditions on $R$. Recall that an element $e$ of a ring is called \emph{idempotent} if $e^2=e$. The \emph{trivial} idempotents are $1$ and $0$. The spectrum of a ring $R$ is connected if and only if $R$ has no non-trivial idempotent elements (\cite[\href{https://stacks.math.columbia.edu/tag/00EF}{Tag 00EF}]{stacks-project}). We will prove a difference analog of this result.


An element $f$ of a $\s$-ring $R$ is called \emph{constant} \index{constant element} if $\s(f)=f$. The subring of all constant elements is denoted by $R^\s.$

\begin{prop} \label{prop: sspec(R) sconnected}
	Let $R$ be a $\s$-ring. If $e\in R$ is a non-trivial constant idempotent element, then
	$$\spec(R)=\VV(e)\uplus\VV(1-e)$$ is a decomposition of $\spec(R)$ into disjoint non-empty $\s$-closed subsets. Conversely, if $\spec(R)=X\uplus Y$ is a decomposition of $\spec(R)$ into disjoint non-empty $\s$-closed subsets, then there exists a non-trivial constant idempotent element $e\in R$ such that $X=\VV(e)$ and $Y=\VV(1-e)$.
\end{prop}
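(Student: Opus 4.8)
The plan is to reduce the difference-algebraic statement to the classical fact that $\spec(R)$ is connected iff $R$ has no non-trivial idempotents, by passing to the ring of constants $R^\s$. The forward direction is immediate: if $e\in R^\s$ is a non-trivial idempotent, then $R=Re\times R(1-e)$ gives the usual decomposition $\spec(R)=\VV(1-e)\uplus\VV(e)$ into disjoint non-empty closed subsets, and since $e$ and $1-e$ are constant, the ideals $(e)$ and $(1-e)$ are $\s$-ideals, so $\VV(e)$ and $\VV(1-e)$ are $\s$-invariant, hence $\s$-closed. (Note $\p\in\VV(e)$ means $e\in\p$, and $\s^{-1}(\p)\ni e$ since $\s(e)=e\in\p$.)

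For the converse, suppose $\spec(R)=X\uplus Y$ with $X,Y$ disjoint, non-empty, and $\s$-closed. Since $X$ and $Y$ are in particular closed and disjoint with union all of $\spec(R)$, the classical result produces a unique idempotent $e\in R$ with $X=\VV(e)$ and $Y=\VV(1-e)$; concretely $R\simeq R/I_X\times R/I_Y$ where $I_X,I_Y$ are the radical ideals cutting out $X,Y$, and $e$ is the element corresponding to $(0,1)$ (or $(1,0)$, fixing a convention). The point is to show $e$ is constant. The key observation is that $\s$-invariance of $X$ and $Y$ means the radical ideals $I_X=\VV^{-1}$-preimage... more precisely $I_X$ and $I_Y$ are radical $\s$-ideals by the stated bijection between radical $\s$-ideals and $\s$-closed subsets. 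Hence $\s$ respects the product decomposition $R=R/I_X\times R/I_Y$, i.e. $\s$ preserves each factor, so $\s(e)$ is again an idempotent lying in the same factor as $e$ (it maps to $0$ in $R/I_X$ and to a unit, necessarily $1$, in $R/I_Y$). But the only idempotent of $R$ mapping to $0$ in $R/I_X$ and $1$ in $R/I_Y$ is $e$ itself (idempotents in a product ring are exactly the pairs of idempotents of the factors, and $0,1$ are the only idempotents of a factor that could occur here once we know the factor-decomposition of $e$ has components $0$ and $1$); therefore $\s(e)=e$.

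Let me restructure the converse slightly for cleanliness. Write $e$ for the idempotent with $X=\VV(e)$, $Y=\VV(1-e)$. Then $\s(e)$ is an idempotent of $R$, and $\VV(\s(e))=\s^{-1}(\VV(e))=\s^{-1}(X)\subseteq X$ since $X$ is $\s$-invariant (using $\s(\VV(e))\subseteq\VV(\s(e))$... actually the clean identity is $\VV(\s(e)) = \s^{-1}(\VV(e))$, because $\p\supseteq\s(e)$ iff $\s(e)\in\p$ iff $e\in\s^{-1}(\p)$, wait that needs care since $\s^{-1}(\p)$ as an ideal versus the spectrum map). I would just argue directly with idempotents: in any ring the map from idempotents to closed-open subsets $e\mapsto\VV(1-e)$ (the "support") is a bijection, and it is compatible with ring maps; applying this to $\s\colon R\to R$, the idempotent $\s(e)$ corresponds to the closed-open set $\s^{-1}(\VV(1-e)) = \s^{-1}(Y)\subseteq Y$, and since $\VV(1-\s(e))\uplus\VV(\s(e))=\spec(R)$ with the first contained in $Y$ and (by the complementary computation) the second in $X$, injectivity of the support bijection forces $\VV(1-\s(e))=Y=\VV(1-e)$, hence $\s(e)=e$.

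The main obstacle is getting the bookkeeping with the idempotent–clopen correspondence exactly right, in particular the direction of the spectrum map $\s\colon\spec(R)\to\spec(R),\ \p\mapsto\s^{-1}(\p)$ and making sure that $\s$-invariance of the \emph{closed} set $X$ (meaning $\s(X)\subseteq X$, i.e. $\s^{-1}(\p)\in X$ for all $\p\in X$) translates correctly into a statement about $\s(e)$. Everything else (non-triviality of $e$ corresponding to $X,Y$ both non-empty; $\VV(e)$ and $\VV(1-e)$ being automatically $\s$-closed once $e$ is constant) is routine. One subtlety worth a sentence: the problem only asserts $\s$-\emph{closed} (closed and $\s$-invariant), not $\s$-clopen, but since $X$ and $Y$ partition $\spec(R)$ each is the complement of the other and hence automatically open; so we are genuinely in the classical clopen situation and may invoke it directly.
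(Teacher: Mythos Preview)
Your argument is correct and follows essentially the same strategy as the paper: the forward direction is immediate, and for the converse you produce the classical idempotent $e$ with $X=\VV(e)$, $Y=\VV(1-e)$ and then show $\s(e)=e$ by comparing the clopen sets attached to $e$ and $\s(e)$. The only substantive difference is the final step. The paper, after obtaining $\VV(e)=\VV(\s(e))$, passes to radicals and does explicit algebra: from $\sqrt{(e)}=\sqrt{(\s(e))}$ it writes $e=a\s(e)$, derives $(1-\s(e))e=0$ and symmetrically $(1-e)\s(e)=0$, and subtracts. Your route---invoking the bijection between idempotents and clopen subsets of $\spec(R)$ to conclude $\s(e)=e$ directly from $\VV(1-\s(e))=\VV(1-e)$---is cleaner and avoids that computation.

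Two small points deserve tightening. First, your initial attempt via ``$R\simeq R/I_X\times R/I_Y$'' with $I_X,I_Y$ the \emph{radical} ideals is not right in general: that product is only $R/(I_X\cap I_Y)=R/\sqrt{0}$, so the identification with $R$ requires $R$ reduced. You rightly abandon this for the clopen argument. Second, the inclusion you need, $(\spec\s)^{-1}(Y)\subseteq Y$, is \emph{not} the statement of $\s$-invariance of $Y$ (which says $(\spec\s)(Y)\subseteq Y$); it follows instead from $\s$-invariance of the complement $X$: if $\p\in X$ then $\s^{-1}(\p)\in X$, hence $\p\notin(\spec\s)^{-1}(Y)$. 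You flag this bookkeeping as ``the main obstacle'' but do not quite close it; one sentence as above suffices. Once both inclusions $\VV(1-\s(e))\subseteq Y$ and $\VV(\s(e))\subseteq X$ are in hand, the partition forces equalities and the idempotent--clopen bijection finishes the job.
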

\begin{proof}
	Set $e'=1-e$. Then also $e'$ is constant and idempotent. Because $e$ is constant, the ideal generated by $e$ is a $\s$-ideal and therefore $\VV(e)$ is $\s$-closed. Similarly, $\VV(e')$ is $\s$-closed.
	Since $ee'=0$, every $\p\in\spec(R)$ is contained in $\VV(e)$ or $\VV(e')$. Since $e+e'=1$, no $\p$ can be contained in $\VV(e)$ and $\VV(e)$. Thus $\spec(R)$ is the disjoint union of the $\s$-closed sets $\VV(e)$ and $\VV(e')$.
	
	Suppose $\VV(e)=\emptyset$. Then $1$ must lie in the ideal generated by $e$. So $1=re$ for some $r\in R$. Therefore $e=re^2=re$. Combining the last two equations yields $e=1$; a contradiction. Suppose $\VV(e)=\spec(R)$. Then $e$ must lie in the nilradical of $R$. Because $e$ is idempotent this implies $e=0$; again a contradiction.
	
	\medskip
	
	Now assume that $\spec(R)=X\uplus Y$ is a decomposition of $\spec(R)$ into disjoint non-empty $\s$\=/closed subsets. It is known that any decomposition of $\spec(R)$ into two disjoint non-empty closed subsets arises from a pair $e,e'$ of non-trivial idempotent elements with $ee'=0$ and $e+e'=1$ (\cite[\href{https://stacks.math.columbia.edu/tag/00EE}{Tag 00EE}]{stacks-project}). So $X=\VV(e)$ and $Y=\VV(e')$. It remains to show that $e$ is constant.
	
	Let $\p\in X=\VV(e)$. Since $X$ is $\s$-invariant, $\s^{-1}(\p)\in X$, i.e., $e\in\s^{-1}(\p)$. Thus $\s(e)\in \p$ and $\VV(e)\subseteq\VV(\s(e))$. Similarly, one shows $\VV(e')\subseteq\VV(\s(e'))$. Because $\s(e)\s(e')=0$ and $\s(e)+\s(e')=1$, we have $\spec(R)=\VV(\s(e))\uplus\VV(\s(e'))$. This implies $\VV(e)=\VV(\s(e))$ and $\VV(e')=\VV(\s(e'))$.
	Consequently $\sqrt{(e)}=\sqrt{(\s(e))}$. Because $e$ is idempotent, it follows that $e=a\s(e)$ for some $a\in R$. Since $(1-\s(e))\s(e)=0$, this implies $(1-\s(e))e=0$. Interchanging the roles of $e$ and $\s(e)$ in the last argument, we find $(1-e)\s(e)=0$. Taking the difference of the last two equations yields $\s(e)=e$ as desired.	 
\end{proof}
From Proposition \ref{prop: sspec(R) sconnected} we immediately obtain:

\begin{cor} \label{cor: spec(R) sconnected iff non constant idempotent}
	Let $R$ be a $\s$-ring. Then $\spec(R)$ is $\s$-connected if and only if $R$ contains no non-trivial constant idempotent element. \qed
\end{cor}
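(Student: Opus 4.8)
The plan is to reduce the corollary to Proposition \ref{prop: sspec(R) sconnected} by unwinding the definition of $\s$-connectedness. Recall that a topological space fails to be connected exactly when it can be written as a disjoint union of two non-empty closed subsets (equivalently, of two non-empty open subsets, since in such a decomposition each piece is the complement of the other and hence clopen). Applying this to $\spec(R)$ equipped with the $\s$-topology, whose closed sets are precisely the $\s$-closed subsets of $\spec(R)$, we see that $\spec(R)$ is $\s$-connected if and only if there is no decomposition $\spec(R)=X\uplus Y$ into two disjoint non-empty $\s$-closed subsets.

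First I would verify that the $\s$-closed subsets of $\spec(R)$ really do form the closed sets of a topology: the collection contains $\emptyset$ and $\spec(R)$, it is stable under arbitrary intersections (an intersection of closed $\s$-invariant sets is closed and $\s$-invariant), and it is stable under finite unions (a finite union of closed $\s$-invariant sets is closed and $\s$-invariant). Consequently a clopen subset of $\spec(R)$ in the $\s$-topology is a $\s$-closed set whose complement is again $\s$-closed, so a disconnection of $\spec(R)$ in the $\s$-topology amounts exactly to a partition $\spec(R)=X\uplus Y$ with $X$ and $Y$ both non-empty and $\s$-closed.

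With this translation in hand the corollary is immediate from Proposition \ref{prop: sspec(R) sconnected}. If $R$ contains a non-trivial constant idempotent $e$, then $\spec(R)=\VV(e)\uplus\VV(1-e)$ is such a partition by the first half of the proposition, so $\spec(R)$ is not $\s$-connected. Conversely, if $\spec(R)$ is not $\s$-connected, choose a partition $\spec(R)=X\uplus Y$ into disjoint non-empty $\s$-closed subsets; by the second half of the proposition it has the form $\VV(e)\uplus\VV(1-e)$ for a non-trivial constant idempotent $e$, so $R$ does contain such an element. Taking contrapositives gives both directions of the equivalence.

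Since both implications are direct appeals to Proposition \ref{prop: sspec(R) sconnected}, there is no genuine obstacle in this argument; the only point deserving a moment's care is the elementary topological observation that $\s$-disconnectedness is the same as the existence of a partition into two non-empty $\s$-closed sets (as opposed to $\s$-open sets), which holds because in any two-block partition into closed pieces each block is the complement of the other and is therefore simultaneously open and closed.
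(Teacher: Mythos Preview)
Your proposal is correct and is exactly the argument the paper has in mind: the corollary is stated with a \qed{} immediately after Proposition~\ref{prop: sspec(R) sconnected}, and your write-up simply spells out why both directions follow from that proposition together with the definition of $\s$-connectedness. Nothing more is needed.
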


Recall that idempotent elements $e_1,\ldots,e_n$ of a ring $R$ are called \emph{orthogonal} if $e_ie_j=0$ for $i\neq j$.

\begin{prop} \label{prop: sconnected components of spec(R)}
	Let $R$ be a $\s$-ring and let $e_1,\ldots,e_n\in R$ be constant orthogonal idempotent elements with $e_1+\cdots+e_n=1$ such that no $e_i$ can be written as a sum of two non-trivial constant orthogonal idempotent elements. Then the $\s$-connected components of $\spec(R)$ are $\VV(1-e_1),\ldots,\VV(1-e_n)$. Moreover, $e_iR$ is naturally a $\s$-ring and $\spec(e_iR)$ is isomorphic to $\VV(1-e_i)$ as a $\s$-topological space.  
\end{prop}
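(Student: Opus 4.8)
The plan is to reduce everything to Proposition \ref{prop: sspec(R) sconnected} and a straightforward induction on the number of constant orthogonal idempotents. First I would observe that for a constant idempotent $e$, the ring $eR$ is naturally a $\s$-ring: since $\s(e)=e$, the multiplication-by-$e$ projection $R\to eR$ commutes with $\s$, so $\s$ restricts to an endomorphism of $eR$ (with identity element $e$). The canonical map $R\to eR$ induces a closed immersion $\spec(eR)\hookrightarrow\spec(R)$ with image $\VV(1-e)$, and this identification is $\s$-equivariant because the map of rings commutes with $\s$; hence $\spec(eR)\cong\VV(1-e)$ as $\s$-topological spaces. Applying this to each $e_i$, and using that the $e_i$ are orthogonal with $\sum e_i=1$, we get $R\cong\prod_{i=1}^n e_iR$ as $\s$-rings and $\spec(R)=\biguplus_{i=1}^n\VV(1-e_i)$ as a disjoint union of $\s$-closed subsets (each $\VV(1-e_i)$ is $\s$-closed because $1-e_i$ is constant, as in Proposition \ref{prop: sspec(R) sconnected}).

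It then remains to show that each $\VV(1-e_i)\cong\spec(e_iR)$ is $\s$-connected and that these are genuinely the $\s$-connected components, i.e. maximal $\s$-connected subsets. For $\s$-connectedness: by Corollary \ref{cor: spec(R) sconnected iff non constant idempotent}, $\spec(e_iR)$ is $\s$-connected iff $e_iR$ has no non-trivial constant idempotent; but a constant idempotent $f\in e_iR$ (non-trivial, meaning $f\neq 0, e_i$) would give $e_i = f + (e_i - f)$ a decomposition of $e_i$ into two non-trivial constant orthogonal idempotents — here I need the small check that $f$ and $e_i-f$ are orthogonal (immediate since $f^2=f$ inside $e_iR$) and that both are constant (clear) and non-trivial — contradicting the hypothesis on $e_i$. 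For maximality: if $C$ is a $\s$-connected subset of $\spec(R)$ meeting $\VV(1-e_i)$, then since the $\VV(1-e_j)$ form a partition into $\s$-closed (hence $\s$-open, being complements of finite unions of $\s$-closed sets) subsets, $C$ cannot meet any other $\VV(1-e_j)$ without being disconnected in the $\s$-topology; so $C\subseteq\VV(1-e_i)$. Combined with $\s$-connectedness of $\VV(1-e_i)$ itself, this shows $\VV(1-e_i)$ is exactly the $\s$-connected component of any of its points.

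The only mildly delicate point — and the one I'd expect to be the main obstacle — is bookkeeping the equivalence between "$e_i$ is not a sum of two non-trivial constant orthogonal idempotents" and "$e_iR$ has no non-trivial constant idempotent," i.e. checking that idempotents of $e_iR$ correspond precisely to idempotents $f$ of $R$ with $f=e_if$ (equivalently $f\le e_i$ in the idempotent lattice), and that constancy transfers both ways. This is routine ring theory but needs to be stated cleanly so the induction/direct argument goes through; none of it involves any real computation. An alternative, perhaps cleaner, organization is to induct on $n$: if some $e_iR$ had a non-trivial constant idempotent we could split $e_i$ and increase $n$, so after finitely many steps (the number is bounded since $|R/\operatorname{nil}(R)|$-type finiteness is not available in general — actually one should simply argue by the hypothesis directly rather than by termination of splitting) we reach the stated situation; I would prefer the direct argument above to avoid any termination subtlety.
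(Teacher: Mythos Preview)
Your proposal is correct and follows essentially the same approach as the paper: identify $e_iR\simeq R/(1-e_i)$ as a $\s$-ring so that $\spec(e_iR)\simeq\VV(1-e_i)$, use orthogonality and $\sum e_i=1$ to get a disjoint $\s$-closed decomposition of $\spec(R)$, and show each piece is $\s$-connected by observing that a non-trivial constant idempotent $f\in e_iR$ would give $e_i=f+(e_i-f)$, contradicting the hypothesis (the paper writes this as $e_i=e_ie+e_i(1-e)$, which is the same thing). Your extra paragraph on maximality and the ``mildly delicate point'' are routine and the paper treats them implicitly; the induction alternative you mention is unnecessary and you are right to prefer the direct argument.
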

\begin{proof}
	Note that $e_iR$ is a $\s$-ideal of $R$, rather than a $\s$-subring of $R$. However, $e_iR$ is also $\s$-ring with identity element $e_i$. As $e_iR$ and $R/(1-e_i)$ are isomorphic as $\s$-rings, we see that $\spec(e_iR)$ and $\VV(1-e_i)$ are isomorphic as $\s$-topological spaces.
	
	Assume $e\in e_iR$ is a constant idempotent element. Then $e_i=e_ie+e_i(1-e)$ expresses $e_i$ as a sum of orthogonal constant idempotent elements. By assumption $e_ie=e_i$ or $e_ie=0$. But $e_ie=e$, so $e=e_i$ or $e=0$. It follows from Corollary \ref{cor: spec(R) sconnected iff non constant idempotent} that $\VV(1-e_i)\simeq\spec(e_iR)$ is $\s$-connected.
	
	Because the $e_i$'s are orthogonal, the $\VV(1-e_i)$'s are disjoint and because $e_1+\cdots+e_n=1$, their union equals $\spec(R)$. In summary we see that the $\VV(1-e_i)$'s are the $\s$-connected components of $\spec(R)$.
\end{proof}


To show that $\spec([\s]_kT)$ is $\s$-connected for any $k$-algebra $T$, we will need the following result.

\begin{lemma} \label{lemma: constants of sR}
	Let $T$ be a $k$-algebra. Then $([\s]_k T)^\s=k^\s$.
\end{lemma}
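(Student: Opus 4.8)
The plan is to exploit the explicit description of $[\s]_k T$ as the directed union of the subalgebras $T[i] = T\otimes_k {\hs T}\otimes_k\cdots\otimes_k{\hsi T}$, together with the explicit formula for $\s$ on $[\s]_k T$ given in Section~\ref{subsec: Difference algebra}. Every element $f\in[\s]_k T$ lies in some $T[i]$, and the formula for $\s$ shows that $\s$ maps $T[i]$ into $T[i+1]$ by, roughly speaking, ``shifting'' the tensor factors one slot to the right and inserting the element $1\otimes 1$ in the first (degree-zero) slot. Concretely, $\s(f)$ always has the form $(1\otimes 1)\otimes g$ for some $g\in {\hs T}\otimes_k\cdots\otimes_k{^{\s^{i+1}}\!T}\subseteq T[i+1]$. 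So if $f$ is constant, i.e.\ $\s(f)=f$, then $f$ itself must already have this shape; iterating, $f$ must be ``infinitely shiftable'', which should force $f$ to come from $k$ alone.

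First I would make the $T[i]\hookrightarrow T[i+1]$ inclusion explicit: an element of $T[i]$ is sent to $1\otimes(\text{that element})$ in $T[0]\otimes_k T[i]' = T[i+1]$ after re-indexing the base-change twists, and under this identification the $\s$ of $[\s]_k T$ restricts on each $T[i]$ to the composite of (a) the inclusion $T[i]\hookrightarrow T[i+1]$ followed by... more precisely, unwinding the displayed formula, $\s$ sends the class of $t_0\otimes\cdots\otimes t_i$ (with the $\lambda$'s absorbed) to $1\otimes t_0'\otimes\cdots\otimes t_i'$, where $t_j' \in {^{\s^{j+1}}\!T}$ is $t_j$ with its scalar twisted by $\s$. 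The key structural point to extract is: \emph{the image of $\s$ on $[\s]_k T$ lies in $k\cdot 1 + (\text{ideal generated by }\m_T\text{ in the first tensor slot})$}? That phrasing is awkward; better: pick a $k$-basis. Since $T$ is a $k$-algebra, choose a $k$-basis $\{b_\alpha\}$ of $T$ with $b_{\alpha_0}=1$, so $T = k\cdot 1\oplus W$ where $W$ is the span of the remaining $b_\alpha$. Then $T[i] = k\cdot 1 \oplus (\text{terms involving at least one tensor factor in }W)$, and the ``first slot'' of any element of $\operatorname{im}(\s)$ is $1\in k\cdot 1$, hence contributes no $W$-component there. An element fixed by $\s$ must have trivial $W$-component in its first tensor slot for \emph{every} $i$, i.e.\ writing $f\in T[i]$ in the basis $\{b_{\alpha_0}\otimes\cdots\}$, only basis tensors with $b_{\alpha_0}=1$ in slot $0$ can appear; by $\s$-invariance and induction on the slot index, only basis tensors that are $1\otimes 1\otimes\cdots\otimes 1$ can appear, so $f\in k\cdot 1$. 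Combined with the obvious inclusion $k^\s\subseteq([\s]_k T)^\s$, this gives equality.

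The main obstacle I expect is bookkeeping with the base-change twists ${\hsi T}=T\otimes_k k$: the scalars get dragged along by $\s$, so the statement ``$f$ has trivial $W$-component in slot $0$'' must be formulated carefully with respect to a basis that is compatible across all the twisted factors ${\hsi T}$ simultaneously. A clean way to sidestep this is to argue more functorially: the $k$-algebra map $\varepsilon_i\colon T[i]\to T[i-1]$ contracting the first tensor factor via the counit-like map $T\to k$ (using the basis splitting $T=k\oplus W$, projection onto $k$) satisfies $\varepsilon_{i+1}\circ\s = \s\circ(\text{identity on }T[i])$ appropriately, so that on a constant $f$ one gets $f = \varepsilon(f)\in T[i-1]$, and descending $i$ down to $0$ lands $f$ in $T[0]=T$; then a final direct check on $T$ itself (where $\s$ acts on $T=T\otimes_k k$ only through the scalar, so $\s(t\otimes\lambda)=t\otimes\s(\lambda)$, forcing the $t$-part to be a scalar and $\lambda\in k^\s$) finishes it. I would present the argument in the second, more functorial form, as it minimizes explicit index manipulation, and I would isolate the compatibility $\varepsilon_{i+1}\circ\s=\s\circ\iota$ as the one computational lemma to verify from the displayed formula.
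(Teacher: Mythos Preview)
Your first approach---pick a $k$-basis of $T$ containing $1$, form the induced tensor basis $V_n$ of each $T[n]$, and exploit that $\s$ shifts the tensor slots---is exactly what the paper does. The paper's write-up is slightly cleaner than your ``induction on the slot index'': it takes $n$ minimal with $a\in T[n]$, notes that if $a\notin k$ then some basis vector $v\in V_n\setminus V_{n-1}$ occurs with nonzero coefficient, and observes that $\s$ carries $V_n\setminus V_{n-1}$ into $V_{n+1}\setminus V_n$, so $a=\s(a)$ would force $a\notin T[n]$, a contradiction. No separate base case on $T[0]$ is needed.

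Your preferred second, ``more functorial'' packaging has genuine wrinkles. Contracting the \emph{first} tensor factor of $T[i]=T\otimes_k{\hs T}\otimes_k\cdots\otimes_k{\hsi T}$ via a $k$-linear splitting $T\to k$ lands you in ${\hs T}\otimes_k\cdots\otimes_k{\hsi T}$, not in $T[i-1]$; so $\varepsilon_i$ as you describe it does not have the stated codomain, and the compatibility $\varepsilon_{i+1}\circ\s=\s\circ\iota$ does not type-check as written. (Also, the splitting $T\to k$ is only $k$-linear, not a $k$-algebra map, though that is harmless here.) Finally, your closing remark that on $T[0]=T$ the endomorphism $\s$ ``acts only through the scalar'' via $\s(t\otimes\lambda)=t\otimes\s(\lambda)$ is wrong: $\s$ sends $T[0]$ into $T[1]$, not into itself, and your formula would make every element of $T$ constant. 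These are all fixable---unwinding them simply returns you to the explicit basis argument---but the functorial repackaging does not actually avoid the index bookkeeping and introduces errors; stick with your first approach.
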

\begin{proof}
	Let $(v_i)_{i\in I}$ be a $k$-basis of $T\subseteq [\s]_k T$ that contains $1$. For $n\geq0$ and ${\bf i}=(i_0,\ldots,i_n)\in I^{n+1}$ define $v_{\bf{i}}=v_{i_0}\s(v_{i_1})\cdots\s^n(v_{i_n})$. By construction of $T[n]$ (Section \ref{subsec: Difference algebra}), $V_n:=(v_{\bf i})_{{\bf i}\in I^{n+1}}$ is a $k$\=/basis of $T[n]$. Because $(v_i)_{i\in I}$ contains $1$, we have $V_n\subseteq V_{n+1}$ and $\s(V_n)\subseteq V_{n+1}$.
	
	Let	$a$ be a constant element of $[\s]_k T$ and suppose $a$ does not lie in $k$. Let $n\geq 0$ be minimal such that $a\in T[n]$. Then we can write
	$a=\sum_{v\in V_n} \lambda_v v$
	with $\lambda_v\in k$, where, for some $v\in V_n\smallsetminus V_{n-1}$, the coefficient $\lambda_v$ is non-zero. As $a$ is constant
	\begin{equation} \label{eqn: sum a}
	a=\sum_{v\in V_n}\sigma(\lambda_v)\s(v)	
	\end{equation}
	If $v\in V_n\smallsetminus V_{n-1}$, then $\s(v)\in V_{n+1}\smallsetminus V_n$.
	Therefore equation (\ref{eqn: sum a}) shows that $a\notin T[n]$, a contradiction. 
\end{proof}

The following corollary shows that $\s$-connectedness is a concept that truly belongs to the $\s$-world. The connectedness of $\spec(T)$ and the $\s$-connectedness of $\spec([\s]_k T)$ are unrelated.
\begin{cor} \label{cor: specsR sconnected}
	Let $T$ be a $k$-algebra. Then $\spec([\s]_k T)$ is $\s$-connected.
\end{cor}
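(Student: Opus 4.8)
The plan is to combine Corollary \ref{cor: spec(R) sconnected iff non constant idempotent} with Lemma \ref{lemma: constants of sR}. By Corollary \ref{cor: spec(R) sconnected iff non constant idempotent}, the space $\spec([\s]_k T)$ is $\s$-connected precisely when the $\s$-ring $[\s]_k T$ contains no non-trivial constant idempotent element. So it suffices to show that every constant idempotent of $[\s]_k T$ is trivial.

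First I would take a constant idempotent $e \in [\s]_k T$. By Lemma \ref{lemma: constants of sR}, the constants of $[\s]_k T$ are exactly $k^\s \subseteq k$, so $e$ lies in $k$. But $k$ is a field, so its only idempotents are $0$ and $1$; hence $e$ is trivial. This is the whole argument, and there is essentially no obstacle — the real work was already done in Lemma \ref{lemma: constants of sR}, whose point is precisely that passing from $T$ to $[\s]_k T$ collapses all constants down to $k^\s$, regardless of how disconnected $\spec(T)$ itself may be (for instance $T$ could be a product of many copies of $k$).

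Here is the proof in full.

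\begin{proof}
	Let $e\in [\s]_k T$ be a constant idempotent element. By Lemma \ref{lemma: constants of sR} we have $e\in ([\s]_k T)^\s=k^\s\subseteq k$. Since $k$ is a field, its only idempotent elements are $0$ and $1$, so $e$ is trivial. Thus $[\s]_k T$ contains no non-trivial constant idempotent element, and by Corollary \ref{cor: spec(R) sconnected iff non constant idempotent} the $\s$-topological space $\spec([\s]_k T)$ is $\s$-connected.
\end{proof}
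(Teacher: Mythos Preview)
Your proof is correct and follows essentially the same approach as the paper: both invoke Corollary \ref{cor: spec(R) sconnected iff non constant idempotent} to reduce to showing there are no non-trivial constant idempotents, then use Lemma \ref{lemma: constants of sR} to conclude that any constant lies in $k^\s\subseteq k$ and hence must be $0$ or $1$.
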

\begin{proof}
	By Corollary \ref{cor: spec(R) sconnected iff non constant idempotent} it suffices to show that $[\s]_k T$ has no non-trivial constant idempotent element. But by Proposition \ref{lemma: constants of sR} every constant idempotent of $[\s]_k T$ belongs to $k$ and must therefore be trivial.
\end{proof}

\subsection{Strongly \'{e}tale difference algebras and the strong core}

To motivate the definitions in this subsection, let us recall a possible path to the definition of the connected component $\G^o$ of an algebraic group $\G$. See e.g., \cite[Chapter 6]{Waterhouse:IntrotoAffineGroupSchemes}. Instead of defining $\G^o$ directly, one first constructs the coordinate ring of $\G/\G^o$. Let $\pi_0(k[\G])$ be the union of all \'{e}tale $k$-subalgebras of $k[\G]$. One shows that $\pi_0(k[\G])$ is an \'{e}tale $k$-algebra and a Hopf subalgebra of $k[\G]$. One can then define $\G^o$ as the kernel of the morphism $\G\to \pi_0(\G)$ of algebraic groups corresponding to the inclusion $\pi_0(k[\G])\subseteq k[\G]$ of Hopf algebras.

We will follow here a similar path. The first step is to clarify, what is the appropriate difference analog of an \'{e}tale $k$-algebra in our context. This question is addressed in the following definitions.

\begin{defi}
	A \ks-algebra $R$ is \emph{$\s$-separable} (over $k$) if $\s\colon R\otimes_k k'\to R\otimes_k k'$ is injective for every $\s$-field extension $k'$ of $k$.
\end{defi} 
See \cite[Prop. 1.2]{TomasivWibmer:StronglyEtaleDifferenceAlgebras} for other equivalent characterizations of $\s$-separable \ks-algebras. E.g., a \ks-algebra $R$ is $\s$-separable if and only if $\overline{\s}\colon {\hs R}\to R,\ f\otimes \lambda\mapsto\s(f)\lambda$ is injective. Here ${\hs R}=R\otimes_k k$ is the base change of $R$ via $\s\colon k\to k$.

 Recall (\cite[Chapter V, \S 6]{Bourbaki:Algebra2}) that a $k$-algebra $T$ is \emph{\'{e}tale} if $T\otimes_k\overline{k}$ is isomorphic (as a $\overline{k}$\=/algebra) to a finite direct product of copies of the algebraic closure $\overline{k}$ of $k$. (In particular, $R$ is finite dimensional as a $k$-vector space.)
Following \cite[Def. 1.7]{TomasivWibmer:StronglyEtaleDifferenceAlgebras} we make the following definition.

\begin{defi} \label{defi: strongly setale}
	A $k$-$\s$-algebra is \emph{\ssetale{}} if it is $\s$-separable over $k$ and \'{e}tale as a $k$-algebra. A $\s$-algebraic group $G$ is \emph{\ssetale{}} if $k\{G\}$ is \ssetale{}.
\end{defi}

Let us see some examples of \ssetale{} $\s$-algebraic groups.

\begin{ex} \label{ex: ssetale if and only if}
	Let $0\leq \alpha <n$ and $m\geq 1$ be integers and let $G$ be the $\s$-closed subgroup of the multiplicative group $\Gm$ given by
	$$G(R)=\{g\in R^\times|\ g^n=1,\ \s^m(g)=g^\alpha\}$$ 
	for any \ks-algebra $R$. Then $k\{G\}=k[x,\s(x),\ldots,\s^{m-1}(x)]$, where $x$ denotes the image of the coordinate function on $\Gm$.
	Let us assume that the characteristic of $k$ is zero or does not divide $n$. Then the polynomial $y^n-1$ is separable over $k$ and it follows that $k\{G\}$ is an \'{e}tale $k$-algebra.
	
	We claim that $G$ is \ssetale{} if and only if $\alpha$ and $n$ are relatively prime. Indeed, if there are $a,b\in\nn$ with $1\leq a<n$ and $\alpha a=bn$, then $$\s(\s^{m-1}(x)^a-1)=x^{\alpha a}-1=x^{b n}-1=0$$ and so $\s$ is not injective on $k\{G\}$ and therefore $G$ is not \ssetale{}. On the other hand, if $1=a\alpha+bn$ for $a,b\in\mathbb{Z}$, then
	$$\s(\s^{m-1}(x)^a)=\s^m(x)^a=x^{a\alpha}=x^{a\alpha}x^{bn}=x.$$ 
	This shows that ${\hs(k\{G\})}\to k\{G\}$ is surjective. Because ${\hs(k\{G\})}\to k\{G\}$ is a morphism of finite dimensional $k$-algebras, it must then also be injective. So $k\{G\}$ is $\s$-separable and therefore strongly $\s$-\'{e}tale.
\end{ex}

\begin{ex} \label{ex: split finti ssetale if and only if auto} 
	Let $\GG$ be a finite group with an endomorphism $\s\colon \GG\to \GG$ and let $G$ be the $\s$\=/algebraic group associated to these data as in Example \ref{ex: finite constant}. We claim that $G$ is \ssetale{} if and only if $\s\colon \GG\to \GG$ is an automorphism.
	Clearly $k\{G\}=k^\GG$ is \'{e}tale, so the question is about the $\s$-separability of $k\{G\}$.
	
	If $\s\colon\GG\to\GG$ is not an automorphism, there exist a $g\in\GG$ that does not lie in the image of $\s\colon\GG\to\GG$. Define $h\colon \GG\to k$ by $$h(g')=\begin{cases}
	1 \text{ if } g'=g, \\
	0 \text{ otherwise.}
\end{cases}$$
	Then $\s(h)(g')=\s(h(\s(g')))=\s(0)=0$ for any $g'\in\GG$. Thus $\s(h)=0$ and $\s\colon k\{G\}\to k\{G\}$ is not injective. So $G$ is not \ssetale{}.
	
	Conversely, if $\s\colon \GG\to \GG$ is an automorphism, then ${\hs(k\{G\})}\to k\{G\}$ is an isomorphism. Therefore $k\{G\}$ is $\s$-separable and so \ssetale{}.
\end{ex}


We proceed on our path to define the $\s$-identity component.

\begin{defi}[{\cite[Def. 1.17]{TomasivWibmer:StronglyEtaleDifferenceAlgebras}}]
	Let $R$ be a \ks-algebra. The union $\pis(R)=\pis(R|k)$ of all \ssetale{} \ks-subalgebras of $R$ is called the \emph{strong core} of $R$. 
\end{defi}	

The strong core $\pis(R)$ of $R$ is a $\s$-separable \ks-subalgebra of $R$ (\cite[Rem. 1.18]{TomasivWibmer:StronglyEtaleDifferenceAlgebras}). It is however an open problem if $\pis(R)$ is \ssetale{} (equivalently, finite dimensional as a $k$-vector space) if $R$ is finitely $\s$-generated (\cite[Conjecture 1.19]{TomasivWibmer:StronglyEtaleDifferenceAlgebras}). However, if $R$ is a finitely $\s$-generated \ks-Hopf algebra, then $\pis(R)$ is \ssetale{} (\cite[Theorem 3.2]{TomasivWibmer:StronglyEtaleDifferenceAlgebras}).

The strong core has good functorial properties:

\begin{lemma}[{\cite[Lemma 1.25]{TomasivWibmer:StronglyEtaleDifferenceAlgebras}}] \label{lemma: pis and tensor}
	Let $R$ and $S$ be \ks-algebras.  Then $$\pis(R\otimes_k S)=\pis(R)\otimes_k\pis(S).$$
\end{lemma}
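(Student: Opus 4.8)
The inclusion $\pis(R)\otimes_k\pis(S)\subseteq\pis(R\otimes_k S)$ is the easy direction and I would dispatch it first. The tensor product of two strongly $\sigma$-\'etale \ks-algebras is again strongly $\sigma$-\'etale: \'etaleness is preserved under tensor products of $k$-algebras (a finite product of copies of $\overline{k}$ tensored with another such is again one, over $\overline k$), and $\sigma$-separability is preserved because $\overline{\sigma}\colon{\hs(R\otimes_k S)}\to R\otimes_k S$ factors as ${\hs R}\otimes_k{\hs S}\to R\otimes_k S$, the tensor product of the two injective maps $\overline{\sigma}\colon{\hs R}\to R$ and $\overline{\sigma}\colon{\hs S}\to S$; over the field $k$ a tensor product of injective linear maps is injective. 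Hence if $A\subseteq R$ and $B\subseteq S$ are strongly $\sigma$-\'etale \ks-subalgebras, then $A\otimes_k B\subseteq R\otimes_k S$ is a strongly $\sigma$-\'etale \ks-subalgebra, so $A\otimes_k B\subseteq\pis(R\otimes_k S)$; taking the union over all such $A,B$ gives $\pis(R)\otimes_k\pis(S)\subseteq\pis(R\otimes_k S)$.

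For the reverse inclusion, the idea is to show that any strongly $\sigma$-\'etale \ks-subalgebra $C$ of $R\otimes_k S$ already lies inside $\pis(R)\otimes_k\pis(S)$. I would first reduce to the \emph{split} case: since $\pis$ commutes with base change to a $\sigma$-field extension (this is part of its good functorial properties, and in any case base-changing to $\overline{k}$ with any extension of $\sigma$ is harmless for detecting $\sigma$-separability and \'etaleness because these properties descend), and since $C$ is finite-dimensional over $k$, I may assume $k$ is (separably, or even algebraically) closed, so that every strongly $\sigma$-\'etale \ks-algebra is, as a $k$-algebra, a finite product of copies of $k$, i.e.\ the ring of $k$-valued functions on a finite set $\mathbb{X}$ carrying an endomorphism $\sigma\colon\mathbb X\to\mathbb X$, and $\sigma$-separability forces that endomorphism to be \emph{bijective} (Example~\ref{ex: split finti ssetale if and only if auto}). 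So $C$ is a product of copies of $k$ indexed by a finite set on which $\sigma$ acts as a permutation.

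The heart of the matter is then: an idempotent $e$ of $R\otimes_k S$ that is a "$\sigma$-periodic" idempotent — meaning $\sigma^n(e)=e$ for some $n\geq 1$, equivalently $e$ lies in a strongly $\sigma$-\'etale \ks-subalgebra — must lie in $\pis(R)\otimes_k\pis(S)$. Here I would invoke the structure of idempotents in a tensor product: an idempotent of $R\otimes_k S$ corresponds to an open-closed subset of $\spec(R)\times_k\spec(S)$, and (using that $k$ is algebraically closed, or more carefully a limit argument over finite-dimensional subalgebras) one shows the relevant finite Boolean algebra of "$\sigma$-periodic" idempotents of $R\otimes_k S$ is generated by $\sigma$-periodic idempotents of $R$ and of $S$ separately; each such idempotent of $R$ generates a finite $\sigma$-stable product of copies of $k$ inside $R$ on which $\sigma$ permutes the factors, hence sits in $\pis(R)$, and similarly for $S$. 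Since $C$ is generated as a $k$-algebra by its (finitely many, $\sigma$-periodic) idempotents, this gives $C\subseteq\pis(R)\otimes_k\pis(S)$, completing the reverse inclusion.

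\textbf{Main obstacle.} The delicate point is the structural claim about $\sigma$-periodic idempotents of $R\otimes_k S$ being built from those of $R$ and $S$: the naive statement "every idempotent of $R\otimes_k S$ is a sum of products $e_i\otimes f_i$ of idempotents" is false for general $k$-algebras, so one genuinely needs the reduction to $k$ algebraically closed (or at least separably closed) together with the finiteness coming from $C$ being finite-dimensional, and one must check that the $\sigma$-action does not spoil the descent. I expect the proof in the source exploits that $\pi_0^\sigma(R)$ for the split situation is the fixed algebra detecting the orbit structure of $\sigma$ on connected components, reducing everything to the elementary fact that for finite $\sigma$-sets $\mathbb X,\mathbb Y$ the connected $\sigma$-components of $\mathbb X\times\mathbb Y$ (i.e.\ the $\sigma$-orbits) are determined by those of $\mathbb X$ and $\mathbb Y$ — but making this interface with the scheme-theoretic $R,S$ rigorous, rather than only their cores, is where the care is needed.
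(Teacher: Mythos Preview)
The paper does not prove this lemma; it is quoted directly from \cite[Lemma~1.25]{TomasivWibmer:StronglyEtaleDifferenceAlgebras} without argument, so there is no in-paper proof against which to compare.

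On the merits of your proposal: the inclusion $\pis(R)\otimes_k\pis(S)\subseteq\pis(R\otimes_k S)$ is handled correctly. Tensor products of \ssetale{} \ks-algebras are \ssetale{} for exactly the reasons you give, and the union argument is fine. Your reduction to $k$ separably closed via Lemma~\ref{lemma: pis and base change} is also legitimate and not circular (in the source that lemma is 1.24, proved before 1.25), and the descent back to $k$ is a faithful-flatness argument you should spell out but which presents no difficulty.

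The reverse inclusion, however, is left genuinely open in your sketch, and you say so yourself. Over an algebraically closed $k$, you correctly observe that a \ssetale{} subalgebra $C\subseteq R\otimes_k S$ is spanned by $\s$-periodic idempotents, and that the problem reduces to showing any $\s$-periodic idempotent $e\in R\otimes_k S$ lies in $\pis(R)\otimes_k\pis(S)$. But your justification --- that the Boolean algebra of $\s$-periodic idempotents of $R\otimes_k S$ is generated by those of $R$ and of $S$ --- is simply a restatement of the goal (since over algebraically closed $k$ one checks that $\pis(R)$ is exactly the $k$-algebra generated by the $\s$-periodic idempotents of $R$). Your heuristic about $\s$-orbits on a product $\mathbb X\times\mathbb Y$ of finite $\s$-sets does not furnish a proof either: $R$ and $S$ are arbitrary \ks-algebras, so even after passing to $\pi_0(R),\pi_0(S)$ there are no finite $\mathbb X,\mathbb Y$ available, and the finite \'etale subalgebras $A\subseteq R$, $B\subseteq S$ with $e\in A\otimes_k B$ need not be $\s$-stable, so one cannot immediately speak of $\s$ acting on their spectra. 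Bridging this gap --- producing $\s$-stable \'etale subalgebras of $R$ and $S$ that are $\s$-separable and whose tensor product still contains $e$ --- is the substance of the proof, and it is not carried out here.
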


\begin{lemma}[{\cite[Lemma 1.24]{TomasivWibmer:StronglyEtaleDifferenceAlgebras}}] \label{lemma: pis and base change}
	Let $R$ be a \ks-algebra and let $k'$ be a $\s$-field extension of $k$.
	Then $$\pis(R\otimes_k k'|k')=\pis(R|k)\otimes_k k'.$$
\end{lemma}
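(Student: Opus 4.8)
The plan is to prove the two inclusions separately. The inclusion $\pis(R|k)\otimes_k k'\subseteq\pis(R\otimes_k k'|k')$ is the routine one, while the reverse inclusion is the crux. For the routine one, first note that the defining family of $\pis(R|k)$ is directed: if $A_1,A_2$ are \ssetale{} \ks-subalgebras of $R$, then so is the compositum $A_1A_2$, because a compositum of \'{e}tale subalgebras is \'{e}tale and $\overline{\s}$ is an isomorphism on $A_1A_2$ once it is on each $A_i$. Hence $\pis(R|k)=\bigcup_A A$ is a directed union, and since $-\otimes_k k'$ commutes with directed unions we get $\pis(R|k)\otimes_k k'=\bigcup_A(A\otimes_k k')$. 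It therefore suffices to check that each $A\otimes_k k'$ is \ssetale{} over $k'$: \'{e}taleness is preserved by base change, and $\s$-separability is preserved because for any $\s$-field extension $k''$ of $k'$ one has the canonical identification $(A\otimes_k k')\otimes_{k'}k''\cong A\otimes_k k''$, with $k''$ in particular a $\s$-field extension of $k$, so $\s$ is injective on $A\otimes_k k''$ by the $\s$-separability of $A$ over $k$.

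For the reverse inclusion I would first reduce to finite data. Let $B\subseteq R\otimes_k k'$ be a \ssetale{} $k'$-$\s$-subalgebra; being \'{e}tale, it is finite dimensional over $k'$, so a $k'$-basis of $B$ and the finitely many scalars of $k'$ occurring in it are defined over a finitely $\s$-generated \ks-subalgebra $R_0$ of $R$ and a finitely $\s$-generated $\s$-subfield $k_0$ of $k'$. By faithfully flat descent of \'{e}taleness and of the isomorphism property of $\overline{\s}$ along $k_0\to k'$, there is a \ssetale{} $k_0$-$\s$-subalgebra $B_0\subseteq R_0\otimes_k k_0$ with $B_0\otimes_{k_0}k'=B$. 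Since $\pis(R_0|k)\subseteq\pis(R|k)$, this reduces the statement to the case where both $R$ and $k'$ are finitely $\s$-generated over $k$. I would then further reduce to the case that $R$ is \'{e}tale: any \ssetale{} subalgebra lies in the maximal \'{e}tale subalgebra $\pi_0(R)$, which is a $\s$-subalgebra, so $\pis(R|k)=\pis(\pi_0(R)|k)$, and $\pi_0$ commutes with field base change, $\pi_0(R\otimes_k k')=\pi_0(R)\otimes_k k'$.

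The remaining content is genuine descent of the canonical strong core along $k\to k'$, which I would treat by splitting $k'/k$ into a separable algebraic part, a purely inseparable part, and a transcendental part. The separable algebraic part is clean: for a finite Galois subextension one has $k'\otimes_k k'\cong\prod_{\operatorname{Gal}(k'/k)}k'$, and each $1\otimes g$ is a $g$-semilinear automorphism of $R\otimes_k k'$ that permutes \ssetale{} $k'$-$\s$-subalgebras; hence it fixes their union $D:=\pis(R\otimes_k k'|k')$, so $D$ carries the Galois descent datum and $D=(D\cap R)\otimes_k k'$ with $D\cap R\subseteq\pis(R|k)$. This yields the inclusion over the separable algebraic part.

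The main obstacle is the purely inseparable and transcendental parts, where the Galois-descent mechanism is unavailable. Here one must argue directly that no new \ssetale{} $k'$-$\s$-subalgebra is created by the base change, i.e. that over the separable closure the underlying finite $\operatorname{Gal}$-set equipped with its bijective $\s$-map — the combinatorial datum corresponding to a \ssetale{} algebra via the equivalence of Theorem~\ref{theo: equivalence} — is unchanged by enlarging the base field in these two ways. The subtlety, which is precisely the phenomenon that forces the endomorphism $\s\colon\operatorname{Gal}\to\operatorname{Gal}$ in that equivalence, is that the descent data interact nontrivially with the difference structure (the Galois action need not commute with $\s$); so the core argument is not a formal consequence of the \'{e}tale case but requires tracking the compatibility of $\overline{\s}$ with the base change throughout.
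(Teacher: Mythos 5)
First, a remark on the comparison you were asked for: the paper does not prove this statement at all --- Lemma~\ref{lemma: pis and base change} is quoted verbatim from \cite[Lemma 1.24]{TomasivWibmer:StronglyEtaleDifferenceAlgebras}, so there is no in-paper argument to measure yours against. Judged on its own, your treatment of the inclusion $\pis(R|k)\otimes_k k'\subseteq\pis(R\otimes_k k'|k')$ is essentially complete (directedness of the family of \ssetale{} subalgebras, preservation of \'{e}taleness under base change, and the identification $(A\otimes_k k')\otimes_{k'}k''\cong A\otimes_k k''$ for the $\s$-separability), and your structural analysis of the reverse inclusion is sound: after reducing to the ind-\'{e}tale case via $\pi_0$, the only field-theoretic input that can create new \'{e}tale subalgebras is the separable algebraic closure $\ell$ of $k$ in $k'$, since \'{e}tale $k'$-subalgebras of $T\otimes_k k'$ correspond to quotient $\gal(k'_s/k')$-sets of $\Hom(T,k_s)$ and this action factors through $\gal(k_s/\ell)$.

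The problem is that the crux of the reverse inclusion is not actually proved. You explicitly defer the purely inseparable and transcendental parts with ``one must argue directly that no new \ssetale{} $k'$-$\s$-subalgebra is created,'' which is precisely the statement to be established, not an argument for it; the linear-disjointness/surjectivity-of-Galois-groups argument sketched above, together with the descent of $\s$-stability and of the injectivity of $\overline{\s}$ along the faithfully flat map $k\to k'$, is missing. The Galois-descent step also has a concrete gap: the semilinear automorphism $1\otimes g$ does \emph{not} commute with $\s=\s\otimes\s$ on $R\otimes_k k'$; rather $\s\circ(1\otimes g)=(1\otimes\s(g))\circ\s$ (this is exactly the twist of Lemma~\ref{lemma: action of s on Gal}). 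Consequently $(1\otimes g)(B)$ need not be a $\s$-subalgebra for $B$ a \ssetale{} $k'$-$\s$-subalgebra, so the assertion that each $1\otimes g$ ``permutes \ssetale{} $k'$-$\s$-subalgebras'' and hence fixes their union $D$ is unjustified as stated; one has to work with the whole (finite) forward $\s$-orbit of $g$ and show the resulting compositum is still \ssetale{}. Finally, $\ell/k$ need not be normal and its Galois closure need not embed into $k'$, so the reduction to finite Galois subextensions is also incomplete. In short: the easy inclusion and the overall strategy are fine, but the hard inclusion is left as a description of what would need to be shown rather than a proof.
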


The following proposition allows us to define the $\s$-identity component and the group of $\s$\=/connected components of a $\s$-algebraic group. It is a difference analog of \cite[Theorem~6.7]{Waterhouse:IntrotoAffineGroupSchemes}.

\begin{prop} \label{prop: exists Gcore}
	Let $G$ be a $\s$-algebraic group. Among the morphisms from $G$ to \ssetale{} $\s$-algebraic groups there exists a universal one.
	
	In more detail: There exists a \ssetale{} $\s$-algebraic group $\pis(G)$, together with a morphism $G\to\pis(G)$ of $\s$-algebraic groups such that for every morphism $G\to H$ of $\s$-algebraic groups with $H$ \ssetale{}, there exists a unique morphism $\pis(G)\to H$ making
	$$
	\xymatrix{
		G \ar[rr] \ar[rd] & & \pis(G) \ar@{..>}[ld] \\
		& H &
	}
	$$
	commutative.
\end{prop}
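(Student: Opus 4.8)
The plan is to mimic the construction of $\pi_0$ for algebraic groups, using the strong core $\pis(k\{G\})$ in place of the étale core of a Hopf algebra. The key input is that $k\{G\}$ is a finitely $\s$-generated \ks-Hopf algebra, so by \cite[Theorem 3.2]{TomasivWibmer:StronglyEtaleDifferenceAlgebras} the strong core $\pis(k\{G\})$ is \ssetale{}, in particular a finitely $\s$-generated \ks-algebra. First I would check that $\pis(k\{G\})$ is a \ks-Hopf subalgebra of $k\{G\}$. The counit $\varepsilon\colon k\{G\}\to k$ restricts to $\pis(k\{G\})$, and the antipode $S\colon k\{G\}\to k\{G\}$ maps \ssetale{} \ks-subalgebras to \ssetale{} \ks-subalgebras (since $S$ is an automorphism of the underlying $k$-algebra commuting with $\s$), hence $S(\pis(k\{G\}))\subseteq\pis(k\{G\})$. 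For comultiplication $\Delta\colon k\{G\}\to k\{G\}\otimes_k k\{G\}$, the crucial point is Lemma \ref{lemma: pis and tensor}: if $A\subseteq k\{G\}$ is \ssetale{}, then $A\otimes_k A$ is an \ssetale{} \ks-subalgebra of $k\{G\}\otimes_k k\{G\}$, hence $A\otimes_k A\subseteq\pis(k\{G\}\otimes_k k\{G\})=\pis(k\{G\})\otimes_k\pis(k\{G\})$; taking the union over all such $A$ shows $\Delta(\pis(k\{G\}))\subseteq\pis(k\{G\})\otimes_k\pis(k\{G\})$. Thus $\pis(k\{G\})$ is a finitely $\s$-generated \ks-Hopf subalgebra of $k\{G\}$.

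Next I would define $\pis(G)$ to be the $\s$-algebraic group with $k\{\pis(G)\}=\pis(k\{G\})$, which is \ssetale{} by Definition \ref{defi: strongly setale}, and let $G\to\pis(G)$ be the morphism of $\s$-algebraic groups dual to the inclusion $\pis(k\{G\})\hookrightarrow k\{G\}$. It remains to verify the universal property. Given a morphism $\f\colon G\to H$ of $\s$-algebraic groups with $H$ \ssetale{}, the dual map $\f^*\colon k\{H\}\to k\{G\}$ is a morphism of \ks-Hopf algebras whose source is an \ssetale{} \ks-algebra. Then $\f^*(k\{H\})$ is a \ks-subalgebra of $k\{G\}$ which is a quotient of the \ssetale{} algebra $k\{H\}$; I would argue that a quotient \ks-algebra of an \ssetale{} \ks-algebra is again \ssetale{} (it is étale as a quotient of an étale $k$-algebra, and $\s$-separability is inherited by quotients since $\s$-separability of $A$ is equivalent to injectivity of $\overline\s\colon{\hs A}\to A$, which is preserved under surjections — alternatively one cites \cite[Prop. 1.2]{TomasivWibmer:StronglyEtaleDifferenceAlgebras}). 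Hence $\f^*(k\{H\})\subseteq\pis(k\{G\})$, so $\f^*$ factors uniquely as $k\{H\}\to\pis(k\{G\})\hookrightarrow k\{G\}$, and the first map is automatically a morphism of \ks-Hopf algebras (being the corestriction of a Hopf algebra morphism to a Hopf subalgebra, with injective codomain inclusion). Dualizing gives the required unique morphism $\pis(G)\to H$.

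The main obstacle, and the only point requiring genuine care, is confirming that $\pis(k\{G\})$ is closed under the comultiplication, i.e.\ that it is an honest \emph{Hopf} subalgebra rather than merely a \ks-subalgebra; this is exactly where Lemma \ref{lemma: pis and tensor} is indispensable, since without it one only knows $\Delta(A)\subseteq k\{G\}\otimes_k k\{G\}$ for an \ssetale{} $A$ and cannot conclude the image lands in $\pis(k\{G\})\otimes_k\pis(k\{G\})$. A secondary technical point is the stability of the \ssetale{} property under passing to quotient \ks-algebras, used in the universal property; this should be straightforward from the characterizations in \cite[Prop. 1.2]{TomasivWibmer:StronglyEtaleDifferenceAlgebras} but is worth stating explicitly. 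Everything else — restriction of $\varepsilon$, behavior of $S$, finite $\s$-generation, and the Yoneda/duality bookkeeping via Remark \ref{rem: equivalence svarieties salgebras} — is routine.
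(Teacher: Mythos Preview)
Your overall approach matches the paper's: construct $\pis(G)$ via the strong core $\pis(k\{G\})$, show it is a \ks-Hopf subalgebra using Lemma~\ref{lemma: pis and tensor}, and verify the universal property via the fact that quotients of \ssetale{} \ks-algebras are \ssetale{}. One minor difference is that you invoke \cite[Theorem~3.2]{TomasivWibmer:StronglyEtaleDifferenceAlgebras} directly for the \ssetale{}ness of $\pis(k\{G\})$, whereas the paper instead first establishes that $\pis(k\{G\})$ is a \ks-Hopf subalgebra, then appeals to Theorem~\ref{theo: ksHopfsubalgebra finitely sgenerated} to get finite $\s$-generation, and deduces finite-dimensionality from that.

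There is, however, a genuine slip in your comultiplication argument. You write that for \ssetale{} $A\subseteq k\{G\}$ one has $A\otimes_k A\subseteq\pis(k\{G\}\otimes_k k\{G\})$ and that ``taking the union over all such $A$'' yields $\Delta(\pis(k\{G\}))\subseteq\pis(k\{G\})\otimes_k\pis(k\{G\})$. But nothing here controls $\Delta(A)$: there is no reason $\Delta(A)\subseteq A\otimes_k A$, since $A$ is only a \ks-subalgebra, not a subcoalgebra. The correct argument is the one you already use elsewhere: $\Delta$ is a morphism of \ks-algebras, so $\Delta(A)$ (or the \ks-subalgebra it generates) is a quotient of the \ssetale{} algebra $A$, hence itself \ssetale{}, and therefore contained in $\pis(k\{G\}\otimes_k k\{G\})=\pis(k\{G\})\otimes_k\pis(k\{G\})$ by Lemma~\ref{lemma: pis and tensor}. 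Also, your parenthetical remark that injectivity of $\overline{\s}\colon{\hs A}\to A$ is ``preserved under surjections'' is false in general; you should simply cite \cite[Lemma~1.15]{TomasivWibmer:StronglyEtaleDifferenceAlgebras} (as the paper does) for the fact that quotients of \ssetale{} \ks-algebras are \ssetale{}.
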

\begin{proof}
	This was already proved in \cite[Prop. 6.5]{BachmayrWibmer:AlgebraicGroupsAsDifferenceGaloisGroupsOfLinearDifferentialEquations}. For the convenience of the reader and because it is instructive, we sketch the proof: Using Lemma \ref{lemma: pis and tensor} one shows that $\pis(k\{G\})$ is a \ks-Hopf subalgebra. By Theorem \ref{theo: ksHopfsubalgebra finitely sgenerated} it is finitely $\s$-generated. Because $\pis(k\{G\})$ is a union of \ks-algebras that are finite dimensional $k$-vector spaces, it follows from the finite $\s$-generation that in fact $\pis(k\{G\})$ is a finite dimensional $k$-vector space. Thus $\pis(k\{G\})$ is \ssetale{} and corresponds to a \ssetale{} $\s$-algebraic group $\pis(G)$. The inclusion $\pis(k\{G\})\subseteq k\{G\}$ correspond to a morphism $G\to \pis(G)$ of $\s$-algebraic groups. 
	
	Let $k\{H\}$ be a \ssetale{} \ks-Hopf algebra and $k\{H\}\to k\{G\}$ a morphism of \ks\=/Hopf algebras. Since quotients of \ssetale{} \ks-algebras are \ssetale{} (\cite[Lemma~1.15]{TomasivWibmer:StronglyEtaleDifferenceAlgebras}), the image of $k\{H\}$ in $k\{G\}$ is \ssetale{}, i.e., contained in $\pis(k\{G\})$. In other words, $k\{H\}\to k\{G\}$, factors uniquely through the inclusion $\pis(k\{G\})\subseteq k\{G\}$. Dualizing yields the required universal property.
%
%
%
\end{proof}

As a corollary to the above proof we obtain:
\begin{cor} \label{cor: pis ssetale}
	Let $G$ be a $\s$-algebraic group. Then $\pis(k\{G\})$ is \ssetale{}. \qed
\end{cor}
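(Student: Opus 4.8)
The plan is to extract this directly from the proof of Proposition~\ref{prop: exists Gcore} that was just given. The key observation is that the construction in that proof never uses the Hopf algebra structure on $k\{G\}$ in a way that could fail for an arbitrary $\s$-algebraic group: the coordinate ring $k\{G\}$ of any $\s$-algebraic group \emph{is} a finitely $\s$-generated \ks-Hopf algebra, so all the ingredients are in place. Thus the first step is simply to observe that $k\{G\}$ is a finitely $\s$-generated \ks-Hopf algebra and apply the relevant facts to it.

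More concretely, I would first invoke Lemma~\ref{lemma: pis and tensor} together with the fact that the Hopf algebra structure maps $\Delta\colon k\{G\}\to k\{G\}\otimes_k k\{G\}$, $\varepsilon\colon k\{G\}\to k$ and the antipode $S\colon k\{G\}\to k\{G\}$ are morphisms of \ks-algebras: since $\pis$ is functorial on \ks-algebras and $\pis(k\{G\}\otimes_k k\{G\})=\pis(k\{G\})\otimes_k\pis(k\{G\})$, each structure map carries $\pis(k\{G\})$ into the corresponding strong core, so $\pis(k\{G\})$ is a \ks-Hopf subalgebra of $k\{G\}$. Next, by Theorem~\ref{theo: ksHopfsubalgebra finitely sgenerated}, this \ks-Hopf subalgebra is finitely $\s$-generated over $k$. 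Finally, combining finite $\s$-generation with the fact that $\pis(k\{G\})$ is a union (a filtered union) of \ssetale{} \ks-subalgebras, each of which is finite dimensional as a $k$-vector space, I would conclude that $\pis(k\{G\})$ is itself finite dimensional over $k$: a finite $\s$-generating set lies in one of the finite dimensional members of the union, hence $\pis(k\{G\})$ equals that member (or is contained in its $\s$-algebra closure, which here is just itself since the member is already a \ks-subalgebra closed under $\s$ only after enlarging — so one should be slightly careful and argue that the $\s$-subalgebra generated by a finite subset of a filtered union of $\s$-stable finite dimensional subalgebras is again finite dimensional). Since $\pis(k\{G\})$ is $\s$-separable (being a strong core) and now also \'{e}tale as a $k$-algebra (being finite dimensional and a union of \'{e}tale subalgebras, hence reduced with the right base-change behaviour), it is \ssetale{}.

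The only mild subtlety — and the one place warranting care rather than the verbatim "as a corollary to the above proof" — is the passage from "finitely $\s$-generated plus filtered union of finite dimensional $\s$-stable pieces" to "finite dimensional". One must note that the finite $\s$-generating set $B$ sits inside some \ssetale{} subalgebra $R_0$ of the filtered system (by directedness), and because $R_0$ is a \ks-subalgebra (hence $\s(R_0)\subseteq R_0$), the \ks-subalgebra $\s$-generated by $B$ is contained in $R_0$, so $\pis(k\{G\})\subseteq R_0$ and therefore $\pis(k\{G\})=R_0$ is finite dimensional. I expect this to be entirely routine; indeed the statement is essentially a restatement of what was established inside the proof of Proposition~\ref{prop: exists Gcore}, and the honest one-line proof is to cite that argument. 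I would therefore present the proof as: "This was shown in the course of the proof of Proposition~\ref{prop: exists Gcore}; alternatively, it follows from \cite[Theorem 3.2]{TomasivWibmer:StronglyEtaleDifferenceAlgebras} applied to the finitely $\s$-generated \ks-Hopf algebra $k\{G\}$." The \texttt{\textbackslash qed} already attached to the corollary statement signals exactly this — the result is meant to be immediate from the preceding discussion.
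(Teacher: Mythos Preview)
Your proposal is correct and matches the paper's approach exactly: the corollary is stated with a \qed\ precisely because it was established inside the proof of Proposition~\ref{prop: exists Gcore}, and your unpacking of that argument (Lemma~\ref{lemma: pis and tensor} for the Hopf subalgebra structure, Theorem~\ref{theo: ksHopfsubalgebra finitely sgenerated} for finite $\s$-generation, then finite-dimensionality from the directed union of \ssetale{} \ks-subalgebras) is faithful to what the paper does. Your alternative citation of \cite[Theorem~3.2]{TomasivWibmer:StronglyEtaleDifferenceAlgebras} is also the reference the paper itself gives just before Lemma~\ref{lemma: pis and tensor}.
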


The reader may wonder about the significance of the $\s$-separability assumption in Definition~\ref{defi: strongly setale}. Indeed, one may wonder if only working with \ks-subalgebras that are \'{e}tale as $k$\=/algebras (and not necessarily \ssetale{}) leads to similar results. The following example shows that this is not the case.

\begin{ex} \label{ex: core not Hopfalgebra}
	Let $G$ be the $\s$-algebraic subgroup of $\Gm^2$ given by
	$$G(R)=\left\{\begin{pmatrix} g_1 \\ g_2\end{pmatrix}\in\Gm^2(R) \ \Big| \ g_1^2=1,\ \s(g_1)=1,\ g_2^2=1\right\}$$
	for any \ks-algebra $R$. Let $S$ denote the union of all \ks-subalgebras of $k\{G\}$ that are \'{e}tale as $k$\=/algebras. Then $S$ is a \ks-subalgebra of $k\{G\}$. However, as we will show, $S$ is not a \ks-Hopf subalgebra of $k\{G\}$.
	
	Let us assume that the characteristic of $k$ is not equal to $2$. 
	 With $H_1(R)=\{g\in\Gm(R)|\ g^2=1,\ \s(g)=1\}$ and $H_2(R)=\{g\in\Gm(R)|\ g^2=1\}$ we have $G=H_1\times H_2$. Moreover
	$k\{H_1\}=ke_1\oplus ke_2$ for orthogonal idempotent elements $e_1,e_2\in k\{H_1\}$ with $\s(e_1)=1$ and $\s(e_2)=0$. We have $$k\{G\}=k\{H_1\}\otimes_k k\{H_2\}=(e_1\otimes k\{H_2\})\oplus (e_2\otimes k\{H_2\}).$$ For any element $a\in e_2\otimes k\{H_2\}$ we have $\s(a)=0$ and so $k\{a\}=k[a]$ is an \'{e}tale $k$-algebra. This shows that $e_2\otimes k\{H_2\}$ is contained in $S$. In particular, $S$ has infinite dimension as a $k$-vector space.
	
	Suppose $S$ is a \ks-Hopf subalgebra of $k\{G\}$. Then $S$ is finitely $\s$-generated over $k$ by Theorem~\ref{theo: ksHopfsubalgebra finitely sgenerated} and it follows that $S$ is a finite dimensional $k$-vector space; a contradiction. So $S$ is not a \ks-Hopf subalgebra of $k\{G\}$.
\end{ex}

\subsection{The difference identity component and the group of difference connected components}

\begin{defi}
	Let $G$ be a $\s$-algebraic group. The $\s$-algebraic group $\pis(G)$ from Proposition~\ref{prop: exists Gcore} is called the \emph{group of $\s$-connected components}\index{group of $\s$-connected components} of $G$. The kernel $G^\sc$
	of $G\to\pis(G)$ is called the \emph{$\s$-identity component}\index{$\s$-identity component} of $G$.
\end{defi}
So $k\{\pis(G)\}=\pis(k\{G\})$ and $\pis(G)$ is \ssetale{}. Moreover $G\to \pis(G)$ is a quotient map and $G/G^\sc=\pis(G)$.

Our next goal is to connect $\pis(G)$ with the $\s$-topology on $\spec(k\{G\})$. In particular, we will see that $\pis(G)=1$ if and only if $\spec(k\{G\})$ is $\s$-connected. To this end we need some preparatory results.

Recall that a non-zero idempotent element $e$ is \emph{primitive} if it cannot be written as $e=e'+e''$ for non-zero orthogonal idempotent elements $e'$ and $e''$. In an \'{e}tale $k$-algebra the set $\{e_1,\ldots,e_n\}$ of primitive idempotent elements is finite. Moreover, the $e_i$'s are orthogonal and $e_1+\ldots+e_n=1$.

\begin{lemma}[{\cite[Lemma 1.11]{TomasivWibmer:StronglyEtaleDifferenceAlgebras}}] \label{lemma: structure of sfinite ksalgebras}
	Let $R$ be a \ssetale{} $k$-$\s$-algebra. Then $\s$ induces a bijection on the set of primitive idempotent elements of $R$. 
\end{lemma}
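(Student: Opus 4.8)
The plan is to exploit the finite combinatorial structure of idempotents in an \'{e}tale algebra together with the injectivity of $\s$ that comes for free from $\s$-separability. Since $R$ is \'{e}tale, it decomposes as a finite product of finite separable field extensions of $k$, so its set $\{e_1,\ldots,e_n\}$ of primitive idempotent elements is finite, the $e_i$ are orthogonal, and $e_1+\cdots+e_n=1$; moreover every idempotent of $R$ is uniquely a sum of a subset of the $e_i$, because each field factor has only the trivial idempotents.

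First I would record the elementary stability of the idempotent structure under the ring endomorphism $\s$: from $e_i^2=e_i$, $e_ie_j=0$ for $i\neq j$, and $\sum_i e_i=1$ one gets $\s(e_i)^2=\s(e_i)$, $\s(e_i)\s(e_j)=0$, and $\sum_i\s(e_i)=1$, so that $\s(e_1),\ldots,\s(e_n)$ are again orthogonal idempotents summing to $1$. Writing each as a sum of primitive idempotents, $\s(e_i)=\sum_{j\in S_i}e_j$, orthogonality forces the subsets $S_i\subseteq\{1,\ldots,n\}$ to be pairwise disjoint, and the relation $\sum_i\s(e_i)=1$ together with the uniqueness of the representation forces $\bigcup_i S_i=\{1,\ldots,n\}$. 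Thus $\{S_1,\ldots,S_n\}$ is a partition of an $n$-element set into $n$ (a priori possibly empty) blocks.

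The decisive input is $\s$-separability. Taking $k'=k$ in the definition shows that $\s\colon R\to R$ is injective. Hence $\s(e_i)\neq 0$ for every $i$, i.e.\ every block $S_i$ is non-empty. But $n$ pairwise disjoint non-empty subsets whose union is an $n$-element set must each be singletons; consequently there is a permutation $\tau$ of $\{1,\ldots,n\}$ with $\s(e_i)=e_{\tau(i)}$. This exhibits $\s$ as a bijection of the finite set $\{e_1,\ldots,e_n\}$ onto itself, which is precisely the assertion.

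The only genuinely nontrivial step is the final counting argument, and its legitimacy rests entirely on ruling out $\s(e_i)=0$; this is exactly where $\s$-separability (rather than mere \'{e}taleness) is needed, since an arbitrary endomorphism of an \'{e}tale algebra can certainly annihilate a primitive idempotent (compare Example~\ref{ex: split finti ssetale if and only if auto}). I expect no further obstacles: everything else is the standard idempotent calculus in a finite product of fields.
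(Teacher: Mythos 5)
Your proof is correct. The paper itself does not prove this lemma (it cites \cite[Lemma 1.11]{TomasivWibmer:StronglyEtaleDifferenceAlgebras}), but your argument is exactly the expected one: the primitive idempotents of an \'{e}tale algebra form a finite orthogonal family summing to $1$, $\s$ carries it to another such family, and the injectivity of $\s$ furnished by $\s$-separability (taking $k'=k$) rules out $\s(e_i)=0$, so the disjointness/counting argument forces a permutation.
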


\begin{lemma} \label{lemma: idempotents in ssetale salgebra}
	Let $R$ be a strongly $\s$-\'{e}tale \ks-algebra and let $d_1,\ldots,d_m\in R$ denote the primitive idempotent elements of $R$. Let $A_1\uplus\cdots\uplus A_n=\{1,\ldots,m\}$ be the partition of $\{1,\ldots,m\}$ corresponding to the cycle decomposition of the permutation $\tau$ of the $d_i$'s induced by $\s$ (cf. Lemma \ref{lemma: structure of sfinite ksalgebras}). For $i=1,\ldots,n$ let $e_i=\sum_{j\in A_i} d_j$.
	
	Then $e_1,\ldots,e_n$ are constant orthogonal idempotent elements with $e_1+\cdots+e_n=1$ such that no $e_i$ can be written as a sum of two non-trivial constant orthogonal idempotent elements.
	
\end{lemma}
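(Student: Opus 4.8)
The plan is to unwind the definitions and verify the four assertions about the $e_i$'s one by one, using Lemma \ref{lemma: structure of sfinite ksalgebras} as the main input. Write $\tau$ for the permutation of the set $\{d_1,\ldots,d_m\}$ of primitive idempotents induced by $\s$; by Lemma \ref{lemma: structure of sfinite ksalgebras} this is a genuine bijection, so its cycle decomposition partitions $\{1,\ldots,m\}$ into the blocks $A_1,\ldots,A_n$, and $e_i=\sum_{j\in A_i}d_j$ is the sum of the idempotents in the $i$-th cycle.

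First I would record the elementary facts that a finite sum of pairwise orthogonal idempotents is again idempotent, and that if $\{d_1,\ldots,d_m\}$ are pairwise orthogonal with $d_1+\cdots+d_m=1$ (which holds in an étale $k$-algebra, as recalled just before Lemma \ref{lemma: structure of sfinite ksalgebras}), then for disjoint index sets the partial sums $e_i$ are pairwise orthogonal idempotents summing to $1$. This gives the first half of the statement except for the word ``constant''. For constancy: since $\s$ is a ring endomorphism, $\s(d_j)$ is again a primitive idempotent, namely $d_{\tau(j)}$ by definition of $\tau$; hence $\s(e_i)=\sum_{j\in A_i}\s(d_j)=\sum_{j\in A_i}d_{\tau(j)}$, and because $A_i$ is a union of cycles of $\tau$ — in fact a single cycle — $\tau$ permutes $A_i$ among itself, so this sum is again $\sum_{j\in A_i}d_j=e_i$. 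Thus each $e_i$ is constant.

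The remaining point — that no $e_i$ splits as a sum of two non-trivial constant orthogonal idempotents — is the part requiring the cycle structure, and I expect it to be the main (though still mild) obstacle. Suppose $e_i=f+f'$ with $f,f'$ non-trivial orthogonal idempotents and $\s(f)=f$. Working in the $\s$-ring $e_iR$, which has identity $e_i$, one may expand $f$ in the basis of primitive idempotents: $f=\sum_{j\in A_i}\varepsilon_j d_j$ with each $\varepsilon_j\in\{0,1\}$, so $f$ corresponds to a subset $B\subseteq A_i$. Applying $\s$ and using $\s(d_j)=d_{\tau(j)}$ shows that $f=\s(f)$ forces $\tau(B)=B$, i.e. $B$ is a union of $\tau$-orbits inside $A_i$. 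But $A_i$ is a single $\tau$-orbit, so $B=\varnothing$ or $B=A_i$, meaning $f=0$ or $f=e_i$; this contradicts non-triviality of both $f$ and $f'$. Hence no such decomposition exists.

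Putting these three observations together establishes every clause of the lemma. The only genuinely $\s$-specific ingredient is Lemma \ref{lemma: structure of sfinite ksalgebras} (that $\s$ acts bijectively on primitive idempotents), which is what legitimizes passing to the cycle decomposition; everything else is bookkeeping with orthogonal idempotents in a finite étale algebra.
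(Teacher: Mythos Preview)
Your proposal is correct and follows essentially the same approach as the paper's proof. The paper's argument for the final clause is the terse assertion ``Any constant idempotent element of $R$ is a sum of some $e_i$'s'', which is exactly what you unpack: an idempotent in an \'{e}tale algebra is a sum of primitive idempotents indexed by some subset $B$, constancy forces $B$ to be $\tau$-invariant, and hence $B$ is a union of orbits $A_i$; your version simply restricts attention to $B\subseteq A_i$ from the outset, which is equivalent.
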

\begin{proof}
	The $e_i$'s are constant because the $A_i$'s are the orbits of $\tau$. Since the $d_j$'s are orthogonal and the $A_i$'s are disjoint, also the $e_i$'s are orthogonal. We have $e_1+\cdots+e_n=d_1+\cdots+d_m=1$. Any constant idempotent element of $R$ is a sum of some $e_i$'s, so no $e_i$ can be written as a sum of two non-trivial constant orthogonal idempotent elements.
\end{proof}

\begin{prop} \label{prop: pis and scomponents}
	Let $R$ be a \ks-algebra such that $\pis(R)$ is strongly $\s$-\'{e}tale. Let $e_1,\ldots,e_n\in \pis(R)$ be as in Lemma \ref{lemma: idempotents in ssetale salgebra}. Then the $\s$-connected components of $\spec(R)$ are $\VV(1-e_1),\ldots,\VV(1-e_n)$. Moreover $\VV(1-e_i)$ is isomorphic to $\spec(e_iR)$ as a $\s$-topological space.
\end{prop}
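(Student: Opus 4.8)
The plan is to reduce Proposition \ref{prop: pis and scomponents} to the already-established Proposition \ref{prop: sconnected components of spec(R)} by transporting the relevant idempotents from $\pis(R)$ into $R$. First I would record that the inclusion $\pis(R)\subseteq R$ is a morphism of \ks-algebras, so each $e_i\in\pis(R)$ from Lemma \ref{lemma: idempotents in ssetale salgebra} is in particular a constant idempotent element of $R$, and the relations $e_ie_j=0$ for $i\ne j$ and $e_1+\cdots+e_n=1$ continue to hold in $R$. Thus the hypotheses of Proposition \ref{prop: sconnected components of spec(R)} are verified once I check its last requirement: that no $e_i$ is a sum of two non-trivial constant orthogonal idempotent elements of $R$.

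The key step is exactly this last point, and here is where the hypothesis that $\pis(R)$ is \ssetale{} (hence finite-dimensional) is used only indirectly — what really matters is the universal/maximality property of the strong core. Suppose $e_i=f+f'$ with $f,f'\in R$ non-zero orthogonal constant idempotents. Then $k\{f\}=k[f]$ is a two-dimensional \'{e}tale $k$-algebra on which $\s$ acts as the identity, so it is $\s$-separable, hence \ssetale{}; therefore $f\in\pis(R)$, and likewise $f'\in\pis(R)$. But then $e_i=f+f'$ is a decomposition of $e_i$ into non-trivial constant orthogonal idempotents inside $\pis(R)$, contradicting the corresponding property of the $e_i$ established in Lemma \ref{lemma: idempotents in ssetale salgebra}. (One checks $k[f]$ is \'{e}tale: $f$ satisfies $x^2-x$, which is separable since $f\ne 0,1$, and a finite $k$-algebra generated by a separable element is \'{e}tale.) Hence the $e_i$ meet all the hypotheses of Proposition \ref{prop: sconnected components of spec(R)}.

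Applying Proposition \ref{prop: sconnected components of spec(R)} now yields directly that the $\s$-connected components of $\spec(R)$ are $\VV(1-e_1),\ldots,\VV(1-e_n)$ and that $\VV(1-e_i)$ is isomorphic to $\spec(e_iR)$ as a $\s$-topological space, where $e_iR$ carries its natural $\s$-ring structure with identity $e_i$. This completes the proof.

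I expect the only real obstacle to be the verification that a non-trivial constant idempotent $f\in R$ automatically lands in $\pis(R)$, i.e.\ that $k[f]$ is \ssetale{}; everything else is a transfer of already-proved statements. One should be slightly careful that the partition/maximality property in Lemma \ref{lemma: idempotents in ssetale salgebra} was stated for constant orthogonal idempotents \emph{of $\pis(R)$} (equivalently of $R$, by the above), so that the contradiction is genuine; but since any constant idempotent of $R$ lies in $\pis(R)$, the notions coincide and no gap arises.
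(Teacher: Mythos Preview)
Your proof is correct and follows essentially the same route as the paper: reduce to Proposition~\ref{prop: sconnected components of spec(R)} by showing that every constant idempotent of $R$ already lies in $\pis(R)$, so that the indecomposability of the $e_i$ in $\pis(R)$ from Lemma~\ref{lemma: idempotents in ssetale salgebra} upgrades to indecomposability in $R$. The only difference is cosmetic: the paper quotes this fact from \cite[Lemma~1.12]{TomasivWibmer:StronglyEtaleDifferenceAlgebras}, whereas you supply the short direct argument that for a non-trivial constant idempotent $f$ the \ks-subalgebra $k[f]\cong k\times k$ is \ssetale{}. (A tiny nitpick: the separability of $x^2-x$ is automatic and does not use $f\neq 0,1$; what $f\neq 0,1$ gives you is that $f\notin k$, so that a hypothetical decomposition $e_i=f+f'$ is genuinely non-trivial.)
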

\begin{proof}
	By \cite[Lemma 1.12]{TomasivWibmer:StronglyEtaleDifferenceAlgebras} all the constant idempotent elements of $R$ belong to $\pis(R)$. So by Lemma \ref{lemma: idempotents in ssetale salgebra} the constant idempotent elements $e_1,\ldots,e_n$ satisfy the condition of Proposition~\ref{prop: sconnected components of spec(R)}.
\end{proof}

The \emph{topological space of a $\s$-algebraic group} $G$ is $\spec(k\{G\})$ (equipped with the Zariski topology). As in Section \ref{subsec: the difference topology} above, we consider $\spec(k\{G\})$ as a $\s$-topological space. 

\begin{theo}  \label{theo: finitely many scomponents}
	Let $G$ be a $\s$-algebraic group. Then the topological space of $G$ has only finitely many $\s$-connected components. Moreover, the topological space of $G^\sc$ is isomorphic (as a $\s$\=/topological space) to the $\s$-connected component of the topological space of $G$ that contains the identity, i.e., the kernel $\m_G$ of the counit $\varepsilon\colon k\{G\}\to k$. 
\end{theo}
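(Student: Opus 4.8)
The statement has two parts: (1) the topological space $\spec(k\{G\})$ has only finitely many $\s$-connected components, and (2) the $\s$-connected component containing the point $\m_G$ is $\s$-homeomorphic to $\spec(k\{G^\sc\})$. Part (1) follows immediately from what has been assembled: by Corollary~\ref{cor: pis ssetale}, $\pis(k\{G\})$ is \ssetale{}, hence finite dimensional over $k$, so it has finitely many primitive idempotents; grouping them into $\s$-orbits as in Lemma~\ref{lemma: idempotents in ssetale salgebra} yields finitely many constant orthogonal idempotents $e_1,\ldots,e_n$ summing to $1$, and by Proposition~\ref{prop: pis and scomponents} the $\s$-connected components of $\spec(k\{G\})$ are exactly $\VV(1-e_1),\ldots,\VV(1-e_n)$. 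So there are $n<\infty$ of them.

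For part (2), the first task is to identify which $e_i$ corresponds to the component through $\m_G$. The counit $\varepsilon\colon k\{G\}\to k$ is a morphism of \ks-algebras, so it restricts to a morphism $\pis(k\{G\})=k\{\pis(G)\}\to k$, i.e. it is the counit of the Hopf algebra $k\{\pis(G)\}$ (this uses that $\pis(G)\to \pis(G)$ is the trivial-component story for the \ssetale{} group; concretely $\varepsilon$ sends exactly one primitive idempotent of $\pis(k\{G\})$ to $1$ and the rest to $0$). Since $\varepsilon$ commutes with $\s$ and kills all but one primitive idempotent, that distinguished primitive idempotent $d$ must be constant ($\s(d)$ is again primitive and $\varepsilon(\s(d))=\s(\varepsilon(d))=1$, forcing $\s(d)=d$), so it is itself one of the $e_i$'s — say $e_1$ — and $\VV(1-e_1)$ is the $\s$-connected component containing $\m_G$, because $\varepsilon$ (as a point of $\spec$) lies in $\VV(1-e_1)$ iff $\varepsilon(1-e_1)=0$ iff $\varepsilon(e_1)=1$.

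Next I would identify $\spec(e_1 k\{G\})$ with $\spec(k\{G^\sc\})$ as $\s$-topological spaces, which by Proposition~\ref{prop: pis and scomponents} gives $\VV(1-e_1)\cong \spec(e_1 k\{G\})$ as required. Recall $k\{G^\sc\}=k\{G\}/\I(G^\sc)$ where $G^\sc=\ker(G\to\pis(G))$, so $\I(G^\sc)=(\pi^*(\m_{\pis(G)}))=$ the ideal of $k\{G\}$ generated by the augmentation ideal of $\pis(k\{G\})$. The augmentation ideal of the \ssetale{} Hopf algebra $\pis(k\{G\})$ is spanned (as a complement of $k\cdot 1$) in a way that, modulo it, one sets $e_1\equiv 1$ and every other primitive idempotent $\equiv 0$; more precisely $\m_{\pis(G)}$ is generated by $1-e_1$ together with the other primitive idempotents of $\pis(k\{G\})$ lying inside $e_1\cdot\pis(k\{G\})$ — but the key point is just that the composite $e_1 k\{G\}\hookrightarrow k\{G\}\twoheadrightarrow k\{G^\sc\}$ is an isomorphism of $\s$-rings. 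Surjectivity is clear since $e_1\equiv 1$ mod $\I(G^\sc)$; for injectivity one checks $e_1 k\{G\}\cap \I(G^\sc)=0$ using that $\I(G^\sc)$ is exactly the ideal generated by $1-e_1$ plus the ``internal'' primitive idempotents, which intersects $e_1 k\{G\}$ trivially. This is the step I expect to be the main obstacle: pinning down the defining ideal $\I(G^\sc)$ precisely enough to see the isomorphism $e_1 k\{G\}\xrightarrow{\sim} k\{G^\sc\}$, rather than merely a surjection. One clean way around it: both $e_1 k\{G\}$ and $k\{G^\sc\}$ are quotients of $k\{G\}$ killing the appropriate complementary idempotents, and one can compare them via the universal property of the quotient $G\to\pis(G)$ — the quotient map $k\{G\}\to k\{G^\sc\}$ factors through $k\{G\}\to e_1 k\{G\}$, and conversely, so they agree.

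Finally, assembling: $\spec(k\{G^\sc\})\cong \spec(e_1 k\{G\})\cong \VV(1-e_1)$ as $\s$-topological spaces (the middle isomorphism from Proposition~\ref{prop: pis and scomponents}, using that $e_1 k\{G\}\cong k\{G\}/(1-e_1)$ as $\s$-rings), and $\VV(1-e_1)$ is the $\s$-connected component of $\spec(k\{G\})$ through $\m_G$ by the identification of $e_1$ above. This proves both assertions.
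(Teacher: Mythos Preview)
Your proof is correct and follows essentially the same approach as the paper. The step you flag as the main obstacle dissolves once you note that $e_1=d_1$ is itself \emph{primitive} (you showed $\s(d_1)=d_1$, so its $\s$-orbit is a singleton), hence there are no ``internal'' primitive idempotents inside $e_1\pis(k\{G\})$ and $\m_{\pis(G)}=(1-e_1)\pis(k\{G\})$; this gives $\I(G^\sc)=(1-e_1)k\{G\}$ and $k\{G^\sc\}=k\{G\}/(1-e_1)$ directly, with no need for the workaround you sketch.
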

\begin{proof}
	As $\pis(k\{G\})$ is \ssetale{} (Corollary \ref{cor: pis ssetale}), the first statement follows immediately from Theorem \ref{prop: pis and scomponents}. Let $d_1,\ldots,d_m\in\pis(k\{G\})$ and $e_1,\ldots,e_n\in\pis(k\{G\})$ be as in Lemma~\ref{lemma: idempotents in ssetale salgebra}. The counit $\varepsilon\colon \pis(k\{G\})\to k$ maps precisely one $d_i$ to $1\in k$ and all other $d_i$'s to $0$. We may assume that $\varepsilon(d_1)=1$. By Lemma \ref{lemma: structure of sfinite ksalgebras} we have $\s(d_1)\in\{d_1,\ldots,d_m\}$. As $\varepsilon(\s(d_1))=\s(\varepsilon(d_1))=\s(1)=1$, this shows that $\s(d_1)=d_1$. So $e_1=d_1$. The kernel of $\varepsilon$ on $\pis(k\{G\})$ is the ideal generated by $1-e_1$. So, by the definition of $G^\sc$, we have $\I(G^\sc)=(1-e_1)\subset k\{G\}$. Therefore $k\{G^\sc\}=k\{G\}/(1-e_1)$ and consequently $\spec(k\{G^\sc\})$ is isomorphic to $\VV(1-e_1)\subseteq\spec(k\{G\})$.
	We know from Proposition \ref{prop: pis and scomponents} that $\VV(1-e_1)$ is a $\s$-connected component and clearly $\m_G\in\VV(1-e_1)$.
\end{proof}

There does not seem to be an easy formula for the number of $\s$-connected components. Indeed, the following example illustrates that the number of $\s$-connected components does not only depend on the underlying field of the base difference field, but it also depends on the endomorphism $\s\colon k\to k$. 

\begin{ex} \label{ex: sconnected3}
	Let $k=\mathbb{Q}$ (considered as a constant $\s$-field). Let $G$ be the $\s$-closed subgroup of $\Gm$ given by
	$$G(R)=\{g\in R^\times|\ g^3=1,\ \s(g)=g\}$$
	for any \ks-algebra $R$. Then $\spec(k\{G\})$ consists of two elements and $\s$ is the identity map on $\spec(k\{G\})$. Thus $G$ has two $\s$-connected components.
	
	Now assume $k$ is a $\s$-field of characteristic zero that contains the two non-trivial third roots of unity $a_1$ and $a_2$. Then $\spec(k\{G\})$ consists of three elements. The endomorphism $\s$ either permutes or fixes $a_1$ and $a_2$. If $\s$ fixes $a_1$ and $a_2$, then $\s$ is the identity map on $\spec(k\{G\})$ and $G$ has three $\s$-connected components. If $\s$ permutes $a_1$ and $a_2$, then $G$ has two $\s$-connected components. In particular, in this case the number of $\s$-connected components of $G$ is strictly smaller than the vector space dimension of $\pis(k\{G\})=k\{G\}$.
\end{ex}

We next characterize $\s$-connected $\s$-algebraic groups.
\begin{lemma} \label{lemma: characterize sconnected}
	For a $\s$-algebraic group $G$, the following statements are equivalent:
	\begin{enumerate}
		\item The topological space of $G$ is $\s$-connected.
		\item $G=G^\sc$.
		\item $\pis(G)=1$.
	\end{enumerate}
\end{lemma}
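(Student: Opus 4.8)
The plan is to show $(1)\Leftrightarrow(3)$ directly via the machinery built up in the section, and $(2)\Leftrightarrow(3)$ essentially by definition.

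First, the equivalence $(2)\Leftrightarrow(3)$: by definition $G^\sc=\ker(G\to\pis(G))$, so $G=G^\sc$ holds precisely when $G\to\pis(G)$ is the trivial morphism, i.e.\ when $\pis(G)=1$. More carefully, $G\to\pis(G)$ is a quotient map (as noted after the definition of $\pis(G)$), so $G=\ker(G\to\pis(G))$ forces $\pis(G)=G/\ker=1$ by Theorem~\ref{theo: isom1}; conversely if $\pis(G)=1$ then its kernel $G^\sc$ is all of $G$. Dually, $\pis(G)=1$ means $k\{\pis(G)\}=\pis(k\{G\})=k$, and then $G^\sc$ is the kernel of $G\to 1$, which is $G$.

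Next, $(1)\Leftrightarrow(3)$: By Corollary~\ref{cor: pis ssetale}, $\pis(k\{G\})$ is \ssetale{}, hence finite-dimensional as a $k$-vector space. So Proposition~\ref{prop: pis and scomponents} applies: letting $e_1,\ldots,e_n\in\pis(k\{G\})$ be the constant orthogonal idempotents of Lemma~\ref{lemma: idempotents in ssetale salgebra}, the $\s$-connected components of $\spec(k\{G\})$ are $\VV(1-e_1),\ldots,\VV(1-e_n)$. Thus $\spec(k\{G\})$ is $\s$-connected if and only if $n=1$ with $e_1=1$, which (via the construction in Lemma~\ref{lemma: idempotents in ssetale salgebra}, where the $e_i$ are the constant idempotents coming from cycles of $\tau$) happens exactly when $\pis(k\{G\})$ contains no non-trivial constant idempotent. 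But by Corollary~\ref{cor: spec(R) sconnected iff non constant idempotent} applied to $R=\pis(k\{G\})$ — combined with \cite[Lemma~1.12]{TomasivWibmer:StronglyEtaleDifferenceAlgebras}, which says every constant idempotent of $k\{G\}$ already lies in $\pis(k\{G\})$ — this is equivalent to $k\{G\}$ having no non-trivial constant idempotent, i.e.\ to $\spec(k\{G\})$ being $\s$-connected. To close the loop with $(3)$: $\pis(k\{G\})$ is \ssetale{}, so by Lemma~\ref{lemma: structure of sfinite ksalgebras} it is determined (as a \ks-algebra built from idempotents, ignoring any residue-field subtleties) so that $\pis(k\{G\})=k$ precisely when its only constant idempotents are $0,1$; more robustly, since $\pis(k\{G\})$ is a \ks-Hopf algebra that is \ssetale{}, $\pis(G)=1$ iff $\pis(k\{G\})=k$ iff $k$ is the only \ssetale{} \ks-subalgebra, and the argument above shows the presence of a non-trivial constant idempotent $e$ produces the \ssetale{} subalgebra $k\oplus ke$ strictly larger than $k$, while absence of such forces $\pis(k\{G\})$ to have the counit as an isomorphism onto $k$.

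I expect the main obstacle to be the last link: carefully arguing that $\pis(k\{G\})=k$ (not merely "$\spec(\pis(k\{G\}))$ is $\s$-connected") is equivalent to the absence of non-trivial constant idempotents in $k\{G\}$. A connected \'etale $k$-algebra need not be $k$ itself (it could be a nontrivial separable field extension), so one must use that $\pis(k\{G\})$ is a \ks-Hopf algebra: a connected \ssetale{} Hopf algebra over $k$ is $k$, because the counit gives a $k$-point of the corresponding \ssetale{} $\s$-algebraic group $\pis(G)$, and a connected \'etale group scheme with a rational point is trivial. Alternatively, one invokes Theorem~\ref{theo: finitely many scomponents}: the $\s$-connected component of $\spec(k\{G\})$ through $\m_G$ is the space of $G^\sc$, so $G$ being $\s$-connected means $G^\sc=G$, i.e.\ $(1)\Rightarrow(2)$, and $(2)\Rightarrow(3)\Rightarrow(1)$ is routine. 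Routing the proof through Theorem~\ref{theo: finitely many scomponents} is cleanest and is the approach I would ultimately write up.
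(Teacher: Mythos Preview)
Your proposal is correct and follows essentially the same route as the paper. In particular, you correctly identify the one nontrivial point: for $(1)\Rightarrow(3)$, having a single $\s$-connected component only gives that $\pis(k\{G\})$ is a separable field extension of $k$, and one must use the counit (equivalently, the rational point on the \'{e}tale group scheme $\pis(G)$) to force this field to be $k$ itself; the paper does exactly this. One small caution: your closing remark that Theorem~\ref{theo: finitely many scomponents} directly gives $(1)\Rightarrow(2)$ elides the step that equality of underlying $\s$-topological spaces implies $G^\sc=G$; to finish that route cleanly you should note (as in the proof of Theorem~\ref{theo: finitely many scomponents}) that $\I(G^\sc)=(1-e_1)$, so $\VV(1-e_1)=\spec(k\{G\})$ forces the idempotent $1-e_1$ to be nilpotent, hence zero.
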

\begin{proof}
	The equivalence of (ii) and (iii) is tautological. The implication (ii)$\Rightarrow$(i) follows from the fact that the topological space of $G^\sc$ is $\s$-connected by Theorem \ref{theo: finitely many scomponents}
	
	Finally, let us show that (i) implies (iii).  Let $d_1,\ldots,d_m\in\pis(k\{G\})$ and $e_1,\ldots,e_n\in\pis(k\{G\})$ be as in Lemma \ref{lemma: idempotents in ssetale salgebra}. Then
	$$\pis(k\{G\})=e_1\pis(k\{G\})\times\cdots\times e_n\pis(k\{G\})$$
	and as in the proof of Theorem \ref{theo: finitely many scomponents} we have $e_1=d_1$. So $e_1\pis(k\{G\})=d_1\pis(k\{G\})$ is a finite separable field extension of $k$ (because $\pis(k\{G\})$ is an \'{e}tale $k$-algebra). But the counit $\varepsilon$ identifies $e_1\pis(k\{G\})=\pis(k\{G\})/(1-e_1)$ with $k$. So $e_1\pis(k\{G\})=e_1k\simeq k$. We know from Proposition~\ref{prop: pis and scomponents} that $G$ has $n$ $\s$-connected components. By assumption $n=1$. So $\pis(k\{G\})=k$.
\end{proof}

\begin{defi} \label{defi: sconnected}
	A $\s$-algebraic group satisfying the equivalent conditions of Lemma~\ref{lemma: characterize sconnected} is called \emph{$\s$-connected}.\index{$\s$-connected $\s$-algebraic group}
\end{defi}

As an immediate corollary to Theorem \ref{theo: finitely many scomponents} we obtain:

\begin{cor} \label{cor: Gsc is sconnected}
	Let $G$ be a $\s$-algebraic group. Then $G^\sc$ is $\s$-connected. \qed
\end{cor}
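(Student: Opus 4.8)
The plan is to obtain the statement directly from Theorem \ref{theo: finitely many scomponents} together with the characterization of $\s$-connectedness in Lemma \ref{lemma: characterize sconnected}. First I would invoke Theorem \ref{theo: finitely many scomponents}: it identifies the topological space of $G^\sc$, as a $\s$-topological space, with the $\s$-connected component $C$ of $\spec(k\{G\})$ that contains $\m_G$. Since $C$ is by definition a connected component of $\spec(k\{G\})$ with respect to the $\s$-topology, it is in particular connected in the $\s$-topology, i.e. $C$ is $\s$-connected; transporting this along the isomorphism of $\s$-topological spaces, the topological space of $G^\sc$ is $\s$-connected.

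If one prefers a hands-on version of this step, one can instead use the explicit description produced in the proof of Theorem \ref{theo: finitely many scomponents}: with $d_1,\ldots,d_m$ and $e_1,\ldots,e_n\in\pis(k\{G\})$ as in Lemma \ref{lemma: idempotents in ssetale salgebra}, and $e_1=d_1$ the primitive idempotent with $\varepsilon(d_1)=1$, one has $\I(G^\sc)=(1-e_1)$, hence $k\{G^\sc\}=k\{G\}/(1-e_1)\simeq e_1k\{G\}$; and $\spec(e_1k\{G\})$ is $\s$-connected by the argument inside the proof of Proposition \ref{prop: sconnected components of spec(R)}, namely because $e_1k\{G\}$ (being the ``$i=1$'' block there) has no non-trivial constant idempotent element, cf. Corollary \ref{cor: spec(R) sconnected iff non constant idempotent}.

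Finally I would apply Lemma \ref{lemma: characterize sconnected} to the $\s$-algebraic group $G^\sc$ (which is a genuine $\s$-algebraic group, being a normal $\s$-closed subgroup of $G$): condition (i) of that lemma holds for $G^\sc$ by the previous paragraph, so conditions (ii) and (iii) hold as well, which is exactly the assertion that $G^\sc$ is $\s$-connected in the sense of Definition \ref{defi: sconnected}.

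I do not expect a real obstacle here — the corollary is essentially a repackaging of Theorem \ref{theo: finitely many scomponents}. The one point worth stating explicitly is that a $\s$-connected component of a $\s$-topological space, regarded with its induced $\s$-topology, is itself $\s$-connected; this is immediate since connected components are connected and the identification furnished by Theorem \ref{theo: finitely many scomponents} is an isomorphism of $\s$-topological spaces, but it is the only place where a small amount of care is needed.
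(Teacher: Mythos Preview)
Your proposal is correct and follows exactly the route the paper intends: the corollary is stated with a bare \qed immediately after the line ``As an immediate corollary to Theorem \ref{theo: finitely many scomponents} we obtain'', so the paper's proof is precisely your argument—Theorem \ref{theo: finitely many scomponents} identifies the topological space of $G^\sc$ with a $\s$-connected component, hence it is $\s$-connected, which is condition (i) of Lemma \ref{lemma: characterize sconnected} and therefore the definition of $\s$-connectedness. Your optional hands-on variant via the idempotent $e_1$ is also fine and just unwinds the same references.
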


\begin{ex} \label{ex: algebraic group sconnected}
	Let $\G$ be an algebraic group. Then $[\s]_k\G$ is a $\s$-connected $\s$-algebraic group by Corollary \ref{cor: specsR sconnected}.
\end{ex}

\begin{ex}
	The $\s$-algebraic group $G$ from Example \ref{ex: core not Hopfalgebra} is $\s$-connected.
	 With the notation of Example \ref{ex: core not Hopfalgebra}, we clearly have $\pis(k\{H_1\})=k$. Note that $H_2$ is the algebraic group of order two, considered as a $\s$-algebraic group. So it follows from Example \ref{ex: algebraic group sconnected} that $\pis(k\{H_2\})=k$. Finally, using Lemma \ref{lemma: pis and tensor} we find
	$$\pis(k\{G\})=\pis(k\{H_1\}\otimes_k k\{H_2\})=\pis(k\{H_1\})\otimes_k\pis(k\{H_2\})=k.$$
\end{ex}

\begin{ex} \label{ex: sconnected1}
	Let $n\geq 2$ be an integer and let $G$ be the $\s$-algebraic group given by
	$$G(R)=\{g\in R^\times|\ g^n=1,\ \s(g)=1\}\leq\Gm(R)$$
	for any \ks-algebra $R$. We claim that $G$ is $\s$-connected. Let $x\in k\{G\}$ denote the image of the coordinate function on $\Gm$. Then $k\{G\}=k\{x\}=k[x]$ with $\s(x)=1$ and $x^n=1$. Clearly, $\s(x-1)=0$. Since $\m_G=(x-1)$ is a maximal ideal of $k\{G\}$, the kernel of $\s$ on $k\{G\}$ is $\m_G$. Therefore $\s^{-1}(\p)=\m_G$ for every prime ideal $\p$ of $k\{G\}$. Now $\spec(k\{G\})$ is a discrete topological space. Suppose $X_1$ and $X_2$ are non-empty disjoint $\s$-invariant subsets of $\spec(k\{G\})$. A point $x_1$ from $X_1$ is mapped onto $\m_G$ under $\s$, similarly for a point $x_2$ from $X_2$. So $\m_G$ lies in the intersection of $X_1$ and $X_2$; a contradiction. Thus $G$ is $\s$-connected. 
\end{ex}

An example of a $\s$-algebraic group that is not $\s$-connected can be deduced from Example \ref{ex: ssetale if and only if}:
\begin{ex}
 Let $G$ be the $\s$-algebraic group from Example \ref{ex: ssetale if and only if} with $\alpha=1$. Then $\pis(G)=G$ and so $G^\sc=1$. In particular, $G$ is not $\s$-connected.
\end{ex}

The formation of $\pis(G)$ and $G^\sc$ is compatible with base extension:

\begin{prop} \label{prop:  pis and base change}
	Let $G$ be a $\s$-algebraic group and $k'$ a $\s$-field extension of $k$. Then
	$$\pis(G_{k'})=\pis(G)_{k'} \quad \text{ and } \quad (G^\sc)_{k'}=(G_{k'})^\sc.$$
\end{prop}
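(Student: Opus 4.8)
The plan is to reduce everything to the corresponding statement about the strong core of the coordinate Hopf algebra, which is exactly Lemma~\ref{lemma: pis and base change}. Recall that $k\{\pis(G)\}=\pis(k\{G\})$ and that $G^\sc=\ker(G\to\pis(G))$, so $k\{G/G^\sc\}=\pis(k\{G\})\subseteq k\{G\}$.

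First I would establish $\pis(G_{k'})=\pis(G)_{k'}$. By definition $\pis(G_{k'})$ is the \ssetale{} $k'$-$\s$-algebraic group with coordinate ring $\pis(k'\{G_{k'}\})=\pis(k\{G\}\otimes_k k'\,|\,k')$, and $\pis(G)_{k'}$ has coordinate ring $\pis(k\{G\}\,|\,k)\otimes_k k'$. Lemma~\ref{lemma: pis and base change} says precisely that these two \ks-algebras coincide as subalgebras of $k\{G\}\otimes_k k'$, and the identification is visibly one of Hopf algebras because both sit inside $k\{G_{k'}\}$ compatibly with the Hopf structure maps (which are defined over $k$). Hence $\pis(G_{k'})$ and $\pis(G)_{k'}$ represent the same functor, proving the first equality. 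One small point to spell out: one should note that $\pis(k\{G\})$ is strongly $\s$-\'etale (Corollary~\ref{cor: pis ssetale}) so that base change of an \'etale $k$-algebra is an \'etale $k'$-algebra and $\s$-separability is preserved, matching Definition~\ref{defi: strongly setale}; this makes $\pis(G)_{k'}$ a legitimate \ssetale{} $k'$-$\s$-algebraic group.

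For the second equality I would argue via the quotient map. The morphism $G\to\pis(G)$ is a quotient map with kernel $G^\sc$; base changing along $k\to k'$ gives $G_{k'}\to \pis(G)_{k'}=\pis(G_{k'})$, and by Lemma~\ref{lemma: quotients and base change} (more precisely, since base change sends $\s$-closed embeddings to $\s$-closed embeddings and preserves injectivity of the dual map on coordinate rings, as $-\otimes_k k'$ is faithfully flat) this is again a quotient map with kernel $(G^\sc)_{k'}$. Thus $\ker(G_{k'}\to\pis(G_{k'}))=(G^\sc)_{k'}$. On the other hand, by the definition of the $\s$-identity component applied over $k'$, $\ker(G_{k'}\to\pis(G_{k'}))=(G_{k'})^\sc$. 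Comparing the two descriptions of this kernel yields $(G^\sc)_{k'}=(G_{k'})^\sc$.

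The only genuine content is Lemma~\ref{lemma: pis and base change}, which is cited and hence available, so there is no real obstacle; the main thing to be careful about is the bookkeeping at the Hopf-algebra level, namely that the isomorphism of \ks-algebras $\pis(k\{G\}\otimes_k k')\cong\pis(k\{G\})\otimes_k k'$ is compatible with comultiplication, counit and antipode. This is immediate once one observes that all of these identifications take place inside $k\{G\}\otimes_k k'=k'\{G_{k'}\}$ and that the Hopf structure maps of $k'\{G_{k'}\}$ are obtained from those of $k\{G\}$ by base change, hence restrict correctly to the strong core.
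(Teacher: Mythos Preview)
Your proof is correct and takes essentially the same approach as the paper, which simply states that the proposition is clear from Lemma~\ref{lemma: pis and base change}. You have merely spelled out the bookkeeping (Hopf-algebra compatibility, the kernel argument for $G^\sc$) that the paper leaves implicit.
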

\begin{proof}
	This is clear from Lemma \ref{lemma: pis and base change}.
\end{proof}

A connected algebraic group is geometrically connected (\cite[Prop. 1.34]{Milne:AlgebraicGroupsTheTheoryofGroupSchemesOfFiniteTypeOverAField}). From Proposition \ref{prop:  pis and base change} we obtain a similar result in our setting:
\begin{cor}
	Let $G$ be a $\s$-connected $\s$-algebraic group. Then $G_{k'}$ is $\s$-connected for every $\s$-field extension $k'$ of $k$. \qed
\end{cor}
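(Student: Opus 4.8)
The plan is to reduce the statement to the characterization of $\s$-connectedness in terms of the group of $\s$-connected components, and then invoke the base-change compatibility of $\pis$. By Lemma \ref{lemma: characterize sconnected}, a $\s$-algebraic group $H$ is $\s$-connected if and only if $\pis(H)=1$. Thus the claim amounts to the implication: $\pis(G)=1$ forces $\pis(G_{k'})=1$ for every $\s$-field extension $k'$ of $k$.

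First I would apply Proposition \ref{prop:  pis and base change}, which gives $\pis(G_{k'})=\pis(G)_{k'}$. Since $G$ is $\s$-connected, $\pis(G)=1$, and the base change of the trivial $\s$-algebraic group over $k$ is the trivial $\s$-algebraic group over $k'$ (concretely, $k\otimes_k k'=k'$). Hence $\pis(G_{k'})=1$, and a second application of Lemma \ref{lemma: characterize sconnected} shows that $G_{k'}$ is $\s$-connected, as desired.

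There is essentially no obstacle: the only real content is Proposition \ref{prop:  pis and base change}, which in turn rests on the base-change formula for the strong core, Lemma \ref{lemma: pis and base change}. The analogy with the algebraic-group statement (a connected algebraic group is geometrically connected) is exact, and the argument is purely formal once these two ingredients are in place; no finiteness or characteristic hypotheses beyond those already built into the notion of a $\s$-algebraic group are needed.
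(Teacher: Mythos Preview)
Your proof is correct and follows exactly the approach implicit in the paper's \qed: the corollary is immediate from Proposition~\ref{prop:  pis and base change} together with the characterization in Lemma~\ref{lemma: characterize sconnected}, and you have simply spelled out these details.
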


Recall (\cite[Def. 6.15]{Wibmer:AlmostSimpleAffineDifferenceAlgebraicGroups}) that a $\s$-closed subgroup $H$ of a $\s$-algebraic group $G$ is \emph{characteristic} if for every \ks-algebra $R$, every automorphism of $G_R$ induces an automorphism of $H_R$.

\begin{theo} \label{theo: Gsc is characteristic}
	Let $G$ be a $\s$-algebraic group. Then $G^\sc$ is a characteristic subgroup of $G$.
\end{theo}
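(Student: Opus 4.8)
The plan is to reduce the statement to the universal property of $\pis(G)$ established in Proposition~\ref{prop: exists Gcore}. Fix a \ks-algebra $R$ and an automorphism $\alpha\colon G_R\to G_R$ of $\s$-algebraic groups over $R$. I want to show that $\alpha$ restricts to an automorphism of $(G^\sc)_R$. By Proposition~\ref{prop:  pis and base change} (applied over the base $R$, or rather its natural analog for general \ks-algebras — but in fact it suffices to observe that the construction of $\pis$ and hence of $G^\sc$ commutes with the relevant base change, which is Lemma~\ref{lemma: pis and base change} for field extensions; for a general \ks-algebra $R$ the formation of the strong core still commutes with base change by Lemma~\ref{lemma: pis and tensor}-type arguments, or one can simply work with the universal property directly over $R$), we have $(G^\sc)_R = (G_R)^\sc$ and $\pis(G_R) = \pis(G)_R$. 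So it is enough to prove the statement with $R = k$ and $G$ replaced by $G_R$; i.e., it suffices to show: if $\alpha\colon G\to G$ is an automorphism of $\s$-algebraic groups, then $\alpha(G^\sc) = G^\sc$.

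The key step is that the quotient map $q\colon G\to \pis(G)$ is preserved by $\alpha$. Consider the composite $q\circ\alpha\colon G\to \pis(G)$. Since $\pis(G)$ is \ssetale, the universal property of Proposition~\ref{prop: exists Gcore} applied to $q\circ\alpha$ yields a unique morphism $\beta\colon \pis(G)\to\pis(G)$ with $\beta\circ q = q\circ\alpha$. Applying the same reasoning to $\alpha^{-1}$ gives $\gamma\colon\pis(G)\to\pis(G)$ with $\gamma\circ q = q\circ\alpha^{-1}$. Then $\gamma\circ\beta\circ q = \gamma\circ q\circ\alpha = q\circ\alpha^{-1}\circ\alpha = q$, and by the uniqueness clause of the universal property (applied to the morphism $q\colon G\to\pis(G)$ itself) we get $\gamma\circ\beta = \id$; symmetrically $\beta\circ\gamma = \id$. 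Hence $\beta$ is an automorphism of $\pis(G)$ and the square relating $q$, $\alpha$, $\beta$ commutes.

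Now I conclude by tracking kernels. Since $G^\sc = \ker(q)$ and $q\circ\alpha = \beta\circ q$ with $\beta$ an isomorphism, we have $\ker(q\circ\alpha) = \ker(\beta\circ q) = \ker(q) = G^\sc$. On the other hand, because $\alpha$ is an automorphism, $\ker(q\circ\alpha) = \alpha^{-1}(\ker(q)) = \alpha^{-1}(G^\sc)$ (this is the behavior of kernels under composition with an isomorphism, valid for $\s$-algebraic groups since inverse images of $\s$-closed subgroups along morphisms are again $\s$-closed subgroups, cf.\ the discussion of $\f^{-1}(Z)$ in Section~\ref{subsec: affine difference algebraic geometry}). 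Therefore $\alpha^{-1}(G^\sc) = G^\sc$, i.e., $\alpha(G^\sc) = G^\sc$, as desired. I expect the only mildly delicate point to be the bookkeeping needed to make the base change to an arbitrary \ks-algebra $R$ rigorous — specifically, checking that $\pis(k\{G\}\otimes_k R)$ and the quotient map behave well over $R$; but this is handled by the functorial properties of the strong core (Lemmas~\ref{lemma: pis and tensor} and \ref{lemma: pis and base change}) together with the fact that $\m_{G_R}$ is the extension of $\m_G$, so the rest of the argument is formal.
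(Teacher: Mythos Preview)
Your universal-property argument is clean and correct when $R=k$ (or more generally when $R$ is a $\s$-field, via Lemma~\ref{lemma: pis and base change}). The gap is in the reduction step. The sentence ``it is enough to prove the statement with $R=k$ and $G$ replaced by $G_R$'' does not parse: when $R$ is a general \ks-algebra, $G_R$ is not a $\s$-algebraic group over a $\s$-field, so neither Proposition~\ref{prop: exists Gcore} nor the notion of $\pis(G_R)$ as defined in the paper applies to it. Since the definition of \emph{characteristic} requires checking all \ks-algebras $R$, this case cannot be sidestepped; it is in fact the whole content of the theorem.

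The paper's proof handles general $R$ directly on the Hopf-algebra side, and the mechanism is worth internalizing because your handwave at the end points to the right lemma but not to how it is used. An automorphism $\psi$ of the $R$-$\s$-Hopf algebra $k\{G\}\otimes_k R$ is in particular an automorphism of $k$-$\s$-algebras, hence preserves the strong core $\pis(k\{G\}\otimes_k R)$ taken \emph{over $k$}. By Lemma~\ref{lemma: pis and tensor} this core equals $\pis(k\{G\})\otimes_k\pis(R)$, so
\[
\psi\bigl(\pis(k\{G\})\otimes 1\bigr)\subseteq \pis(k\{G\})\otimes_k\pis(R)\subseteq \pis(k\{G\})\otimes_k R,
\]
and then $R$-linearity of $\psi$ upgrades this to $\psi\bigl(\pis(k\{G\})\otimes_k R\bigr)\subseteq \pis(k\{G\})\otimes_k R$. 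Via \cite[Lemma~6.16]{Wibmer:AlmostSimpleAffineDifferenceAlgebraicGroups} this is exactly what is needed. Note that Lemma~\ref{lemma: pis and tensor} is not being used to say ``$\pis$ commutes with base change to $R$'' in any naive sense; rather, one computes the strong core of the tensor product \emph{over $k$} and exploits that $\psi$ is simultaneously $k$-$\s$-linear (to preserve that core) and $R$-linear (to extend scalars). Your universal-property formulation over $R$ would require either defining strongly $\s$-\'etale group schemes over a general base or reproving this inclusion, so the ``formal'' step you defer is in fact where the work happens.
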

\begin{proof}
	By Lemma \cite[Lemma 6.16]{Wibmer:AlmostSimpleAffineDifferenceAlgebraicGroups} it suffices to show that for every $k$-$\s$-algebra $R$, every automorphism $\psi$ of the $R$-$\s$-Hopf algebra $k\{G\}\otimes_k R$ maps $\pis(k\{G\})\otimes_k R$ into $\pis(k\{G\})\otimes_k R$. Since $\psi$ is an automorphism of the $k$-$\s$-algebra $k\{G\}\otimes_k R$, we have $\psi(\pis(k\{G\}\otimes_k R))\subseteq \pis(k\{G\}\otimes_k R)$. Using Lemma \ref{lemma: pis and tensor}, we obtain 
	\begin{align*}
	\psi(\pis(k\{G\})\otimes 1) & \subseteq \psi(\pis(k\{G\}\otimes_k R))\subseteq \pis(k\{G\}\otimes_k R)= \\
	& =\pis(k\{G\})\otimes_k\pis(R)\subseteq \pis(k\{G\})\otimes_k R.
	\end{align*}
	Thus  $\psi(\pis(k\{G\})\otimes_k R)\subseteq\pis(k\{G\})\otimes_k R$ as required.
\end{proof}

The three basic numerical invariants for $\s$-algebraic groups do not change when passing to the $\s$-identity component.

\begin{prop} \label{prop: invariants for Gsc}
	Let $G$ be a $\s$-algebraic group. Then $\sdim(G^\sc)=\sdim(G)$, $\ord(G^\sc)=\ord(G)$ and $\ld(G^\sc)=\ld(G)$.
\end{prop}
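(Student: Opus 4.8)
The plan is to exploit the fact that $G/G^\sc = \pis(G)$ is \ssetale{} and hence, in particular, finite, i.e., $k\{\pis(G)\}$ is a finite dimensional $k$-vector space. By Lemma \ref{lemma: ld=1} (or more precisely its proof/the defining invariants), a \ks-algebra that is finite dimensional as a $k$-vector space has limit degree $1$, $\s$-dimension $0$ and order $0$; concretely, $\sdim(\pis(G))=0$, $\ord(\pis(G))=0$ and $\ld(\pis(G))=1$. I would first record this observation, citing that an \'{e}tale $k$-algebra is finite dimensional over $k$ and that for such a $\s$-algebraic group $H$ one has $H[i]$ eventually constant of dimension $0$, forcing $d=e=0$ in Theorem \ref{theo: existence of sdim and order}, and that the kernels $\G_i$ of the projections are eventually trivial, giving $\ld(H)=1$.

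Next I would apply Proposition \ref{prop: invariants and quotients} to the short exact sequence $1\to G^\sc\to G\to \pis(G)\to 1$. This gives
\begin{align*}
\sdim(G) &= \sdim(G^\sc)+\sdim(\pis(G)) = \sdim(G^\sc)+0,\\
\ord(G) &= \ord(G^\sc)+\ord(\pis(G)) = \ord(G^\sc)+0,\\
\ld(G) &= \ld(G^\sc)\cdot\ld(\pis(G)) = \ld(G^\sc)\cdot 1,
\end{align*}
which yields the three asserted equalities at once. One should phrase the argument so that it remains valid when infinite values occur, exactly as the formulas in Proposition \ref{prop: invariants and quotients} are set up to allow: the summand/factor contributed by $\pis(G)$ is trivial whether or not the invariants of $G$ itself are finite.

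The only genuine content beyond quoting Proposition \ref{prop: invariants and quotients} is the claim that a \ssetale{} (or merely finite) $\s$-algebraic group has all three invariants trivial; I expect this to be the main (albeit small) obstacle, essentially a bookkeeping check against the definitions of $\sdim$, $\ord$ and $\ld$ via Theorem \ref{theo: existence of sdim and order} and Proposition \ref{prop: limit degree}. For $\ld$ one can also simply invoke Lemma \ref{lemma: ld=1}: since $k\{\pis(G)\}$ is even finite dimensional over $k$, it is certainly finitely generated as a $k$-algebra, so $\ld(\pis(G))=1$; and $\ld(\pis(G))=1$ finite forces $\sdim(\pis(G))=0$ by the remark following Proposition \ref{prop: limit degree}, while $\ord(\pis(G))=0$ follows because $\pis(G)[i]$ is a finite (zero-dimensional) scheme for all $i$, so its dimension is the constant $0$, giving $e=0$ in Theorem \ref{theo: existence of sdim and order}. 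Everything else is a one-line substitution into the formulas of Proposition \ref{prop: invariants and quotients}.
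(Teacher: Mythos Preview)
Your proposal is correct and follows essentially the same route as the paper: reduce via Proposition~\ref{prop: invariants and quotients} to showing that $\pis(G)$ has $\sdim=0$, $\ord=0$ and $\ld=1$, and then verify these using that $k\{\pis(G)\}$ is \'{e}tale (hence finite dimensional) together with Lemma~\ref{lemma: ld=1}. The paper's proof is slightly terser but structurally identical.
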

\begin{proof}
	By Proposition \ref{prop: invariants and quotients} it suffices to show that $\sdim(\pis(G))=0$, $\ord(\pis(G))=0$ and $\ld(\pis(G))=1$. Clearly a \ssetale{} $\s$-algebraic group has $\s$-dimension zero and order zero. Furthermore, a \ssetale{} $\s$-algebraic group has limit degree one by Lemma \ref{lemma: ld=1}.
\end{proof}

The following two propositions are useful for determining the $\s$-identity component in concrete examples. Proposition \ref{prop: sconnected component unique} is a difference analog of \cite[Prop. 5.58]{Milne:AlgebraicGroupsTheTheoryofGroupSchemesOfFiniteTypeOverAField}, while Proposition \ref{prop: sconnected exact sequence} is a difference analog of \cite[Prop. 5.59]{Milne:AlgebraicGroupsTheTheoryofGroupSchemesOfFiniteTypeOverAField}.

\begin{prop} \label{prop: sconnected component unique}
	Let $G$ be a $\s$-algebraic group. If $N$ is a normal $\s$-connected $\s$-closed subgroup of $G$ such that $G/N$ is \ssetale{}, then $N=G^{\sc}$. 
\end{prop}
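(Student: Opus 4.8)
The plan is to show that the composite morphism $N \hookrightarrow G \to \pis(G)$ is trivial, so that $N \subseteq \ker(G \to \pis(G)) = G^\sc$, and then to show the reverse inclusion $G^\sc \subseteq N$ by playing off the quotient $G/N$ being \ssetale{} against the universal property of $\pis(G)$. First I would observe that since $N$ is $\s$-connected, $\pis(N) = 1$ by Lemma~\ref{lemma: characterize sconnected}. The restriction of $G \to \pis(G)$ to $N$ is a morphism $N \to \pis(G)$ to a \ssetale{} $\s$-algebraic group, hence by the universal property in Proposition~\ref{prop: exists Gcore} it factors through $\pis(N) = 1$; thus $N \to \pis(G)$ is trivial, giving $N \subseteq G^\sc$.

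For the reverse inclusion, I would use that $G/N$ is \ssetale{} by hypothesis. The quotient map $G \to G/N$ is then a morphism from $G$ to a \ssetale{} $\s$-algebraic group, so by the universal property of $\pis(G)$ it factors as $G \to \pis(G) \to G/N$. Restricting to $G^\sc = \ker(G \to \pis(G))$, the composite $G^\sc \to \pis(G) \to G/N$ is trivial, i.e. $G^\sc \subseteq \ker(G \to G/N) = N$ (using Theorem~\ref{theo: existence of quotients} to identify the kernel of the quotient map with $N$). Combining the two inclusions yields $N = G^\sc$.

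The only point requiring a little care is that the two universal-property factorizations above genuinely land in the right places: for the first, that a morphism from a $\s$-connected group to a \ssetale{} group is trivial — this is immediate from $\pis(N)=1$ and Proposition~\ref{prop: exists Gcore}; for the second, that the induced map $\pis(G) \to G/N$ composed with $G \to \pis(G)$ equals the original quotient, which is exactly the commutativity asserted in Proposition~\ref{prop: exists Gcore}. I expect the main (minor) obstacle to be bookkeeping with kernels: one should note explicitly that $N$ being normal and $\s$-closed makes $G/N$ a legitimate $\s$-algebraic group and that $\ker(G \to G/N) = N$, which is guaranteed by Theorem~\ref{theo: existence of quotients}. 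No dimension count or limit-degree argument is needed; the statement is purely formal once the universal property of $\pis$ is in hand.
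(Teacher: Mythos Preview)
Your proof is correct and follows essentially the same approach as the paper. Both arguments use the universal property of $\pis$ twice: once on the morphism $G\to G/N$ (yielding $G^\sc\leq N$) and once on the restriction $N\to\pis(G)$ (yielding $N\leq G^\sc$ via $\pis(N)=1$); the only cosmetic difference is that the paper phrases the second step as computing the kernel of $N\to\pis(G)$ in two ways rather than as a direct inclusion.
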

\begin{proof}
	As $G/N$ is \ssetale{}, we obtain from Proposition \ref{prop: exists Gcore} a commutative diagram	
	$$
	\xymatrix{
		G \ar[rr] \ar[rd] & & \pis(G) \ar@{..>}[ld] \\
		& G/N &
	}
	$$
	So $G^\sc\leq N$ and the kernel of $N\to \pis(G)$ is $G^\sc$. But by Proposition \ref{prop: exists Gcore} again, $N\to \pis(G)$ factors through $\pis(N)=1$. So the kernel of $N\to \pis(G)$ is $N$. So $N=G^\sc$.
\end{proof}

Alternatively, Proposition \ref{prop: sconnected component unique} can be restated as: For any $\s$-algebraic group $G$, there exists a unique (up to isomorphism) exact sequence
$$1\to G^\sc\to G\to \pis(G)\to 1$$ with $G^\sc$ $\s$-connected and $\pis(G)$ \ssetale{}. The following example shows how Proposition~\ref{prop: sconnected component unique} can be used to determine the $\s$-identity component of a $\s$-algebraic group.

\begin{ex}
	Let $G$ be the $\s$-algebraic group given by
	$$G(R)=\{g\in R^\times|\ g^4=1,\ g^2\s(g)^2=1\}\leq\Gm(R)$$
	for any \ks-algebra $R$. Let us assume that the characteristic of $k$ is not equal to two. We will show that $G$ is not $\s$-connected. Indeed, $G$ has two $\s$-connected components and $G^{\sc}$ is given by $G^{\sc}(R)=\{g\in R^\times|\ g^2=1\}$.
	
	Let $H$ be the $\s$-algebraic group given by $H(R)=\{g\in R^\times|\ g^2=1,\ \s(g)=g\}\leq\Gm(R)$ for any \ks-algebra $R$.
	It follows from $g^4=1$ and $g^2\s(g)^2=1$ that $\s(g^2)=g^2$.
	Therefore we can define a morphism of $\s$-algebraic groups
	$$\f\colon G\to H,\ g\mapsto g^2.$$ The dual morphism $\f^*\colon k\{H\}\to k\{G\}$ is injective, so $\f$ is a quotient map. The kernel $N$ of $\f$ is given by $N(R)=\{g\in R^\times|\ g^2=1\}$. It follows from Example \ref{ex: algebraic group sconnected} that $N$ is $\s$-connected. Moreover, by Example \ref{ex: ssetale if and only if} the $\s$-algebraic group $H$ is \ssetale{}. Therefore it follows from Proposition \ref{prop: sconnected component unique} that $G^\sc=N$ and $\pis(G)=H$.
	As $\pis(k\{G\})=k\{H\}=k\times k$, with the two non-trivial idempotent elements being constant, it follows from Proposition \ref{prop: pis and scomponents} that $G$ has two $\s$-connected components.
\end{ex}

See \cite[Example 6.8]{BachmayrWibmer:AlgebraicGroupsAsDifferenceGaloisGroupsOfLinearDifferentialEquations}
for another example computation of $\pis(G)$ and $G^\sc$ that uses Proposition \ref{prop: sconnected component unique}.

\begin{prop} \label{prop: sconnected exact sequence}
	Let $1\to N\to G\to H\to 1$ be an exact sequence of $\s$-algebraic groups. If $N$ and $H$ are $\s$-connected, then $G$ is $\s$-connected. Moreover, if $G$ is $\s$-connected, then $H$ is $\s$-connected.
\end{prop}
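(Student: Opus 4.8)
The plan is to prove both implications via the characterization of $\s$-connectedness in terms of the strong core, i.e., using the equivalence $G$ is $\s$-connected $\iff$ $\pis(k\{G\}) = k$ from Lemma~\ref{lemma: characterize sconnected}. The key functorial tool will be the behavior of $\pis$ under the exact sequence, together with the fact (Lemma~\ref{lemma: pis and tensor}) that $\pis$ commutes with tensor products, and the fact that $\pis(k\{G\})$ is a $k$-$\s$-Hopf subalgebra of $k\{G\}$ corresponding to the quotient $G \to \pis(G)$.

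For the first implication, suppose $N$ and $H$ are $\s$-connected; I want $\pis(G) = 1$. Consider the composite morphism $N \to G \to \pis(G)$. Since $\pis(G)$ is \ssetale{} and $N$ is $\s$-connected, the universal property in Proposition~\ref{prop: exists Gcore} forces $N \to \pis(G)$ to factor through $\pis(N) = 1$, so $N$ lies in the kernel of $G \to \pis(G)$. Hence $G \to \pis(G)$ factors through $G/N \cong H$, giving a morphism $H \to \pis(G)$. But $H$ is $\s$-connected, so again this morphism factors through $\pis(H) = 1$. Therefore $G \to \pis(G)$ is trivial; since it is a quotient map (so $k\{\pis(G)\} \hookrightarrow k\{G\}$ with image $k$), we get $\pis(G) = 1$, i.e., $G$ is $\s$-connected. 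This half is essentially a diagram chase using only the universal property and Theorem~\ref{theo: isom 3}.

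For the second implication, suppose $G$ is $\s$-connected; I want $H$ $\s$-connected. Let $\pi\colon G \to H$ be the quotient map, so $\pi^*\colon k\{H\} \hookrightarrow k\{G\}$ is injective. Then $\pi^*(\pis(k\{H\}))$ is a \ssetale{} $k$-$\s$-subalgebra of $k\{G\}$ — here I use that $\pis(k\{H\})$ is \ssetale{} (Corollary~\ref{cor: pis ssetale}) and that a $k$-$\s$-subalgebra of $k\{G\}$ isomorphic to a \ssetale{} algebra is itself \ssetale{}. Hence $\pi^*(\pis(k\{H\})) \subseteq \pis(k\{G\}) = k$, which forces $\pis(k\{H\}) = k$ since $\pi^*$ is injective and unital. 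By Lemma~\ref{lemma: characterize sconnected}, $H$ is $\s$-connected.

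The main obstacle is the first implication: the delicate point is justifying that the morphism $G \to \pis(G)$ really does factor through $H = G/N$, which requires knowing $N \subseteq \ker(G \to \pis(G))$, and this in turn rests on the universal property of $\pis$ applied to $N$ rather than to $G$. Once that factorization is in place, everything reduces to the observation that a quotient map to the trivial group is trivial. An alternative route for the first part would be to argue directly on coordinate rings: $k\{G\}$ sits in $k\{H\} \subseteq k\{G\}$ and one can analyze $\pis(k\{G\})$ by comparing it with $k\{H\}$ and the fiber, but this seems more cumbersome than the categorical argument, so I would present the universal-property version.
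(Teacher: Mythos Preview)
Your proof is correct and follows essentially the same approach as the paper: for the first implication you use the universal property of $\pis$ to see that $N$ lies in the kernel of $G\to\pis(G)$, then factor through $G/N\simeq H$ and apply the universal property again, exactly as the paper does; for the second implication you use the inclusion $k\{H\}\hookrightarrow k\{G\}$ to conclude $\pis(k\{H\})\subseteq\pis(k\{G\})=k$, which is precisely the paper's one-line argument.
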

\begin{proof}
	By Proposition \ref{prop: exists Gcore}, the morphism $N\to G\to \pis(G)$ factors through the morphism $N\to\pis(N)=1$. So $N$ is contained in $G^\sc$, i.e., in the kernel of $G\to \pis(G)$. We therefore have an induced quotient map $G/N\to\pis(G)$. But $G/N\simeq H$ is $\s$-connected, so $G/N\to\pis(G)$ factors through $\pis(G/N)=1$. Thus $\pis(G)=1$.
	
	For a quotient map $G\to H$, we have an inclusion $k\{H\}\subseteq k\{G\}$. So, clearly, $\pis(k\{H\})=k$ if $\pis(k\{G\})=k$.	
\end{proof}

The following example illustrates how Proposition \ref{prop: sconnected exact sequence} can be used to show that a $\s$-algebraic group is $\s$-connected. See Examples \ref{ex: Kitchens} and \ref{ex: babbitt long} for more example applications of Proposition~\ref{prop: sconnected exact sequence}.
\begin{ex} \label{ex: easy Babbitt is sconnected}
	Let $G$ be the $\s$-closed subgroup of $\Gm$ given by
	$$G(R)=\{g\in R^\times|\ g^4=1,\ \s(g)^2=1\}\leq\Gm(R)$$
	for any \ks-algebra $R$. We will show that $G$ is $\s$-connected.
	Let $H$ be the $\s$-algebraic group given by $H(R)=\{g\in R^\times|\ g^2=1\}$ and let $\f\colon G\to H$ be the morphism given by
	$$\f_R\colon G(R)\to H(R),\ g\mapsto \s(g).$$ Then the dual map $\f^*\colon k\{H\}\to k\{G\}$ is injective and so $\f$ is a quotient map. The kernel $N$ of $\f$ is given by $N(R)=\{g\in R^\times|\ g^4=1,\ \s(g)=1\}$ for any \ks-algebra $R$. We have an exact sequence $1\to N\to G\to H\to 1$. Note that $H=[\s]_k\H$ where $\H$ is the algebraic group given by $\H(T)=\{g\in T^\times|\ g^2=1\}$ for any $k$-algebra $T$. So it follows from Example \ref{ex: algebraic group sconnected} that $H$ is $\s$-connected. Because $N$ is $\s$-connected by Example \ref{ex: sconnected1}, Proposition \ref{prop: sconnected exact sequence} shows that $G$ is $\s$-connected.
\end{ex}

We conclude this section with one more example computation.

\begin{ex} \label{ex: finite sconnected iff}
	Let $\GG$ be a finite group with an endomorphism $\s\colon \GG\to \GG$ and let $G$ be the $\s$\=/algebraic group associated to these data as in Example \ref{ex: finite constant}. We will show that $G$ is $\s$-connected if and only if some power of $\s\colon \GG\to \GG$ is the trivial endomorphism ($g\mapsto 1$) of $\GG$.

	Let us start by determining the topological space of $G^\sc$. Note that $\spec(k\{G\})$ is naturally in bijection with $\GG$. 
	 We have a commutative diagram 
	$$
	\xymatrix{
		\spec(k\{G\}) \ar_\s[d] \ar^-\simeq[r] & \GG \ar^\s[d] \\
		\spec(k\{G\}) \ar^-\simeq[r] & \GG
	}
	$$
	
	Let $\NN=\{g\in\GG|\ \exists \ n\geq 1 :\ \s^n(g)=1\}$. Then $\NN$ is a normal subgroup of $\GG$ that is invariant under $\s$. Because any element of $\NN$ will eventually map to $1$ under iterations of $\s$, we see that $\NN$ cannot be written as a disjoint union of $\s$-invariant sets. As also the complement of $\NN$ is stable under $\s$, we see that $\NN$ corresponds to the $\s$-connected component of $\spec(k\{G\})$ that contains $\m_G$ under the bijection $\GG\simeq\spec(k\{G\})$.
	In particular, $G$ is $\s$-connected if and only if $\GG=\NN$, i.e., some power of $\s$ is the trivial endomorphism of $\GG$.

	The cosets $g\NN$ of $\NN$ in $\GG$ need not correspond to $\s$-connected components of $G$ because they may not be stable under $\s$. Note that the induced map $\s\colon\GG/\NN\to\GG/\NN$ is an automorphism (because it is injective). The $\s$-connected components of $G$ are in one-to-one correspondence with the orbits of $\s$ on $\GG/\NN$.
\end{ex}

\section{Basic properties of \'{e}tale difference algebraic groups} \label{sec: basic properties of etale difference algebraic groups}

In this short section we recall the definition of \'{e}tale difference algebraic groups and establish some first properties.


Recall that a $k$-algebra $T$ is \emph{ind-\'{e}tale} if it is a union of \'{e}tale $k$-subalgebras. Equivalently, $T$ is integral over $k$ and a separable $k$-algebra. Yet another characterization of ind-\'{e}tale $k$-algebras is that every element satisfies a separable polynomial over $k$.
Following \cite[Def. 6.1]{Wibmer:AlmostSimpleAffineDifferenceAlgebraicGroups} we make the following definition.

\begin{defi}
	A \ks-algebra is \emph{$\s$-\'{e}tale} if it is finitely $\s$-generated over $k$ and ind-\'{e}tale as a $k$-algebra. A $\s$-algebraic group $G$ is $\s$-\'{e}tale if $k\{G\}$ is a $\s$-\'{e}tale \ks-algebra.
\end{defi}


Of course \ssetale{} $\s$-algebraic groups are $\s$-\'{e}tale. Let us see some examples of $\s$-\'{e}tale $\s$-algebraic groups.

\begin{ex} \label{ex: finite is setale}
	Let $\GG$ be a finite group and $\s\colon \GG\to \GG$ a group endomorphism. Then the $\s$\=/algebraic group $G$ constructed from these data as in Example \ref{ex: finite constant} is a $\s$-\'{e}tale $\s$-algebraic group. This is clear because $k\{G\}$ is a finite direct product of copies of $k$ and therefore \'{e}tale.
\end{ex}

\begin{ex} \label{ex: roots of unity}
	Let $n\geq 2$ be an integer. If the characteristic of $k$ does not divide $n$, the $\s$-algebraic group $G$ given by
	$$G(R)=\{g\in R^\times |\ g^n=1\}\leq\Gm(R)$$
	for any \ks-algebra $R$ is $\s$-\'{e}tale. To verify that $k\{G\}=k\{y\}/[y^n-1]$ is $\s$-\'{e}tale it suffices to observe that $k[y]/(y^n-1)$ is an \'{e}tale $k$-algebra because of our assumption on the characteristic.
\end{ex}

\begin{ex} \label{ex: benign}
	Let $\G$ be an \'{e}tale algebraic group. Then $G=[\s]_k\G$ is a $\s$-\'{e}tale $\s$-algebraic group with $\ld(G)=|\G|$.
	
	As $k[\G]$ is an \'{e}tale $k$-algebra. Also ${\hsi(k[\G])}$ and $k[\G[i]]=k[\G]\otimes_k\cdots\otimes_k {\hsi(k[\G])}$ are \'{e}tale $k$\=/algebras for every $i\in\nn$. Therefore $k\{G\}=\cup k[\G[i]]$ is $\s$-\'{e}tale. The statement that $\ld(G)=|\G|$ is a special case of \cite[Example 5.5]{Wibmer:FinitenessPropertiesOfAffineDifferenceAlgebraicGroups}.
\end{ex}

Note that Example \ref{ex: roots of unity} is a special case of Example \ref{ex: benign} with $\G$ the algebraic group of $n$-th roots of unity, i.e., $\G(T)=\{g\in T^\times|\ g^n=1\}$ for any $k$-algebra $T$. 

One of the main results of this article (Theorem \ref{theo: Babbitt}) shows that any $\s$-\'{e}tale $\s$-algebraic group can be build up (in a rather precise way) from the $\s$-algebraic groups in Example~\ref{ex: benign} and two finite $\s$-\'{e}tale $\s$-algebraic groups. (Here a $\s$-algebraic group $G$ is called finite if $k\{G\}$ is a finite dimensional $k$-vector space.)

\begin{defi}
	A $\s$-algebraic group is \emph{benign} if it is isomorphic to a $\s$-algebraic group of the form $[\s]_k\G$ for an \'{e}tale algebraic group $\G$.
\end{defi}

The usage of the word \emph{benign} originates from \cite[Def. 5.4.7]{Levin}, where it is used to describe an extension $L/K$ of $\s$-fields such $L=[\s]_KM$ for a finite Galois extension $M$ of $K$.

Clearly, a $\s$-\'{e}tale $\s$-algebraic group has $\s$-dimension zero and order zero. However, as seen in Example \ref{ex: benign}, the limit degree may be strictly larger than one.

%
%
%
%


\begin{lemma} \label{lemma: subgroups and quotients of setale are setale}
	Quotients and $\s$-closed subgroups of $\s$-\'{e}tale $\s$-algebraic groups are $\s$-\'{e}tale.
\end{lemma}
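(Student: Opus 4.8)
The plan is to reduce both statements to the corresponding facts about the underlying $k$-algebras, since being $\s$-\'{e}tale is by definition a conjunction of two conditions: finite $\s$-generation and being ind-\'{e}tale as a $k$-algebra. Finite $\s$-generation is already controlled by the general theory: a $\s$-closed subgroup of a $\s$-algebraic group has a coordinate ring that is a quotient of $k\{G\}$, hence finitely $\s$-generated, and a quotient $G/N$ has coordinate ring a \ks-Hopf subalgebra of $k\{G\}$, hence finitely $\s$-generated by Theorem \ref{theo: ksHopfsubalgebra finitely sgenerated}. So the entire content is the preservation of the ind-\'{e}tale property at the level of $k$-algebras.

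\textbf{$\s$-closed subgroups.} First I would handle subgroups. If $H$ is a $\s$-closed subgroup of a $\s$-\'{e}tale $G$, then $k\{H\}=k\{G\}/\I(H)$ is a quotient ring of $k\{G\}$. A quotient of an ind-\'{e}tale $k$-algebra is ind-\'{e}tale: the cleanest way is to use the characterization that a $k$-algebra is ind-\'{e}tale if and only if every element satisfies a separable polynomial over $k$; this property obviously passes to quotient rings, since the image of an element still satisfies the same separable polynomial. (Alternatively, $R\otimes_k\kb$ being a filtered union of finite products of copies of $\kb$ is inherited by quotients, but the element-wise characterization is shortest.) Hence $k\{H\}$ is ind-\'{e}tale, and combined with finite $\s$-generation, $H$ is $\s$-\'{e}tale.

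\textbf{Quotients.} For a quotient $\f\colon G\to G/N$, Theorem \ref{theo: existence of quotients} gives that $\f^*\colon k\{G/N\}\to k\{G\}$ is injective, so $k\{G/N\}$ is (isomorphic to) a $k$-subalgebra of $k\{G\}$. A $k$-subalgebra of an ind-\'{e}tale $k$-algebra is again ind-\'{e}tale, by the same element-wise criterion: if $a\in k\{G/N\}\subseteq k\{G\}$, then $a$ satisfies a separable polynomial over $k$ because $k\{G\}$ is ind-\'{e}tale, and that relation already lives in $k\{G/N\}$. Therefore $k\{G/N\}$ is ind-\'{e}tale, and since it is finitely $\s$-generated (being a \ks-Hopf subalgebra of the finitely $\s$-generated $k\{G\}$, by Theorem \ref{theo: ksHopfsubalgebra finitely sgenerated}), $G/N$ is $\s$-\'{e}tale.

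\textbf{Main obstacle.} There is essentially no serious obstacle here; the statement is a routine bookkeeping exercise. The only point requiring mild care is invoking the right packaging of "ind-\'{e}tale" — I would explicitly use the equivalence (recalled in the excerpt just before the definition of $\s$-\'{e}tale) between "union of \'{e}tale $k$-subalgebras", "integral over $k$ and separable", and "every element satisfies a separable polynomial over $k$", and pick the element-wise version so that stability under both sub- and quotient-formation is immediate. One should also remember to separately invoke finite $\s$-generation from the general theory rather than trying to prove it afresh.
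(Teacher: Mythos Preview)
Your proof is correct and follows essentially the same approach as the paper: quotients of $G$ correspond to \ks-Hopf subalgebras of $k\{G\}$ and $\s$-closed subgroups correspond to quotients of $k\{G\}$, so everything reduces to the stability of ind-\'{e}tale $k$-algebras under subalgebras and quotients. You are slightly more explicit than the paper in separately checking finite $\s$-generation and in spelling out the element-wise argument for ind-\'{e}taleness, but the underlying argument is the same.
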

\begin{proof}
	Let $G$ be a $\s$-\'{e}tale $\s$-algebraic group. Quotients of $G$ correspond to \ks-Hopf subalgebras of $k\{G\}$ (Corollary \ref{cor: quotients correspond to ksHopfsubalgebras})
	and $\s$-closed subgroups of $G$ correspond to quotients of $k\{G\}$. 
	Thus the claim follows from the fact the subalgebras and quotients of ind-\'{e}tale algebras are ind-\'{e}tale.
%
%
\end{proof}

\begin{lemma} \label{lemma: etale Zariski closures}
	Let $G$ be a $\s$-closed subgroup of an algebraic group $\G$. For $i\geq 0$ let $G[i]$ denote the $i$-th order
	Zariski closure of $G$ in $\G$. Then the following statements are equivalent:
	\begin{enumerate}
		\item $G$ is $\s$-\'{e}tale.
		\item $G[0]$ is an \'{e}tale algebraic group.
		\item $G[i]$ is an \'{e}tale algebraic group for every $i\geq 0$.
	\end{enumerate}
\end{lemma}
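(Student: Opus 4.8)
The plan is to prove the chain of implications (iii) $\Rightarrow$ (i) $\Rightarrow$ (ii) $\Rightarrow$ (iii), using the description $k\{G\} = \bigcup_{i\in\nn} k[G[i]]$ from Section~\ref{subsec: affine difference algebraic geometry} as the main bridge between the difference-algebraic data and the order-$i$ Zariski closures.

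For (iii) $\Rightarrow$ (i): if every $G[i]$ is an \'{e}tale algebraic group, then each $k[G[i]]$ is an \'{e}tale $k$-algebra, hence in particular ind-\'{e}tale; a directed union of ind-\'{e}tale $k$-algebras is again ind-\'{e}tale (every element lies in some $k[G[i]]$ and thus satisfies a separable polynomial over $k$), so $k\{G\}$ is ind-\'{e}tale as a $k$-algebra. Since $G$ is a $\s$-algebraic group, $k\{G\}$ is finitely $\s$-generated by definition, so $k\{G\}$ is $\s$-\'{e}tale, i.e., $G$ is $\s$-\'{e}tale. The implication (i) $\Rightarrow$ (ii) is a triviality: $G[0]$ is a closed subgroup of $\G$ whose coordinate ring $k[G[0]] = \I(G)\cap k[\G]$ is a $k$-subalgebra of the ind-\'{e}tale algebra $k\{G\}$, hence ind-\'{e}tale; but $k[G[0]]$ is finite-dimensional over $k$ (it is the coordinate ring of an affine scheme of finite type over $k$), so ind-\'{e}tale forces \'{e}tale, i.e., $G[0]$ is an \'{e}tale algebraic group.

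The real content is (ii) $\Rightarrow$ (iii): from $G[0]$ \'{e}tale we must deduce that every $G[i]$ is \'{e}tale. I would argue by induction on $i$, the case $i=0$ being the hypothesis. For the inductive step, recall from Section~\ref{subsec: affine difference algebraic geometry} (and the discussion after Proposition~\ref{prop: linearization}) that the projection $\pi_i\colon G[i]\to G[i-1]$ is a quotient map of algebraic groups, so it suffices to control its kernel $\G_i$. The key geometric fact is that $\G_i$ is isomorphic to a closed subgroup of ${\hsi\G}$ — concretely, $\G_i$ sits inside the fiber $\{0\}\times\cdots\times\{0\}\times{\hsi\G}$ of $\X[i]\to\X[i-1]$ restricted to $G[i]$. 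Now ${\hsi\G} = \G\otimes_k k$ is the base change of $\G$ along $\s^i\colon k\to k$; an \'{e}tale algebraic group stays \'{e}tale under base change along a field homomorphism (\'{e}taleness of $k[\G]$ is detected after $\otimes_k\overline{k}$, and $\s^i$ extends to the algebraic closures), so from $G[0]=\G$... — wait, here is the subtlety: hypothesis (ii) only tells us $G[0]$ is \'{e}tale, not that $\G$ itself is \'{e}tale. However $G[0]$ is the Zariski closure of $G$ in $\G$, and we are free to replace $\G$ by $G[0]$ from the start (the $i$-th order Zariski closure of $G$ only depends on $G[0]$, not on the ambient $\G$, since $G[i]\subseteq G[0]\times{\hs G[0]}\times\cdots$), so we may and do assume $\G = G[0]$ is \'{e}tale. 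Then each ${\hsi\G}$ is \'{e}tale, each $\G_i$ is a closed subgroup of an \'{e}tale group hence \'{e}tale, and $G[i]$ is an extension of the \'{e}tale group $G[i-1]$ by the \'{e}tale group $\G_i$, hence \'{e}tale (an extension of \'{e}tale algebraic groups is \'{e}tale — one can see this on coordinate rings, $k[G[i]]$ being finite over $k[G[i-1]]\otimes k[\G_i]$, or invoke the standard structure theory). This completes the induction.

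The main obstacle I anticipate is not any single deep fact but rather assembling the correct bookkeeping for the kernel $\G_i$ of $\pi_i$ and justifying the reduction to the case $\G=G[0]$ cleanly; once one observes that $G[i]$ depends only on $G$ (equivalently, on $G[0]$) and not on the choice of ambient embedding, and that $\G_i\leq{\hsi\G}$, everything else is a routine propagation of \'{e}taleness through base change, closed subgroups, and extensions. One should double-check that "extension of \'{e}tale by \'{e}tale is \'{e}tale" is available in the required generality (arbitrary base field, possibly imperfect) — this is standard but worth a one-line justification via the finiteness of $k[G[i]]$ as a $k$-vector space together with separability being inherited in short exact sequences of finite $k$-algebras.
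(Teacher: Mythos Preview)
Your argument is correct overall, with two small slips in (i) $\Rightarrow$ (ii): the coordinate ring is $k[G[0]] = k[\G]/(\I(G)\cap k[\G])$, not $\I(G)\cap k[\G]$ itself; and ``finite type'' alone does not give finite-dimensionality --- you need finitely generated \emph{plus} integral (which you have from ind-\'{e}tale) to conclude that $k[G[0]]$ is a finite $k$-module. Neither slip affects the conclusion.

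For (ii) $\Rightarrow$ (iii) the paper takes a shorter route that avoids your induction via the kernels $\G_i$ entirely. You already observed (for your reduction step) that $G[i]$ sits inside $G[0]\times{\hs G[0]}\times\cdots\times{\hsi G[0]}$; the paper simply stops there: this containment means $k[G[i]]$ is a quotient of
\[
k[G[0]]\otimes_k{\hs(k[G[0]])}\otimes_k\cdots\otimes_k{\hsi(k[G[0]])},
\]
and \'{e}taleness is preserved under base change, tensor products, and quotients --- done in one line. Your approach via the short exact sequences $1\to\G_i\to G[i]\to G[i-1]\to 1$ also works, but it trades those three elementary closure properties for the fact that an extension of \'{e}tale algebraic groups is \'{e}tale, which (as you yourself flagged) requires a separate check. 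The paper's argument is thus slightly more self-contained; yours has the mild advantage of making the group-theoretic structure of the tower $G[i]$ explicit, which is useful elsewhere in the paper (e.g., Proposition~\ref{prop: limit degree}).
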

\begin{proof}
	Because $k\{G\}$ is the union of the $k[G[i]]$'s (i) and (iii) are equivalent. If $k[G[0]]$ is an \'{e}tale $k$-algebra, all the $k[G[i]]$'s are \'{e}tale $k$-algebras, because $k[G[i]]$ is a quotient of $$k[G[0]][i]=k[G[0]]\otimes_k{\hs(k[G[0]])}\otimes_k\ldots\otimes_k{\hsi(k[G[0]])}$$ and \'{e}taleness is preserved under base change, tensor products and quotients.
\end{proof}

A $\s$-closed subgroup of an \'{e}tale algebraic group is $\s$-\'{e}tale (Lemma \ref{lemma: subgroups and quotients of setale are setale}). Conversely, if $G$ is an \'{e}tale algebraic group, we can embed $G$ as $\s$-closed subgroup into an \'{e}tale algebraic group. For example, we may first embed $G$ into some algebraic group $\G$ and then consider the Zariski closure of $G$ in $\G$. Thus a $\s$-algebraic group is $\s$-\'{e}tale if and only if it is isomorphic to a Zariski-dense $\s$-closed subgroup of an \'{e}tale algebraic group.

\begin{prop} \label{prop: reduced and order zero implies setale}
	A $\s$-algebraic group $G$ is $\s$-\'{e}tale if and only if it is reduced (i.e., $k\{G\}$ is reduced) and has order zero.
\end{prop}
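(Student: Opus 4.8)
\emph{Proof proposal.} The plan is to pass from $G$ to its Zariski closures inside an ambient algebraic group and then invoke Lemma~\ref{lemma: etale Zariski closures}. The implication ``$\s$-\'{e}tale $\Rightarrow$ reduced and order zero'' is immediate: if $G$ is $\s$-\'{e}tale, then $k\{G\}$ is ind-\'{e}tale over $k$ and hence reduced, and $G$ has $\s$-dimension zero and order zero as already observed above. So the content is the converse.

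Assume $k\{G\}$ is reduced and $\ord(G)=0$; in particular $\sdim(G)=0$. By Proposition~\ref{prop: linearization} we may view $G$ as a $\s$-closed subgroup of an algebraic group $\G$; let $G[i]$ denote the $i$-th order Zariski closure of $G$ in $\G$. The projections $\pi_i\colon G[i]\to G[i-1]$ are quotient maps, hence surjective, so $\dim G[i]\geq\dim G[i-1]$ and the sequence $(\dim G[i])_i$ is non-decreasing. On the other hand, Theorem~\ref{theo: existence of sdim and order}, applied with $d=\sdim(G)=0$ and $e=\ord(G)=0$, gives $\dim G[i]=0$ for all sufficiently large $i$; being non-decreasing and non-negative, $\dim G[i]=0$ for every $i$, in particular $\dim G[0]=0$. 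Moreover $k[G[0]]$ is a subring of the reduced ring $k\{G\}=\bigcup_i k[G[i]]$, hence reduced. Thus $G[0]$ is a reduced, zero-dimensional algebraic group.

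The crucial point is that such a group is automatically \'{e}tale. Indeed, $k[G[0]]$ is a reduced finite-dimensional $k$-algebra, hence a finite product of finite field extensions of $k$, and the connected component $G[0]^o$ carrying the identity corresponds to one of these field factors $L$; since the counit $k[G[0]]\to k$ restricts to a $k$-algebra homomorphism out of the field $L$, that homomorphism is an isomorphism, so $L=k$ and $G[0]^o=\spec k$. Consequently the quotient map $G[0]\to G[0]/G[0]^o=\pi_0(G[0])$ has trivial kernel and is therefore an isomorphism, so $G[0]\simeq\pi_0(G[0])$ is \'{e}tale. By Lemma~\ref{lemma: etale Zariski closures}, $G$ is $\s$-\'{e}tale. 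There is no serious obstacle in this argument; the only subtlety worth flagging is that over an imperfect field a reduced zero-dimensional affine \emph{scheme} need not be \'{e}tale, but this pathology is excluded for group schemes precisely because the rational identity section forces the identity component to be $\spec k$. The remainder is routine bookkeeping with the numerical invariants and with the dictionary between $G$ and its Zariski closures.
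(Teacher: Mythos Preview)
Your proof is correct and follows essentially the same route as the paper: embed $G$ in an algebraic group, show the Zariski closures $G[i]$ are reduced finite algebraic groups and hence \'{e}tale, then invoke Lemma~\ref{lemma: etale Zariski closures}. The paper is terser---it simply asserts that each $G[i]$ is reduced and finite and cites Waterhouse for ``reduced finite algebraic group $\Rightarrow$ \'{e}tale''---whereas you spell out the monotonicity of $\dim G[i]$ and give the identity-component argument explicitly, but the underlying strategy is the same.
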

\begin{proof}
	Clearly a $\s$-\'{e}tale $\s$-algebraic group is reduced and has order zero. Conversely, let $G$ be a reduced $\s$-algebraic group of order zero. Embed $G$ as a $\s$-closed subgroup in some algebraic group $\G$ and let $G[i]$ denote the $i$-th order Zariski closure of $G$ in $\G$. Then each $G[i]$ is a reduced finite algebraic group and therefore \'{e}tale (\cite[Ex. 8, p. 53]{Waterhouse:IntrotoAffineGroupSchemes}). It follows from Lemma \ref{lemma: etale Zariski closures} that $G$ is $\s$-\'{e}tale.
\end{proof}

\begin{cor} \label{cor: ssetal equals order zero in char zero}
	Assume that the base $\s$-field $k$ has characteristic zero. Then a $\s$-algebraic group is $\s$-\'{e}tale if and only if it has order zero.
\end{cor}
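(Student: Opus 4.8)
The plan is to deduce this corollary directly from Proposition~\ref{prop: reduced and order zero implies setale}. Since that proposition already shows a $\s$-algebraic group is $\s$-\'{e}tale if and only if it is reduced and has order zero, it suffices to prove that in characteristic zero, every $\s$-algebraic group of order zero is automatically reduced. The forward implication is trivial: a $\s$-\'{e}tale $\s$-algebraic group is ind-\'{e}tale as a $k$-algebra, hence reduced, and has order zero.

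For the converse, let $G$ be a $\s$-algebraic group of order zero over a $\s$-field $k$ of characteristic zero. First I would embed $G$ as a $\s$-closed subgroup in some algebraic group $\G$ (using Proposition~\ref{prop: linearization}), and consider the $i$-th order Zariski closures $G[i]$, which are algebraic subgroups of $\G[i]$. Since $\ord(G)=0$, the dimensions $\dim(G[i])$ are eventually constant, so each $G[i]$ is a finite algebraic group. Now the key input is Cartier's theorem: over a field of characteristic zero, every affine group scheme of finite type is reduced (smooth). In particular each $G[i]$ is reduced, hence (being finite and reduced) \'{e}tale. By Lemma~\ref{lemma: etale Zariski closures}, the equivalence of (iii) and (i) then gives that $G$ is $\s$-\'{e}tale. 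Alternatively, one can argue that $k\{G\}=\bigcup_i k[G[i]]$ is a union of reduced rings, hence reduced, and then invoke Proposition~\ref{prop: reduced and order zero implies setale} directly.

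I do not anticipate a serious obstacle here; the corollary is essentially a packaging of Proposition~\ref{prop: reduced and order zero implies setale} together with the characteristic-zero fact that finite group schemes are \'{e}tale. The only point requiring a moment's care is making sure the order-zero hypothesis really forces each $G[i]$ to be \emph{finite} (zero-dimensional) rather than merely of bounded dimension growth: this is exactly the content of $\ord(G)=0$ via Theorem~\ref{theo: existence of sdim and order}, where $d=0$ and the eventual value $e$ bounds the dimension. Once that is in hand, Cartier's theorem (equivalently, \cite[Ex.~8, p.~53]{Waterhouse:IntrotoAffineGroupSchemes} combined with the characteristic-zero reducedness of group schemes) finishes the argument with no further difficulty.
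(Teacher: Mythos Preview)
Your proposal is correct and uses essentially the same idea as the paper: Cartier's theorem plus Proposition~\ref{prop: reduced and order zero implies setale}. The paper is just more direct---it applies Cartier's theorem (in the form of \cite[Theorem~11.4, p.~86]{Waterhouse:IntrotoAffineGroupSchemes}, valid for arbitrary affine group schemes, not only those of finite type) straight to $k\{G\}$ to conclude it is reduced, then invokes Proposition~\ref{prop: reduced and order zero implies setale}; your detour through the Zariski closures $G[i]$ and their finiteness is unnecessary, since Cartier already gives each $k[G[i]]$ reduced regardless of whether $G[i]$ is finite.
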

\begin{proof}
	A $\s$-algebraic group over a $\s$-field of characteristic zero is reduced (\cite[Theorem~11.4, p. 86]{Waterhouse:IntrotoAffineGroupSchemes}). Thus the claim follows from Proposition \ref{prop: reduced and order zero implies setale}.
\end{proof}

\section{Expansive endomorphisms and \'{e}tale difference algebraic groups}
\label{sec: Expansive endomorphisms and etale difference algebraic groups}

The category of \'{e}tale algebraic groups over $k$ is equivalent to the category of finite groups equipped with a continuous action of the absolute Galois group of $k$ (\cite[Theorem 6.4]{Waterhouse:IntrotoAffineGroupSchemes}). The goal of this section is to provide a difference analog of this statement (Theorem \ref{theo: equivalence}): The category of $\s$-\'{e}tale $\s$-algebraic groups is equivalent to the category of profinite groups equipped with an expansive endomorphism and a compatible action of the absolute Galois group of $k$. 

This theorem follows rather directly from a much more general equivalence of categories proved in \cite{TomasicWibmer:DifferenceGaloisTheoryAndDynamics}. The equivalence in \cite{TomasicWibmer:DifferenceGaloisTheoryAndDynamics} works over an arbitrary $\s$-ring in place of our base $\s$-field $k$ and its proof relies on Janelidze's categorical Galois theory. We present here a comparatively short, self-contained proof of Theorem \ref{theo: equivalence} that avoids Janelidze's categorical Galois theory and mainly relies on the classical equivalence of ind-\'{e}tale $k$-algebras and profinite spaces equipped with a continuous action of the absolute Galois group.

\medskip

We denote with $k_s$ the separable algebraic closure of $k$ and with $\Gal=\gal(k_s/k)$ the absolute Galois group of $k$. Note that it is always possible to extend $\s\colon k\to k$ to an endomorphism $\s\colon k_s\to k_s$. However, such an extension is usually not unique. We fix once and for all such an extension $\s\colon k_s\to k_s$. The following lemma shows that we have a natural action of $\s$ on $\Gal$.

\begin{lemma} \label{lemma: action of s on Gal}
	For every $\tau\in \Gal$ there exists a unique $\s(\tau)\in\Gal$ such that
	$$
	\xymatrix{
		k_s \ar^{\s(\tau)}[r] \ar_\s[d] & k_s \ar^\s[d]\\
		k_s \ar^\tau[r] & k_s	
	}
	$$
	commutes. The map $\s\colon \Gal\to \Gal,\ \tau\mapsto \s(\tau)$ is a continuous morphism of groups.
\end{lemma}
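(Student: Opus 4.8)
The plan is to construct $\s(\tau)$ directly by transport of structure. Given $\tau \in \Gal = \gal(k_s/k)$, I would like to define $\s(\tau) = \s \circ \tau \circ \s^{-1}$, but $\s \colon k_s \to k_s$ need not be surjective, so $\s^{-1}$ does not literally make sense. Instead, first observe that $\s(k_s)$ is a subfield of $k_s$ over which $k_s$ is still separably algebraic (since $k_s$ is separably closed, and $\s$ is an isomorphism onto its image $\s(k_s) \cong k_s$, which is itself separably closed). So $\s \colon k_s \to \s(k_s)$ is an isomorphism of fields, and I can form $\tau' := \s \circ \tau \circ \s^{-1} \colon \s(k_s) \to \s(k_s)$, a field automorphism fixing $\s(k)$. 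Here is the crux: I need to extend $\tau'$ from $\s(k_s)$ to an automorphism of $k_s$ fixing $k$, and I need this extension to be \emph{unique}.

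For uniqueness, the key point is that $k_s$ is a purely inseparable extension of $\s(k_s)$: indeed $k$ is purely inseparable over $\s(k)$ — wait, that is false in general (e.g.\ $\s$ could be an automorphism). Let me reconsider. The correct observation is that $k_s/\s(k_s)$ need not be algebraic at all in general, but in fact it is: every element of $k_s$ is separably algebraic over $k$, hence over $\s(k)$, hence over $\s(k_s)$ — but then $k_s/\s(k_s)$ is separable algebraic, and since $\s(k_s)$ is already separably closed, we get $k_s = \s(k_s)$, i.e.\ $\s$ is surjective on $k_s$. That cannot be right either, since $\s$ restricted to $k$ need not be surjective. The resolution: an element of $k_s$ satisfies a separable polynomial with coefficients in $k$, but those coefficients need not lie in $\s(k)$, so the element need not be separably algebraic over $\s(k)$ — it is algebraic over $\s(k)$ but possibly inseparably so. So $k_s/\s(k_s)$ is an algebraic extension, and I claim it is purely inseparable. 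This is because $\s(k_s)$ contains $\s(k)$, $k$ is algebraic over $\s(k)$ (as $k \subseteq k_s$ and $k_s$ is algebraic over $\s(k)$... actually $k$ algebraic over $\s(k)$ needs $\s$ to have algebraic image of finite-ish type — hmm, $k$ need not be algebraic over $\s(k)$). I think the cleanest route is different.

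Here is the approach I would actually take. Consider the compositum: any $k$-embedding $\s(k_s) \hookrightarrow k_s$ extends (by separable closedness of $k_s$ and the fact that $k_s/\s(k_s)$-embeddings into $k_s$ exist) — more concretely, I would argue that $\s(k_s) \cdot k$, the subfield of $k_s$ generated by $\s(k_s)$ and $k$, equals $k_s$: every element of $k_s$ is separable over $k$, and $\s(k_s)$ is separably closed, so $\s(k_s) \cdot k$ is separably closed and algebraic over $k$, hence equal to $k_s$. Wait — but that again would force $k_s = \s(k_s)$ if $k \subseteq \s(k_s)$, which is false. Since $k \not\subseteq \s(k_s)$ in general, $\s(k_s)\cdot k$ can be strictly larger than $\s(k_s)$, and the argument "$\s(k_s)\cdot k$ is separably closed" holds because $k_s$ is separable over $k$ so $\s(k_s)\cdot k$, containing $k$ and being a subfield of $k_s$, is... no, that doesn't make it separably closed. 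Let me just state the plan at the level the paper needs: I would show $k_s = \s(k_s) \cdot k$ (separably closed, as $k_s$ is the separable closure of $k$ and $\s(k_s)\cdot k \supseteq \s(k_s)$ is separably closed over $\s(k) \subseteq k$... ), then since $\tau$ fixes $k$ and $\tau' = \s\tau\s^{-1}$ is defined on $\s(k_s)$ and agrees with $\mathrm{id}$ on $\s(k) \subseteq k$, the maps $\tau'$ and $\mathrm{id}_k$ glue to a well-defined map on $\s(k_s)\cdot k = k_s$ provided they agree on $\s(k_s) \cap k$, which they do since $\tau'$ fixes $\s(k) \supseteq$ (anything forced). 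This gluing gives existence; uniqueness is forced because any $\s(\tau)$ making the square commute must restrict to $\tau'$ on $\s(k_s)$ and to $\mathrm{id}$ on $k$, and $\s(k_s)$ together with $k$ generate $k_s$.

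Once $\s \colon \Gal \to \Gal$ is defined, checking it is a group homomorphism is a diagram chase: $\s(\tau_1 \tau_2)$ and $\s(\tau_1)\s(\tau_2)$ both make the commuting square for $\tau_1\tau_2$ work, so by uniqueness they coincide; similarly $\s(\mathrm{id}) = \mathrm{id}$. For continuity, I would use that a homomorphism between profinite groups is continuous iff the preimage of every open normal subgroup is open; equivalently, I would show that for each finite Galois subextension $L/k$ inside $k_s$, the composite $\Gal \xrightarrow{\s} \Gal \twoheadrightarrow \gal(L/k)$ factors through some $\gal(L'/k)$ with $L'/k$ finite — indeed it factors through $\gal(L''/k)$ where $L''$ is the Galois closure over $k$ of the subfield generated by $\s^{-1}(L) \cap k_s$, or more simply I would note $\s^{-1}(L) := \{x \in k_s : \s(x) \in L\}$ is a finite-degree subextension and $\s(\tau)$ acts on $L$ only through the action of $\tau$ on (the Galois closure of) $\s^{-1}(L)$.

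\textbf{Main obstacle.} The genuinely delicate point is the existence-and-uniqueness of the extension of $\tau' = \s\circ\tau\circ\s^{-1}$ from $\s(k_s)$ to all of $k_s$, precisely because $\s$ need not be surjective so $k_s/\s(k_s)$ is a nontrivial (algebraic, but possibly inseparable) extension. The right way to handle it cleanly is to show $k_s$ is generated as a field by $\s(k_s)$ and $k$ together — this simultaneously gives uniqueness (the two pieces $\tau'$ on $\s(k_s)$ and $\mathrm{id}$ on $k$ determine the extension) and, once one checks $\tau'$ and $\mathrm{id}_k$ agree on the overlap $\s(k_s)\cap k$ and that $k_s$ is separably closed hence the glued partial map extends to an automorphism, gives existence. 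I expect the bulk of the author's proof to be exactly this field-theoretic lemma about $k_s = \s(k_s)\cdot k$ and the compatibility of the two partial maps.
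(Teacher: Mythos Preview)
Your conjugation is written the wrong way around, and this is what sends you into the weeds. From the diagram one needs $\s\circ\s(\tau)=\tau\circ\s$, i.e., formally $\s(\tau)=\s^{-1}\circ\tau\circ\s$, \emph{not} $\s\circ\tau\circ\s^{-1}$. With your formula $\tau'=\s\tau\s^{-1}$ on $\s(k_s)$, one checks that $\s\circ\tau'=\s^2\tau\s^{-1}$, which equals $\tau\circ\s$ only when $\s^2$ commutes with $\tau$; so even if you could extend $\tau'$ to $k_s$, the resulting map would not make the square commute.

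Once the order is fixed, the ``extension from $\s(k_s)$ to $k_s$'' problem evaporates: to define $\s(\tau)(a)=\s^{-1}(\tau(\s(a)))$ on all of $k_s$ one only needs $\tau(\s(a))\in\s(k_s)$ for every $a\in k_s$. This is exactly the argument of Lemma~\ref{lemma: prepare for lift} in the paper (applied with $T=k_s$ and $\alpha=\tau\circ\s$): if $a$ satisfies a separable $f\in k[x]$, then $\s(a)$ satisfies ${}^{\s}\!f\in k[x]$, hence so does $\tau(\s(a))$ since $\tau$ fixes $k$; but the roots of ${}^{\s}\!f$ in $k_s$ are precisely the $\s$-images of the roots of $f$, so $\tau(\s(a))\in\s(k_s)$. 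Uniqueness is then immediate from the injectivity of $\s$, and that $\s(\tau)\in\Gal$ follows because any $k$-algebra endomorphism of the algebraic extension $k_s/k$ is an automorphism. The group homomorphism and continuity parts of your sketch are fine.

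Finally, the field-theoretic lemma you were aiming for, $k_s=\s(k_s)\cdot k$, is actually \emph{false} in general. Take $k=\mathbb{C}(t_0,t_1,t_2,\ldots)$ with $\s(t_i)=t_{i+1}$. Then $\s(k)=\mathbb{C}(t_1,t_2,\ldots)$, the element $t_0$ is transcendental over $\s(k)$ and hence over its algebraic closure $\s(k_s)$, and one computes $\s(k_s)\cdot k=\s(k_s)(t_0)$, a rational function field in one variable over an algebraically closed field, which is not algebraically closed. So $\s(k_s)\cdot k\subsetneq k_s$. (The paper itself does not give a self-contained proof of this lemma---it cites \cite[Lemma 6.1]{Wibmer:ExpansiveDynamicsOnProfiniteGroups}---but the essential point is the image computation above, which reappears in the paper as Lemma~\ref{lemma: prepare for lift}.)
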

\begin{proof}
	This is points (1) and (2) of \cite[Lemma 6.1]{Wibmer:ExpansiveDynamicsOnProfiniteGroups}.
\end{proof}

In the sequel we will always consider $\Gal$ as equipped with the endomorphism $\s\colon \Gal \to\Gal$ as in Lemma \ref{lemma: action of s on Gal}.

Recall that a continuous map $\s\colon X\to X$ on a metric space $(X,d)$ is called \emph{(forward) expansive} if there exists an $\varepsilon>0$ such that for any $x\neq y$ in $X$ there exists an $n\in\nn$ with $d(\s^n(x),\s^n(y))>\varepsilon$. In the context of a continuous group homomorphism $\s\colon \GG \to \GG$ on a topological group $\GG$, this idea translates to: There exists a neighborhood $\texttt{U}$ of the identity $1\in\GG$ such that for any $x\neq y$ in $\GG$ there exists an $n\in\nn$ with $\s^n(x)\notin \s^n(y)\texttt{U}$, equivalently, $y^{-1}x\notin \s^{-n}(\texttt{U})$. Since the open normal subgroups of a profinite group are a neighborhood basis at $1$ (\cite[Theorem 2.1.3]{RibesZaleskii:ProfiniteGroups}), we can assume that $\texttt{U}$ is an open normal subgroup of $\texttt{G}$, in case $\GG$ is a profinite group. We thus arrive at the following definition. 

\begin{defi} \label{defi: expansive endo}
	An endomorphism $\s\colon \GG\to\GG$ of a profinite group $\GG$ is \emph{expansive} if there exists an open normal subgroup $\NN$ of $\GG$ such that $\bigcap_{i\in\nn}\s^{-i}(\NN)=1$.
\end{defi}

Similarly, an automorphism $\s\colon \GG\to\GG$ of a profinite group $\GG$ is \emph{expansive} if there exists an open normal subgroup $\NN$ of $\GG$ such that $\bigcap_{i\in\Z}\s^{-i}(\NN)=1$. Profinite groups, or more general topological groups, such as totally disconnected locally compact groups, equipped with an expansive automorphism have been studied by various authors. See \cite{Kitchens:ExpansiveDynamicsOnZeroDimensionalGroups}, \cite{Fagnani:SomeResultsOnTheClassificationOfExpansiveAutomorphismsOfCompactAbelianGroups},
\cite{KitchensSchmidt:AutomorphismsOfCompactGroups},  \cite{BoyleSchrauder:ZdgroupshiftsAndBernoulliFactors}, \cite{Willis:TheNubOfAnAutomorphismOfaTotallyDisconnectedLocallyCompactGroup}, \cite{GloecknerRaja:ExpansiveAutomorphismsOfTotallyDisconnectedLocallyCompactGroups}, \cite{Shah:ExpansiveAutomorphismsOnLocallyCompactGroups}, \cite[Chapter 3]{Schmidt:DynamicalSystemsOfAlgebraicOrigin} and the references given there.

While there has been some interest in generalizing results from automorphisms to endomorphisms (\cite{Reid:EndomorphismsOfProfiniteGroups}, \cite{BywatersGloecknerTornier:ContractionGroupsAndPassageToSubgroupsAndQuotientsForEndomorphismsOfTotallyDisconnectedLocallyCompactGroups}, \cite{Willis:TheScaleAndTidySubgroupsForEndomorphismsOfTotallyDisconnectedLocallyCompactGroups}, \cite{GiordanoVirili:TopologicalEntropyInTotallyDisconnectedLocallyCompactGroups}, \cite{Wibmer:ExpansiveDynamicsOnProfiniteGroups}), the literature on expansive endomorphisms of profinite groups is rather scarce. 

There are two basic examples of expansive endomorphisms of profinite groups. 

\begin{ex}
	Let $\GG$ be a finite (discrete) group. Then any endomorphism $\s\colon\GG\to \GG$ is expansive since we can choose $\NN=1$ in Definition \ref{defi: expansive endo}.
\end{ex}

\begin{ex}
	Let $\HH$ be a finite (discrete) group and consider $\GG=\HH^\nn$ as a profinite group under componentwise multiplication and equipped with the product topology. Then the shift map
	$$\s\colon \HH^\nn\to \HH^\nn,\ (h_0,h_1,h_2,\ldots)\mapsto (h_1,h_2,\ldots)$$ is expansive. Indeed, we can choose $\NN=1\times \HH\times\HH\ldots\leq \HH^\nn$.
\end{ex}



\begin{defi}
	A \emph{profinite $\Gal$-$\s$-group} is a profinite group $\GG$ equipped with an endomorphism $\s\colon\GG\to\GG$ and a continuous action $\Gal\times \GG\to \GG$ of $\Gal$ on $\GG$ by group automorphisms such that 
	$\s(\tau(g))=\s(\tau)(\s(g))$ for $\tau\in\Gal$ and $g\in\GG$.
	An  \emph{expansive profinite $\Gal$-$\s$-group} is a profinite $\Gal$-$\s$-group such that $\s\colon\GG\to\GG$ is expansive.
\end{defi}

A morphism $\f\colon\GG\to\HH$ of profinite $\Gal$-$\s$-groups is a morphism of profinite groups that commutes with $\s$ and is $\Gal$-equivariant, i.e., $\f(\tau(g))=\tau(\f(g))$ for $\tau\in\Gal$ and $g\in \GG$.
The main result of this section is the following:

\begin{theo} \label{theo: equivalence}
	The category of $\s$-\'{e}tale $\s$-algebraic groups over $k$ is equivalent to the category of expansive profinite $\Gal$-$\s$-groups.
\end{theo}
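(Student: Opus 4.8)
The plan is to reduce everything to the classical anti-equivalence between ind-\'{e}tale $k$-algebras and profinite spaces carrying a continuous action of $\Gal$, under which an ind-\'{e}tale $k$-algebra $A$ corresponds to $\Hom_{k\text{-alg}}(A,k_s)$ with $\Gal$ acting by post-composition, finite \'{e}tale algebras correspond to finite discrete sets, and (by reversing arrows and using that the functor sends tensor products to products) ind-\'{e}tale $k$-Hopf algebras correspond to profinite groups with continuous $\Gal$-action. The whole task is then to explain how the extra $\s$-structure on $k\{G\}$ produces, and is produced by, an expansive endomorphism of $\GG:=\Hom_{k\text{-alg}}(k\{G\},k_s)$, and how finite $\s$-generation matches expansiveness. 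So, given a $\s$-\'{e}tale $\s$-algebraic group $G$, I would first produce the profinite $\Gal$-group $\GG$ from $k\{G\}$ via the classical equivalence, and then adjoin the endomorphism.

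To define $\s\colon\GG\to\GG$ I would declare $\s(\psi)$ to be the unique $k$-algebra homomorphism satisfying $\s\circ\s(\psi)=\psi\circ\s$, where the outer $\s$ is the fixed endomorphism of $k_s$ and the inner $\s$ is that of $k\{G\}$. The key lemma is well-definedness: since $k\{G\}$ is ind-\'{e}tale, each $a\in k\{G\}$ is a root of a separable $p\in k[T]$, so $\s(a)$ is a root of the polynomial $p^\s$ obtained by applying $\s$ to the coefficients; as $\s\colon k\to\s(k)$ is an isomorphism, $p^\s$ is separable over $\s(k)$, whence $\psi(\s(a))$ lies in the separable closure $\s(k_s)$ of $\s(k)$ inside $k_s$, i.e. in the image of the injective field map $\s\colon k_s\to k_s$. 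Injectivity gives uniqueness and forces $\s(\psi)$ to be $k$-linear, and the outer and inner $\s$ being ring maps makes it a homomorphism. I would then check that $\s\colon\GG\to\GG$ is a continuous group endomorphism, using that $\s$ on $k\{G\}$ commutes with the Hopf structure maps, and that $\s(\tau\cdot\psi)=\s(\tau)\cdot\s(\psi)$, which follows immediately from the defining relation $\tau\circ\s=\s\circ\s(\tau)$ on $k_s$ from Lemma \ref{lemma: action of s on Gal}.

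The substantive point is the equivalence between finite $\s$-generation and expansiveness. Here I would use the geometric picture: by Lemma \ref{lemma: etale Zariski closures} and the discussion following it, embed $G$ as a Zariski-dense $\s$-closed subgroup of an \'{e}tale algebraic group $\G$, so $k\{G\}=\bigcup_i k[G[i]]$ and $\GG=\varprojlim_i G[i](k_s)$ with $G[i]\subseteq\G\times{}^\s\G\times\cdots\times{}^{\s^i}\G$ the $i$-th order Zariski closure. Under this description $\s\colon\GG\to\GG$ becomes the one-step shift, and the open normal subgroup $\NN=\ker\big(\GG\to\G(k_s)\big)$ satisfies $\s^i(g)\bmod\NN=(\text{$i$-th coordinate of }g)$, so $\bigcap_{i\in\nn}\s^{-i}(\NN)=1$ and $\s$ is expansive. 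For the inverse functor I would send an expansive profinite $\Gal$-$\s$-group $\GG$ to the ind-\'{e}tale $k$-Hopf algebra $k\{G\}$ attached (classically) to its underlying profinite $\Gal$-group, equipped with the $\s$-semilinear endomorphism $\s(f)(\psi)=\s\big(f(\s(\psi))\big)$; one checks this commutes with the Hopf maps and inverts the previous formula. Expansiveness gives an open normal $\NN$ with $\bigcap_{i}\s^{-i}(\NN)=1$; the corresponding finite \'{e}tale Hopf subalgebra $C=k[\GG/\NN]$ then $\s$-generates $k\{G\}$, since $k[C,\s C,\s^2 C,\ldots]$ corresponds to the quotient $\GG/\bigcap_i\s^{-i}(\NN)=\GG$ and is therefore all of $k\{G\}$.

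I expect the main obstacle to be exactly this last equivalence, and within it the careful identification of $\s\colon\GG\to\GG$ with the shift in the inverse-limit picture, keeping track of the base changes ${}^{\s^j}\G$ and the transition maps induced by $\s\colon k_s\to k_s$. The remaining work is routine: checking that both constructions are functorial in morphisms and mutually quasi-inverse. Full faithfulness is inherited from the classical anti-equivalence once one observes that a morphism of profinite $\Gal$-groups respects the $\s$-endomorphisms if and only if the dual morphism of $k$-Hopf algebras commutes with the $\s$-structures, which is immediate from the two defining relations $\s\circ\s(\psi)=\psi\circ\s$; essential surjectivity is exactly the content of the inverse functor above.
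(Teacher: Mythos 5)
Your proposal follows essentially the same route as the paper: pass through the classical anti-equivalence between ind-\'{e}tale $k$-algebras and profinite $\Gal$-spaces, lift $\s$ to an endomorphism of $\GG=\Hom(k\{G\},k_s)$ via the relation $\s\circ\s(\psi)=\psi\circ\s$ (justified by the same separable-polynomial argument as Lemma \ref{lemma: prepare for lift}), and then match finite $\s$-generation with expansiveness by identifying $\s$ on $\GG$ with the shift on $\varprojlim G[i](k_s)$. The forward direction (finitely $\s$-generated $\Rightarrow$ expansive) is correct and is the paper's argument in slightly different clothing. Your claim that full faithfulness is ``immediate'' is defensible but compresses real work: the paper's Proposition \ref{prop: s equivalence} spends several lemmas tracking the base-change twists ${\hs\XX}$ to verify that $\s$-equivariance of the map on Hom-sets forces $\s$-equivariance of the dual algebra map; the underlying reason it works is that $k$-algebra homomorphisms to $k_s$ separate points of an ind-\'{e}tale algebra.

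There is one genuine gap, in the converse direction (expansive $\Rightarrow$ finitely $\s$-generated). You take an open normal subgroup $\NN\leq\GG$ with $\bigcap_{i\in\nn}\s^{-i}(\NN)=1$ and immediately form ``the corresponding finite \'{e}tale Hopf subalgebra $C=k[\GG/\NN]$''. But $\GG/\NN$ inherits a $\Gal$-action --- and hence corresponds to a finite \'{e}tale $k$-Hopf algebra at all --- only if $\NN$ is stable under the $\Gal$-action, and the definition of expansiveness does not provide this. The paper repairs this in Lemma \ref{lemma: s continuous}: replace $\NN$ by $\mathtt{U}=\bigcap_{\tau\in\Gal}\tau(\NN)$, which is normal, $\Gal$-stable and still satisfies $\bigcap_{i\in\nn}\s^{-i}(\mathtt{U})=1$; the non-trivial point is that $\mathtt{U}$ remains \emph{open}, which requires a compactness argument using the continuity of the action $\Gal\times\GG\to\GG$. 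Once this is inserted, the rest of your converse (the $k$-subalgebra generated by $C,\s(C),\s^2(C),\ldots$ corresponds to $\GG/\bigcap_i\s^{-i}(\mathtt{U})=\GG$ and hence equals $k\{G\}$) matches the paper's argument, which phrases it dually as the injectivity of $\GG\to\HH^\nn$, $g\mapsto(\overline{\s^i(g)})_{i\in\nn}$, corresponding to the surjectivity of $k\{\H\}\to k\{G\}$.
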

The proof of Theorem \ref{theo: equivalence} is given at the end of this section, after some preparatory results are established.

The fact that \'{e}tale algebraic groups over $k$ are equivalent to finite groups with a continuous $\Gal$-action follows from the fact that \'{e}tale algebras over $k$ are anti-equivalent to finite sets with a continuous $\Gal$-action (\cite[Theorem 6.3]{Waterhouse:IntrotoAffineGroupSchemes}). For our proof of Theorem \ref{theo: equivalence} we shall need an ``infinite'' version of this anti-equivalence, i.e., a version that applies to ind-\'{e}tale algebras instead of just \'{e}tale algebras. On the side of the $\Gal$-actions one then has to replace finite sets by profinite spaces.

Recall that a \emph{profinite (topological) space} is a topological space that can be written as a projective limit of finite discrete topological spaces. Equivalently, a topological space is profinite, if it is Hausdorff, compact and totally disconnected (\cite[Theorem 1.1.12]{RibesZaleskii:ProfiniteGroups}).

For an ind-\'{e}tale $k$-algebra $T$, the set $\Hom(T,k_s)$ of all $k$-algebra morphisms of $T$ to $k_s$ is naturally a profinite space: As $T=\varinjlim T_i$ is the directed union of its \'{e}tale $k$-subalgebras $T_i$, we see that $\Hom(T,k_s)=\varprojlim\Hom(T_i,k_s)$ is the projective limit of the finite sets $\Hom(T_i,k_s)$.
More explicitly, a basis for the topology of $\Hom(T,k_s)$ is given by the open subsets
$$U(a_1,\ldots,a_n,b_1,\ldots,b_n)=\{\psi\in\Hom(T,k_s)|\ \psi(a_1)=b_1,\ldots,\psi(a_n)=b_n\},$$
where $a_1,\ldots,a_n\in T$ and $b_1,\ldots,b_n\in k_s$.

The action $\Gal\times \Hom(T,k_s)\to \Hom(T,k_s),\ (\tau, \psi)\mapsto \tau\circ \psi$ is continuous (\cite[Lemma~3.5.4]{BorceuxJanelidze:GaloisTheories}). A \emph{profinite $\Gal$-space} is a profinite space together with a continous $\Gal$-action. (According to \cite[Lemma 5.6.4 (a)]{RibesZaleskii:ProfiniteGroups}, this definition is equivalent to Definition 3.5.1 in \cite{BorceuxJanelidze:GaloisTheories}, where a profinite $\Gal$-space is defined to be the projective limit of finite discrete $\Gal$-spaces.) A morphism of profinite $\Gal$-spaces is a continuous $\Gal$-equivariant map. We are now prepared to state the infinitary version of the ``Galois equivalence''.

\begin{theo}[{\cite[Theorem 3.5.8]{BorceuxJanelidze:GaloisTheories}}] \label{theo: classical equivalence}
	The functor $T\rightsquigarrow\Hom(T,k_s)$ defines an anti-equivalence of categories between the category of ind-\'{e}tale $k$-algebras and the category of profinite $\Gal$-spaces. Under this anti-equivalence, surjective morphisms of ind-\'{e}tale $k$-algebras correspond to injective morphisms of profinite $\Gal$-spaces.
\end{theo}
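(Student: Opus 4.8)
The plan is to deduce the statement from the classical finite anti-equivalence between \'{e}tale $k$-algebras and finite discrete $\Gal$-sets (\cite[Theorem 6.3]{Waterhouse:IntrotoAffineGroupSchemes}) by passing to filtered colimits, respectively cofiltered limits. Write $\mathcal{E}$ for the category of \'{e}tale $k$-algebras and $\mathcal{F}$ for the category of finite discrete $\Gal$-sets, so that $T_0\rightsquigarrow\Hom(T_0,k_s)$ is an anti-equivalence $\mathcal{E}\simeq\mathcal{F}^{\mathrm{op}}$. The first step is to identify the two categories appearing in the theorem with $\mathrm{Ind}(\mathcal{E})$ and $\mathrm{Pro}(\mathcal{F})$. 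On the algebra side, an ind-\'{e}tale $k$-algebra is by definition the directed union --- a filtered colimit in the category of $k$-algebras --- of its \'{e}tale $k$-subalgebras, ind-\'{e}tale algebras are closed under filtered colimits in $k$-algebras, and each \'{e}tale $k$-algebra is finitely presented over $k$, hence a compact object; a standard argument (Gabriel--Ulmer) then identifies the category of ind-\'{e}tale $k$-algebras with $\mathrm{Ind}(\mathcal{E})$. On the space side, by \cite[Lemma 5.6.4 (a)]{RibesZaleskii:ProfiniteGroups} every profinite $\Gal$-space is a cofiltered limit $\varprojlim X_i$ of finite discrete $\Gal$-sets, and any continuous $\Gal$-equivariant map from such a limit to a finite discrete $\Gal$-set factors $\Gal$-equivariantly through some $X_i$ (continuity yields a factorization through a finite quotient space, which one enlarges to a $\Gal$-stable one); hence morphisms of profinite $\Gal$-spaces coincide with $\mathrm{Pro}$-morphisms and profinite $\Gal$-spaces form the category $\mathrm{Pro}(\mathcal{F})$.

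Since $\mathrm{Ind}(-)$ carries equivalences to equivalences and $\mathrm{Ind}(\mathcal{F}^{\mathrm{op}})=\mathrm{Pro}(\mathcal{F})^{\mathrm{op}}$, the finite anti-equivalence $\mathcal{E}\simeq\mathcal{F}^{\mathrm{op}}$ induces an anti-equivalence between ind-\'{e}tale $k$-algebras and profinite $\Gal$-spaces. One then checks that the abstractly obtained functor is the concrete one: for $T=\varinjlim T_i$ a directed union of \'{e}tale subalgebras it sends $T$ to the pro-object $(\Hom(T_i,k_s))_i$, whose limit as a topological space is $\varprojlim\Hom(T_i,k_s)=\Hom(T,k_s)$, carrying the topology and the $\Gal$-action $(\tau,\psi)\mapsto\tau\circ\psi$ described before the theorem. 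The quasi-inverse is $X\rightsquigarrow\mathcal{C}_\Gal(X,k_s)$, the $k$-algebra of locally constant $\Gal$-equivariant maps $X\to k_s$ (for $X=\varprojlim X_i$ this is $\varinjlim\mathcal{C}_\Gal(X_i,k_s)$, a directed union of \'{e}tale $k$-algebras), and the unit and counit of the adjunction are the evident evaluation maps, which are isomorphisms in the finite case and hence, after passing to (co)limits, in general.

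For the last assertion, suppose $\pi\colon T\to T'$ is a surjection of ind-\'{e}tale algebras; writing $T=\varinjlim T_i$ over \'{e}tale subalgebras gives $T'=\varinjlim\pi(T_i)$ with each $\pi(T_i)$ \'{e}tale and each $T_i\to\pi(T_i)$ a surjection of \'{e}tale algebras, which the finite anti-equivalence turns into inclusions $\Hom(\pi(T_i),k_s)\hookrightarrow\Hom(T_i,k_s)$; since a cofiltered limit of injections of sets is injective, $\Hom(T',k_s)\to\Hom(T,k_s)$ is injective. Conversely, given an injection $X'\hookrightarrow X$ of profinite $\Gal$-spaces, write $X=\varprojlim X_i$ and let $X_i'$ be the (closed, $\Gal$-stable) image of $X'$ in $X_i$, so that $X'=\varprojlim X_i'$; a locally constant $\Gal$-equivariant map $g\colon X'\to k_s$ factors through some $X_i'$, and one extends it $\Gal$-equivariantly to $X_i$ by sending the orbits in $X_i\smallsetminus X_i'$ to $0\in k\subseteq k_s$ (which is $\Gal$-fixed) and then pulls back to $X$, so that restriction $\mathcal{C}_\Gal(X,k_s)\to\mathcal{C}_\Gal(X',k_s)$ is surjective. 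As $\Hom(-,k_s)$ and $\mathcal{C}_\Gal(-,k_s)$ are mutually inverse anti-equivalences, these two implications give ``$\pi$ is surjective $\iff\Hom(\pi,k_s)$ is injective'' in both directions. I expect the bulk of the work to lie in the two $\mathrm{Ind}/\mathrm{Pro}$ identifications of the first paragraph --- especially the verification that \emph{every} morphism of profinite $\Gal$-spaces is a $\mathrm{Pro}$-morphism and that filtered colimits of ind-\'{e}tale algebras are computed in the category of all $k$-algebras --- after which everything reduces formally to the finite statement.
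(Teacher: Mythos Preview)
Your sketch is correct. The paper's own proof is simply a pair of citations: the anti-equivalence is quoted as a special case of \cite[Theorem~3.5.8]{BorceuxJanelidze:GaloisTheories} (after observing that a $k$-algebra is split by $k_s$ in the sense of \cite[Def.~2.3.1]{BorceuxJanelidze:GaloisTheories} if and only if it is ind-\'{e}tale), and the surjective/injective correspondence is quoted from \cite[Lemma~3.24]{TomasicWibmer:DifferenceGaloisTheoryAndDynamics}. So you are supplying a self-contained argument where the paper outsources; your Ind/Pro extension of the finite anti-equivalence is the standard way to prove the cited result and is essentially how Borceux--Janelidze proceed. Two minor remarks on your write-up: the forward half of the last assertion is immediate without passing through the finite case, since if $\pi$ is surjective then $\psi\mapsto\psi\circ\pi$ is injective on $\Hom(-,k_s)$ by the epi property alone; and in your $\mathrm{Pro}$ identification, the step ``enlarge to a $\Gal$-stable one'' deserves one more word---use that the clopen partition of $X$ pulled back from a finite discrete target has only finitely many $\Gal$-translates (the action on it factors through a finite quotient of $\Gal$), and take their common refinement.
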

\begin{proof}
	The anti-equivalence is a special case of \cite[Theorem 3.5.8]{BorceuxJanelidze:GaloisTheories}, where an arbitrary Galois extension is allowed in place of $k_s/k$. Note that a $k$-algebra $T$ is split by $k_s$ (in the sense of Definition \cite[Def. 2.3.1]{BorceuxJanelidze:GaloisTheories}) if and only if $T$ is ind-\'{e}tale.
	
	The surjective/injective statement is \cite[Lemma 3.24]{TomasicWibmer:DifferenceGaloisTheoryAndDynamics}.	
%
\end{proof}

We now return to our difference scenario. Our first goal is to add a $\s$ to the anti-equivalence of Theorem \ref{theo: classical equivalence}.

\begin{defi}
	A \emph{profinite $\Gal$-$\s$-space} $\XX$ is a profinite space $\XX$ together with a continuous endomorphism $\s\colon\XX\to \XX$ and a continuous action $\Gal\times \XX\to \XX$ compatible with $\s$ in the sense that
	$$\s(\tau(x))=\s(\tau)(\s(x))$$
	for $\tau\in\Gal$ and $x\in\XX$.	
\end{defi}
A morphism of profinite $\Gal$-$\s$-spaces is a continuous $\Gal$-equivariant map that commutes with the action  of $\s$.

A \ks-algebra is called \emph{ind-\'{e}tale} if it is ind-\'{e}tale as a $k$-algebra. The following two lemmas show that ind-\'{e}tale \ks-algebras give rise to profinite $\Gal$-$\s$-spaces.

\begin{lemma} \label{lemma: prepare for lift}
	Let $T$ be an ind-\'{e}tale $k$-algebra and let $\alpha\colon T\to k_s$ be a morphism of rings such that $\alpha(\lambda a)=\s(\lambda)\alpha(a)$ for $\lambda\in k$ and $a\in T$. Then there exists a unique morphism $\beta\colon T\to k_s$ of $k$-algebras such that 
	$$
	\xymatrix{
	T \ar^\beta[rr] \ar_\alpha[rd] & & k_s \ar^\s[ld]\\
	& k_s &	
	}
	$$
	commutes.
\end{lemma}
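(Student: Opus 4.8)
The plan is to notice that $\beta$, if it exists, is \emph{forced}: since $\s\colon k_s\to k_s$ is injective, the equation $\s\circ\beta=\alpha$ determines $\beta$ uniquely, so uniqueness is essentially free. The only real content is existence, and for that the crux is the single claim that $\alpha(T)\subseteq\s(k_s)$. Granting this, one sets $\beta=\s^{-1}\circ\alpha$, where $\s^{-1}\colon\s(k_s)\xrightarrow{\sim}k_s$ is the inverse of the field isomorphism $\s\colon k_s\to\s(k_s)$, and checks it is a morphism of $k$-algebras.

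First I would record the elementary facts about $\s$ on $k_s$: a ring homomorphism out of a field is injective, so $\s\colon k_s\to k_s$ is a field isomorphism onto the subfield $\s(k_s)$; since $k_s$ is separably closed, so is $\s(k_s)$; and $\s|_k$ is the given $\s\colon k\to k$, so $\s(k)\subseteq\s(k_s)$. Then comes the core step. Fix $a\in T$. Because $T$ is ind-\'{e}tale over $k$, the element $a$ is separable algebraic over $k$, so there is a separable polynomial $p=\sum_{i=0}^n\lambda_i x^i\in k[x]$ with $p(a)=0$. Applying the ring homomorphism $\alpha$ and using $\alpha(\lambda b)=\s(\lambda)\alpha(b)$ gives $0=\sum_{i=0}^n\s(\lambda_i)\alpha(a)^i=p^\s(\alpha(a))$, where $p^\s=\sum_i\s(\lambda_i)x^i\in\s(k)[x]$. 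Since $\s\colon k\to\s(k)$ is a ring isomorphism, $p^\s$ is again separable; hence $\alpha(a)$ is separable algebraic over $\s(k)$, therefore over $\s(k_s)$, and since $\s(k_s)$ is separably closed we conclude $\alpha(a)\in\s(k_s)$. (Equivalently: $p$ splits in $k_s$, so $p^\s$ splits in $\s(k_s)$, and then every root of $p^\s$ in $k_s$ already lies in $\s(k_s)$.) As $a$ was arbitrary, $\alpha(T)\subseteq\s(k_s)$.

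To finish, define $\beta=\s^{-1}\circ\alpha\colon T\to k_s$. It is a composite of ring homomorphisms, hence a ring homomorphism, with $\beta(1)=\s^{-1}(1)=1$; and for $\lambda\in k$, $a\in T$ one has $\beta(\lambda a)=\s^{-1}(\s(\lambda)\alpha(a))=\lambda\,\s^{-1}(\alpha(a))=\lambda\beta(a)$, so $\beta$ is a morphism of $k$-algebras, while $\s\circ\beta=\alpha$ holds by construction. For uniqueness, if $\s\circ\beta'=\alpha$ as well, then $\s(\beta'(a))=\s(\beta(a))$ for every $a\in T$, and injectivity of $\s$ on $k_s$ forces $\beta'=\beta$.

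The main obstacle is exactly the inclusion $\alpha(T)\subseteq\s(k_s)$; everything around it is formal bookkeeping. This is the one place the ind-\'{e}tale hypothesis on $T$ is genuinely used: separability of the minimal relation satisfied by $a$ is what guarantees that the image $\alpha(a)$, a root of the transported polynomial $p^\s$ over $\s(k)$, cannot escape the separably closed field $\s(k_s)$.
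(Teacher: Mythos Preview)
Your proof is correct and follows essentially the same approach as the paper: both reduce existence to the inclusion $\alpha(T)\subseteq\s(k_s)$, establish it by transporting a separable relation for $a$ along $\s$ to get ${}^\s p(\alpha(a))=0$, and then use that the roots of ${}^\s p$ lie in $\s(k_s)$. The paper argues this last point by explicitly factoring $p$ over $k_s$ (your parenthetical ``equivalently'' argument), while your main phrasing via separable-closedness of $\s(k_s)$ is an equally valid variant; you also spell out the $k$-linearity of $\beta$ a bit more carefully than the paper does.
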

\begin{proof}
	The uniqueness of $\beta$ follows from the injectivity of $\s\colon k_s\to k_s$. For the existence of $\beta$ it suffices to show that $\alpha(T)\subseteq \s(k_s)$ (because then, for any $a\in T$, we can define $\beta(a)\in k_s$ to be the unique element of $k_s$ such that $\alpha(a)=\s(\beta(a))$). 
	
	Let $a\in T$. Since $T$ is an ind-\'{e}tale $k$-algebra, $a$ satisfies a monic (separable) polynomial $f\in k[x]$. Write $f=\prod_{i=1}^n(x-a_i)$ with $a_i\in k_s$. Let ${\hs f}\in k[x]$ denote the polynomial obtained from $f$ by applying $\s\colon k\to k$ to the coefficients. Then ${\hs f}=\prod_{i=1}^n(x-\s(a_i))$.
	
	From $f(a)=0$ and $\alpha(\lambda a)=\s(\lambda)\alpha(a)$ for $\lambda\in k$, we obtain ${\hs f}(\alpha(a))=0$. So $\alpha(a)\in \{\s(a_1),\ldots,\s(a_n)\}\subseteq\s(k_s)$.
\end{proof}

Recall (Section \ref{subsec: Difference algebra}) that for a \ks-algebra $R$, we denote with $R^\sharp$ the underlying $k$-algebra.

\begin{lemma}
	Let $R$ be an ind-\'{e}tale $k$-$\s$-algebra. Then, for every morphism $\psi\colon R\to k_s$ of $k$-algebras, there exists a unique morphism $\s(\psi)\colon R\to k_s$ of $k$-algebras such that 
	$$
	\xymatrix{
	R \ar^{\s(\psi)}[r] \ar_\s[d] & k_s \ar^\s[d] \\
	R \ar^\psi[r] & k_s	
	}
	$$
	commutes. Moreover, $\s\colon \Hom(R^\sharp,k_s^\sharp)\to \Hom(R^\sharp,k_s^\sharp),\ \psi\mapsto \s(\psi)$ is continuous and 
	\begin{equation} \label{eq: s action}
		\s(\tau(\psi))=\s(\tau)(\s(\psi))
	\end{equation}
	 for $\tau\in\Gal$, i.e., $\Hom(R^\sharp,k_s^\sharp)$ is a profinite $\Gal$-$\s$-space.
\end{lemma}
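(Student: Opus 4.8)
The plan is to reduce everything to the previous lemma, \texttt{Lemma \ref{lemma: prepare for lift}}, applied to a suitable composite map. First I would fix a morphism $\psi\colon R\to k_s$ of $k$-algebras and consider the composite $\alpha=\psi\circ\s\colon R\to R\to k_s$. Since $\s\colon R\to R$ is a morphism of $\s$-rings extending $\s\colon k\to k$, for $\lambda\in k$ and $a\in R$ we get $\alpha(\lambda a)=\psi(\s(\lambda)\s(a))=\s(\lambda)\psi(\s(a))=\s(\lambda)\alpha(a)$. Thus $\alpha$ satisfies the hypothesis of \texttt{Lemma \ref{lemma: prepare for lift}}, and since $R$ is ind-\'{e}tale over $k$, that lemma produces a unique morphism of $k$-algebras $\beta\colon R\to k_s$ with $\s\circ\beta=\alpha=\psi\circ\s$; set $\s(\psi)=\beta$. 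This gives both existence and uniqueness of $\s(\psi)$, because uniqueness in \texttt{Lemma \ref{lemma: prepare for lift}} is exactly uniqueness of the $k$-algebra map making the square commute (again using injectivity of $\s\colon k_s\to k_s$).

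Next I would check continuity of $\psi\mapsto\s(\psi)$ on $\Hom(R^\sharp,k_s^\sharp)$. Using the basis of open sets $U(a_1,\ldots,a_n,b_1,\ldots,b_n)$ described before \texttt{Theorem \ref{theo: classical equivalence}}, I would show that the preimage of such a basic open set under $\s$ is again open. Concretely, $\s(\psi)(a_i)=b_i$ for all $i$ means $\s(\s(\psi)(a_i))=\s(b_i)$, i.e.\ $\psi(\s(a_i))=\s(b_i)$ (by the defining square, $\s\circ\s(\psi)=\psi\circ\s$); so the preimage of $U(a_1,\ldots,a_n,b_1,\ldots,b_n)$ under $\s$ is $U(\s(a_1),\ldots,\s(a_n),\s(b_1),\ldots,\s(b_n))$ intersected with the (open and closed) locus where $\s(\psi)(a_i)$ actually lands in $\s(k_s)$ — but since $\s(\psi)$ always exists as a map to $k_s$, this locus is everything, so the preimage is exactly the basic open set $U(\s(a_1),\ldots,\s(a_n),\s(b_1),\ldots,\s(b_n))$. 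Hence $\s$ is continuous. (Alternatively one can argue via $R=\varinjlim R_i$ over \'{e}tale $k$-subalgebras that are $\s$-invariant, but the direct basic-open-set computation is cleanest.)

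Finally I would verify the compatibility relation $\s(\tau(\psi))=\s(\tau)(\s(\psi))$ for $\tau\in\Gal$. Both sides are $k$-algebra morphisms $R\to k_s$, so by the uniqueness clause it suffices to check that $\s(\tau)(\s(\psi))$ satisfies the defining property of $\s(\tau(\psi))$, namely that $\s\circ\big(\s(\tau)(\s(\psi))\big)=\tau(\psi)\circ\s=\tau\circ\psi\circ\s$. Now $\s\circ\s(\tau)=\tau\circ\s$ on $k_s$ by \texttt{Lemma \ref{lemma: action of s on Gal}}, and $\s\circ\s(\psi)=\psi\circ\s$ by the construction above, so
$$\s\circ\big(\s(\tau)\circ\s(\psi)\big)=\big(\s\circ\s(\tau)\big)\circ\s(\psi)=\tau\circ\s\circ\s(\psi)=\tau\circ\psi\circ\s,$$
which is exactly $\tau(\psi)\circ\s$. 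By uniqueness, $\s(\tau(\psi))=\s(\tau)(\s(\psi))$, establishing \eqref{eq: s action}. Together with the already-known continuity of the $\Gal$-action from \texttt{Theorem \ref{theo: classical equivalence}}, this shows $\Hom(R^\sharp,k_s^\sharp)$ is a profinite $\Gal$-$\s$-space.

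The routine points are the continuity bookkeeping; the only place where one has to be slightly careful is invoking \texttt{Lemma \ref{lemma: prepare for lift}} with the right map ($\psi\circ\s$, not $\psi$), and then using its uniqueness clause as the engine for both the well-definedness of $\s(\psi)$ and the compatibility identity. I do not expect any serious obstacle here; the lemma is essentially a direct corollary of \texttt{Lemma \ref{lemma: prepare for lift}}.
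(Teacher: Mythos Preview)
Your proposal is correct and follows essentially the same approach as the paper: you apply Lemma~\ref{lemma: prepare for lift} to $\alpha=\psi\circ\s$ to obtain $\s(\psi)$, compute the preimage of a basic open set as $U(\s(a_1),\ldots,\s(a_n),\s(b_1),\ldots,\s(b_n))$ via the identity $\s\circ\s(\psi)=\psi\circ\s$, and derive the compatibility \eqref{eq: s action} from the same identity together with $\s\circ\s(\tau)=\tau\circ\s$ (the paper packages this last step as a commutative diagram rather than invoking uniqueness, but the content is identical). Your aside about ``the locus where $\s(\psi)(a_i)$ lands in $\s(k_s)$'' is unnecessary: the reverse implication in the continuity computation follows directly from the injectivity of $\s\colon k_s\to k_s$, so the preimage is exactly the basic open set with no extra condition.
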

\begin{proof}
	The existence and uniqueness of $\s(\psi)$ follows from Lemma \ref{lemma: prepare for lift} applied to $$\alpha\colon R\xrightarrow{\s} R\xrightarrow{\psi}k_s.$$
%
%
%
%
%
%
%
	For $a_1,\ldots,a_n\in R$ and $b_1,\ldots,b_n\in k_s$ we have,
	$$\s^{-1}(U(a_1,\ldots,a_n,b_1,\ldots,b_n))=\{\psi\in\Hom(R^\sharp,k_s^\sharp)|\ \s(\psi)(a_i)=b_i,\ i=1,\ldots,n\}.$$ 
	But
	$$
	\s(\psi)(a_i)=b_i\ \Leftrightarrow \ \s(\s(\psi)(a_i))=\s(b_i)\ \Leftrightarrow \ \psi(\s(a_i))=\s(b_i).
	$$
	Therefore $\s^{-1}(U(a_1,\ldots,a_n,b_1,\ldots,b_n))=U(\s(a_1),\ldots,\s(a_n),\s(b_1),\ldots,\s(b_n))$ is open. Formula (\ref{eq: s action}) follows from the commutative diagram
	$$
	\xymatrix{
	R \ar^-{\s(\psi)}[r] \ar_\s[d]& k_s \ar^\s[d]  \ar^-{\s(\tau)}[r] & k_s \ar^\s[d] \\
	R \ar^\psi[r] & k_s \ar^\tau[r] & k_s	
	}
	$$
\end{proof}	

If $\alpha\colon R\to S$ is a morphism of ind-\'{e}tale \ks-algebras, the commutative diagram
$$
\xymatrix{
R \ar^\alpha[r] \ar_\s[d] & S \ar^-{\s(\psi)}[r] \ar^\s[d] & k_s \ar^\s[d]  \\
R \ar^-\alpha[r] & S \ar^\psi[r] & k_s	
}
$$
shows that the diagram
$$
\xymatrix{
\Hom(S^\sharp,k_s^\sharp) \ar[r] \ar_\s[d] & \Hom(R^\sharp,k_s^\sharp) \ar^\s[d] \\
\Hom(S^\sharp,k_s^\sharp) \ar[r] & \Hom(R^\sharp,k_s^\sharp)
}
$$
commutes. So the induced map $\Hom(S^\sharp,k_s^\sharp)\to \Hom(R^\sharp,k_s^\sharp)$ is a morphism of profinite $\Gal$\=/$\s$\=/spaces. In other words, $R\rightsquigarrow \Hom(R^\sharp,k_s^\sharp)$ is a contravariant functor from the category of ind-\'{e}tale \ks\=/algebras to the category of profinite $\Gal$-$\s$-spaces. To show that this functor defines an anti-equivalence we need some more preparatory results.

For a profinite $\Gal$-space $\XX$, we can twist the action of $\Gal$ on $\XX$ by $\s$ to obtain a new profinite $\Gal$-space ${\hs \XX}$. In detail, ${\hs \XX}=\XX$ as profinite spaces but the action of $\Gal$ on ${\hs \XX}$ is given by $g(x)=\s(g)(x)$ for $x\in{\hs \XX}$ and $g\in\Gal$. Note that for a profinite $\Gal$-$\s$-space $\XX$, the map $\s\colon\XX\to \XX$ can be interpreted as a morphism $\s\colon \XX\to{\hs \XX}$ of profinite $\Gal$-spaces.

\begin{lemma} \label{lemma: twist and profinite}
	Let $T$ be an ind-\'{e}tale $k$-algebra. Then
	$${\hs(\Hom(T,k_s))}\simeq \Hom({\hs T},k_s)$$
	as profinite $\Gal$-spaces.
\end{lemma}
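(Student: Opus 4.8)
The plan is to unwind both sides of the claimed isomorphism in terms of $k$-algebra morphisms and check that the natural bijection already provided by the set-level identity $\Hom(T,k_s)=\Hom({}^\sigma T,k_s)^\#$ (at the level of underlying sets both are just $\Hom(T,k_s)$, since ${}^\sigma T=T\otimes_k k$ differs from $T$ only in the $k$-action) is continuous, bijective, and $\Gal$-equivariant when the right $\Gal$-action is the twisted one. Concretely, recall that ${}^\sigma T=T\otimes_k k$, where the left tensor factor sees $k$ via $\sigma\colon k\to k$. A $k$-algebra morphism $\beta\colon {}^\sigma T\to k_s$ is the same datum as a ring morphism $\alpha\colon T\to k_s$ with $\alpha(\lambda a)=\sigma(\lambda)\alpha(a)$, via $\beta(a\otimes\mu)=\mu\,\alpha(a)$; this is exactly the kind of map appearing in Lemma \ref{lemma: prepare for lift}. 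So first I would define the map
$$
\Phi\colon \Hom({}^\sigma T,k_s)\to {}^\sigma(\Hom(T,k_s)),\qquad \beta\longmapsto \big(a\mapsto \beta(a\otimes 1)\big),
$$
and its inverse sending $\alpha$ to $a\otimes\mu\mapsto \mu\,\alpha(a)$; the two are manifestly mutually inverse bijections of sets.

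Next I would check continuity of $\Phi$ and $\Phi^{-1}$. Both spaces carry the subspace topology from $k_s^{T}$ (product of discrete copies of $k_s$ indexed by elements of $T$, resp. of ${}^\sigma T$), with a basis of opens of the form $U(a_1,\dots,a_n,b_1,\dots,b_n)$ as in the excerpt. Since $\Phi$ and $\Phi^{-1}$ are given by evaluating at elements of $T$ resp.\ elements $a\otimes 1$ and by the obvious rescaling, the preimage of a basic open is again a basic open (a finite intersection of evaluation conditions), so both maps are continuous; together with bijectivity on compact Hausdorff spaces this gives a homeomorphism.

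Finally, and this is the only point requiring a moment's care, I would verify $\Gal$-equivariance with respect to the twisted action on the target. For $g\in\Gal$ and $\beta\in\Hom({}^\sigma T,k_s)$, the $\Gal$-action on $\Hom({}^\sigma T,k_s)$ is post-composition, $(g\cdot\beta)=g\circ\beta$. Under $\Phi$ this corresponds to $a\mapsto g(\beta(a\otimes 1))$. On the other hand, the twisted action on ${}^\sigma(\Hom(T,k_s))$ sends $\alpha$ to $\sigma(g)\circ\alpha$, i.e.\ $a\mapsto \sigma(g)(\alpha(a))$. So equivariance reduces to the identity $g(\beta(a\otimes 1))=\sigma(g)(\Phi(\beta)(a))$; but $\Phi(\beta)(a)=\beta(a\otimes1)$, so what must be checked is $g\circ\beta = \sigma(g)\circ\beta$ as maps $T\to k_s$ — which is \emph{not} literally true, and indicates that the correct bookkeeping is slightly different: one should instead factor $\beta$ through the lift $\beta'\colon T\to k_s$ of Lemma \ref{lemma: prepare for lift} (with $\sigma\circ\beta'$ equal to the associated $\alpha$) and track how $\Gal$ acts on $\beta'$ versus on $\alpha$ through the defining relation $\sigma(g)(\sigma(x))=\sigma(g(x))$ of Lemma \ref{lemma: action of s on Gal}. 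I expect the main (though still routine) obstacle to be precisely pinning down which of the several natural identifications of $\Hom({}^\sigma T,k_s)$ with $\Hom(T,k_s)$ intertwines the post-composition $\Gal$-action with the $\sigma$-twisted one; once the identification is chosen so that $\sigma(g)\in\Gal$ on the target matches $g\in\Gal$ on the source via the conjugation-by-$\sigma$ relation, the commuting-square diagram of Lemma \ref{lemma: action of s on Gal} makes equivariance immediate, and the rest (bijectivity, continuity, compatibility with $\sigma$ itself) is formal.
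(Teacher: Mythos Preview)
Your proposed bijection is not well-defined. The map $\Phi(\beta)\colon a\mapsto \beta(a\otimes 1)$ is \emph{not} a $k$-algebra morphism $T\to k_s$: for $\lambda\in k$ one has $\Phi(\beta)(\lambda a)=\beta(\lambda a\otimes 1)=\beta(a\otimes\sigma(\lambda))=\sigma(\lambda)\,\Phi(\beta)(a)$, so $\Phi(\beta)$ is $\sigma$-semilinear, not $k$-linear. Hence $\Phi(\beta)\notin\Hom(T,k_s)={}^{\sigma}(\Hom(T,k_s))$ as a set. The proposed inverse $\alpha\mapsto(a\otimes\mu\mapsto\mu\,\alpha(a))$ fails too: it is not well-defined on ${}^{\sigma}T$, since $(\lambda a)\otimes\mu$ and $a\otimes\sigma(\lambda)\mu$ would be sent to $\mu\lambda\,\alpha(a)$ and $\sigma(\lambda)\mu\,\alpha(a)$ respectively. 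So there is no ``set-level identity'' here; the two sides are genuinely different sets of maps.

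The paper's bijection is $\eta(\psi)(a\otimes\lambda)=\sigma(\psi(a))\lambda$, which \emph{does} involve $\sigma\colon k_s\to k_s$ and not just the identity. Its inverse is exactly the lift of Lemma~\ref{lemma: prepare for lift}: given $\psi'\in\Hom({}^{\sigma}T,k_s)$, one first forms the $\sigma$-semilinear map $\alpha(a)=\psi'(a\otimes 1)$ and then lifts it to a genuine $k$-algebra map $\psi\colon T\to k_s$ with $\sigma\circ\psi=\alpha$. This lifting is where the ind-\'etale hypothesis enters (to guarantee $\alpha(T)\subseteq\sigma(k_s)$). You allude to this lift at the very end, but only as a patch for the equivariance check; in fact it is needed to define the bijection at all. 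Once the correct $\eta$ is in place, the equivariance $\eta(\sigma(\tau)\psi)=\tau\,\eta(\psi)$ follows immediately from the defining relation $\sigma\circ\sigma(\tau)=\tau\circ\sigma$ on $k_s$, and the continuity/homeomorphism argument goes through as you sketched.
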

\begin{proof}
Let us first describe the bijection $\eta\colon \Hom(T,k_s)\to\Hom({\hs T},k_s)$. If $\psi\colon T\to k_s$ is a morphism of $k$-algebras, then $\psi'=\eta(\psi)\colon {\hs T}=T\otimes_k k\to k_s,\ a\otimes\lambda\mapsto \s(\psi(a))\lambda$ is a morphism of $k$-algebras. Conversely, if $\psi'\colon {\hs T}\to k_s$ is a morphism of $k$-algebras, then $\alpha\colon T\to {\hs T}\xrightarrow{\psi'} k_s$ (where the first map is $a\mapsto a\otimes 1$) is a morphism of rings satisfying $\alpha(\lambda a)=\s(\lambda)\alpha(a)$ for $\lambda\in k$ and $a\in T$. Thus Lemma \ref{lemma: prepare for lift} yields a (unique) morphism $\psi=\rho(\psi')\colon T\to k_s$ of $k$-algebras such that 
$$
\xymatrix{
T \ar^\psi[r] \ar[d] & k_s \ar^\s[d] \\
{\hs T} \ar^{\psi'}[r] & k_s	
}
$$ 
commutes. The above diagram shows that $\rho$ is the inverse of $\eta$. For $a_1,\ldots,a_n\in T$ and $b_1,\ldots,b_n\in k_s$ we have
$$\rho^{-1}(U(a_1,\ldots,a_n,b_1,\ldots,b_n))=\{\psi'\in\Hom({\hs T},k_s)|\ \rho(\psi')(a_i)=b_i\ i=1,\ldots,n\}.$$
But $$\rho(\psi')(a_i)=b_i \ \Leftrightarrow \ \s(\rho(\psi')(a_i))=\s(b_i)\ \Leftrightarrow \ \psi'(a_i\otimes 1)=\s(b_i).$$ 
So $\rho^{-1}(U(a_1,\ldots,a_n,b_1,\ldots,b_n))=U(a_1\otimes 1,\ldots,a_n\otimes 1,\s(b_1),\ldots,\s(b_n))$ and $\rho$ is continuous. As any continuous bijection between compact Hausdorff spaces is a homeomorphism (\cite[Prop. 13.26]{Sutherland:IntroductionToMetricAndTopologicalSpaces}), we see that $\rho$ and therefore also $\eta$ is a homeomorphism.

For $a\in T$, $\lambda\in k$ and $\psi\in\Hom(T,k_s)$, it follows from the commutative diagram
$$
\xymatrix{
T \ar^\psi[r] \ar[d] & k_s \ar^\s[d]
\ar^-{\s(\tau)}[r] & k_s \ar^\s[d] \\
{\hs T} \ar^-{\eta(\psi)}[r] & k_s \ar^\tau[r] & k_s	
}
$$
that
$$\eta(\s(\tau)(\psi))(a\otimes\lambda)=\s(\s(\tau)(\psi)(a))\lambda=\tau(\eta(\psi))(a\otimes 1)\lambda=\tau(\eta(\psi))(a\otimes \lambda).$$
Thus $\eta(\s(\tau)(\psi))=\tau(\eta(\psi))$ as desired.
\end{proof}	

\begin{lemma} \label{lemma: lower rectangle}
	The isomorphism from Lemma \ref{lemma: twist and profinite} is functorial, i.e., for a morphism $\alpha\colon S\to T$ of ind-\'{e}tale $k$-algebras, we have a commutative diagram 
	\begin{equation} \label{eq: diag twist}
		\xymatrix{
		{\hs(\Hom(T,k_s))} \ar^\simeq[r] \ar[d] & \Hom({\hs T},k_s) \ar[d] \\
		{\hs(\Hom(S,k_s))} \ar^\simeq[r] & \Hom({\hs S},k_s)
	}
	\end{equation}
 in the category of profinite $\Gal$-spaces.
\end{lemma}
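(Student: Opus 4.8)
The plan is to verify the commutativity of the square $(\ref{eq: diag twist})$ directly, using the explicit formulas for the two horizontal isomorphisms established in the proof of Lemma~\ref{lemma: twist and profinite}. Write $\eta_T\colon\Hom(T,k_s)\to\Hom({\hs T},k_s)$ for that isomorphism; recall it is given by $\eta_T(\psi)(a\otimes\lambda)=\s(\psi(a))\lambda$ for $\psi\in\Hom(T,k_s)$, $a\in T$ and $\lambda\in k$, and likewise $\eta_S(\varphi)(b\otimes\lambda)=\s(\varphi(b))\lambda$ for $\varphi\in\Hom(S,k_s)$.

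First I would record what the four edges of $(\ref{eq: diag twist})$ actually are. The horizontal arrows are $\eta_T$ and $\eta_S$. The left vertical arrow is ${\hs(\Hom(\alpha,k_s))}$; as a map of underlying sets (and of topological spaces) this is simply $\Hom(\alpha,k_s)\colon\psi\mapsto\psi\circ\alpha$, because the twist ${\hs(-)}$ only changes the $\Gal$-action, not the space or the map, and since $\Hom(\alpha,k_s)$ is $\Gal$-equivariant for the untwisted actions it is a fortiori equivariant for the twisted ones, hence a morphism of profinite $\Gal$-spaces. The right vertical arrow is $\Hom({\hs\alpha},k_s)\colon\varphi'\mapsto\varphi'\circ{\hs\alpha}$, where ${\hs\alpha}\colon{\hs S}\to{\hs T}$ is $b\otimes\lambda\mapsto\alpha(b)\otimes\lambda$. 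Since all four objects and all four arrows thus already lie in the category of profinite $\Gal$-spaces, it suffices to check that $(\ref{eq: diag twist})$ commutes as a diagram of sets.

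To do so I would evaluate the two composites on an arbitrary $\psi\in\Hom(T,k_s)$ and then on an elementary tensor $b\otimes\lambda\in{\hs S}=S\otimes_k k$. Going down and then right yields $\eta_S(\psi\circ\alpha)(b\otimes\lambda)=\s\big((\psi\circ\alpha)(b)\big)\lambda=\s(\psi(\alpha(b)))\lambda$, whereas going right and then down yields $\big(\eta_T(\psi)\circ{\hs\alpha}\big)(b\otimes\lambda)=\eta_T(\psi)(\alpha(b)\otimes\lambda)=\s(\psi(\alpha(b)))\lambda$. The two values coincide, and since the elementary tensors span ${\hs S}$ as a $k$-vector space this shows $\eta_S\circ{\hs(\Hom(\alpha,k_s))}=\Hom({\hs\alpha},k_s)\circ\eta_T$, which is exactly the commutativity of $(\ref{eq: diag twist})$.

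There is no genuine obstacle here: this is a routine naturality check. The only point deserving a moment's attention is the identification of the left vertical arrow with precomposition by $\alpha$ at the level of underlying sets, which is immediate once one unwinds the definition of the twist functor ${\hs(-)}$ on morphisms.
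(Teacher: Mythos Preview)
Your proposal is correct and follows essentially the same approach as the paper: the paper's proof simply observes that for $\psi\in{\hs(\Hom(T,k_s))}$ both paths through the diagram yield the map ${\hs S}\to k_s,\ s\otimes\lambda\mapsto \s(\psi(\alpha(s)))\lambda$, which is exactly the computation you carried out, only more tersely stated.
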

\begin{proof}
	Let $\psi\in 	{\hs(\Hom(T,k_s))} $. Both paths in diagram (\ref{eq: diag twist}) yield the element of $\Hom({\hs S},k_s)$ given by ${\hs S}\to k_s,\ s\otimes\lambda\mapsto \s(\psi(\alpha(s)))\lambda$.
\end{proof}

For a \ks-algebra $R$, the map $\overline{\s}\colon {\hs R}\to R,\ a\otimes\lambda\mapsto \s(a)\lambda$ is a morphism of $k$-algebras. Thus, if $R$ is ind-\'{e}tale, we obtain a morphism
$\Hom(R^\sharp,k_s^\sharp)\to \Hom({\hs R}^\sharp,k_s^\sharp)$ of profinite $\Gal$-spaces.

\begin{lemma} \label{lemma: triangle}
	Let $R$ be an ind-\'{e}tale \ks-algebra. Then the composition
	$$\Hom(R^\sharp,k_s^\sharp)\to \Hom({\hs R}^\sharp,k_s^\sharp) \simeq {\hs (\Hom(R^\sharp,k_s^\sharp))}$$
	equals $\s\colon \Hom(R^\sharp,k_s^\sharp)\to \Hom(R^\sharp,k_s^\sharp)$.
\end{lemma}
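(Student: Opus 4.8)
The plan is to chase through the definitions of the two maps whose composition is asserted to be $\s$, and to observe that the composition and $\s$ are both determined by the same characterizing equation, so uniqueness forces them to agree. No topology enters: both maps are already known to be morphisms of profinite $\Gal$-$\s$-spaces, so it is enough to check that they agree as maps of underlying sets.

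First I would unwind the map $\Hom(R^\sharp,k_s^\sharp)\to \Hom({\hs R}^\sharp,k_s^\sharp)$: it is induced by the $k$-algebra morphism $\overline{\s}\colon {\hs R}\to R,\ a\otimes\lambda\mapsto\s(a)\lambda$, hence sends $\psi\colon R\to k_s$ to $\psi\circ\overline{\s}$, which satisfies $(\psi\circ\overline{\s})(a\otimes 1)=\psi(\s(a))$ for $a\in R$. Next I would unwind the isomorphism $\Hom({\hs R}^\sharp,k_s^\sharp)\simeq {\hs(\Hom(R^\sharp,k_s^\sharp))}$: it is the inverse of the bijection $\eta$ constructed in the proof of Lemma \ref{lemma: twist and profinite} (taken with $T=R^\sharp$), and according to the description there it sends a $k$-algebra morphism $\psi'\colon{\hs R}\to k_s$ to the unique $k$-algebra morphism $\chi\colon R\to k_s$ with $\s(\chi(a))=\psi'(a\otimes 1)$ for all $a\in R$. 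Composing, the map of the statement sends $\psi$ to the unique $k$-algebra morphism $\chi\colon R\to k_s$ satisfying $\s(\chi(a))=\psi(\s(a))$ for all $a\in R$.

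To finish, I would recall that $\s(\psi)$ is by definition the unique $k$-algebra morphism $R\to k_s$ with $\s\circ\s(\psi)=\psi\circ\s$, i.e., with $\s(\s(\psi)(a))=\psi(\s(a))$ for all $a\in R$ --- exactly the equation characterizing $\chi$ above. Since in both cases uniqueness comes from the injectivity of $\s\colon k_s\to k_s$ (Lemma \ref{lemma: prepare for lift}), we get $\chi=\s(\psi)$, which is the claim. The only real point of care is bookkeeping: keeping the direction of each arrow straight and correctly identifying the isomorphism of Lemma \ref{lemma: twist and profinite} with $\eta^{-1}$ (not $\eta$) together with its explicit formula; there is no genuine obstacle.
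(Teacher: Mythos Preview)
Your proof is correct and is essentially the same argument as the paper's, which compresses the computation into a single commutative diagram showing that $\psi\circ\overline{\s}$ composed with $R\to{\hs R}$ equals $\psi\circ\s=\s\circ\s(\psi)$; your write-up simply unpacks that diagram in words and correctly identifies the isomorphism of Lemma~\ref{lemma: twist and profinite} as $\rho=\eta^{-1}$.
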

\begin{proof}
	This follows from the commutative diagram
	$$
	\xymatrix{
	R \ar^-{\s(\psi)}[rr] \ar[d] \ar^\s[rd] & & k_s \ar^\s[d] \\
	{\hs R} \ar^-{\overline{\s}}[r] & R \ar^-\psi[r] & k_s	
	}
	$$
\end{proof}

We are now prepared to prove a $\s$-version of Theorem \ref{theo: classical equivalence}.

\begin{prop} \label{prop: s equivalence}
	The functor $R\rightsquigarrow \Hom(R^\sharp,k_s^\sharp)$ defines an anti-equivalence of categories between the category of ind-\'{e}tale \ks-algebras and the category of profinite $\Gal$-$\s$-spaces.
\end{prop}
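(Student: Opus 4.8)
The plan is to combine the classical anti-equivalence of Theorem \ref{theo: classical equivalence} with the structural lemmas (Lemmas \ref{lemma: twist and profinite}, \ref{lemma: lower rectangle}, \ref{lemma: triangle}) to upgrade it to the difference setting. The key observation is that specifying a $\s$-structure on an ind-\'{e}tale \ks-algebra $R$ — beyond its underlying $k$-algebra structure on $R^\sharp$ — is the same as specifying a $k$-algebra morphism $\overline{\s}\colon {\hs R}\to R$, and specifying a $\s$-structure on a profinite $\Gal$-space $\XX$ is the same as specifying a morphism $\XX\to {\hs \XX}$ of profinite $\Gal$-spaces (compatibly with composition/iteration). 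So the strategy is: take the contravariant functor $F\colon R\rightsquigarrow \Hom(R^\sharp,k_s^\sharp)$ constructed above; show it is essentially surjective, full, and faithful, by transporting these properties from the functor $G\colon T\rightsquigarrow\Hom(T,k_s)$ of Theorem \ref{theo: classical equivalence} using the natural isomorphism ${\hs(G(T))}\simeq G({\hs T})$ of Lemma \ref{lemma: twist and profinite}.

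\medskip

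\textbf{Faithfulness and fullness.} First I would note that $F$ is faithful because $G$ already is: a morphism of \ks-algebras is in particular a morphism of the underlying $k$-algebras, and $F(\alpha)=G(\alpha^\sharp)$ on underlying maps. For fullness, suppose $f\colon F(S)\to F(R)$ is a morphism of profinite $\Gal$-$\s$-spaces. Forgetting the $\s$-structure, $f$ is a morphism of profinite $\Gal$-spaces $G(S^\sharp)\to G(R^\sharp)$, so by Theorem \ref{theo: classical equivalence} there is a unique $k$-algebra morphism $\alpha\colon R^\sharp\to S^\sharp$ with $G(\alpha)=f$. It remains to check $\alpha$ is a morphism of \ks-algebras, i.e. $\alpha\circ\s_R=\s_S\circ\alpha$. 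Equivalently (via the description of $\s$ on an ind-\'{e}tale \ks-algebra through $\overline{\s}\colon{\hs R}\to R$), one must check $\alpha\circ\overline{\s}_R={\overline{\s}_S}\circ{\hs\alpha}$ as maps ${\hs R}\to S$. Apply $G$: by Lemma \ref{lemma: triangle}, $G(\overline{\s}_R)$ is (up to the isomorphism of Lemma \ref{lemma: twist and profinite}) the map $\s_{F(R)}$, and similarly for $S$; by Lemma \ref{lemma: lower rectangle} the isomorphisms are functorial, so the identity $G(\alpha\circ\overline{\s}_R)=G({\overline{\s}_S}\circ{\hs\alpha})$ reduces exactly to $f\circ\s_{F(S)}=\s_{F(R)}\circ f$, which holds since $f$ commutes with $\s$. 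Since $G$ is faithful, $\alpha\circ\overline{\s}_R={\overline{\s}_S}\circ{\hs\alpha}$, so $\alpha$ is a morphism of \ks-algebras and $F(\alpha)=f$. Uniqueness of $\alpha$ is inherited from $G$.

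\medskip

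\textbf{Essential surjectivity.} Given a profinite $\Gal$-$\s$-space $\XX$ with structure map $s\colon\XX\to{\hs\XX}$ (this is $\s$, viewed as a morphism of profinite $\Gal$-spaces, as noted before Lemma \ref{lemma: twist and profinite}), apply the inverse equivalence of Theorem \ref{theo: classical equivalence} to get an ind-\'{e}tale $k$-algebra $T$ with $G(T)\cong\XX$, and then apply it to $s$: since $G({\hs T})\simeq{\hs(G(T))}\simeq{\hs\XX}$ by Lemma \ref{lemma: twist and profinite}, the morphism $s$ corresponds to a $k$-algebra morphism $\overline{\s}\colon{\hs T}\to T$. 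One then defines $\s\colon T\to T$ as the composite $T\xrightarrow{a\mapsto a\otimes 1}{\hs T}\xrightarrow{\overline{\s}}T$; by construction this is additive and multiplicative, and $\s(\lambda a)=\s(\lambda)\s(a)$ because of how scalars act on ${\hs T}=T\otimes_k k$, so $(T,\s)$ is an ind-\'{e}tale \ks-algebra. Finally $F(T,\s)\cong\XX$ as profinite $\Gal$-$\s$-spaces: the underlying $\Gal$-spaces agree by construction, and the $\s$-structures match precisely by Lemma \ref{lemma: triangle} (which identifies $\s_{F(T)}$ with the image of $\overline{\s}$ under $G$, i.e. with $s$). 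The \textbf{main obstacle}, and the step requiring the most care, is verifying the compatibility of the $\s$-structures under the identifications — i.e. chasing that the passage ``structure map on $\XX$'' $\leftrightarrow$ ``$\overline{\s}$ on $T$'' $\leftrightarrow$ ``$\s$ on $T$'' is coherent; but all the needed diagram chases are exactly packaged by Lemmas \ref{lemma: twist and profinite}, \ref{lemma: lower rectangle} and \ref{lemma: triangle}, so no new ideas are needed beyond bookkeeping.
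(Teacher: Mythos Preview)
Your proposal is correct and follows essentially the same approach as the paper's proof: both reduce fully faithfulness to the observation that a $k$-algebra map $\alpha$ is a \ks-algebra map if and only if the square involving $\overline{\s}$ and ${\hs\alpha}$ commutes, and then translate this via Theorem~\ref{theo: classical equivalence} and the diagram lemmas (Lemmas~\ref{lemma: twist and profinite}, \ref{lemma: lower rectangle}, \ref{lemma: triangle}) into the statement that $f$ commutes with $\s$; essential surjectivity is handled identically by constructing $\overline{\s}$ from the structure map $\XX\to{\hs\XX}$ and invoking Lemma~\ref{lemma: triangle}. The only difference is presentational---the paper spells out the large diagram explicitly while you summarize the chase in words---but the content is the same.
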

\begin{proof}
	We first show that the functor is fully faithful. Let $R$ and $S$ be ind-\'{e}tale \ks-algebras. According to Theorem \ref{theo: classical equivalence} we have a bijection
	$$
	\xi\colon \Hom(R^\sharp,S^\sharp)\simeq \Hom(\Hom(S^\sharp,k_s^\sharp)^\sharp,\Hom(R^\sharp,k_s^\sharp)^\sharp),
	$$
	where $\Hom(S^\sharp,k_s^\sharp)^\sharp$ denotes the profinite $\Gal$-space obtained from the profinite $\Gal$-$\s$-space $\Hom(S^\sharp,k_s^\sharp)$ by forgetting $\s$. It suffices to show that $\xi$ restricts to a bijection between the subsets $\Hom(R,S)\subseteq \Hom(R^\sharp,S^\sharp)$ and $$\Hom(\Hom(S^\sharp,k_s^\sharp),\Hom(R^\sharp,k_s^\sharp))\subseteq \Hom(\Hom(S^\sharp,k_s^\sharp)^\sharp,\Hom(R^\sharp,k_s^\sharp)^\sharp).$$
	In fact, it suffices to show that if $\alpha\colon R\to S$ is a morphism of $k$-algebras, such that $\xi(\alpha)\colon \Hom(S^\sharp,k_s^\sharp)\to\Hom(R^\sharp,k_s^\sharp)$ is a morphism of profinite $\Gal$-$\s$-spaces, then $\alpha$ is a morphism of \ks-algebras. Note that $\alpha\colon R\to S$ is a morphism of \ks-algebras if and only if 
	$$
	\xymatrix{
	{\hs R} \ar^-{{\hs\alpha}}[r] \ar_{\overline{\s}}[d] & {\hs S} \ar^{\overline{\s}}[d] \\
R \ar^\alpha[r] & S	
	}	
	$$
	is a commutative diagram in the category of ind-\'{e}tale $k$-algebras. According to Theorem \ref{theo: classical equivalence} it thus suffices to show that the induced diagram
	\begin{equation} \label{eq: diag for main equ}
		\xymatrix{
		\Hom(S^\sharp,k^\sharp) \ar[r] \ar[d] & \Hom(R^\sharp,k^\sharp) \ar[d] \\
		\Hom({\hs S}^\sharp,k^\sharp) \ar[r] & \Hom({\hs R}^\sharp,k^\sharp)
	}
	\end{equation}
	is a commutative diagram in the category of profinite $\Gal$-spaces. Now (\ref{eq: diag for main equ}) fits into the larger diagram 
	\begin{equation} \label{eq: diag big}
		\xymatrix{
		\Hom(S^\sharp,k^\sharp) \ar@/_3.5pc/_\s[dd] \ar[r] \ar[d] & \Hom(R^\sharp,k^\sharp) \ar[d] \ar@/^3.5pc/^\s[dd] \\
		\Hom({\hs S}^\sharp,k^\sharp) \ar[r] \ar_-\simeq[d]  & \Hom({\hs R}^\sharp,k^\sharp) \ar^-\simeq[d] \\
		{\hs(\Hom(S^\sharp,k^\sharp))} \ar[r] & {\hs(\Hom(R^\sharp,k^\sharp))}
	}
	\end{equation}
	in the category of profinite $\Gal$-spaces. 	
	Note that if $\f\colon \XX\to\mathtt{Y}$ is a morphism of profinite $\Gal$\=/$\s$\=/spaces, then 
	$$
	\xymatrix{
	\XX \ar^\f[r] \ar_\s[d] & \mathtt{Y} \ar^\s[d]\\
	{\hs \XX} \ar^\f[r] &	{\hs \mathtt{Y}}
	}
	$$
	is a commutative diagram in the category of profinite $\Gal$-spaces. Thus the outer rectangle of (\ref{eq: diag big}) commutes. The left and right triangles commute by Lemma \ref{lemma: triangle} and the lower rectangle commutes by Lemma \ref{lemma: lower rectangle}. Thus also the upper rectangle commutes as desired.
	
	It remains to show that the functor $R\rightsquigarrow \Hom(R^\sharp,k_s^\sharp)$ is essentially surjective. Let $\XX$ be a profinite $\Gal$-$\s$-space. From Theorem \ref{theo: classical equivalence} we know that there exists an ind-\'{e}tale $k$-algebra $R$ and an isomorphism $\XX\simeq\Hom(R,k_s^\sharp)$ of profinite $\Gal$-spaces. Again, by Theorem \ref{theo: classical equivalence}, the morphism
	$$\Hom(R,k_s^\sharp)\simeq\XX\xrightarrow{\s}{\hs\XX}\simeq{\hs(\Hom(R,k_s^\sharp))}\simeq \Hom({\hs R},k_s^\sharp)$$ of profinite $\Gal$-spaces is induced by a unique morphism $\overline{\s}\colon {\hs R}\to R$ of $k$-algebras. Then $\s\colon R\to R,\ a\mapsto\overline{\s}(a\otimes 1)$ is a ring endomorphism extending $\s\colon k\to k$, i.e., $R$ is a \ks-algebra.
	 
	 By definition of $\overline{\s}\colon {\hs R}\to R$, the diagram
	 $$
	 \xymatrix{
	 \XX \ar_\simeq[d] \ar^\s[r] & {\hs\XX} \ar^\simeq[d]	\\
	 \Hom(R^\sharp,k_s^\sharp) \ar[rd] & {\hs(\Hom(R^\sharp,k_s^\sharp))} \ar^\simeq[d] \\
	 & \Hom({\hs R}^\sharp,k_s^\sharp)
	 }
	 $$
	commutes. Using Lemma \ref{lemma: triangle} it follows that 
	$$
	\xymatrix{
		\XX \ar_\simeq[d] \ar^\s[r] & {\hs\XX} \ar^\simeq[d]	\\
		\Hom(R^\sharp,k_s) \ar^-\s[r] & {\hs(\Hom(R^\sharp,k_s))} 
	}
	$$
	commutes, i.e., $\XX\simeq\Hom(R^\sharp,k_s^\sharp)$ as profinite $\Gal$-$\s$-spaces.
\end{proof}

Note that the profinite $\Gal$-$\s$-groups are exactly the group objects in the category of profinite $\Gal$-$\s$-spaces. From Proposition \ref{prop: s equivalence} we thus obtain:

\begin{cor} \label{cor:  s equivalence}
	The category of ind-\'{e}tale \ks-Hopf algebras is anti-equivalent to the category of profinite $\Gal$-$\s$-groups. \qed
\end{cor}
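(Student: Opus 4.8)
The plan is to deduce this from Proposition \ref{prop: s equivalence} via the general principle that an anti-equivalence of categories which preserves finite coproducts (and the initial object) restricts to an anti-equivalence between cogroup objects on one side and group objects on the other.

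First I would record that a \ks-Hopf algebra whose underlying $k$-algebra is ind-\'{e}tale is the same thing as a cogroup object in the category of ind-\'{e}tale \ks-algebras. Recall that the coproduct in the category of \ks-algebras is the tensor product $\otimes_k$ (with $\s(r\otimes s)=\s(r)\otimes\s(s)$, see Section \ref{subsec: Difference algebra}) and the initial object is $k$; the Hopf algebra axioms say precisely that $\Delta\colon R\to R\otimes_k R$, $\varepsilon\colon R\to k$ and the antipode $S\colon R\to R$ exhibit $R$ as a cogroup object, and demanding that these be morphisms of \ks-algebras is exactly the definition of a \ks-Hopf algebra structure. Since a tensor product over $k$ of two ind-\'{e}tale $k$-algebras is again ind-\'{e}tale, and $k$ is ind-\'{e}tale, the full subcategory of ind-\'{e}tale \ks-algebras is closed under these coproducts and contains the initial object, so its cogroup objects are exactly the ind-\'{e}tale \ks-Hopf algebras. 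Dually, as already noted in the text, the profinite $\Gal$-$\s$-groups are exactly the group objects in the category of profinite $\Gal$-$\s$-spaces, where the product is the cartesian product (with diagonal $\s$- and $\Gal$-actions) and the terminal object is the one-point space.

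Next I would verify that the anti-equivalence $R\rightsquigarrow\Hom(R^\sharp,k_s^\sharp)$ of Proposition \ref{prop: s equivalence} carries finite coproducts of ind-\'{e}tale \ks-algebras to finite products of profinite $\Gal$-$\s$-spaces, and $k$ to the point. On the level of $k$-algebras this is the classical identification $\Hom((R\otimes_k S)^\sharp,k_s)\cong\Hom(R^\sharp,k_s)\times\Hom(S^\sharp,k_s)$ (the universal property of $\otimes_k$ as coproduct of commutative $k$-algebras), together with $\Hom(k,k_s)=\{*\}$; one then checks directly that this bijection is compatible with the $\Gal$-actions and with $\s$, using that the $\s$-structure on $R\otimes_k S$ is the diagonal one. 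This compatibility check is where I expect the only genuine, if minor, work to lie.

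Finally, a finite-product/coproduct-preserving (anti-)equivalence of categories automatically induces an (anti-)equivalence between the categories of (co)group objects: the structure morphisms of a cogroup object $R$ are transported to morphisms making $\Hom(R^\sharp,k_s^\sharp)$ a group object, this assignment is functorial, and it is fully faithful and essentially surjective by Proposition \ref{prop: s equivalence} (the (co)associativity, (co)unit and (co)inverse diagrams correspond to one another under the functor). Combining this with the two identifications of the first paragraph yields the asserted anti-equivalence between ind-\'{e}tale \ks-Hopf algebras and profinite $\Gal$-$\s$-groups.
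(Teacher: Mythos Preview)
Your argument is correct and follows exactly the paper's approach: the paper simply observes that profinite $\Gal$-$\s$-groups are the group objects in the category of profinite $\Gal$-$\s$-spaces and then invokes Proposition~\ref{prop: s equivalence}, leaving the categorical fact that an anti-equivalence carries cogroup objects to group objects implicit. Your write-up just spells out that implicit step (and the identification of ind-\'{e}tale \ks-Hopf algebras with cogroup objects) in detail; note, incidentally, that since an anti-equivalence automatically sends coproducts to products, your explicit verification of this is not strictly necessary, though it does no harm.
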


\begin{ex} \label{ex: equ for etale}
	Let $\H$ be an \'{e}tale algebraic group. We would like to describe the profinite $\Gal$\=/$\s$\=/group $\GG$ corresponding to the ind-\'{e}tale \ks-Hopf algebra $k\{\H\}$ under the anti-equivalence of Corollary \ref{cor:  s equivalence}. Let $\HH=\Hom(k[\H],k_s)$ be the finite (discrete) group equipped with the continuous $\Gal$-action corresponding to $\H$. Forgetting $\s$ for now, note that $k\{\H\}^\sharp$ is the coproduct of the ${\hsi(k[\H])}$'s and so $\GG^\sharp$ is the product of the $\Hom({\hsi(k[\H])},k_s)$'s. But $\Hom({\hsi(k[\H])},k_s)\simeq {\hsi\HH}$ by Lemma \ref{lemma: twist and profinite}. So $\GG^\sharp=\HH^\nn$ with action of $\Gal$ given by $$\tau(h_0,h_1,h_2,\ldots)=(\tau(h_0),\s(\tau)(h_1),\s^2(\tau)(h_2),\ldots).$$
	Multiplication in $\HH^\nn$ is componentwise and the topology is the product topology.
	
	The morphism $\psi\colon k\{\H\}\to k_s$ of $k$-algebras corresponding to $(\psi_i)_{i\in\nn}\in \HH^\nn$ is determined by $\psi(a_i\otimes\lambda_i)=\s^i(\psi_i(a_i))\lambda_i$ for $a_i\otimes\lambda_i\in k[\H]\otimes_k k={\hsi(k[\H])}$. Thus the morphism $\s(\psi)\colon k\{\H\}\to k_s$ of $k$-algebras such that
	$$
	\xymatrix{
k\{\H\} \ar^-{\s(\psi)}[r] \ar_\s[d] & k_s \ar^\s[d] \\
k\{\H\} \ar^-\psi[r] & k_s		
	}
	$$
	commutes, is given by $\s(\psi)(a_i\otimes\lambda_i)=\s^i(\psi_{i+1}(a_i))\lambda_i$ for $a_i\otimes\lambda_i\in k[\H]\otimes_k k={\hsi(k[\H])}$. This shows that $$\s\colon\HH^\nn\to\HH^\nn,\ (h_0,h_1,h_2,\ldots)\mapsto (h_1,h_2,\ldots)$$ is simply the shift.
\end{ex}

For the proof of Theorem \ref{theo: equivalence} we need one more preparatory result.

\begin{lemma} \label{lemma: s continuous}
	Let $\GG$ be an expansive profinite $\Gal$-$\s$-group. Then there exists an open normal subgroup $\mathtt{U}$ of $\GG$ such that $\bigcap_{i\in\nn}\s^{-i}(\mathtt{U})=1$ and $\tau(\mathtt{U})=\mathtt{U}$ for all $\tau\in\Gal$.
\end{lemma}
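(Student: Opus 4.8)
The plan is to start from an open normal subgroup $\NN\leq\GG$ with $\bigcap_{i\in\nn}\s^{-i}(\NN)=1$, provided by expansiveness, and to replace it by the intersection $\mathtt{U}=\bigcap_{\tau\in\Gal}\tau(\NN)$ of all of its $\Gal$-translates. Since $\mathtt{U}\subseteq\NN$, the inclusion $\bigcap_{i\in\nn}\s^{-i}(\mathtt{U})\subseteq\bigcap_{i\in\nn}\s^{-i}(\NN)=1$ is automatic, and $\mathtt{U}$ is $\Gal$-invariant by construction, because for $\tau\in\Gal$ one has $\tau(\mathtt{U})=\bigcap_{\tau'\in\Gal}\tau\tau'(\NN)=\bigcap_{\tau'\in\Gal}\tau'(\NN)=\mathtt{U}$. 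Normality is also clear: each $\tau(\NN)$ is normal in $\GG$ since $\tau$ acts as a group automorphism of $\GG$, and an intersection of normal subgroups is normal. So the only nontrivial point is to check that $\mathtt{U}$ is again \emph{open}.

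To get openness I would first show that the stabilizer $\Gal_\NN=\{\tau\in\Gal\mid\tau(\NN)=\NN\}$ is an open subgroup of $\Gal$. This is where the continuity of the action $\Gal\times\GG\to\GG$ and the compactness of $\NN$ (an open subgroup of a profinite group is closed, hence compact) are used. Given $\tau_0$ with $\tau_0(\NN)\subseteq\NN$, for each $n\in\NN$ one chooses a product neighbourhood $A_n\times B_n$ of $(\tau_0,n)$ whose image under the action lies in the open set $\NN$; extracting a finite subcover $B_{n_1},\dots,B_{n_k}$ of $\NN$ and setting $A=\bigcap_{j}A_{n_j}$ yields an open neighbourhood of $\tau_0$ with $\tau(\NN)\subseteq\NN$ for all $\tau\in A$. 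This tube-lemma argument shows that $\{\tau\in\Gal\mid\tau(\NN)\subseteq\NN\}$ is open; combining it with the analogous statement for $\tau^{-1}$ (using that inversion is a homeomorphism of $\Gal$) shows $\Gal_\NN$ is open, hence of finite index in the profinite group $\Gal$. Therefore $\NN$ has only finitely many $\Gal$-translates, so $\mathtt{U}$ is a finite intersection of open (normal) subgroups, hence an open normal subgroup of $\GG$, as required.

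I expect the tube-lemma/compactness argument for the openness of the stabilizer to be the only genuine step; the rest ($\Gal$-invariance, normality, the expansiveness inequality) is formal bookkeeping. Two small things worth double-checking are that $\tau(\NN)$ really is an open normal subgroup for every $\tau$ — which holds because $\Gal$ acts on $\GG$ by continuous group automorphisms — and the reduction "an open subgroup of a profinite group has finite index", which is exactly what makes the defining intersection for $\mathtt{U}$ finite.
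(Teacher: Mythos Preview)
Your proof is correct and starts from the same construction as the paper: set $\mathtt{U}=\bigcap_{\tau\in\Gal}\tau(\NN)$ and note that normality, $\Gal$-invariance, and the expansiveness condition are formal. The only difference is in the openness argument. The paper exploits compactness of $\Gal$: for a fixed $g\in\mathtt{U}$ it covers $\Gal$ by product neighbourhoods $V_\tau\times W_\tau$ mapping into $\NN$, extracts a finite subcover, and intersects the $W_\tau$'s to get an open neighbourhood of $g$ inside $\mathtt{U}$. You instead exploit compactness of $\NN$ (as a closed subset of the compact group $\GG$) to show that the stabilizer $\Gal_\NN$ is open, hence of finite index, so $\mathtt{U}$ is a \emph{finite} intersection of open normal subgroups. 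Your route yields the slightly stronger structural statement that $\NN$ has only finitely many $\Gal$-translates; the paper's route is a hair more direct. As a minor simplification of your argument: since $\NN$ is open in the profinite group $\GG$ it has finite index, and because each $\tau$ is an automorphism, $\tau(\NN)$ has the same index; thus $\tau(\NN)\subseteq\NN$ already forces $\tau(\NN)=\NN$, and the separate step involving $\tau^{-1}$ is unnecessary.
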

\begin{proof}
	This proof has some similarity with the proof of \cite[Lemma 5.6.4 (a)]{ RibesZaleskii:ProfiniteGroups}.
	Since $\s\colon\GG\to\GG$ is expansive, there exists an open normal subgroup $\NN$ of $\GG$ such that $\bigcap_{i\in\nn}\s^{-i}(\mathtt{N})=1$. Set $\mathtt{U}=\bigcap_{\tau\in\Gal}\tau(\NN)$. Then $\mathtt{U}$ is a normal subgroup of $\GG$, $\tau(\mathtt{U})=\mathtt{U}$ for all $\tau\in\Gal$ and  $\bigcap_{i\in\nn}\s^{-i}(\mathtt{U})=1$ because $\mathtt{U}\subseteq\NN$. 
	
	It remains to see that $\mathtt{U}$ is open. Let $g\in \mathtt{U}$. Then $\tau(g)\in\mathtt{U}\subseteq\NN$ for any $\tau\in\Gal$. Since $\NN\subseteq \GG$ is open and the action $\Gal\times \GG\to \GG$ is continuous, there exist open neighborhoods $\tau\in V_\tau\subseteq\Gal$ and $g\in W_\tau\subseteq \GG$ such that $\tau'(g')\in \NN$ for all $\tau'\in V_\tau$ and $g'\in W_\tau$. The $V_\tau$'s are an open cover of $\Gal$. Therefore, there exist $\tau_1,\ldots,\tau_n\in\Gal$ such that $\Gal=V_{\tau_1}\cup\ldots\cup V_{\tau_n}$. Set $W=W_{\tau_1}\cap\ldots\cap W_{\tau_n}$. Then $W$ is an open neighborhood of $g$ and $\tau'(g')\in \NN$ for all $\tau'\in\Gal$ and $g'\in W$. Thus $g'\in\tau(\NN)$ for all $g'\in W$ and all $\tau\in \Gal$, i.e., $W\subseteq \mathtt{U}$.
\end{proof}

\begin{proof}[Proof of Theorem \ref{theo: equivalence}] Given Corollary \ref{cor:  s equivalence},	
	it suffices to show that for an ind-\'{e}tale \ks-Hopf algebra $R$,  
	the profinite $\Gal$-$\s$-group $\GG=\Hom(R^\sharp,k_s^\sharp)$ is expansive if and only if $R$ is finitely $\s$-generated over $k$.
	
	First assume that $R$ is finitely $\s$-generated, so that $R=k\{G\}$ for the $\s$-\'{e}tale $\s$-algebraic group $G=\Hom(R,-)$. By Lemma \ref{lemma: etale Zariski closures} there exist an \'{e}tale algebraic group $\H$ and a $\s$-closed embedding $G\to [\s]_k\H$. On the side of the coordinate rings, this corresponds to a surjective morphism $k\{\H\}\to k\{G\}$ ind-\'{e}tale \ks-Hopf algebras. Which, in turn, according to Theorem~\ref{theo: classical equivalence}, corresponds to an injective morphism $\f\colon \GG\to \Hom(k\{\H\}^\sharp,k_s^\sharp)$ of profinite $\Gal$-$\s$-groups. By Example \ref{ex: equ for etale}, the profinite $\Gal$-$\s$-group $\Hom(k\{\H\}^\sharp,k_s^\sharp)$ can be identified with $\HH^\nn$, where $\HH=\Hom(k[\H],k_s)$ is a finite (discrete) group and $\s\colon \HH^\nn\to\HH^\nn$ is the shift map.
	Set $\mathtt{U}=1\times\HH\times\HH\times\ldots\leq \HH^\nn$. Then $\mathtt{U}$ is an open normal subgroup of $\HH^\nn$ with $\bigcap_{i\in\nn}\s^{-i}(\mathtt{U})=1$. Set $\NN=\f^{-1}(\mathtt{U})\leq\GG$. Then $\NN$ is an open normal subgroup of $\GG$ and
	$$\bigcap_{i\in\nn}\s^{-i}(\NN)=\bigcap_{i\in\nn}\s^{-i}(\f^{-1}(\mathtt{U}))=\bigcap_{i\in\nn}\f^{-1}(\s^{-i}(\mathtt{U}))=\f^{-1}\Big(\bigcap_{i\in\nn}\s^{-i}(\mathtt{U})\Big)=\f^{-1}(1)=1,$$
since $\f$ is injective. Thus $\GG=\Hom(R^\sharp,k_s^\sharp)$ is expansive.

	Conversely, assume that $\GG=\Hom(R^\sharp,k_s^\sharp)$ is expansive. Let $\mathtt{U}$ be as in Lemma \ref{lemma: s continuous} and set $\HH=\GG/\mathtt{U}$. Then $\HH$ is a finite (discrete) group equipped with a continuous action of $\Gal$. We consider $\HH^\nn$ as a profinite $\Gal$-$\s$-group as in Example \ref{ex: equ for etale}. In particular, $\Gal$ is acting on $\HH^\nn$ via
	$\tau(h_0,h_1,h_2,\ldots)=(\tau(h_0),\s(\tau)(h_1),\s^2(\tau)(h_2),\ldots)$ for $\tau\in\Gal$.
	
	The map $\f\colon\GG\to \HH^\nn,\ g\mapsto(\overline{\s^i(g)})_{i\in\nn}$ is a continuous group homomorphism that commutes with $\s$. Moreover, for $\tau\in\Gal$ and $g\in\GG$ we have
	$$\f(\tau(g))=(\overline{\s^i(\tau(g))})_{i\in\nn}=(\overline{\s^i(\tau)(\s^i(g))})_{i\in\nn}=\tau(\overline{\s^i(g)})_{i\in\nn}=\tau(\f(g)).$$
	So $\f$ is a morphism of profinite $\Gal$-$\s$-groups. Since $\bigcap_{i\in\nn}\s^{-i}(\mathtt{U})=1$, the map $\f$ is injective.

	Let $\H$ be the \'{e}tale algebraic group corresponding to the finite group $\HH$ with the continuous $\Gal$-action.	According to Theorem \ref{theo: equivalence}, Theorem \ref{theo: classical equivalence} and Example \ref{ex: equ for etale}, the injective morphims $\f$ of profinite $\Gal$-$\s$-groups corresponds to a surjective morphism $k\{\H\}\to R$ of \ks-algebras. As $k\{\H\}$ is finitely $\s$-generated, it follows that also $R$ is finitely $\s$-generated.	
\end{proof}

\begin{rem} \label{rem: symbolic dynamics}
	It is a natural question to ask, which profinite $\Gal$-$\s$-spaces correspond to the finitely $\s$-generated \ks-algebras under Proposition \ref{prop: s equivalence}? As explained in \cite[Theorem 5.1]{TomasicWibmer:DifferenceGaloisTheoryAndDynamics}, these are exactly the subshifts. The subshifts of finite type, extensively studied in symbolic dynamics (\cite{LindMarcus:IntroductionToSymbolicDynamisAndCoding}, \cite{Kitchens:SymbolicDynamics}), correspond to finitely $\s$-presented \ks-algebras.
\end{rem}

As an immediate corollary to Theorem \ref{theo: equivalence} we obtain:

\begin{cor}
	Let $k$ be a separably algebraically closed $\s$-field. Then the category of $\s$-\'{e}tale $\s$-algebraic groups (over $k$) is equivalent to the category of profinite groups equipped with an expansive endomorphism. \qed
\end{cor}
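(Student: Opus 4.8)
The plan is to deduce this directly from Theorem \ref{theo: equivalence}, the only point being to identify what an expansive profinite $\Gal$-$\s$-group is when $k$ is separably algebraically closed. First I would observe that in this case $k_s=k$, so the absolute Galois group $\Gal=\gal(k_s/k)$ is the trivial group. Consequently the only continuous action of $\Gal$ on a profinite group $\GG$ is the trivial one, and the compatibility condition $\s(\tau(g))=\s(\tau)(\s(g))$ from the definition of a profinite $\Gal$-$\s$-group is vacuous (both sides equal $\s(g)$ since $\tau$ and $\s(\tau)$ are the identity). Thus the data of an expansive profinite $\Gal$-$\s$-group collapses to precisely the data of a profinite group $\GG$ together with an expansive endomorphism $\s\colon\GG\to\GG$, the notion of expansiveness being literally unchanged (Definition \ref{defi: expansive endo}).

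Next I would check the morphisms match up. A morphism $\f\colon\GG\to\HH$ of profinite $\Gal$-$\s$-groups is by definition a morphism of profinite groups commuting with $\s$ and $\Gal$-equivariant; since $\Gal=1$, the equivariance condition $\f(\tau(g))=\tau(\f(g))$ is automatic, so morphisms of expansive profinite $\Gal$-$\s$-groups are exactly morphisms of profinite groups commuting with the expansive endomorphisms. Hence the forgetful functor from the category of expansive profinite $\Gal$-$\s$-groups to the category of profinite groups equipped with an expansive endomorphism is an isomorphism of categories.

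Composing this identification with the equivalence of Theorem \ref{theo: equivalence} gives the claimed equivalence between the category of $\s$-\'{e}tale $\s$-algebraic groups over $k$ and the category of profinite groups equipped with an expansive endomorphism. There is no real obstacle here: the entire content is the triviality of $\Gal$ for a separably algebraically closed field, together with the observation that the expansiveness condition in Definition \ref{defi: expansive endo} does not reference $\Gal$ at all; everything else is a direct appeal to Theorem \ref{theo: equivalence}.
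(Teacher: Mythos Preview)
Your proposal is correct and matches the paper's approach: the paper treats this as an immediate corollary of Theorem~\ref{theo: equivalence} with no further argument, and your observation that $\Gal=1$ when $k$ is separably algebraically closed (so the $\Gal$-action data and equivariance conditions become vacuous) is exactly the content of that immediacy.
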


From Theorem \ref{theo: equivalence} we also obtain a combinatorial-arithmetic description of the category of \ssetale{} $\s$-algebraic groups. 

\begin{cor}
	The category of \ssetale{} $\s$-algebraic groups is equivalent to the category of finite groups equipped with an automorphism and a compatible continuous action of $\Gal$.
\end{cor}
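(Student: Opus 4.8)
The plan is to identify the strongly $\sigma$-\'{e}tale $\sigma$-algebraic groups inside the equivalence of Theorem~\ref{theo: equivalence}. By Definition~\ref{defi: strongly setale}, a $\sigma$-algebraic group $G$ is strongly $\sigma$-\'{e}tale if and only if $k\{G\}$ is \'{e}tale as a $k$-algebra and $\sigma$-separable over $k$. I would translate each of these two conditions into a condition on the expansive profinite $\Gal$-$\sigma$-group $\GG=\Hom(k\{G\}^\sharp,k_s^\sharp)$ attached to $G$ by Theorem~\ref{theo: equivalence} (equivalently, Corollary~\ref{cor:  s equivalence}).

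First I would show that $k\{G\}$ is \'{e}tale as a $k$-algebra if and only if $\GG$ is finite. Indeed, an ind-\'{e}tale $k$-algebra is \'{e}tale precisely when it is finite dimensional over $k$, and under the anti-equivalence of Theorem~\ref{theo: classical equivalence} (or Proposition~\ref{prop: s equivalence}) an ind-\'{e}tale $k$-algebra $T$ is finite dimensional if and only if $\Hom(T,k_s)$ is finite (with $\dim_k T=\#\Hom(T,k_s)$ in that case). Note that every finite group with an endomorphism is automatically an expansive profinite $\Gal$-$\sigma$-group (choose $\NN=1$), so this case is consistent with Theorem~\ref{theo: equivalence}. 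Next, assuming $k\{G\}$ is \'{e}tale (hence finite dimensional over $k$), I would show that $k\{G\}$ is $\sigma$-separable if and only if $\sigma\colon\GG\to\GG$ is an automorphism. By the characterization of $\sigma$-separability recalled after the definition of $\sigma$-separable, $k\{G\}$ is $\sigma$-separable if and only if $\overline{\s}\colon{\hs k\{G\}}\to k\{G\}$, $f\otimes\lambda\mapsto\s(f)\lambda$, is injective. Since base change along the field homomorphism $\s\colon k\to k$ preserves dimension, we have $\dim_k{\hs k\{G\}}=\dim_k k\{G\}<\infty$, so $\overline{\s}$ is injective if and only if it is bijective, i.e.\ an isomorphism of $k$-algebras. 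Applying $\Hom(-,k_s)$ and composing with the canonical isomorphism $\Hom(({\hs k\{G\}})^\sharp,k_s^\sharp)\simeq{\hs\GG}$ of Lemma~\ref{lemma: twist and profinite}, Lemma~\ref{lemma: triangle} identifies the resulting composite with $\s\colon\GG\to\GG$. Hence $k\{G\}$ is $\sigma$-separable if and only if $\s\colon\GG\to\GG$ is bijective, and being an endomorphism of a (finite) group, this is exactly the statement that $\s$ is an automorphism.

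Combining the two translations, $G$ is strongly $\sigma$-\'{e}tale if and only if $\GG$ is a finite group on which $\s$ acts by an automorphism. Therefore the equivalence of Theorem~\ref{theo: equivalence} restricts to an equivalence between the full subcategory of strongly $\sigma$-\'{e}tale $\sigma$-algebraic groups and the full subcategory of expansive profinite $\Gal$-$\sigma$-groups $(\GG,\s)$ with $\GG$ finite and $\s$ an automorphism. Since a morphism of profinite $\Gal$-$\sigma$-groups commutes with $\s$ by definition, this last full subcategory is precisely the category of finite groups equipped with an automorphism and a compatible continuous action of $\Gal$, which is the assertion. The main obstacle I anticipate is getting the dictionary ``$\sigma$-separable $\leftrightarrow$ $\s$ is an automorphism'' exactly right: one has to use finite dimensionality to upgrade injectivity of $\overline{\s}$ to bijectivity, and then thread this through Lemmas~\ref{lemma: twist and profinite} and~\ref{lemma: triangle} so that it is genuinely the endomorphism $\s$ of $\GG$ that becomes invertible; the rest is routine bookkeeping with full subcategories.
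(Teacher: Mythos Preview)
Your proposal is correct and follows essentially the same route as the paper: both arguments restrict the equivalence of Theorem~\ref{theo: equivalence} by showing that \'etaleness of $k\{G\}$ corresponds to finiteness of $\GG$, and that $\sigma$-separability corresponds to bijectivity of $\sigma$ on $\GG$ via Lemma~\ref{lemma: triangle} (and implicitly Lemma~\ref{lemma: twist and profinite}), using finite dimensionality to pass between injectivity and bijectivity of $\overline{\sigma}$.
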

\begin{proof}
	Let $G$ be a $\s$-\'{e}tale $\s$-algebraic group and $\GG=\Hom(k\{G\}^\sharp,k_s^\sharp)$ the corresponding profinite $\Gal$-$\s$-group. Clearly, $k\{G\}$ is an \'{e}tale $k$-algebra if and only if $\GG$ is finite.
	So, assuming that $k\{G\}$ is \'{e}tale, it suffices to show that $k\{G\}$ is $\s$-separable if and only if $\s\colon \GG\to \GG$ is bijective. 
	
	As in the proof of Proposition \ref{prop: s equivalence}, the map $\s\colon \GG\to \GG$ can be interpreted as a morphism $\s\colon \GG\to{\hs\GG}$ of profinite $\Gal$-spaces. According to Lemma \ref{lemma: triangle}, the corresponding morphism of $k$-algebras is $\overline{\s}\colon {\hs(k\{G\})}=k\{G\}\otimes_k k\to k\{G\},\ f\otimes\lambda\mapsto \s(f)\lambda$. So $\s\colon\GG\to\GG$ is bijective if and only if $\overline{\s}\colon {\hs(k\{G\})}\to k\{G\}$ is bijective. Since $k\{G\}$ is a finite dimensional $k$-vector space, the latter is equivalent to $\overline{\s}\colon {\hs(k\{G\})}\to k\{G\}$ being injective. This in turn is equivalent to $k\{G\}$ being $\s$-separable.	
%
%
\end{proof}

We conclude this section with an example, illustrating the equivalence of Theorem \ref{theo: equivalence}.

\begin{ex}
	Let $\HH=\{1,h,h^2,h^3\}$ be the cyclic group of order four (considered as a discrete topological group). As in Example \ref{ex: equ for etale} we consider $\HH^\nn$ as a profinite group via the product topology. The map $\s\colon \HH^\nn\to \HH^\nn$ is the shift. Consider the subgroup $\GG$ of $\HH^\nn$ given by
	$\GG=\{(h_0,h_1,h_2,\ldots)\in\HH^\nn|\ h_i^2=h_{i+1}^2\ \forall \ i\in\nn \}$. Then $\GG$ is a closed subgroup of $\HH^\nn$ and invariant under $\s\colon\HH^\nn\to\HH^\nn$. Since $\s\colon\HH^\nn\to\HH^\nn$ is expansive, also $\s\colon\GG\to\GG$ is expansive. So $\GG$ is a profinite group equipped with an expansive endomorphism. 
	
	Let us also add the action of an absolute Galois group. Let $k=\mathbb{Q}$, considered as a constant $\s$-field and let $\Gal$ denote the Galois group of $k_s=\overline{\mathbb{Q}}$ over $k$. As extension of $\s$ to $k_s$ we choose the identity map. So the action of $\s$ on $\Gal$ is trivial and consequently a compatible action of $\Gal$ on $G$ is a continuous action that commutes with $\s$. Let $\Gal$ act on $\HH$ as $\Gal$ acts on $\{1,i,-1, -i\}=\{a\in \overline{\mathbb{Q}}|\ h^4=1\}$ (under the isomorphism determined by $i\mapsto h$).
	Let $\Gal$ act on $\GG$ by $\tau(h_0,h_1,h_2,\ldots)=(\tau(h_0),\tau(h_1),\tau(h_2),\ldots)$ for $\tau\in\Gal$. Then $\GG$ is an expansive profinite $\Gal$-$\s$-group.
	
	The corresponding $\s$-\'{e}tale $\s$-algebraic group $G$ is the $\s$-closed subgroup of $\Gm$ given by
	$$G(R)=\{g\in R^\times|\ g^4=1,\ \s(g)^2=g^2\}\leq\Gm(R)$$
	for any \ks-algebra $R$.	
\end{ex}

%

\section{A decomposition theorem for \'{e}tale difference algebraic groups}

\label{sec: decomposition theorem}

In this section we establish a rather precise structure theorem for $\s$-\'{e}tale $\s$-algebraic groups (Theorem \ref{theo: Babbitt}). In particular, this theorem shows that any $\s$-\'{e}tale $\s$-algebraic group is build up from benign $\s$-algebraic groups and finite $\s$-\'{e}tale $\s$-algebraic groups.

\subsection{Infinitesimal difference algebraic groups}  \label{subsec: Infinitesimal}
The last $\s$-closed subgroup in the subnormal series of Theorem \ref{theo: Babbitt} is $\s$-infinitesimal. In this subsection we establish the properties of $\s$\=/infinitesimal $\s$-algebraic groups relevant for the proof of Theorem \ref{theo: Babbitt}.

Recall that an algebraic group $\G$ is \emph{infinitesimal} if $\G(T)=1$ for every reduced $k$-algebra $T$. 
The following definition introduces a $\s$-analog of infinitesimal algebraic groups. A \ks-algebra $R$ is called \emph{$\s$-reduced} if $\s\colon R\to R$ is injective.

\begin{defi}
	A $\s$-algebraic group $G$ is $\s$-infinitesimal if $G(R)=1$ for every $\s$-reduced \ks\=/algebra $R$.
\end{defi}

\begin{ex}
	Let $n\geq 2$. The $\s$-closed subgroup $G$ of $\Gm$ given by
	$$G(R)=\{g\in R^\times|\ g^n=1,\ \s(g)=1\}$$ 
	for any \ks-algebra $R$, is $\s$-infinitesimal.
\end{ex}

\begin{ex}
	For $r\geq 1$ the $\s$-closed subgroup $G$ of $\Gl_n$ given by
	$$G(R)=\{g\in\Gl_n(R)|\ \s^r(g)=I_n\}$$
	for any \ks-algebra $R$, is $\s$-infinitesimal. (Here $I_n$ is the $n\times n$ identity matrix.)
\end{ex}
The following lemma gives an algebraic characterization of $\s$-infinitesimal $\s$-algebraic groups.

\begin{lemma} \label{lemma: sinfinitesimal}
	A $\s$-algebraic group $G$ is $\s$-infinitesimal if and only if for every $f\in\m_G$ there exists an $n\in\nn$ such that $\s^n(f)=0$. In fact, if $G$ is $\s$-infinitesimal, then $\s^n(\m_G)=0$ for some 
$n\in\nn$.  
\end{lemma}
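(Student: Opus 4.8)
The plan is to prove the two implications of the equivalence separately, obtaining the stronger statement $\s^n(\m_G)=0$ in the course of the ``only if'' direction (which of course implies that every $f\in\m_G$ is killed by some power of $\s$).

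For the direction ``if every $f\in\m_G$ satisfies $\s^n(f)=0$ for some $n$, then $G$ is $\s$-infinitesimal'', I would take a $\s$-reduced \ks-algebra $R$ and a point $g\in G(R)$, i.e.\ a morphism $\psi\colon k\{G\}\to R$ of \ks-algebras. For $f\in\m_G$ pick $n$ with $\s^n(f)=0$; then $\s^n(\psi(f))=\psi(\s^n(f))=0$, and since $\s\colon R\to R$ is injective so is $\s^n$, hence $\psi(f)=0$. Thus $\psi$ annihilates $\m_G$ and factors through $k\{G\}/\m_G\simeq k$, so $\psi$ is the composite of the counit $\varepsilon\colon k\{G\}\to k$ with the structure map $k\to R$. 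This is exactly the trivial element of $G(R)$, so $g=1$ and $G(R)=1$.

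For the converse, assume $G$ is $\s$-infinitesimal and set $I=\{f\in k\{G\}\mid \s^n(f)=0\text{ for some }n\in\nn\}$. A short routine check shows that $I$ is a $\s$-ideal of $k\{G\}$ (closure under sums and $k\{G\}$-multiples is immediate, and $\s(I)\subseteq I$ since $\s^{n-1}(\s(f))=\s^n(f)$), and that $R:=k\{G\}/I$ is $\s$-reduced: if $\s(\bar f)=0$ in $R$ then $\s(f)\in I$, so $\s^{n+1}(f)=0$ for some $n$ and hence $f\in I$. The quotient map $\pi\colon k\{G\}\to R$ is then a morphism of \ks-algebras, hence a point of $G(R)$; since $G$ is $\s$-infinitesimal and $R$ is $\s$-reduced, $G(R)=1$, so $\pi$ is the composite of $\varepsilon$ with the structure map $k\to R$. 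In particular $\m_G\subseteq\ker(\pi)=I$, which already yields that every $f\in\m_G$ is annihilated by some power of $\s$.

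To upgrade this to a uniform exponent I would use that $k\{G\}$ is finitely $\s$-generated: write $k\{G\}=k[B,\s(B),\s^2(B),\dots]$ with $B=\{b_1,\dots,b_m\}$ finite and, after replacing each $b_i$ by $b_i-\varepsilon(b_i)$ (which does not change the generated $k$-algebra), assume $B\subseteq\m_G$. Inspecting the constant term of a polynomial expression in the $\s^j(b_i)$ shows that $\m_G$ equals the ordinary ideal generated by $\{\s^j(b_i)\mid 1\le i\le m,\ j\ge 0\}$. Since $B$ is finite and $B\subseteq\m_G\subseteq I$, there is a single $N$ with $\s^N(b_i)=0$ for all $i$; then $\s^N(\s^j(b_i))=\s^j(\s^N(b_i))=0$, so $\s^N$ kills every ideal generator of $\m_G$, whence $\s^N(\m_G)=0$. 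The one step requiring a little care is this identification of $\m_G$ with the ideal generated by the $\s$-transforms of $B$ (checking that subtracting the constants $\varepsilon(b_i)$ is harmless and that $\varepsilon$ vanishes on every positive-degree monomial in the $\s^j(b_i)$), together with the observation that $N$ can be chosen uniformly precisely because $B$ is finite; beyond this I do not expect any genuine obstacle.
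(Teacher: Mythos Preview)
Your proof is correct and follows essentially the same route as the paper: both directions use the reflexive closure $I=\{f\mid \s^n(f)=0\text{ for some }n\}$ of the zero ideal and the observation that $k\{G\}/I$ is $\s$-reduced. For the uniform exponent the paper invokes the fact that $\m_G$ is a finitely $\s$-generated $\s$-ideal (citing \cite[Theorem~4.1]{Wibmer:FinitenessPropertiesOfAffineDifferenceAlgebraicGroups}), whereas you give the slightly more self-contained argument of normalizing a finite $\s$-generating set of $k\{G\}$ into $\m_G$ and reading off that $\m_G$ is generated by its $\s$-transforms; the two arguments are otherwise identical.
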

\begin{proof}
	 Let $\ida=\{f\in k\{G\}|\ \exists \ n\in\nn : \s^n(f)=0\}$ denote the reflexive closure (cf. \cite[p.~107]{Levin}) of the zero ideal of $k\{G\}$. Then $\ida$ is a $\s$-ideal of $k\{G\}$ and $R=k\{G\}/\ida$ is $\s$-reduced.

	Assume that $G$ is $\s$-infinitesimal. Then the canonical map $k\{G\}\to R$ factors through the counit $\varepsilon\colon k\{G\}\to k$. So $\m_G\subseteq \ida$. 
	
	Conversely, assume that $\m_G\subseteq \ida$. Then $\m_G=\ida$, since $\m_G$ is a maximal ideal of $k\{G\}$. Let $R$ be a $\s$-reduced \ks-algebra and $g\colon k\{G\}\to R$ a morphism of \ks-algebras. To show that $G(R)=1$, it suffices to show that $g(\m_G)=0$. But if $f\in\m_G=\ida$, then there exists an $n\in\nn$ with $\s^n(f)=0$ and so $\s^n(g(f))=g(\s^n(f))=g(0)=0$. Since $\s\colon R\to R$ is injective, this implies $g(f)=0$.
	
	The $\s$-ideal $\m_G$ is finitely $\s$-generated, i.e., of the form  $\m_G=[f_1,\ldots,f_m]$, for $f_1,\ldots,f_m\in\m_G$ (e.g., by \cite[Theorem 4.1]{Wibmer:FinitenessPropertiesOfAffineDifferenceAlgebraicGroups}). So we can find $n\in\nn$ such that $\s^n(f_i)=0$ for $i=1,\ldots,m$. But then $\s^n(\m_G)=0$.	
	%
%
%
%
\end{proof}

\begin{cor} \label{cor: sinfinitesimal has ld 1}
	A $\s$-infinitesimal $\s$-algebraic group has limit degree one and is $\s$-connected.
\end{cor}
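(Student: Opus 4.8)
The plan is to deduce both assertions from the strong form of Lemma~\ref{lemma: sinfinitesimal}: if $G$ is $\s$-infinitesimal then $\s^n(\m_G)=0$ for some $n\in\nn$. Once this is in hand, everything reduces to short bookkeeping arguments, using Lemma~\ref{lemma: ld=1} for the limit degree and Lemma~\ref{lemma: characterize sconnected} together with Corollary~\ref{cor: spec(R) sconnected iff non constant idempotent} for $\s$-connectedness.

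For the limit degree, I would use Lemma~\ref{lemma: ld=1}, which reduces the claim $\ld(G)=1$ to showing that $k\{G\}$ is finitely generated as a $k$-algebra. Start from a finite subset $B$ of $k\{G\}$ whose $\s$-shifts $B,\s(B),\s^2(B),\ldots$ generate $k\{G\}$ as a $k$-algebra; such a $B$ exists because $k\{G\}$ is finitely $\s$-generated over $k$. The one point requiring a little care is to arrange $B\subseteq\m_G$: replacing each $b\in B$ by $b-\varepsilon(b)$ does not change the $k$-subalgebra generated by $B$ together with all its $\s$-shifts, since $\varepsilon(b)\in k$ (and $\s$ maps $k$ into $k$), while every $k$-subalgebra already contains $k$. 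Once $B\subseteq\m_G$, the relation $\s^n(\m_G)=0$ forces $\s^j(b)=0$ for every $b\in B$ and every $j\ge n$, so the finite set $B\cup\s(B)\cup\cdots\cup\s^{n-1}(B)$ already generates $k\{G\}$ as a $k$-algebra. Hence $k\{G\}$ is finitely generated over $k$ and $\ld(G)=1$.

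For $\s$-connectedness, I would invoke Lemma~\ref{lemma: characterize sconnected} to reduce to showing that the topological space $\spec(k\{G\})$ of $G$ is $\s$-connected, and then Corollary~\ref{cor: spec(R) sconnected iff non constant idempotent} to reduce further to showing that $k\{G\}$ contains no non-trivial constant idempotent. Given a constant idempotent $e\in k\{G\}$, note that $\varepsilon(e)$ is an idempotent of the field $k$, hence $0$ or $1$. If $\varepsilon(e)=0$ then $e\in\m_G$, so $e=\s^n(e)\in\s^n(\m_G)=0$ and $e=0$. If $\varepsilon(e)=1$ then $1-e$ is a constant idempotent lying in $\m_G$, and the previous case yields $1-e=0$, i.e. $e=1$. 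Either way $e$ is trivial, so $\spec(k\{G\})$ is $\s$-connected and $G$ is $\s$-connected.

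I do not expect a genuine obstacle here: the whole argument is essentially bookkeeping around Lemma~\ref{lemma: sinfinitesimal}. The only mildly delicate point is the normalization of the $\s$-generating set of $k\{G\}$ into the augmentation ideal $\m_G$, together with the (routine) check that this normalization does not affect the $k$-algebra generated — this is what makes the vanishing of $\s^n(\m_G)$ applicable.
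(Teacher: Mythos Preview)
Your proof is correct. For the limit degree assertion you argue exactly as the paper does: normalize a finite $\s$-generating set into $\m_G$ (the paper phrases this via the splitting $k\{G\}=k\oplus\m_G$, you via $b\mapsto b-\varepsilon(b)$, which is the same projection), then use $\s^n(\m_G)=0$ and Lemma~\ref{lemma: ld=1}.

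For $\s$-connectedness you take a slightly different route than the paper. The paper argues via the strong core: if $G$ were not $\s$-connected then $\pis(k\{G\})\supsetneq k$, so $\pis(k\{G\})$ would contain a nonzero $f\in\m_G$; but $\s^n(f)=0$ contradicts the $\s$-separability of $\pis(k\{G\})$. You instead go through Corollary~\ref{cor: spec(R) sconnected iff non constant idempotent} and show directly that any constant idempotent $e$ is trivial, using $e=\s^n(e)$ together with $\s^n(\m_G)=0$. Your argument is more self-contained in that it avoids invoking the $\s$-separability of $\pis(k\{G\})$ and the machinery behind it; the paper's argument, on the other hand, fits more naturally with the ambient development of $\pis$ in Section~\ref{sec: the difference identity component}. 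Both are short and valid.
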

\begin{proof}
	Let $G$ be a $\s$-infinitesimal $\s$-algebraic group. According to Lemma \ref{lemma: ld=1} it suffices to show that $k\{G\}$ is finitely generated as a $k$-algebra. 
	We have $k\{G\}=k\oplus\m_G$ (direct sum of $k$-vector spaces). Therefore, we can find a finite $\s$-generating set $B$ of $k\{G\}$ that is contained in $\m_G$. By Lemma~\ref{lemma: sinfinitesimal} there exists an $n\in\nn$ such that $\s^n(B)=0$. So $k\{G\}$ is generated by $B,\s(B),\ldots,\s^{n-1}(B)$ as a $k$-algebra.
	
	Suppose $G$ is not $\s$-connected. Then $k\subsetneqq \pis(k\{G\})\subseteq k\oplus\m_G=k\{G\}$ and so $\pis(k\{G\})$ contains an element $f$ of $\m_G$. But then $\s^n(f)=0$ for some $n$. This contradicts the $\s$-separability of $\pis(k\{G\})$. 
\end{proof}

A $\s$-algebraic group $G$ is \emph{finite} if $k\{G\}$ is a finite dimensional $k$-vector space.

\begin{cor}
	A $\s$-infinitesimal $\s$-\'{e}tale $\s$-algebraic group is finite.
\end{cor}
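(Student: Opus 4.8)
The plan is to obtain this as a short corollary of Corollary \ref{cor: sinfinitesimal has ld 1} and Lemma \ref{lemma: ld=1}, together with the definition of $\s$-\'{e}taleness. First I would note that, since $G$ is $\s$-infinitesimal, Corollary \ref{cor: sinfinitesimal has ld 1} gives $\ld(G)=1$, and then Lemma \ref{lemma: ld=1} tells us that $k\{G\}$ is finitely generated as a $k$-algebra, say $k\{G\}=k[a_1,\dots,a_n]$.

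Next I would use that $G$ is $\s$-\'{e}tale, so that $k\{G\}$ is ind-\'{e}tale as a $k$-algebra; in particular every element of $k\{G\}$ satisfies a separable, hence nonzero monic, polynomial over $k$, so each $a_i$ is algebraic over $k$. A finitely generated $k$-algebra whose (algebra) generators are algebraic over $k$ is finite-dimensional as a $k$-vector space, by the usual tower argument: $k[a_1]$ is finite over $k$, $k[a_1,a_2]$ is finite over $k[a_1]$, and so on. Hence $k\{G\}$ is finite-dimensional over $k$, i.e.\ $G$ is finite.

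There is essentially no obstacle here: the work is all done in Corollary \ref{cor: sinfinitesimal has ld 1} (bounding the limit degree) and Lemma \ref{lemma: ld=1} (translating $\ld(G)=1$ into finite generation of $k\{G\}$ as a $k$-algebra), and the only elementary input at the end is the standard fact that a finitely generated algebraic $k$-algebra is finite-dimensional. One could just as well argue at the last step that the $k$-subalgebra generated by finitely many \'{e}tale $k$-subalgebras of $k\{G\}$ containing $a_1,\dots,a_n$ is again \'{e}tale, being a quotient of their tensor product (\'{e}taleness is stable under tensor products and quotients), and therefore finite-dimensional.
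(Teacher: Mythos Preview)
Your proposal is correct and follows essentially the same route as the paper: use Corollary~\ref{cor: sinfinitesimal has ld 1} to get $\ld(G)=1$, invoke Lemma~\ref{lemma: ld=1} to deduce that $k\{G\}$ is finitely generated as a $k$-algebra, and then observe that a finitely generated ind-\'{e}tale $k$-algebra is \'{e}tale (hence finite-dimensional). The paper compresses the last two steps into a single sentence, but the argument is the same.
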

\begin{proof}
	If $G$ is a $\s$-infinitesimal $\s$-\'{e}tale $\s$-algebraic group, then $k\{G\}$ is ind-\'{e}tale and finitely generated as a $k$-algebra by Corollary \ref{cor: sinfinitesimal has ld 1}. Thus $k\{G\}$ is an \'{e}tale $k$-algebra.
\end{proof}

\begin{lemma} \label{lemma: Gsc sinfinitesimal}
	A reduced finite $\s$-connected $\s$-algebraic group is $\s$-infinitesimal.
\end{lemma}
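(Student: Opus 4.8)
The plan is to reduce, via Lemma \ref{lemma: sinfinitesimal}, to showing that $\s^n(\m_G)=0$ for some $n\in\nn$, and to extract this from the dynamics of $\s$ on the (finitely many) idempotents of $A:=k\{G\}$. Since $G$ is reduced and finite, $A$ is a reduced finite-dimensional $k$-algebra, hence a finite product $A\simeq K_1\times\cdots\times K_r$ of finite field extensions of $k$; let $e_1,\ldots,e_r$ be the corresponding primitive idempotents. The counit $\varepsilon\colon A\to k$ is surjective, so $\m_G=\ker(\varepsilon)$ is a maximal ideal of $A$; hence $\m_G=(1-e)A$ for exactly one of the $e_i$, say $e$, and $\varepsilon(e)=1$, so in particular $e\neq 0$.

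The heart of the proof is the observation that $e\leq\s(e)$ in the partial order on idempotents, i.e.\ $e\s(e)=e$. Indeed, $\m_G$ is a $\s$-ideal (it is the defining ideal of the trivial subgroup; directly, $\varepsilon(\s(f))=\s(\varepsilon(f))=0$ for $f\in\m_G$), so $1-\s(e)=\s(1-e)\in\m_G=(1-e)A$, and multiplying by $e$ gives $e-e\s(e)\in e(1-e)A=0$. Applying the ring endomorphism $\s$ repeatedly produces an increasing chain of idempotents $e\leq\s(e)\leq\s^2(e)\leq\cdots$ in $A$. Because $A$ has only finitely many idempotents, this chain stabilizes: there is an $n$ with $\s^{n+1}(e)=\s^n(e)=:f$, so that $f$ is a \emph{constant} idempotent with $f\geq e$, and hence $f\neq 0$.

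At this point $\s$-connectedness of $G$ finishes the argument: the topological space of $G$ is $\s$-connected (Lemma \ref{lemma: characterize sconnected}), so by Corollary \ref{cor: spec(R) sconnected iff non constant idempotent} the only constant idempotents of $A$ are $0$ and $1$; since $f\neq 0$ we conclude $f=1$, i.e.\ $\s^n(e)=1$ and $\s^n(1-e)=0$. As $\m_G=(1-e)A$ and $\s$ is a ring homomorphism, this yields $\s^n(\m_G)=0$, and Lemma \ref{lemma: sinfinitesimal} then gives that $G$ is $\s$-infinitesimal. I do not expect a genuine obstacle here; the computation is short, and the only point requiring care is \emph{where} the hypothesis ``reduced'' enters, namely in writing $A$ as a product of fields so that $\m_G$ is generated by a single idempotent --- the statement genuinely fails without reducedness (already for $\mu_p$ viewed as a $\s$-algebraic group over a constant $\s$-field of characteristic $p$).
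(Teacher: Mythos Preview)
Your proof is correct. The paper's argument is the $\spec$-side dual of yours: instead of tracking the increasing chain of idempotents $e\leq\s(e)\leq\s^2(e)\leq\cdots$, it works directly in the $\s$-topology, defining
\[
Z=\{\p\in\spec(k\{G\})\mid \exists\,n\in\nn:\ \s^{-n}(\p)=\m_G\},
\]
and observing that both $Z$ and its complement are $\s$-closed (closedness being automatic since the space is finite and discrete). Since $\m_G\in Z$, $\s$-connectedness forces $Z=\spec(k\{G\})$, and then reducedness enters at the last step via $\s^{-n}(0)=\s^{-n}\big(\bigcap_\p\p\big)=\m_G$. The two proofs are mirror images: your stable constant idempotent $f=\s^n(e)$ cuts out a $\s$-closed piece of $\spec(k\{G\})$, and $f=1$ is precisely the statement $Z=\spec(k\{G\})$. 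Your version makes the role of reducedness slightly more visible up front (it is what lets you write $\m_G=(1-e)A$), while the paper's version keeps reducedness in reserve until the final line; otherwise there is no substantive difference.
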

\begin{proof}
Because $G$ is finite, $\spec(k\{G\})$ is a finite discrete topological space. Set
	$$Z=\{\p\in\spec(k\{G\})|\ \exists\ n\in\nn:\ \s^{-n}(\p)=\m_G\}.$$
	Then $Z$ and the complement of $Z$ are $\s$-closed subsets of $\spec(k\{G\})$. Because $\spec(k\{G\})$ is $\s$\=/connected and $\m_G\in Z$, we must have $Z=\spec(k\{G\})$. It follows that there exists an $n\in\nn$ such that $\s^{-n}(\p)=\m_G$ for every prime ideal $\p$ of $k\{G\}$. The nilradical of $k\{G\}$, which, by assumption, is trivial, is the intersection of all prime ideals of $k\{G\}$.  Therefore $\s^{-n}(0)=\s^{-n}(\sqrt{0})=\m_G$. So $G$ is $\s$-infinitesimal by Lemma \ref{lemma: sinfinitesimal}.
%
%
\end{proof}


\begin{lemma} \label{lemma: inverse image of sinfinitesimal is sinfintesimal}
	Let $\f\colon G\to H$ be a morphism of $\s$-algebraic groups and let $H'$ be a $\s$\=/infinitesimal  $\s$-closed subgroup of $H$. If $\ker(\f)$ is $\s$-infinitesimal, then $\f^{-1}(H')$ is $\s$-infinitesimal.
\end{lemma}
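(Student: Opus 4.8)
The plan is to translate the $\s$-infinitesimality of the three groups involved into the algebraic criterion of Lemma~\ref{lemma: sinfinitesimal}, namely that $\s^n$ kills the augmentation ideal for some $n$. First I would set $G'=\f^{-1}(H')$ and observe that the restriction $\f|_{G'}\colon G'\to H'$ has kernel $\ker(\f|_{G'})=\ker(\f)\cap G'=\ker(\f)$, since $\ker(\f)=\f^{-1}(1)\subseteq\f^{-1}(H')=G'$. Thus replacing $G$ by $G'$ and $H$ by $H'$, we may assume $H=H'$ is $\s$-infinitesimal and we must show $G$ is $\s$-infinitesimal, knowing that $N:=\ker(\f)$ is $\s$-infinitesimal.

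The cleanest route is to work directly with $R$-points: let $R$ be a $\s$-reduced \ks-algebra and $g\in G(R)$. Then $\f_R(g)\in H(R)=1$ because $H$ is $\s$-infinitesimal, so $g\in\ker(\f)(R)=N(R)$; but $N$ is $\s$-infinitesimal and $R$ is $\s$-reduced, so $N(R)=1$ and hence $g=1$. This shows $G(R)=1$ for every $\s$-reduced $R$, i.e., $G$ is $\s$-infinitesimal. (One should note that $\ker(\f)(R)=\ker(\f_R)$ by the definition of the kernel recalled in Section~\ref{subsec: affine difference algebraic geometry}, and that $N\leq G$ being $\s$-infinitesimal forces $N(R)=1$.)

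There is essentially no obstacle here: the functorial definition of $\s$-infinitesimal does all the work, and the only thing to check is the identification $\ker(\f^{-1}(H')\to H')=\ker(\f)$, which is immediate from $\ker(\f)\subseteq\f^{-1}(H')$. Alternatively, if one prefers a Hopf-algebraic argument, one can use Lemma~\ref{lemma: sinfinitesimal}: from $\s^a(\m_H)=0$ for the appropriate quotient and $\s^b(\m_N)=0$, together with $k\{\f^{-1}(H')\}/\I(\text{image of }\m_{H'})\cong k\{N\}$-type comparisons, one deduces $\s^{a+b}(\m_{G'})=0$; but the point-theoretic argument above is shorter and avoids chasing ideals through quotients, so that is what I would present.
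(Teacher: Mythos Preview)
Your proposal is correct and takes essentially the same approach as the paper: both argue directly via the functorial definition, checking that $\f^{-1}(H')(R)=1$ for every $\s$-reduced $R$ by using $H'(R)=1$ and $\ker(\f)(R)=1$. The paper's proof is the one-line computation $\f^{-1}(H')(R)=\{g\in G(R)\mid \f(g)\in H'(R)=1\}=\ker(\f)(R)=1$, so your reduction step (replacing $G$ by $G'$ and $H$ by $H'$) is harmless but unnecessary.
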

\begin{proof}
	Let $R$ be a $\s$-reduced \ks-algebra. Then
	$$\f^{-1}(H')(R)=\{g\in G(R)|\ \f(g)\in H'(R)=1\}=\ker(\f)(R)=1.$$
\end{proof}

\subsection{The difference Frobenius morphism}
%
The Frobenius morphism and the closely related Frobenius kernels play an important role in the representation theory of algebraic groups in positive characteristic. (See e.g., \cite[Section 9, Part I]{Jantzen:RepresentationsOfAlgebraicGroups}.) In this subsection we introduce a difference analog of the Frobenius morphism and establish the properties relevant for the proof of Theorem \ref{theo: Babbitt}.

The idea is that in most of the constructions and results from \cite[Section 9, Part I]{Jantzen:RepresentationsOfAlgebraicGroups}, the Frobenius endomorphism $a\mapsto a^p$ can be replaced by our ``abstract'' endomorphism $\s$.

%
%
%
%


Let $R$ be a \ks-algebra. For $r\in\nn$ the map $\s^r\colon k\to k,\ \lambda\mapsto\s^r(\lambda)$ is a morphism of difference rings. We denote by
$_{\s^r}R$ the $k$-$\s$-algebra obtained from $R$ by restriction of scalars via $\s^r\colon k\to k$. That is, $_{\s^r}R$ equals $R$ as a difference ring, but the $k$-algebra structure map is $k\to {_{\s^r}R},\ \lambda\mapsto \s^r(\lambda)$. Note that $\s^r\colon R\to {_{\s^r}R},\ a\mapsto \s^r(a)$ is a morphism of $k$-$\s$-algebras.

Let $X$ be a $\s$-variety and let ${\hsr X}$ denote the $\s$-variety obtained from $X$ by base change along $\s^r\colon k\to k$. So
$${\hsr X}(R)=X({_{\s^r}R})$$
for any \ks-algebra $R$ and ${\hsr X}$ is represented by $k\{{\hsr X}\}={\hsr (k\{G\})}=k\{G\}\otimes_k k$.

We define a morphism $$F_X^r\colon X\to{\hsr X}$$
of $\s$-varieties over $k$
by $(F_X^r)_R:=X(\s^r)\colon X(R)\to X({_{\s^r}R})={\hsr X}(R)$
for any \ks-algebra $R$. This makes sense because $\s^r\colon R\to {_{\s^r}R}$ is a morphism of $k$-$\s$-algebras. Moreover, if $\psi\colon R\to R'$ is a morphism of \ks-algebras, then

$$
\xymatrix{
	R \ar[d]_-\psi \ar[r]^-{\s^r} &  {_{\s^r}R} \ar[d]^-\psi  \\
	R' \ar[r]^-{\s^r} & {_{\s^r}R'}	
}
$$
is a commutative diagram of \ks-algebras. Therefore
$$
\xymatrix{
	X(R) \ar[d]_-{X(\psi)} \ar[rr]^-{(F_X^r)_R} & & {\hsr X}(R) \ar[d]^-{{\hsr X}(\psi)} \\
	X(R') \ar[rr]^-{(F_X^r)_{R'}} & &{\hsr X}(R')	
}
$$
commutes, so that $F_X^r$ is indeed natural in $R$, as required.
We write $F_X$ for $F_X^1$ and call $F_X$ the \emph{$\s$-Frobenius morphism} of $X$.

If $\f\colon X\to Y$ is a morphism of $\s$-varieties, then
$$
\xymatrix{
	X(R) \ar[d]_-{X(\s^r)} \ar[r]^-{\f_R} &  Y(R) \ar[d]^-{Y(\s^r)} \\
	X(_{\s^r}R) \ar[r]^-{\f_{(_{\s^r}R)}} & Y(_{\s^r}R)	
}
$$
commutes for any $k$-$\s$-algebra $R$. Therefore, we have a commutative diagram
$$
\xymatrix{
	X \ar[r]^-\f \ar[d]_-{F_X^r} &  Y \ar[d]^-{F_Y^r} \\
	{\hsr X} \ar[r]^-{{\hsr\f}} & {\hsr Y}	
}
$$
of $\s$-varieties.

The dual morphism
$(F_X^r)^*\colon {\hsr(k\{X\})}\to k\{X\}$ is the image of $\id\in X(k\{X\})=\Hom(k\{X\},k\{X\})$ in
$${\hsr X}(k\{X\})=\Hom(k\{X\},{_{\s^r}(k\{X\})})=\Hom(\hsr(k\{X\}),k\{X\})$$
under $(F_X^r)_{k\{X\}}$. 
Thus $(F_X^r)^*$ is given by
\begin{equation} \label{eqn: dual for sFrob}
(F_X^r)^*\colon {\hsr(k\{X\})}\to k\{X\},\quad f\otimes\lambda\mapsto\s^r(f)\lambda.
\end{equation}



Note that if $G$ is a $\s$-algebraic group. Then $F_G^r\colon G\to{\hsr G}$ is a morphism of $\s$-algebraic groups.

%
%

\begin{ex} \label{ex: explicit sfrob}
	Let $G$ be a $\s$-algebraic group. By Proposition \ref{prop: linearization} we may embed $G$ as a $\s$-closed subgroup into some $\Gl_n$. Then also ${\hsr G}$ is naturally embedded into $\Gl_n$: The equations defining ${\hsr G}$ as a $\s$-closed subgroup of $\Gl_n$ are obtained from the equations defining $G$ as a $\s$-closed subgroup of $\Gl_n$ by applying $\s^r$ to the coefficients of the equations. The endomorphism $\f$ of the $\s$-algebraic group $[\s]_k\Gl_n$ given by
	$$\f_R\colon \Gl_n(R)\to\Gl_n(R),\ g\mapsto \s^r(g)$$
	where $\s^r(g)$ is the matrix obtained from $g$ by applying $\s$ to the coefficients, restricts to a morphism $\f\colon G\to {\hsr G}$. The dual map of $\f$ on $k\{\Gl_n\}=k\{x_{ij},\frac{1}{\det(x)}\}$ is given by
	$$\f^*\colon k\{x_{ij},\tfrac{1}{\det(x)}\}\to k\{x_{ij},\tfrac{1}{\det(x)}\},\ x_{ij}\mapsto \s^r(x_{ij}).$$
	It follows that the dual map of $\f\colon G\to {\hsr G}$ agrees with the dual map of $F_G^r$ given in (\ref{eqn: dual for sFrob}). In other words, $\f$ agrees with $F_G^r$. (The advantage of the abstract description is that it shows that $\f$ does not depend on the chosen embedding of $G$ into $\Gl_n$.)
\end{ex}

Because of Example \ref{ex: explicit sfrob}, we may sometimes simply write $\s^r(g)$ instead of $F_G^r(g)$, for $g\in G(R)$ and $R$ a \ks-algebra. 

Our next goal is to understand when the $\s$-Frobenius morphism $F_G\colon G\to{\hsr G}$ is a quotient map. The following definition will help answer this question.

\begin{defi} \label{defi: absolutely sreduced}
	A $\s$-algebraic group $G$ is \emph{$\s$-reduced} if $k\{G\}$ is $\s$-reduced (i.e., $\s\colon k\{G\}\to k\{G\}$ is injective). A $\s$-algebraic group $G$ is \emph{absolutely $\s$-reduced} if $G_{k'}$ is $\s$-reduced for every $\s$-field extension $k'$ of $k$.
\end{defi}
In other words, $G$ is absolutely $\s$-reduced if and only if $k\{G\}$ is a $\s$-separable \ks-algebra.
The following example shows that a $\s$-reduced $\s$-algebraic group need not be absolutely $\s$-reduced.
\begin{ex}
	Let $k$ be a $\s$-field such that there exists a $\lambda\in k$ that is transcendental over $\s(k)$.
	Let $G$ be the $\s$-closed subgroup of the additive group given by
	$$G(R)=\{g\in R|\ \s^2(g)+\lambda\s(g)=0\}\leq\Ga(R)$$
	for any \ks-algebra $R$. Then $k\{G\}=k[y,\s(y)]$. To show that $\s$ is injective on $k\{G\}$, it suffices to show that $\s(y)$ and $\s^2(y)=-\lambda\s(y)$ are algebraically independent over $\s(k)$. But this is guaranteed by the assumption on $\lambda$. Thus $G$ is $\s$-reduced. However, $G$ is not absolutely $\s$\=/reduced. Over the inversive closure $k^*$ of $k$ (\cite[Def. 2.1.6]{Levin}) we can find a $\mu\in k^*$ such that $\s(\mu)=\lambda$ and then $\s(y)+\mu y$ lies in the kernel of $\s$ on $k^*\{G_{k^*}\}=k^*[y,\s(y)]$.
\end{ex}

The following lemma implies that an algebraic group, when considered as a $\s$-algebraic group is absolutely $\s$-reduced.

\begin{lemma} \label{lemma: sT is separable}
	Let $T$ be a $k$-algebra. Then $[\s]_k T$ is $\s$-separable over $k$.
\end{lemma}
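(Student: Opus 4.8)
The plan is to reduce to the characterization of $\s$-separability recalled right after Definition \ref{defi: strongly setale}: a $k$-$\s$-algebra $R$ is $\s$-separable over $k$ if and only if the map $\overline{\s}\colon {\hs R}\to R,\ f\otimes\lambda\mapsto\s(f)\lambda$ is injective (\cite[Prop. 1.2]{TomasivWibmer:StronglyEtaleDifferenceAlgebras}). So it suffices to show that $\overline{\s}\colon {\hs([\s]_k T)}\to [\s]_k T$ is injective, and I would do this by writing $\overline{\s}$ out in a convenient $k$-basis.

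First I would recall the explicit basis of $[\s]_k T$ used in the proof of Lemma \ref{lemma: constants of sR}: choose a $k$-basis $(v_i)_{i\in I}$ of $T$ containing $1$, say $v_*=1$ for a distinguished index $*\in I$, and for $n\geq 0$ and ${\bf i}=(i_0,\ldots,i_n)\in I^{n+1}$ set $v_{\bf i}=v_{i_0}\s(v_{i_1})\cdots\s^n(v_{i_n})\in T[n]$. Then $V_n=(v_{\bf i})_{{\bf i}\in I^{n+1}}$ is a $k$-basis of $T[n]$, and since $v_*=1$ the inclusion $T[n]\hookrightarrow T[n+1]$ identifies $v_{(i_0,\ldots,i_n)}$ with $v_{(i_0,\ldots,i_n,*)}$; hence $V=\bigcup_n V_n$ is a $k$-basis of $[\s]_k T=\bigcup_n T[n]$, indexed by the set $J$ of sequences ${\bf i}=(i_0,i_1,\ldots)\in I^{\nn}$ with $i_m=*$ for all but finitely many $m$. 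Because $[\s]_k T$ is $k$-free with basis $V$, the base change ${\hs([\s]_k T)}=[\s]_k T\otimes_k k$ (along $\s\colon k\to k$) is $k$-free with basis $\{v_{\bf i}\otimes 1\mid {\bf i}\in J\}$, and $\overline{\s}$ is $k$-linear with $\overline{\s}(v_{\bf i}\otimes 1)=\s(v_{\bf i})$.

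The key computation is then $\s(v_{\bf i})=\s(v_{i_0})\s^2(v_{i_1})\cdots=v_{(*,i_0,i_1,\ldots)}$, i.e. $\s$ acts on the index set $J$ as the injective "prepend $*$" right shift $\tilde\s\colon J\to J$. Consequently $\{v_{\tilde\s({\bf i})}\mid{\bf i}\in J\}$ is a subset of the basis $V$, hence $k$-linearly independent in $[\s]_k T$; so $\overline{\s}$ carries the $k$-basis of ${\hs([\s]_k T)}$ onto a $k$-linearly independent set, and is therefore injective. By the cited characterization, $[\s]_k T$ is $\s$-separable over $k$. There is no genuine obstacle here: the only thing needing care is the bookkeeping with the nested bases $V_n\subseteq V_{n+1}$ and the verification that $\s$ permutes $V$ into itself as the right shift, all of which is already implicit in the proof of Lemma \ref{lemma: constants of sR}. (One could instead check $\s$-separability directly by observing $[\s]_k T\otimes_k k'\cong [\s]_{k'}(T\otimes_k k')$ for a $\s$-field extension $k'/k$ and that $\s$ is injective on the latter, but the basis argument settles $\s$-separability in one step.)
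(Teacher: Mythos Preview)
Your proof is correct, but it takes a different route from the paper. The paper argues via the definition of $\s$-separability directly: it uses the base-change identity $([\s]_k T)\otimes_k k'\simeq [\s]_{k'}(T\otimes_k k')$ to reduce to showing that $\s$ is injective on any $[\s]_{k'}T'$, and then observes that $\s\colon T'[i]\to T'[i+1]$ factors as the composite $T'[i]\to {\hs(T'[i])}\hookrightarrow T'[i+1]$ of two injective maps. Amusingly, this is exactly the alternative you sketch in your final parenthetical. Your main argument instead goes through the characterization via $\overline{\s}$ and the explicit basis from Lemma~\ref{lemma: constants of sR}, computing that $\overline{\s}$ maps the basis $\{v_{\bf i}\otimes 1\}$ injectively into the basis $V$ by the ``prepend $*$'' shift. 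The paper's approach is shorter and avoids any basis bookkeeping; your approach has the virtue of being completely explicit and of re-using machinery already in place, though it requires more care with the indexing. Either way the lemma is immediate.
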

\begin{proof}
	For a $\s$-field extension $k'$ of $k$ we have $([\s]_k T)\otimes_k k'=[\s]_{k'} (T\otimes_k k')$. Therefore, it suffices to show that $\s\colon [\s]_k T\to [\s]_kT$ is injective. Indeed, it suffices to show that $\s\colon T[i]\to T[i+1]$ is injective for every $i\geq 0$. But $\s\colon T[i]\to {\hs (T[i])}\to T[i+1]$ is the composition of two injective maps. 
\end{proof}
From Lemma \ref{lemma: sT is separable} we immediately obtain:
\begin{cor} \label{cor: algebraic group is absolutely sreduced}
	Let $\G$ be an algebraic group. Then $[\s]_k\G$ is an absolutely $\s$-reduced $\s$-algebraic group. \qed
\end{cor}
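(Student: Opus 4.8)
The plan is simply to unwind the definitions and quote Lemma~\ref{lemma: sT is separable}. By Definition~\ref{defi: absolutely sreduced} (and the remark immediately following it), the $\s$-algebraic group $[\s]_k\G$ is absolutely $\s$-reduced precisely when its coordinate ring is a $\s$-separable \ks-algebra. The coordinate ring of $[\s]_k\G$ is $k\{[\s]_k\G\}=[\s]_k k[\G]$, as recorded in Section~\ref{subsec: affine difference algebraic geometry}. So the claim is exactly the assertion that $[\s]_k k[\G]$ is $\s$-separable over $k$, which is Lemma~\ref{lemma: sT is separable} applied with $T=k[\G]$.

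There is no genuine obstacle here; all the content sits in Lemma~\ref{lemma: sT is separable}, whose proof reduces (after the base-change identity $([\s]_k T)\otimes_k k'=[\s]_{k'}(T\otimes_k k')$) to checking that $\s\colon [\s]_k T\to[\s]_k T$ is injective, and this follows because on each level $\s\colon T[i]\to T[i+1]$ factors as the composition of the two injective maps $T[i]\to {\hs(T[i])}$ and ${\hs(T[i])}\to T[i+1]$. Thus the proof of the corollary is a one-line deduction from that lemma.
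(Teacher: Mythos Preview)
Your proof is correct and is exactly the paper's approach: the corollary is stated as an immediate consequence of Lemma~\ref{lemma: sT is separable}, and your unwinding of Definition~\ref{defi: absolutely sreduced} together with the identification $k\{[\s]_k\G\}=[\s]_k k[\G]$ is precisely the intended one-line deduction.
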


The following lemma explains when $F_G\colon G\to{\hs G}$ is a quotient map.

\begin{lemma} \label{lemma: absolutely sreduced and sFrobenius}
	Let $G$ be a $\s$-algebraic group. The following statements are equivalent:
	\begin{enumerate}
		\item $F_G\colon G\to{\hs G}$ is a quotient map.
		\item $F_G^r\colon G\to{\hsr G}$ is a quotient map for every $r\in\nn$.
		\item $G$ is absolutely $\s$-reduced.
	\end{enumerate}
\end{lemma}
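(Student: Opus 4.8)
The plan is to prove the chain of equivalences (i) $\Leftrightarrow$ (ii) $\Leftrightarrow$ (iii) by establishing (iii) $\Rightarrow$ (ii), (ii) $\Rightarrow$ (i), and (i) $\Rightarrow$ (iii). The implication (ii) $\Rightarrow$ (i) is trivial (take $r=1$). The real content is in the other two implications, and both hinge on the explicit formula for the dual morphism of the $\s$-Frobenius, namely $(F_G^r)^*\colon {\hsr(k\{G\})}\to k\{G\}$, $f\otimes\lambda\mapsto \s^r(f)\lambda$, together with the characterization of quotient maps from Theorem \ref{theo: existence of quotients}: a morphism of $\s$-algebraic groups is a quotient map if and only if its dual is injective.

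First I would handle (i) $\Rightarrow$ (iii). If $F_G\colon G\to {\hs G}$ is a quotient map, then $(F_G)^*\colon {\hs(k\{G\})}\to k\{G\}$, $f\otimes\lambda\mapsto\s(f)\lambda$, is injective. This is precisely the map denoted $\overline{\s}$ in the excerpt, and by the cited characterization (\cite[Prop. 1.2]{TomasivWibmer:StronglyEtaleDifferenceAlgebras}) the injectivity of $\overline{\s}\colon {\hs R}\to R$ for $R=k\{G\}$ is equivalent to $k\{G\}$ being $\s$-separable over $k$, i.e., to $G$ being absolutely $\s$-reduced. (One should note that the definition of $\s$-separable given in the text — injectivity of $\s$ on all base extensions $R\otimes_k k'$ — matches the "absolutely $\s$-reduced" condition via Lemma \ref{lemma: quotients and base change} / the remark after Definition \ref{defi: absolutely sreduced}, which the paper has already recorded.)

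For (iii) $\Rightarrow$ (ii), suppose $G$ is absolutely $\s$-reduced, so $k\{G\}$ is $\s$-separable, equivalently $\overline{\s}\colon {\hs(k\{G\})}\to k\{G\}$ is injective. I would then factor $(F_G^r)^*$ as an $r$-fold composition: since $\s^r(f)\lambda = \s(\s^{r-1}(f)) \lambda$, one checks that $(F_G^r)^*\colon {\hsr(k\{G\})}\to k\{G\}$ is the composite
\[
{\hsr(k\{G\})} \xrightarrow{\ \ } {^{\s^{r-1}}\!({\hs(k\{G\})})} \xrightarrow{\ \ } \cdots \xrightarrow{\ \ } {\hs(k\{G\})} \xrightarrow{\ \overline{\s}\ } k\{G\},
\]
where each intermediate arrow is a base change of $\overline{\s}$ along a power of $\s\colon k\to k$. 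Here I use that $\s$-separability is preserved under base change along $\s^i\colon k\to k$: if $\overline{\s}\colon {\hs R}\to R$ is injective and we base-change, the map $^{\s^i}\!\overline{\s}$ is again of the form "$\overline{\s}$" for the \ks-algebra $_{\s^i}R$, which is $\s$-separable because $\s\colon {_{\s^i}R}\to {_{\s^i}R}$ is still $\s\colon R\to R$, hence still injective on all further base extensions. (This is the same bookkeeping used in Lemma \ref{lemma: sT is separable}.) A composition of injective maps is injective, so $(F_G^r)^*$ is injective and $F_G^r$ is a quotient map.

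The main obstacle I anticipate is purely notational: keeping straight the various restriction/extension-of-scalars twists ($_{\s^r}R$ versus ${\hsr R}$) and verifying that the $r$-fold factorization of $(F_G^r)^*$ is literally correct on the level of the formula $f\otimes\lambda\mapsto\s^r(f)\lambda$, including that each stage's base-changed map is again injective. No deep new idea is needed — everything reduces to the formula \eqref{eqn: dual for sFrob}, the quotient-map criterion, and the stability of $\s$-separability under base change along iterates of $\s$ — but the twisting conventions require care to avoid sign-of-exponent type slips.
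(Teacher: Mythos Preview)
Your proof is correct and follows essentially the same approach as the paper: both reduce everything to the characterization of $\s$-separability as injectivity of $\overline{\s}=(F_G)^*$, and then iterate to handle $(F_G^r)^*$. The only cosmetic difference is that the paper phrases the iteration step concretely as ``$k$-linearly independent elements of $k\{G\}$ have $k$-linearly independent images under $\s$, hence under $\s^r$'', whereas you factor $(F_G^r)^*$ as an $r$-fold composition of base-changed copies of $\overline{\s}$ and appeal to flatness; these are the same argument in different clothing.
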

\begin{proof}
	The \ks-algebra $k\{G\}$ is $\s$-separable over $k$ if and only if $\overline{\s}=(F_G)^*\colon {\hs(k\{G\})}\to k\{G\}$ is injective. Thus (i) and (iii) are equivalent. To see that (i) implies (ii), note that the injectivity of $\overline{\s}\colon {\hs(k\{G\})}\to k\{G\}$ can be rephrased as: If $f_1,\ldots,f_n\in k\{G\}$ are $k$-linearly independent, then $\s(f_1),\ldots,\s(f_n)\in k\{
	G\}$ are $k$-linearly independent. If this is true, and $f_1,\ldots,f_n\in k\{G\}$ are $k$-linearly independent, then $\s^r(f_1),\ldots,\s^r(f_n)$ are also $k$-linearly independent, i.e., $(F_G^r)^*\colon {\hsr(k\{G\})}\to k\{G\}$ is injective.
\end{proof}

The following definition introduces a $\s$-analog of the Frobenius kernels (\cite[Section 9.4, Part I]{Jantzen:RepresentationsOfAlgebraicGroups}) 

\begin{defi}
	Let $G$ be a $\s$-algebraic group and $r\in\nn$. The kernel $G_{(r)}$ of the morphism $F_G^r\colon G\to{\hsr G}$ is called the \emph{$r$-th $\s$-Frobenius kernel}\index{$\s$-Frobenius kernel} of $G$.
\end{defi}
Since $F_G^{r+s}=F^s_{{\hsr G}}\circ F_G^r$ we have $$1=G_{(0)}\subseteq G_{(1)}\subseteq G_{(2)}\subseteq\cdots.$$
Moreover, from formula (\ref{eqn: dual for sFrob}) we see that
$$\I(G_{(r)})=(\s^r(\m_G))\subseteq k\{G\}$$
and
$$k\{G/G_{(r)}\}=k[\s^r(k\{G\})]\subseteq k\{G\}.$$

\begin{ex}
	The $r$-th $\s$-Frobenius kernel of $[\s]_k\Gl_n$ is given by
	$$([\s]_k\Gl_n)_{(r)}(R)=\{g\in \Gl_n(R)|\ \s^r(g)=I_n\}$$
	for any \ks-algebra $R$. (Here $I_n$ denotes the $n
	\times n$ identity matrix.)
\end{ex}

As a Corollary to Lemma \ref{lemma: absolutely sreduced and sFrobenius} we obtain:
\begin{cor} \label{cor: sFrobenius quotient for absolutely sreduced}
	Let $G$ be an absolutely $\s$-reduced $\s$-algebraic group and let $r\in\nn$. Then $G/G_{(r)}\simeq {\hsr G}$.
\end{cor}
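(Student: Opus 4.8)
The plan is to obtain Corollary \ref{cor: sFrobenius quotient for absolutely sreduced} directly from Lemma \ref{lemma: absolutely sreduced and sFrobenius} together with the first isomorphism theorem for $\s$-algebraic groups (Theorem \ref{theo: isom1}). Since $G$ is absolutely $\s$-reduced, part (ii) of Lemma \ref{lemma: absolutely sreduced and sFrobenius} tells us that $F_G^r\colon G\to {\hsr G}$ is a quotient map, i.e., $F_G^r(G)={\hsr G}$. By definition the $r$-th $\s$-Frobenius kernel $G_{(r)}$ is $\ker(F_G^r)$, so Theorem \ref{theo: isom1} gives an isomorphism $G/\ker(F_G^r)\to F_G^r(G)$, that is, $G/G_{(r)}\xrightarrow{\ \sim\ }{\hsr G}$.

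Alternatively, and perhaps more transparently on the level of coordinate rings, one can argue as follows. The quotient $G/G_{(r)}$ is characterized (Theorem \ref{theo: existence of quotients}) by its coordinate ring being the image of $(F_G^r)^*$ inside $k\{G\}$; by formula (\ref{eqn: dual for sFrob}) this image is $k[\s^r(k\{G\})]$, which is exactly what was recorded just before the statement: $k\{G/G_{(r)}\}=k[\s^r(k\{G\})]$. On the other hand, $(F_G^r)^*\colon {\hsr(k\{G\})}\to k\{G\}$ has image $k[\s^r(k\{G\})]$ and, by the equivalence (iii)$\Rightarrow$(i)/(ii) in Lemma \ref{lemma: absolutely sreduced and sFrobenius}, it is injective. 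Hence $(F_G^r)^*$ is an isomorphism of \ks-Hopf algebras onto $k\{G/G_{(r)}\}\subseteq k\{G\}$, which dualizes to the desired isomorphism $G/G_{(r)}\simeq {\hsr G}$ of $\s$-algebraic groups. One should also note that $F_G^r$ is a morphism of $\s$-algebraic groups (as remarked in the text preceding Example \ref{ex: explicit sfrob}), so that $G_{(r)}$ is indeed a normal $\s$-closed subgroup and the quotient $G/G_{(r)}$ makes sense.

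There is essentially no obstacle here: the statement is a formal consequence of results already established. The only point requiring a moment's care is to match up the two descriptions of the quotient — verifying that the normal subgroup $G_{(r)}=\ker(F_G^r)$ corresponding to the \ks-Hopf subalgebra $k[\s^r(k\{G\})]$ is the same thing in both the group-theoretic and Hopf-algebraic formulations — but this is exactly the content of Theorem \ref{theo: existence of quotients} and Theorem \ref{theo: isom1}, so I would simply invoke the first isomorphism theorem and be done. If a fully self-contained one-line proof is wanted, it reads: ``By Lemma \ref{lemma: absolutely sreduced and sFrobenius}, $F_G^r\colon G\to {\hsr G}$ is a quotient map with kernel $G_{(r)}$, so $G/G_{(r)}\simeq {\hsr G}$ by Theorem \ref{theo: isom1}.''
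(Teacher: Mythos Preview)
Your proof is correct and matches the paper's own argument exactly: the paper's proof is the one-liner ``This follows immediately from Lemma \ref{lemma: absolutely sreduced and sFrobenius} (and Theorem \ref{theo: isom1}).'' Your additional Hopf-algebraic reformulation is also correct but unnecessary.
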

\begin{proof}
	This follows immediately from Lemma \ref{lemma: absolutely sreduced and sFrobenius} (and Theorem \ref{theo: isom1}).
\end{proof}

The following lemma explains the close connection between $\s$-infinitesimal $\s$-algebraic groups and the $\s$-Frobenius kernels.

\begin{lemma} \label{lemma: Gr sinfinitesimal}
	Let $G$ be a $\s$-algebraic group. Then $G_{(r)}$ is a $\s$-infinitesimal $\s$-algebraic group for every $r\in\nn$. Conversely, if $H$ is a $\s$-infinitesimal $\s$-closed subgroup of $G$, then $H$ is contained in some $G_{(r)}$. In particular, if $G$ is $\s$-infinitesimal, then $G=G_{(r)}$ for some $r\in\nn$.
\end{lemma}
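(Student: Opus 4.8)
The plan is to prove the two directions separately, using the explicit descriptions of the relevant defining ideals established just above the statement.

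First I would show that $G_{(r)}$ is $\s$-infinitesimal. By definition $G_{(r)}=\ker(F_G^r)$, so from the formula for the dual of the $\s$-Frobenius morphism we have $\I(G_{(r)})=(\s^r(\m_G))\subseteq k\{G\}$, and hence $\m_{G_{(r)}}$ is the image of $\m_G$ in $k\{G_{(r)}\}=k\{G\}/(\s^r(\m_G))$. Thus for any $f\in\m_G$, the image $\overline{\s^r(f)}$ of $\s^r(f)$ in $k\{G_{(r)}\}$ is zero, i.e.\ $\s^r(\overline{f})=0$ in $k\{G_{(r)}\}$. Since $\m_{G_{(r)}}$ is generated as an ideal by such elements $\overline{f}$, and $\s^r$ kills all of them, one gets $\s^r(\m_{G_{(r)}})=0$; by Lemma~\ref{lemma: sinfinitesimal} this shows $G_{(r)}$ is $\s$-infinitesimal. (Alternatively, and perhaps more cleanly, one can argue functorially: if $R$ is a $\s$-reduced \ks-algebra and $g\in G_{(r)}(R)$, then $F_G^r(g)=\s^r(g)=1$ in ${\hsr G}(R)$; unwinding via Example~\ref{ex: explicit sfrob} after embedding $G$ into some $\Gl_n$, this says each matrix entry of $g-I_n$ is killed by $\s^r$, hence is zero since $R$ is $\s$-reduced, so $g=I_n$.)

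For the converse, suppose $H$ is a $\s$-infinitesimal $\s$-closed subgroup of $G$, with defining ideal $\I(H)\subseteq k\{G\}$. Apply Lemma~\ref{lemma: sinfinitesimal} to $H$: since $H$ is $\s$-infinitesimal, there exists $n\in\nn$ with $\s^n(\m_H)=0$ in $k\{H\}=k\{G\}/\I(H)$, i.e.\ $\s^n(\m_H)\subseteq\I(H)$, where here I write $\m_H$ for the image of $\m_G$ in $k\{H\}$. Lifting back to $k\{G\}$, this says $\s^n(\m_G)\subseteq\I(H)$, since $\m_H$ is the image of $\m_G$ under the quotient $k\{G\}\to k\{H\}$ and this quotient commutes with $\s$. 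But $\s^n(\m_G)$ is precisely a generating set of $\I(G_{(n)})=(\s^n(\m_G))$, so $\I(G_{(n)})\subseteq\I(H)$, which by the order-reversing correspondence between $\s$-ideals and $\s$-closed subvarieties means $H\subseteq G_{(n)}$. Taking $G=H$ gives the final sentence: a $\s$-infinitesimal $G$ satisfies $G\subseteq G_{(n)}\subseteq G$, so $G=G_{(n)}$.

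The only point requiring a little care is the bookkeeping with $\m_H$ versus $\m_G$: one must check that the counit of $k\{H\}$ is the one induced from the counit of $k\{G\}$ via the Hopf-algebra quotient map, so that $\m_H$ is genuinely the image of $\m_G$; this is immediate since $H\hookrightarrow G$ is a morphism of $\s$-algebraic groups and the trivial subgroup of $H$ maps to the trivial subgroup of $G$. I do not anticipate a serious obstacle here — the main content is just combining the explicit ideal formula $\I(G_{(r)})=(\s^r(\m_G))$ with the characterization of $\s$-infinitesimality in Lemma~\ref{lemma: sinfinitesimal}.
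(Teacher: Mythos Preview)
Your proof is correct and follows essentially the same approach as the paper: both directions combine the explicit formula $\I(G_{(r)})=(\s^r(\m_G))$ with the characterization of $\s$-infinitesimality from Lemma~\ref{lemma: sinfinitesimal}, exactly as you do. The paper's version is just terser, omitting the $\m_H$-versus-$\m_G$ bookkeeping you spell out and not including the alternative functorial argument.
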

\begin{proof}
	Since $k\{G_{(r)}\}=k\{G\}/(\s^r(\m_G))$, it is clear that the augmentation ideal $\m_{G_{(r)}}$ of $G_{(r)}$ satisfies $\s^r(\m_{G_{(r)}})=0$. So $G_{(r)}$ is $\s$-infinitesimal (Lemma \ref{lemma: sinfinitesimal}).
	
	If $H\leq G$ is $\s$-infinitesimal, then $\s^r(\m_G)\subseteq\I(H)$ for some $r\in\nn$ by Lemma \ref{lemma: sinfinitesimal}. So $\I(G_{(r)})=(\s^r(\m_G))\subseteq\I(H)$ and consequently $H\leq G_{(r)}$.
\end{proof}

%
%
%

For later use we record:

\begin{lemma} \label{lemma: quotients and sFrobenius for absolutely sreduced}
	Let $H$ be a $\s$-closed subgroup of a $\s$-algebraic group $G$ and let $N$ be a normal $\s$-closed subgroup of $H$. If $H/N$ is absolutely $\s$-reduced, then $$(F^r_G)^{-1}({\hsr H})/(F^r_G)^{-1}({\hsr N})\simeq {\hsr(H/N)}$$ for every $r\in\nn$.
\end{lemma}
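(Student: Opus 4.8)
The plan is to exhibit an explicit surjection from $G':=(F^r_G)^{-1}({\hsr H})$ onto ${\hsr(H/N)}$ whose kernel is exactly $N':=(F^r_G)^{-1}({\hsr N})$, and then conclude with the first isomorphism theorem (Theorem \ref{theo: isom1}).

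First I would assemble the relevant maps. Let $q\colon H\to H/N$ be the quotient map. Viewing $k$ as a $\s$-field extension of itself along $\s^r\colon k\to k$, base change along $\s^r$ turns $q$ into ${\hsr q}\colon {\hsr H}\to{\hsr(H/N)}$, which is again a quotient map, with kernel ${\hsr N}$, by Lemma \ref{lemma: quotients and base change}. Since $F^r_G$ carries $G'$ into ${\hsr H}$ by the very definition of $G'$, I can form the composite $\phi:={\hsr q}\circ\bigl(F^r_G|_{G'}\bigr)\colon G'\to{\hsr(H/N)}$. Its kernel is $(F^r_G|_{G'})^{-1}(\ker{\hsr q})=(F^r_G|_{G'})^{-1}({\hsr N})=(F^r_G)^{-1}({\hsr N})=N'$, where the middle equality uses ${\hsr N}\subseteq{\hsr H}$. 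In particular $N'$ is a normal $\s$-closed subgroup of $G'$ and, by Theorem \ref{theo: isom1}, $\phi$ induces an isomorphism $G'/N'\xrightarrow{\ \sim\ }\phi(G')$; so everything comes down to proving $\phi(G')={\hsr(H/N)}$, i.e.\ that $\phi$ is a quotient map.

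This surjectivity step is the heart of the matter, and it is where the hypothesis that $H/N$ is absolutely $\s$-reduced is used: by Lemma \ref{lemma: absolutely sreduced and sFrobenius} the $\s$-Frobenius $F^r_{H/N}\colon H/N\to{\hsr(H/N)}$ is then a quotient map, hence $F^r_{H/N}(H/N)={\hsr(H/N)}$. The point is that $H\subseteq G'$ — because $F^r_G(H)=F^r_H(H)\subseteq{\hsr H}$ by naturality of the $\s$-Frobenius — so $\phi(G')\supseteq\phi(H)={\hsr q}(F^r_H(H))$. Applying the naturality square ${\hsr q}\circ F^r_H=F^r_{H/N}\circ q$ together with the identity $g(f(X))=(g\circ f)(X)$ for images of $\s$-variety morphisms, I get ${\hsr q}(F^r_H(H))=F^r_{H/N}(q(H))=F^r_{H/N}(H/N)={\hsr(H/N)}$. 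Hence $\phi(G')={\hsr(H/N)}$ and the first isomorphism theorem yields $G'/N'\simeq{\hsr(H/N)}$.

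The main obstacle, as indicated, is precisely this surjectivity: one cannot argue directly that $F^r_G|_{G'}$ maps onto ${\hsr H}$, since $F^r_G$ need not be a quotient map when $G$ itself fails to be absolutely $\s$-reduced, so the argument must detour through the subgroup $H$ and exploit that it is the quotient $H/N$ — not $H$ or $G$ — that is assumed absolutely $\s$-reduced. The remaining ingredients (preimages of normal $\s$-closed subgroups under morphisms of $\s$-algebraic groups are normal, base change preserves quotient maps via Lemma \ref{lemma: quotients and base change}, images compose so $g(f(X))=(g\circ f)(X)$, and the functor-of-points computation of kernels) are routine and I would only sketch them, also invoking Lemma \ref{lemma: quotient embedding} if a separate argument for the embedding $G'/N'\hookrightarrow{\hsr(H/N)}$ is wanted.
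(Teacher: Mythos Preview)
Your proof is correct and follows essentially the same route as the paper: both construct the composite $(F^r_G)^{-1}({\hsr H})\to{\hsr H}\to{\hsr(H/N)}$, identify its kernel as $(F^r_G)^{-1}({\hsr N})$, and then use that $F^r_{H/N}$ is a quotient map (via the inclusion $H\subseteq(F^r_G)^{-1}({\hsr H})$ and naturality of the $\s$-Frobenius) to force the induced map to be surjective. The only cosmetic difference is that the paper phrases the conclusion via Lemma~\ref{lemma: quotient embedding} and Corollary~\ref{cor: sclosed and quotient is isom} rather than invoking the first isomorphism theorem directly.
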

\begin{proof}
	Using Lemma \ref{lemma: quotients and base change}, we obtain a morphism
	$$(F^r_G)^{-1}({\hsr H})\xrightarrow{F_G^r} {\hsr H}\to{\hsr H}/{\hsr N}={\hsr(H/N)}$$
	with kernel $(F^r_G)^{-1}({\hsr N})$. By Lemma \ref{lemma: quotient embedding}, this yields a $\s$-closed embedding $$\f\colon (F^r_G)^{-1}({\hsr H})/(F^r_G)^{-1}({\hsr N})\to {\hsr(H/N)}.$$
	Since $H/N$ is absolutely $\s$-reduced, $F^r_{H/N}$ is a quotient map (Lemma \ref{lemma: absolutely sreduced and sFrobenius}).
	As
	$$\xymatrix{
		H/N \ar[rr] \ar[rd]_{F_{H/N}^r}& & (F^r_G)^{-1}({\hsr H})/(F^r_G)^{-1}({\hsr N}) \ar[ld]^\f \\
		& {\hsr (H/N)} &
	}
	$$
	commutes, this implies that $\f$ must also be a quotient. Thus $\f$ is an isomorphism by Corollary~\ref{cor: sclosed and quotient is isom}.
\end{proof}

Note that the functor $\G\rightsquigarrow [\s]_k\G$ from the category of algebraic groups to the category of $\s$\=/algebraic groups is not full. For example, for $\lambda_0,\ldots,\lambda_{n-1}\in k$, the morphism $\f\colon [\s]_k\Ga\to [\s]_k\Ga$ given by
$\f(g)= \s^n(g)+\lambda_{n-1}\s^{n-1}(g)+\ldots+\lambda_0g$ for $g\in R$ and any \ks-algebra $R$, is not induced by an endomorphism of the algebraic group $\Ga$. Nevertheless, we have the following result.

\begin{prop} \label{prop: sisom implies isom}
	Let $\G$ and $\H$ be algebraic groups. If $[\s]_k\G$ and $[\s]_k\H$ are isomorphic as $\s$\=/algebraic groups, then $\G$ and $\H$ are isomorphic as algebraic groups.
\end{prop}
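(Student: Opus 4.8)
The plan is to recover the algebraic group $\G$ from the $\s$-algebraic group $[\s]_k\G$ by a functorial construction that only uses the $\s$-algebraic group structure, so that an isomorphism $[\s]_k\G\simeq[\s]_k\H$ is automatically forced to come from an isomorphism $\G\simeq\H$. The natural candidate is the quotient $[\s]_k\G\big/([\s]_k\G)_{(1)}$ by the first $\s$-Frobenius kernel. Indeed, $\G$ is an algebraic group, hence $[\s]_k\G$ is absolutely $\s$-reduced by Corollary \ref{cor: algebraic group is absolutely sreduced}, so Corollary \ref{cor: sFrobenius quotient for absolutely sreduced} gives a canonical isomorphism $[\s]_k\G\big/([\s]_k\G)_{(1)}\simeq\ {\hs([\s]_k\G)}$. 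On the level of coordinate rings, $k\{[\s]_k\G/([\s]_k\G)_{(1)}\}=k[\s(k\{\G\})]\subseteq k\{\G\}=[\s]_kk[\G]$, and a direct inspection of the description of $[\s]_kk[\G]=\bigcup_i k[\G[i]]$ shows this subalgebra is exactly $[\s]_kk[{\hs\G}]$, i.e.\ the quotient is ${\hs([\s]_k\G)}=[\s]_k({\hs\G})$. (This last identification is really just the statement ${\hs([\s]_k\G)}=[\s]_k({\hs\G})$, which is immediate from the definition of base change and the construction of $[\s]_k(-)$.)

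First I would make this identification precise: define $\Q(G)=G/G_{(1)}$ for any $\s$-algebraic group $G$; this is a functor on $\s$-algebraic groups, since the formation of $G_{(1)}=\ker(F_G)$ is functorial (the $\s$-Frobenius morphism $F_G$ is natural in $G$, as recorded after Example \ref{ex: explicit sfrob}). An isomorphism $\F\colon[\s]_k\G\xrightarrow{\ \sim\ }[\s]_k\H$ of $\s$-algebraic groups therefore induces an isomorphism $\Q([\s]_k\G)\simeq\Q([\s]_k\H)$, and by the previous paragraph this reads $[\s]_k({\hs\G})\simeq[\s]_k({\hs\H})$. Iterating, $[\s]_k({\hsr\G})\simeq[\s]_k({\hsr\H})$ for all $r\in\nn$. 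To get back from ${\hs\G}\simeq{\hs\H}$ to $\G\simeq\H$, I would use that the base change functor $\G\rightsquigarrow{\hs\G}$ along $\s\colon k\to k$ is itself fully faithful onto its image in an appropriate sense — or, more cleanly, descend. Actually the cleanest route avoids this: the point is only to exhibit some descent datum, and I do not in fact need to invert the base change.

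Here is the cleaner second step. Rather than peeling off one ${\hs{}}$ at a time, I would argue directly that the functor $\G\rightsquigarrow[\s]_k\G$ reflects isomorphisms because it admits a retraction at the level of Hopf algebras: $k[\G]$ sits inside $[\s]_kk[\G]=k\{\G\}$ as the Hopf subalgebra of ``degree $0$'' elements, namely $k[\G]=k[\G[0]]$, the $0$-th order Zariski closure. So it suffices to show that an isomorphism $\F^*\colon k\{\H\}\to k\{\G\}$ of \ks-Hopf algebras must carry $k[\H]=k[\H[0]]$ onto $k[\G]=k[\G[0]]$. This is where the $\s$-Frobenius kernels re-enter: one has the intrinsic, $\s$-equivariant filtration $k\{G/G_{(r)}\}=k[\s^r(k\{G\})]$, and $k[\G]$ can be characterised inside $k\{\G\}$ purely in terms of this filtration together with the Hopf structure and finite generation — concretely, $k[\G]$ is a finitely generated $k$-Hopf subalgebra $A$ of $k\{\G\}$ with $k\{\G\}=[\s]_k A$, and any two such coincide because $[\s]_kA=[\s]_kA'$ forces $A=A'$ by comparing $0$-th order Zariski closures (which are intrinsic by Theorem \ref{theo: existence of sdim and order} / the Zariski-closure construction). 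Then $\F^*$ restricted to $k[\H]$ lands in $k[\G]$ by symmetry, giving an isomorphism $k[\H]\to k[\G]$ of $k$-Hopf algebras, hence $\G\simeq\H$.

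The main obstacle, as the plan makes clear, is the intrinsic characterisation in the last step: showing that the subring $k[\G]\subseteq k\{\G\}$ (equivalently, the $0$-th order Zariski closure, or the ``benign generating Hopf algebra'') is uniquely determined by the \ks-Hopf algebra structure alone and is therefore preserved by every isomorphism. My expectation is that the right tool is precisely the $\s$-Frobenius kernel filtration: the chain $1=G_{(0)}\subseteq G_{(1)}\subseteq\cdots$ is intrinsic, the successive quotients $G/G_{(r)}$ of an absolutely $\s$-reduced $G=[\s]_k\G$ are the ${\hsr\G}$, and one extracts $k[\G]$ as the unique finitely generated $k$-Hopf subalgebra $A$ of $k\{\G\}$ that is $\s$-generating and for which the canonical map $[\s]_kA\to k\{\G\}$ is an isomorphism. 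Proving uniqueness of such $A$ — i.e.\ $[\s]_kA\cong[\s]_kA'$ as \ks-Hopf algebras implies $A\cong A'$ as $k$-Hopf algebras — is the crux, and I would handle it via the $0$-th order Zariski closure, which Theorem \ref{theo: existence of sdim and order} and the surrounding discussion show is well defined independently of embeddings.
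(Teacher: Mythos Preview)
Your plan has a genuine gap, and the fix is much simpler than the route you are pursuing.

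Your first step looks at the \emph{quotient} $G/G_{(1)}\simeq[\s]_k({\hs\G})$ and therefore only reduces the statement for $\G,\H$ to the same statement for ${\hs\G},{\hs\H}$; iterating this gets you nowhere. Your second step then tries to characterise $k[\G]$ intrinsically inside $k\{\G\}$, but the crux you isolate---``$[\s]_kA\cong[\s]_kA'$ as \ks-Hopf algebras implies $A\cong A'$''---is literally the proposition you are trying to prove. The appeal to the $0$-th order Zariski closure does not rescue this: that closure is \emph{not} independent of the embedding (Theorem~\ref{theo: existence of sdim and order} only asserts that certain numerical invariants extracted from the sequence $G[i]$ are intrinsic, not the $G[i]$ themselves). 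For instance, embedding $[\s]_k\G$ into $[\s]_k(\G[1])$ via the inclusion $k[\G[1]]\subseteq k\{\G\}$ gives $0$-th order Zariski closure $\G[1]=\G\times{\hs\G}$ rather than $\G$.

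The missed observation is that you should look at the \emph{kernel} $G_{(1)}$, not the quotient. For $G=[\s]_k\G$ the $\s$-Frobenius $F_G$ is, on the underlying affine group schemes, just the shift $\G\times{\hs\G}\times{^{\s^2}\!\G}\times\cdots\to{\hs\G}\times{^{\s^2}\!\G}\times\cdots$, so the underlying affine group scheme of $([\s]_k\G)_{(1)}$ is $\G$ itself (equivalently, $k\{([\s]_k\G)_{(1)}\}=k\{\G\}/(\s(\m_G))=k[\G]$). Since $G\mapsto G_{(1)}$ is functorial in $G$---as you already noted---an isomorphism $[\s]_k\G\simeq[\s]_k\H$ of $\s$-algebraic groups yields $([\s]_k\G)_{(1)}\simeq([\s]_k\H)_{(1)}$, and passing to underlying affine group schemes gives $\G\simeq\H$. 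This is the paper's argument, and it is a one-liner once you look at the kernel rather than the cokernel of $F_G$.
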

\begin{proof}
	The morphism $F_{[\s]_k\G}\colon [\s]_k\G\to {\hs([\s]_k\G)}=[\s]_k{\hs\G}$ of $\s$-algebraic groups has the morphism $$\G\times {\hs\G}\times{^{\s^2\!}\G}\times\ldots\to {\hs\G}\times{^{\s^2\!}\G}\times\ldots,\ (g_0,g_1,g_2,\ldots)\mapsto (g_1,g_2,\ldots)$$ as underlying morphism of affine group schemes. So  $([\s]_k\G)_{(1)}$ has $\G$ as the underlying affine group scheme. If $[\s]_k\G\simeq [\s]_k\H$, then $([\s]_k\G)_{(1)}\simeq ([\s]_k\H)_{(1)}$ and therefore also $\G\simeq \H$.
\end{proof}

\subsection{Simple \'{e}tale algebraic groups} The material in this subsection will be helpful for establishing the uniqueness part of our main decomposition theorem (Theorem \ref{theo: Babbitt}).

\begin{defi}
	Let $\G$ be a non-trivial \'{e}tale algebraic group. Then $\G$ is \emph{simple} if for every normal closed subgroup $\N$ of $\G$ either $\N=\G$ are $\N=1$. Moreover, $\G$ is \emph{$\s$-stably simple} if ${\hsi\G}$ is simple for every $i\in\nn$.
\end{defi}

The following example illustrates that for a simple \'{e}tale algebraic group $\G$, the finite group $\G(k_s)$ need not be a simple group.

\begin{ex} \label{ex: simple etale algebraic group}
	Let $V=\{(0,0), (1,0), (0,1), (1,1)\}$ be the Klein four group. Its automorphism group can be identified with the symmetric group $S_3$ on $(1,0), (0,1), (1,1)$. Let $\Gal$ be the Galois group of $k_s/k$ and let $\Gal\to S_3$ be a surjective continuous morphism (i.e., we assume that $S_3$ is a Galois group over $k$). Let $\G$ be the \'{e}tale algebraic group over $k$ associated to this continuous action of $\Gal$ on $V$ (as in Section \ref{sec: Expansive endomorphisms and etale difference algebraic groups}). Then $\G(k_s)\simeq V$ is not a simple group. However, $\G$ is a simple \'{e}tale algebraic group because no non-trivial proper subgroup of $V$ is invariant under the action of $\Gal$. 
\end{ex}
%
%
%
The following example shows that a simple \'{e}tale algebraic group need not be $\s$-stably simple.

\begin{ex}
	Let $k$ be a $\s$-field and as in Section \ref{sec: Expansive endomorphisms and etale difference algebraic groups} let us fix an extension of $\s$ to the separable algebraic closure $k_s$ of $k$. Let $a\in k_s$ be such that $\s(a)\in k$ and $k(a)\subseteq k_s$ is a Galois extension of $k$ with Galois group isomorphic to $S_3$. Let $\G$ be the simple algebraic group from Example~\ref{ex: simple etale algebraic group} associated with the surjection $\Gal\to S_3$, where $\Gal$ is the Galois group of $k_s/k$. As in Lemma \ref{lemma: action of s on Gal} the profinite group $\Gal$ is equipped with an endomorphism $\s\colon \Gal\to \Gal$. The diagram
	$$
	\xymatrix{
		k_s \ar_\s[d] \ar^{\s(\tau)}[r] & k_s \ar^\s[d]  \\
		k_s \ar^\tau[r] & k_s	
	}
	$$
	shows that $\s(\tau)(a)=a$ for all $\tau\in\Gal$, i.e., $\s(\Gal)$ lies in the kernel of $\Gal\to S_3$. So it follows from Lemma \ref{lemma: twist and profinite} that $\Gal$ acts trivial on ${\hs\G}(k_s)$. Since the Klein four group $V$ is not simple, ${\hs\G}$ is not a simple \'{e}tale algebraic group.
	
\end{ex}


Our main goal here is to describe the normal $\s$-closed subgroups of $\s$-stably simple \'{e}tale algebraic groups and the corresponding quotients.

\begin{lemma} \label{lemma: normal subgroup of products}
	Let $\G$ be a $\s$-stably simple \'{e}tale algebraic group that is not commutative. Let $\N$ be a normal closed subgroup of $\G\times{\hs\G}\times\ldots\times {^{\s^n}\!\G}$. Then $\N=\N_0\times\N_1\times\ldots\times\N_n$, where $\N_i\in\{1,{^{\s^i}\!\G}\}$ for $i=0,\ldots,n$.
\end{lemma}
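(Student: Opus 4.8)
The plan is to prove this by induction on $n$, the classical argument for identifying normal subgroups of a direct product of non-abelian simple groups, adapted to the present setting where the ``simple'' factors are the algebraic groups ${^{\s^i}\!\G}$ and ``subgroup'' means ``closed subgroup''. The base case $n=0$ is the definition of $\s$-stably simple (together with the hypothesis that $\G$ is non-trivial, so that $\G$ itself is allowed). For the inductive step, write $P = \G\times{\hs\G}\times\cdots\times{^{\s^n}\!\G}$, let $Q = \G\times\cdots\times{^{\s^{n-1}}\!\G}$ be the product of the first $n$ factors, and let $\pr\colon P\to Q$ and $q\colon P\to {^{\s^n}\!\G}$ be the two projections. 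Given $\N\unlhd P$, consider the image $q(\N)$, a normal closed subgroup of the simple group ${^{\s^n}\!\G}$, so $q(\N)\in\{1,{^{\s^n}\!\G}\}$; similarly $\pr(\N)\unlhd Q$, to which the induction hypothesis will eventually apply once we know $\pr(\N)$ has product form. The two cases $q(\N)=1$ and $q(\N)={^{\s^n}\!\G}$ split the argument.

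First I would handle $q(\N)=1$: then $\N\subseteq Q\times 1$, and $\N$, viewed inside $Q$, is normal in $Q$ (normality in $P$ forces normality in the direct factor $Q$), so the induction hypothesis gives $\N = \N_0\times\cdots\times\N_{n-1}$ with $\N_i\in\{1,{^{\s^i}\!\G}\}$; appending $\N_n=1$ finishes this case. For $q(\N)={^{\s^n}\!\G}$, I would run the commutator argument: let $\mathtt{K}=\N\cap(Q\times 1)$, which is normal in $P$ (hence of product form by the $q(\N)=1$ case, applied to $\mathtt{K}$). The key computation is that for $x\in\N$ with $q(x)=h$ arbitrary in ${^{\s^n}\!\G}$ and $y\in 1\times\cdots\times 1\times{^{\s^n}\!\G}$, the commutator $[x,y]$ lies in $\N\cap(1\times\cdots\times 1\times{^{\s^n}\!\G})$, and as $h$ ranges over ${^{\s^n}\!\G}$ these commutators generate the subgroup of ${^{\s^n}\!\G}$ generated by its own commutators — which, since ${^{\s^n}\!\G}$ is simple and \emph{non-commutative} (here is where that hypothesis is used: the derived subgroup of a non-trivial non-abelian simple group is the whole group), is all of $1\times\cdots\times 1\times{^{\s^n}\!\G}$. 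Hence $1\times\cdots\times 1\times{^{\s^n}\!\G}\subseteq\N$, so $\N_n={^{\s^n}\!\G}$, and $\N = \mathtt{K}'\times{^{\s^n}\!\G}$ where $\mathtt{K}' = \pr(\N)\unlhd Q$; by induction $\mathtt{K}'$ has the desired product form.

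The main obstacle is making the commutator argument rigorous \emph{scheme-theoretically}, since $\G$ and its twists are algebraic groups, not abstract groups, and one cannot simply ``pick elements''. I would circumvent this by reducing to the abstract-group setting via the equivalence of categories between \'etale algebraic groups over $k$ and finite groups with continuous $\Gal$-action (Waterhouse, as recalled in Section~\ref{sec: Expansive endomorphisms and etale difference algebraic groups}): closed normal subgroups of a product $\prod{^{\s^i}\!\G}$ correspond to $\Gal$-stable normal subgroups of $\prod{^{\s^i}\!\G}(k_s)$, where now the factors are honest finite groups, each simple \emph{as an algebraic group} (i.e. having no non-trivial proper $\Gal$-stable normal subgroup) and non-abelian. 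The classical Goursat/commutator argument above then runs at the level of these finite groups, and one checks at each step that the subgroups produced ($q(\N)$, $\mathtt{K}$, the commutators, $\mathtt{K}'$) are automatically $\Gal$-stable because they are built functorially from the $\Gal$-stable $\N$. One subtlety to record carefully: the derived-subgroup step needs that a non-abelian algebraically-simple ${^{\s^i}\!\G}$ has $[{^{\s^i}\!\G},{^{\s^i}\!\G}] = {^{\s^i}\!\G}$; since the derived subgroup is a $\Gal$-stable normal subgroup and ${^{\s^i}\!\G}$ is algebraically simple, it is either trivial (forcing ${^{\s^i}\!\G}$ abelian, excluded) or everything — so this goes through cleanly.
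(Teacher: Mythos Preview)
Your proof is correct and uses the same essential ingredient as the paper --- the commutator trick to force a full factor into $\N$, followed by an appeal to simplicity --- but the organization differs. You set up an induction on $n$, project to the last factor, and in the surjective case argue that the commutators $[x,y]$ (with $y$ in the last factor) generate the derived subgroup of ${^{\s^n}\!\G}$, which equals ${^{\s^n}\!\G}$ by simplicity plus non-commutativity. The paper instead argues directly, coordinate by coordinate: if some $g=(g_0,\ldots,g_n)\in\N(k_s)$ has $g_i\neq 1$, then since ${^{\s^i}\!\G}$ is non-commutative simple its center is trivial, so one can pick a single $h\in{^{\s^i}\!\G}(k_s)$ not commuting with $g_i$; the commutator $(1,\ldots,h,\ldots,1)\,g\,(1,\ldots,h,\ldots,1)^{-1}g^{-1}$ is then a non-trivial element of $\N(k_s)\cap\H_i(k_s)$, and simplicity of $\H_i$ gives $\H_i\leq\N$ at once. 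So the paper avoids induction and avoids invoking the full derived subgroup --- one well-chosen commutator suffices, because $\N\cap\H_i$ is already normal in $\H_i$. Your route is a standard Goursat-style argument and perfectly valid; the paper's is just a bit more economical.
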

\begin{proof}
	It suffices to show the following: If $g=(g_0,\ldots,g_n)\in\N(k_s)$ with $g_i\neq 1$, then $\H_i=1\times \ldots\times 1\times{\hsi\G}\times 1\times\ldots\times 1\leq \N$.
	
	Since ${\hsi\G}$ is a non-commutative simple \'{e}tale algebraic group, its center is trivial. Thus there exists an $h\in{\hsi\G}(k_s)$ with $hg_i\neq g_ih$.
	Therefore
	\begin{align*}
	&(1,\ldots,1,h,1,\ldots,1)(g_1,\ldots,g_n)(1,\ldots,1,h,1,\ldots,1)^{-1}(g_1,\ldots,g_n)^{-1}= \\
	& =(1,\ldots,1,hg_ih^{-1}g_i^{-1},1,\ldots,1)
	\end{align*} is a non-trivial element of $\mathcal{N}(k_s)\cap \H_i(k_s)$. Since $\H_i$ is simple, we have $\N\cap \H_i=\H_i$, i.e., $\H_i\leq \N$ as desired.
\end{proof}

\begin{cor} \label{cor: sstably simple non comm} 
	Let $\G$ be a $\s$-stably simple \'{e}tale algebraic group that is not commutative and let $N$ be a proper normal $\s$-closed subgroup of $G=[\s]_k\G$. Then $N=G_{(r)}$ for some $r\in\nn$.  
\end{cor}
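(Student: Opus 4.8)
The plan is to combine the description of normal subgroups of finite products (Lemma~\ref{lemma: normal subgroup of products}) with the explicit description of the $\s$-Frobenius kernels in terms of the defining ideals. First I would recall that $N$ is determined by its Zariski closures $N[n]$ inside $\G[n]=\G\times{\hs\G}\times\ldots\times{\hsn\G}$, which are closed normal subgroups of $\G[n]$. By Lemma~\ref{lemma: normal subgroup of products}, each $N[n]$ has the form $\prod_{i=0}^n\N_i^{(n)}$ with $\N_i^{(n)}\in\{1,{\hsi\G}\}$. Since the projections $\pi_{n}\colon N[n]\to N[n-1]$ are quotient maps (in particular surjective on the underlying schemes), and these are just the projections forgetting the last coordinate, compatibility forces $\N_i^{(n)}=\N_i^{(n-1)}$ for $i\le n-1$. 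So there is a single subset $S\subseteq\nn$ with $N[n]=\prod_{i=0}^n\N_i$, $\N_i={\hsi\G}$ if $i\in S$ and $\N_i=1$ otherwise.

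Next I would show that $S$ must be an ``initial segment'' of $\nn$, i.e. $S=\{0,1,\ldots,r-1\}$ for some $r\in\nn$ (with $S=\nn$ excluded because $N$ is a \emph{proper} subgroup, which forces some $\N_i=1$). The mechanism here is the $\s$-action: $\I(N)$ is a $\s$-ideal of $k\{G\}=[\s]_k k[\G]=\bigcup_n k[\G[n]]$, and $\s$ shifts $k[\G[n]]$ into $k[\G[n+1]]$ in a way that shifts coordinates. Concretely, applying $\s$ to the defining equations of $N[n]$ relates $\N_i$ to $\N_{i+1}$: if the $i$-th coordinate is ``free'' (i.e. $\N_i=1$, so there are genuine defining equations in the $i$-th slot), then applying $\s$ produces defining equations in the $(i+1)$-th slot, forcing $\N_{i+1}=1$ as well. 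In other words $i\notin S\Rightarrow i+1\notin S$, equivalently $S$ is downward closed, so $S=\{0,\ldots,r-1\}$. Then $\I(N)$ is the $\s$-ideal generated by $\m_{\hsi\G}$-type equations for $i\ge r$, which is precisely $(\s^r(\m_G))$ — i.e. $N=G_{(r)}$ by the formula $\I(G_{(r)})=(\s^r(\m_G))$ recorded after the definition of the $\s$-Frobenius kernel.

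Alternatively, and perhaps more cleanly, I would argue purely group-theoretically using the equivalence of Theorem~\ref{theo: equivalence} (or just Example~\ref{ex: equ for etale}): $G=[\s]_k\G$ corresponds to the profinite $\Gal$-$\s$-group $\HH^\nn$ with $\HH=\G(k_s)$ and $\s$ the shift, and $N$ corresponds to a closed $\s$-invariant $\Gal$-invariant normal subgroup $\NN\le\HH^\nn$. The analog of Lemma~\ref{lemma: normal subgroup of products} at the profinite level (projecting to finite quotients and using simplicity of each $\HH$) shows $\NN=\prod_{i\in\nn}\NN_i$ with $\NN_i\in\{1,\HH\}$; $\s$-invariance (shift-invariance) forces $\NN_i=1\Rightarrow\NN_{i+1}=1$, hence $\NN=1^r\times\HH\times\HH\times\cdots$ for some $r$, which is exactly the image of $G_{(r)}$ under the equivalence (its defining ideal is $(\s^r(\m_G))$, matching the kernel of the $r$-fold shift). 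The main obstacle I anticipate is bookkeeping: correctly matching the combinatorics of ``which coordinates survive'' with the shift action of $\s$ and translating this back into the statement $\I(N)=(\s^r(\m_G))$, i.e. verifying carefully that downward-closedness of $S$ is exactly what the $\s$-stability of $\I(N)$ encodes, rather than some off-by-one variant. Using the profinite-group picture largely dissolves this difficulty, so I would lean on Theorem~\ref{theo: equivalence} and Example~\ref{ex: equ for etale} to make the shift structure transparent.
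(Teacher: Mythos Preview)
Your first approach is correct and is essentially the paper's proof, only spelled out in more detail: the paper picks the minimal $r$ with $N[r]\subsetneq\G[r]$, invokes Lemma~\ref{lemma: normal subgroup of products} to get $N[r]=\G\times{\hs\G}\times\cdots\times{^{\s^{r-1}}\!\G}\times 1$, and concludes $N=G_{(r)}$; your compatibility-of-projections and $\s$-stability arguments are exactly what makes that last ``Thus'' work.

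One correction to your alternative profinite route: from $\NN_i=1\Rightarrow\NN_{i+1}=1$ (which is correct, since $\s(\NN)\subseteq\NN$ forces $\NN_{i+1}\subseteq\NN_i$) you should conclude $\NN=\HH^r\times 1\times 1\times\cdots$, not $1^r\times\HH\times\HH\times\cdots$. This is also what $G_{(r)}$ corresponds to: the kernel of the $r$-fold shift on $\HH^\nn$ is $\{(h_i):h_i=1\text{ for }i\ge r\}=\HH^r\times 1\times\cdots$, matching $\I(G_{(r)})=(\s^r(\m_G))$, which kills the slots indexed $\ge r$. With this fix the argument goes through; it is precisely the off-by-one bookkeeping you anticipated.
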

\begin{proof}
	For $i\in\nn$ let $N[i]$ denote the $i$-th order Zariski closure of $N$ in $\G$. Since $N$ is normal in $\G$, it follows that $N[i]$ is normal in $\G[i]=\G\times{\hs\G}\times\ldots\times{\hsi\G}$ (Lemma \cite[Lemma~3.10]{Wibmer:AlmostSimpleAffineDifferenceAlgebraicGroups}). As $N$ is a proper subgroup of $G$, $N[i]$ must be a proper subgroup of $\G[i]$ for some $i$. Let $r\in\nn$ be such that $N[i]=\G[i]$ for $i=0,\ldots,r-1$ but $N[r]\subsetneqq \G[r]$. By Lemma \ref{lemma: normal subgroup of products} we must have $N[r]=\G\times{\hs\G}\times\ldots\times{^{\s^{r-1}\!}\G}\times 1$. Thus $N=G_{(r)}$.
\end{proof}

The following example shows that the conclusion of Corollary \ref{cor: sstably simple non comm} does not hold for commutative $\s$-stably simple \'{e}tale algebraic group.

\begin{ex}
	Let $q$ be a prime number different from the characteristic of $k$. Then the algebraic group $\G=\mu_q$ defined by $\G(T)=\{g\in T^\times|\ g^q=1\}$ for any $k$-algebra $T$,
	 is a $\s$\=/stably simple \'{e}tale algebraic group. The proper normal $\s$-closed subgroup $N$ of $\G$ given by $N(R)=\{g\in G(R)|\ \s(g)=g\}$ for any \ks-algebra $R$, is not of the form $G_{(r)}$ for some $r\in \nn$.
\end{ex}

%

For clarity of the exposition, we separate the following elementary lemma from the proof of Proposition \ref{prop: key to uniqueness}.

\begin{lemma} \label{lemma: abstract groups}
	Let $i,r\in \nn$ and let $\GG_0,\GG_1,\GG_2,\ldots,\GG_{i+r}$ be abelian groups. For $j=0,\ldots,i$  let $\psi_j\colon\GG_j\times\GG_{j+1}\times\ldots\times\GG_{j+r-1}\to \GG_{j+r}$ be a morphism of groups and set $$\NN_j=\{(g_j,\ldots,g_{j+r})\in \GG_j\times\ldots\times\GG_{j+r}|\ g_{j+r}=\psi_j(g_j,\ldots,g_{j+r-1})\}\leq \GG_j\times\ldots\times\GG_{j+r}.$$ Then the morphism
	$$\f\colon\GG_0\times\GG_1\times\ldots\times\GG_{i+r}\longrightarrow ((\GG_0\times\ldots\times\GG_r)/\NN_0) \times \ldots \times((\GG_i\times\ldots\times \GG_{i+r})/\NN_i)$$ $$(g_0,g_1,\ldots,g_{i+r})\mapsto (\overline{(g_0,\ldots,g_r)},\ldots,\overline{(g_i,\ldots,g_{i+r})})$$
	is surjective.
\end{lemma}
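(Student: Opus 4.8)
The plan is to prove surjectivity of $\f$ by an inductive argument on $i$, constructing a preimage coordinate by coordinate. Observe first that the map $\NN_j \to \GG_j \times \ldots \times \GG_{j+r-1}$ forgetting the last coordinate is an isomorphism, since $g_{j+r}$ is determined by $\psi_j(g_j,\ldots,g_{j+r-1})$; equivalently, for any choice of $(g_j,\ldots,g_{j+r-1})$ there is a unique $g_{j+r}$ making $(g_j,\ldots,g_{j+r})$ lie in $\NN_j$. This means that the quotient $(\GG_j\times\ldots\times\GG_{j+r})/\NN_j$ is naturally identified, via the first $r$ of the last coordinate, with $\GG_{j+r}$: the composite $\GG_j\times\ldots\times\GG_{j+r} \to (\GG_j\times\ldots\times\GG_{j+r})/\NN_j$ sends $(g_j,\ldots,g_{j+r})$ to the class determined by $g_{j+r} - \psi_j(g_j,\ldots,g_{j+r-1})$ (written additively). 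Under this identification, $\f$ becomes the map
$$(g_0,\ldots,g_{i+r}) \mapsto \big(g_r - \psi_0(g_0,\ldots,g_{r-1}),\; g_{r+1}-\psi_1(g_1,\ldots,g_r),\; \ldots,\; g_{i+r} - \psi_i(g_i,\ldots,g_{i+r-1})\big).$$

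Now surjectivity is transparent: given a target element $(a_0,a_1,\ldots,a_i)$ with $a_j \in \GG_{j+r}$, I would set $g_0 = g_1 = \cdots = g_{r-1} = 0$ (the identity in the respective groups), and then define $g_{r+j}$ recursively for $j = 0,1,\ldots,i$ by
$$g_{r+j} = a_j + \psi_j(g_j, g_{j+1}, \ldots, g_{j+r-1}),$$
where the arguments $g_j,\ldots,g_{j+r-1}$ have already been defined at the previous steps (they are among $g_0,\ldots,g_{r+j-1}$). With this choice, $g_{r+j} - \psi_j(g_j,\ldots,g_{j+r-1}) = a_j$ for each $j$, so $(g_0,\ldots,g_{i+r})$ maps to $(a_0,\ldots,a_i)$ under $\f$. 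This proves $\f$ is surjective.

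There is essentially no obstacle here: the only point requiring a moment's care is the identification of $(\GG_j\times\ldots\times\GG_{j+r})/\NN_j$ with $\GG_{j+r}$, which follows because $\NN_j$ is the graph of the homomorphism $(g_j,\ldots,g_{j+r-1})\mapsto \psi_j(g_j,\ldots,g_{j+r-1})$ and hence meets each fiber of the last-coordinate projection in exactly one point, so projecting to the last coordinate modulo $\psi_j$ descends to an isomorphism on the quotient. Once this is in hand, the recursion solves for a preimage of any prescribed image, which is why the abelian hypothesis (or really, just having groups so the recursion makes sense — commutativity is not even essential) suffices. In the write-up I would phrase the recursion directly rather than invoking induction formally, since the dependency $g_{r+j}$ on earlier terms is already triangular.
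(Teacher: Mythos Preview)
Your proof is correct and follows essentially the same recursive construction as the paper's proof: both build a preimage $(g_0,\ldots,g_{i+r})$ by fixing the first block and then solving for $g_{r+j}$ one step at a time so that each component lands in the required coset. Your extra step of identifying $(\GG_j\times\ldots\times\GG_{j+r})/\NN_j$ with $\GG_{j+r}$ via $(g_j,\ldots,g_{j+r})\mapsto g_{j+r}-\psi_j(g_j,\ldots,g_{j+r-1})$ is a nice simplification that makes the recursion cleaner, but the underlying idea is the same; one small caveat is that your parenthetical remark that commutativity is not essential is slightly off, since without it $\NN_j$ need not be normal and the quotient group is not defined.
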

\begin{proof}
	For $j=0,\ldots,i$ let $(g_{j,0},\ldots,g_{j,r})\in \GG_j\times\ldots\times\GG_{j+r}$. So $h=(\overline{(g_{0,0},\ldots,g_{0,r})},\ldots, \overline{(g_{i,0},\ldots,g_{i,r})})$ is an arbitrary element of the codomain of $\f$. Define $g=(g_0,\ldots,g_{i+r})\in\GG_0\times\ldots\times\GG_{i+r}$ by
	$(g_0,\ldots,g_r)=(g_{0,0},\ldots,g_{0,r})$ and then recursively by
	$$g_{r+j}=\psi_j\Big(g_jg_{j,0}^{-1},g_{j+1}g_{j,1}^{-1},\ldots,g_{j+r-1}g_{j,r-1}^{-1}\Big)g_{j,r}$$ for $j=1,\ldots,i$. Then $(g_jg_{j,0}^{-1},g_{j+1}g_{j,1}^{-1},\ldots,g_{j+r}g_{j,r}^{-1})\in \NN_j$ and so $\overline{(g_j,\ldots,g_{j+r})}=\overline{(g_{j,0},\ldots,g_{j,r})}\in (\GG_j\times\ldots\times\GG_{j+r})/\NN_j$. Thus $\f(g)=h$.
\end{proof}

\begin{defi}
Two \'{e}tale algebraic groups $\G$ and $\H$ are \emph{$\s$-stably equivalent} if there exist $m,n\in\nn$ such that ${^{\s^m\!}\G}\simeq {^{\s^n\!}\H}$.
\end{defi}
Note that this defines an equivalence relation on the class of all \'{e}tale algebraic groups over $k$.

\begin{prop} \label{prop: key to uniqueness}
	Let $\G$ be a $\s$-stably simple \'{e}tale algebraic group and let $N$ be a proper normal $\s$-closed subgroup of $G=[\s]_k\G$. Then $G/N\simeq [\s]_k\H$ for some $\s$-stably simple \'{e}tale algebraic group $\H$ $\s$-stably equivalent to $\G$.
\end{prop}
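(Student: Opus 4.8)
The plan is to split according to whether $\G$ is commutative, the commutative case being the substantial one. If $\G$ is not commutative, the statement follows quickly from results already at hand: by Corollary~\ref{cor: sstably simple non comm}, $N=G_{(r)}$ for some $r\in\nn$, and since $G=[\s]_k\G$ is absolutely $\s$-reduced by Corollary~\ref{cor: algebraic group is absolutely sreduced}, Corollary~\ref{cor: sFrobenius quotient for absolutely sreduced} gives $G/N\simeq{\hsr G}=[\s]_k({\hsr\G})$. One then takes $\H={\hsr\G}$: it is $\s$-stably simple because ${^{\s^i}\!({\hsr\G})}={^{\s^{i+r}}\!\G}$ is simple for all $i\in\nn$, and it is $\s$-stably equivalent to $\G$ (with $m=r$, $n=0$ in the definition).

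Now assume $\G$ is commutative, so $G=[\s]_k\G$ is commutative as well, and work with the order-$i$ Zariski closures $N[i]\le\G[i]=\G\times{\hs\G}\times\cdots\times{\hsi\G}$ of $N$. Since $N$ is proper there is an $r$ with $N[r]\subsetneq\G[r]$; take $r$ minimal (if $r=0$ then $N[0]$ is a proper closed subgroup of the simple group $\G$, hence trivial, hence $N=1$ and $G/N=G=[\s]_k\G$, so we may assume $r\ge 1$). Then $N[r-1]=\G[r-1]$, so $\pi_r\colon N[r]\to N[r-1]=\G[r-1]$ is surjective, and the closed subgroup $N[r]\cap(1\times\cdots\times 1\times{\hsr\G})$ of the simple commutative group ${\hsr\G}$ must be trivial, since being all of ${\hsr\G}$ would force $N[r]=\G[r]$. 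Hence $N[r]$ is the graph of a morphism $\psi\colon\G[r-1]\to{\hsr\G}$ of \'{e}tale algebraic groups. Next I would feed in the $\s$-invariance of $N$: since $F_G\colon G\to{\hs G}$ maps $N$ into ${\hs N}$, an induction on $j$ — each step being the Goursat argument above applied to the last $r+1$ coordinates together with the information already known on the first $r+j$ coordinates — identifies $N[r+j]$ with the ``companion'' subgroup $\{(g_0,\ldots,g_{r+j}):g_{l+r}=\psi_l(g_l,\ldots,g_{l+r-1}),\ l=0,\ldots,j\}$ of $\G[r+j]$, where $\psi_l$ denotes the $\s^l$-twist of $\psi$.

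With this description I would produce the morphism witnessing the quotient. Let $\Psi\colon G\to{\hsr G}$ be the composite $G\xrightarrow{(\id,F_G,\ldots,F_G^{r-1})}\prod_{j=0}^{r-1}{^{\s^j}\!G}=[\s]_k\G[r-1]\xrightarrow{[\s]_k\psi}[\s]_k({\hsr\G})={\hsr G}$, a morphism of $\s$-algebraic groups built from the $\s$-Frobenius morphisms and $[\s]_k\psi$, and set $\beta\colon G\to{\hsr G}$, $\beta(g)=F_G^r(g)\,\Psi(g)^{-1}$, which is a homomorphism since ${\hsr G}$ is commutative. Unravelling the definitions, $\beta$ encodes exactly the relations $g_{l+r}=\psi_l(g_l,\ldots,g_{l+r-1})$, so comparing $\ker(\beta)$ with the companion subgroups order by order gives $\ker(\beta)=N$. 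For surjectivity of $\beta$, note that at order $i$ it induces a morphism $\G[i+r]\to({\hsr\G})[i]$ of \'{e}tale algebraic groups whose effect on $k_s$-points is precisely the map $\f$ of Lemma~\ref{lemma: abstract groups} (with $\GG_j={^{\s^j}\!\G}(k_s)$ and the $\NN_j$ the graphs of the $\psi_j$, so that $(\GG_j\times\cdots\times\GG_{j+r})/\NN_j\cong\GG_{j+r}$); Lemma~\ref{lemma: abstract groups} yields surjectivity of $\f$, hence $\beta$ is a quotient map. Theorem~\ref{theo: isom1} then gives $G/N\simeq\beta(G)={\hsr G}=[\s]_k({\hsr\G})$, and $\H={\hsr\G}$ is $\s$-stably simple and $\s$-stably equivalent to $\G$ as in the non-commutative case.

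The main obstacle is the bookkeeping in the commutative case: extracting from the abstract normal $\s$-closed subgroup $N$ the \emph{single} morphism $\psi$ that governs every Zariski closure $N[i]$ (Goursat, simplicity of each ${^{\s^j}\!\G}$, and the $\s$-invariance induction), and then recognizing the companion-type homomorphism $\beta$ with $\ker(\beta)=N$; the surjectivity of $\beta$, which would otherwise be the delicate point, is exactly what Lemma~\ref{lemma: abstract groups} provides.
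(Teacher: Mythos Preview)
Your proof is correct and follows essentially the same route as the paper. The non-commutative case is identical, and in the commutative case both arguments extract the morphism $\psi\colon\G[r-1]\to{\hsr\G}$ from the graph $N[r]$ and then appeal to Lemma~\ref{lemma: abstract groups} for surjectivity of the resulting quotient map. The only cosmetic difference is that the paper first sets $\H=\G[r]/N[r]$ and then observes $\H\simeq{\hsr\G}$ via $g\mapsto\overline{(1,\ldots,1,g)}$, whereas you absorb this identification from the start and write down the explicit homomorphism $\beta(g)=F_G^r(g)\,\Psi(g)^{-1}$ directly into ${\hsr G}$; the paper also does not spell out the companion description of the higher $N[r+j]$ but instead checks $\ker(\f)=N$ from its formula~(\ref{eq: map f}), which amounts to the same thing.
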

\begin{proof}
	First assume that $\G$ is not commutative. From Corollary \ref{cor: sstably simple non comm} we know that $N=G_{(r)}$ for some $r\in\nn$. So $G/N=G/G_{(r)}\simeq{\hsr G}=[\s]_k{\hsr\G}$ by Corollaries \ref{cor: algebraic group is absolutely sreduced} and \ref{cor: sFrobenius quotient for absolutely sreduced}.
	
	Now assume that $\G$ is commutative. For $i\in\nn$ let $N[i]$ denote the $i$-th order Zariski closure of $N$ in $\G$. Moreover, let $\N_i$ denote the kernel of the projection $\pi_i\colon N[i]\to N[i-1]$. Then $\N_i$ can be identified with a closed subgroup of ${\hsi\G}$. Since ${\hsi\G}$ is simple and commutative, we have $\N_i\in\{1,{\hsi\G}\}$. Let $r\in\nn$ be such that $\N_i={\hsi\G}$ for $i=1,\ldots,r-1$ and $\N_r=1$. Then $\pi_r\colon N[r]\to N[r-1]$ is an isomorphism and $N[r-1]=\G[r-1]=\G\times\ldots\times{^{\s^{r-1}\!}\G}$. Define $\psi\colon \G\times\ldots\times{^{\s^{r-1}\!}\G}=N[r-1]\xrightarrow{\pi_r^{-1}}N[r]\to {\hsr\G}$, where the last map is the projection onto the last coordinate. Then $\psi$ is a morphism of algebraic groups and 
	\begin{equation} \label{eq: N[r]}
	N[r](T)=\{(g_0,\ldots,g_r)\in\G[i](T)|\ g_r=\psi(g_0,\ldots,g_{r-1})\}
	\end{equation}
	for any $k$-algebra $T$. Moreover,
	\begin{equation} \label{eq: N}
	 N(R)=\{g\in\G(R)|\ \s^r(g)=\psi(g,\ldots,\s^{r-1}(g))\}
	\end{equation} 	
	for any \ks-algebra $R$. Set $\H=\G[r]/N[r]$. We claim that the morphism ${\hsr\G}\to \H,\ g\mapsto \overline{(1,\ldots,1,g)}$ is an isomorphism. Clearly, the kernel is trivial and so the claim follows from $|\H|=\frac{|\G[r]|}{|N[r]|}=\frac{|\G|^{r+1}}{|\G|^r}=|{\hsr\G}|$.

	The inclusion $k[\H]\subseteq k[\G[r]]\subseteq k\{G\}$ of Hopf algebras, gives rise to a morphism $k\{\H\}\to k\{G\}$ of \ks-Hopf algebras (Lemma \ref{lemma: Hopf induced}). Let $\f\colon G\to[\s]_k\H$ be the corresponding morphism of $\s$\=/algebraic groups. Note that $\f$ can also be described as the composition
	\begin{equation} \label{eq: map f}
	\f\colon G=[\s]_k\G\to[\s]_k\G\times\ldots\times [\s]_k{\hsr\G}=[\s]_k\G[r]\to [\s]_k\G[r]/[\s]_kN[r]=[\s]_k(\G[r]/N[r]),
	\end{equation}
	where the first map is given by $g\mapsto (g,\s(g),\ldots,\s^r(g))$ for $g\in\G(R)$ and $R$ a \ks-algebra.
	
	We claim that $\f\colon G\to [\s]_k\H$ is a quotient map with kernel $N$. (So $G/N\simeq [\s]_k\H$ as desired.)	
	Indeed, $\ker(\f)=N$ by (\ref{eq: N[r]}), (\ref{eq: N}) and (\ref{eq: map f}). To see that 
	$\f$ is a quotient map, we have to show that $\f^*\colon k\{\H\}\to k\{G\}$ is injective. It suffices to show that the restriction of $\f^*$ to $k[\H[i]]\to k[\G[i+r]]$ is injective for every $i\in\nn$. This restriction corresponds to the morphism
	$$\f_i\colon\G\times\ldots\times{^{\s^{i+r}}\!\G}\to \H\times\ldots\times{\hsi\H},\ (g_0,\ldots,g_{i+r})\mapsto \left(\overline{(g_0,\ldots,g_r)}, \overline{(g_1,\ldots,g_{1+r})},\ldots, \overline{(g_i,\ldots,g_{i+r})}\right)$$
	of \'{e}tale algebraic groups. It suffices to show that $\f_i$ is surjective on the $k_s$-points. But this follows from Lemma \ref{lemma: abstract groups} with $\GG_j=({^{\s^j}\!\G})(k_s)$ for $j=0,\ldots,i+r$ and $\psi_j\colon\GG_j\times\ldots\times\GG_{j+r-1}\to\GG_{j+r}$ the base change of $\psi\colon \G\times\ldots\times{^{\s^{r-1}}\!\G}\to {\hsr\G}$ via $\s^j\colon k\to k$ (and then evaluated at $k_s$) for $j=0,\ldots,i$.
\end{proof}

\begin{cor} \label{cor: normal ssubgroup of simple has ld 1}
	Let $\G$ be a $\s$-stably simple \'{e}tale algebraic group. Then $\ld(N)=1$ for every proper normal $\s$-closed subgroup $N$ of $\G$.
\end{cor}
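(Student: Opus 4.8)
The plan is to use Proposition \ref{prop: key to uniqueness} together with the multiplicativity of the limit degree under quotients (Proposition \ref{prop: invariants and quotients}) and the fact that the limit degree of $G=[\s]_k\G$ equals $|\G|$ (Example \ref{ex: benign}). Since $G$ is $\s$-\'{e}tale and $\G$ is non-trivial, both $\ld(G)=|\G|$ and $\ld(G/N)$ are finite positive integers.

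The key steps, in order, are as follows. First, apply Proposition \ref{prop: key to uniqueness} to the proper normal $\s$-closed subgroup $N$ of $G$: there exists a $\s$-stably simple \'{e}tale algebraic group $\H$, $\s$-stably equivalent to $\G$, with $G/N\simeq [\s]_k\H$. Second, by Example \ref{ex: benign} we have $\ld(G/N)=\ld([\s]_k\H)=|\H|$ and $\ld(G)=\ld([\s]_k\G)=|\G|$. Third, since $\H$ is $\s$-stably equivalent to $\G$, there are $m,n\in\nn$ with ${^{\s^m\!}\G}\simeq {^{\s^n\!}\H}$; base change along $\s^i\colon k\to k$ does not change the $k$-vector space dimension of the coordinate ring (it is a flat base change of a finite-dimensional algebra of the same rank), so $|{^{\s^m\!}\G}|=|\G|$ and $|{^{\s^n\!}\H}|=|\H|$, whence $|\H|=|\G|$. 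Fourth, by Proposition \ref{prop: invariants and quotients} we have $\ld(G)=\ld(G/N)\cdot\ld(N)$, i.e., $|\G|=|\H|\cdot\ld(N)=|\G|\cdot\ld(N)$. Since $|\G|$ is a finite positive integer, we conclude $\ld(N)=1$.

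The only mild subtlety is justifying $|{^{\s^i\!}\G}|=|\G|$: the algebra ${^{\s^i}(k[\G])}=k[\G]\otimes_k k$ is obtained from the finite-dimensional $k$-vector space $k[\G]$ by the (flat, in fact faithfully flat) base change $\s^i\colon k\to k$, and tensoring with a field extension preserves dimension, so $\dim_k {^{\s^i}(k[\G])}=\dim_k k[\G]$. I do not anticipate any genuine obstacle here; the whole argument is a short bookkeeping computation once Proposition \ref{prop: key to uniqueness} is in hand. (Alternatively, one could bypass $\H$ entirely in the non-commutative case, where $N=G_{(r)}$ and $G/N\simeq{\hsr G}=[\s]_k{\hsr\G}$ directly gives $\ld(G/N)=|{\hsr\G}|=|\G|$; but invoking Proposition \ref{prop: key to uniqueness} handles both cases uniformly.)
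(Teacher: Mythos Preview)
Your proof is correct and takes a genuinely different route from the paper. The paper argues by cases: in the non-commutative case it invokes Corollary~\ref{cor: sstably simple non comm} to conclude $N=G_{(r)}$, which is $\s$-infinitesimal (Lemma~\ref{lemma: Gr sinfinitesimal}) and hence has limit degree one (Corollary~\ref{cor: sinfinitesimal has ld 1}); in the commutative case it reaches into the \emph{proof} of Proposition~\ref{prop: key to uniqueness}, specifically the observation that the kernel $\N_r$ of $\pi_r\colon N[r]\to N[r-1]$ is trivial, which directly gives $\ld(N)=1$ by definition.

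Your argument instead uses only the \emph{statement} of Proposition~\ref{prop: key to uniqueness}, extracting that $G/N\simeq[\s]_k\H$ with $\H$ $\s$-stably equivalent to $\G$, and then closes via the dimension count $|\H|=|\G|$ and multiplicativity of $\ld$. This is more uniform (no case split) and avoids citing a step buried inside another proof; it is arguably the cleaner packaging. The paper's approach, by contrast, gives slightly more structural information in the non-commutative case (namely that $N$ is actually $\s$-infinitesimal, not merely of limit degree one), which your argument does not recover.
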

\begin{proof}
	In case $\G$ is non-commutative, this follows from Lemma \ref{lemma: Gr sinfinitesimal} and Corollaries  \ref{cor: sstably simple non comm} and \ref{cor: sinfinitesimal has ld 1}. 
	In case $\G$ is commutative, this follows from the proof of Proposition \ref{prop: key to uniqueness} ($\N_r=1$).
\end{proof}
The following lemma provides a converse to Corollary \ref{cor: normal ssubgroup of simple has ld 1}.

\begin{lemma} \label{lemma: implies sstably simple}
	Let $\G$ be a non-trivial \'{e}tale algebraic group. If $\ld(N)\in\{1,|\G|\}$ for every normal $\s$-closed subgroup $N$ of $\G$, then $\G$ is $\s$-stably simple. 
\end{lemma}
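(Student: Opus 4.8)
The plan is to prove the contrapositive: assuming $\G$ is not $\s$-stably simple, I will exhibit a normal $\s$-closed subgroup $N$ of $G:=[\s]_k\G$ with $\ld(N)\notin\{1,|\G|\}$. First I would unwind the failure of $\s$-stable simplicity: by definition there is an $r\in\nn$ for which ${\hsr\G}$ is not simple. Now ${\hsr\G}$ is again a non-trivial \'{e}tale algebraic group — non-trivial since $|{\hsr\G}|=|\G|>1$ (base change along $\s^r\colon k\to k$ preserves the $k$-dimension of the coordinate ring) and \'{e}tale since base change preserves \'{e}taleness — so there is a closed normal subgroup $\N$ of ${\hsr\G}$ with $\N\neq 1$ and $\N\neq{\hsr\G}$. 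Being a closed subgroup of an \'{e}tale algebraic group, $\N$ is \'{e}tale; and since it is proper and non-trivial, $1<|\N|<|{\hsr\G}|=|\G|$.

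The key construction is $N:=(F_G^r)^{-1}([\s]_k\N)$, where $F_G^r\colon G\to{\hsr G}={\hsr([\s]_k\G)}=[\s]_k{\hsr\G}$ is the $r$-th $\s$-Frobenius morphism. Since $\N\unlhd{\hsr\G}$, the subgroup $[\s]_k\N$ is a normal $\s$-closed subgroup of $[\s]_k{\hsr\G}$ (cf.\ the example following Lemma \ref{lemma: quotient embedding}), so $N$ is a normal $\s$-closed subgroup of $G$. Because $G=[\s]_k\G$ is absolutely $\s$-reduced (Corollary \ref{cor: algebraic group is absolutely sreduced}), the morphism $F_G^r$ is a quotient map (Lemma \ref{lemma: absolutely sreduced and sFrobenius}) with kernel the $r$-th $\s$-Frobenius kernel $G_{(r)}$, and the induced isomorphism $G/G_{(r)}\simeq[\s]_k{\hsr\G}$ (Corollary \ref{cor: sFrobenius quotient for absolutely sreduced}) identifies $F_G^r$ with the canonical projection $G\to G/G_{(r)}$. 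Hence $G_{(r)}\subseteq N$, and the third isomorphism theorem (Theorem \ref{theo: isom 3}) shows that the $\s$-closed subgroup $N/G_{(r)}$ of $G/G_{(r)}$ corresponds, under this isomorphism, to $[\s]_k\N$; that is, $N/G_{(r)}\simeq[\s]_k\N$.

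It remains to compute $\ld(N)$ from the exact sequence $1\to G_{(r)}\to N\to[\s]_k\N\to 1$. The kernel $G_{(r)}$ is $\s$-infinitesimal (Lemma \ref{lemma: Gr sinfinitesimal}), so $\ld(G_{(r)})=1$ (Corollary \ref{cor: sinfinitesimal has ld 1}), while $\ld([\s]_k\N)=|\N|$ (Example \ref{ex: benign}). By multiplicativity of the limit degree under quotients (Proposition \ref{prop: invariants and quotients}),
$$\ld(N)=\ld(N/G_{(r)})\cdot\ld(G_{(r)})=|\N|\cdot 1=|\N|.$$
As $1<|\N|<|\G|$, this means $\ld(N)\notin\{1,|\G|\}$, contradicting the hypothesis; therefore ${\hsr\G}$ is simple for every $r\in\nn$, i.e.\ $\G$ is $\s$-stably simple.

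The step requiring care is the identification $N/G_{(r)}\simeq[\s]_k\N$ — equivalently, that $F_G^r$ maps $N$ \emph{onto} $[\s]_k\N$ — which relies on $F_G^r$ being a quotient map, i.e.\ on the absolute $\s$-reducedness of $[\s]_k\G$; the remaining bookkeeping of limit degrees along the exact sequence is routine. (Alternatively, one can argue on the combinatorial side via Theorem \ref{theo: equivalence}: realizing $[\s]_k\G$ as the profinite $\Gal$-$\s$-group $\big(\G(k_s)\big)^\nn$ with the shift endomorphism, a proper non-trivial normal $\Gal$-stable subgroup $\N$ of ${\hsr\G}(k_s)$ yields the shift-invariant normal subgroup consisting of all sequences whose $i$-th entry lies in $\N$ for every $i\geq r$, and the $\s$-closed subgroup it defines has limit degree $|\N|$.)
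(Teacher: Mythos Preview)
Your argument is correct and follows essentially the same route as the paper: both pull back a non-trivial proper normal closed subgroup $\N\unlhd{\hsr\G}$ along the $r$-th $\s$-Frobenius $F_G^r$ to obtain a normal $\s$-closed subgroup $N=(F_G^r)^{-1}([\s]_k\N)$ of $G=[\s]_k\G$ with intermediate limit degree. The only cosmetic difference is in the bookkeeping at the end: the paper observes that the composite $\f\colon G\xrightarrow{F_G^r}[\s]_k{\hsr\G}\to[\s]_k({\hsr\G}/\N)$ is a quotient map, reads off $\ld(G/N)=|{\hsr\G}/\N|$ from Example~\ref{ex: benign}, and then uses Proposition~\ref{prop: invariants and quotients} to conclude $1<\ld(N)<|\G|$; you instead compute $\ld(N)=|\N|$ directly from the exact sequence $1\to G_{(r)}\to N\to[\s]_k\N\to 1$ via Theorem~\ref{theo: isom 3} and Corollary~\ref{cor: sinfinitesimal has ld 1}.
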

\begin{proof}
	Suppose $\G$ is not $\s$-stably simple. Then there exists an $r\in\nn$ and a closed normal subgroup $\N$ of ${\hsr\G}$ such that $1<|{\hsr\G}/\N|<|\G|$. 
	Using Corollary \ref{cor: algebraic group is absolutely sreduced} and Lemma \ref{lemma: absolutely sreduced and sFrobenius} we see that the morphism
	$$\f\colon [\s]_k\G\xrightarrow{F^r_{[\s]_k\G}}{\hsr([\s]_k\G)}=[\s]_k{\hsr\G}\to [\s]_k{\hsr\G}/[\s]_k\N= [\s]_k({\hsr\G}/\N)$$
	is a quotient map. Thus, $N=\ker(\f)$ is a normal $\s$-closed subgroup of $G=[\s]_k\G$ and $\ld(G/N)=\ld([\s]_k({\hsr\G}/\N))=|{\hsr\G}/\N|$ by Example \ref{ex: benign}. So  $1<\ld(N)<|\G|$ by Proposition~\ref{prop: invariants and quotients}; a contradiction.
\end{proof}

\subsection{The decomposition theorem}

Some more preparations are in order before we can finally tackle the proof of the main decomposition theorem (Theorem \ref{theo: Babbitt}).
The following proposition provides an important step in the proof. Roughly speaking, it shows that in a subnormal series we can get rid of the top $\s$-infinitesimal quotient.

 For simplicity, we call a $\s$-\'{e}tale $\s$-algebraic group $G$ \emph{$\s$-stably simple benign} if $G\simeq [\s]_k\G$, where $\G$ is a $\s$-stably simple \'{e}tale algebraic group.
\begin{prop} \label{prop: exchange sinfinitesimal for Babbitt}
	Let $G$ be a $\s$-algebraic group with a subnormal series
	$$G\supseteq G_1\supseteq G_2\supseteq \cdots\supseteq G_{n+1}\supseteq 1$$
	such that $G/G_1$ is $\s$-infinitesimal, $G_i/G_{i+1}$ is $\s$-stably simple benign for $i=1,\ldots n$ and $G_{n+1}$ is $\s$-infinitesimal. Then there exists a subnormal series
	$$G\supseteq H_1\supseteq H_2\supseteq \cdots\supseteq H_n\supseteq 1$$
	such that $G/H_1$ and $H_i/H_{i+1}$ ($i=1,\ldots,n-1$) are $\s$-stably simple benign and $H_n$ is $\s$\=/infinitesimal.
\end{prop}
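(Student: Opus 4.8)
The plan is to push the bottom $\sigma$-infinitesimal piece $G_{n+1}$ upward through the series using the $\sigma$-Frobenius morphism. Here is the idea in outline. Since $G_{n+1}$ is $\sigma$-infinitesimal, by Lemma \ref{lemma: Gr sinfinitesimal} we have $G_{n+1}\subseteq G_{(r)}$ for some $r\in\nn$, i.e.\ $\s^r(\m_{G_{n+1}})=0$. I would apply the $r$-th $\s$-Frobenius morphism $F_G^r\colon G\to{\hsr G}$, set $H_i=(F^r_G)^{-1}({\hsr G_i})$ for $i=1,\ldots,n$, and claim that the series $G\supseteq H_1\supseteq\cdots\supseteq H_n\supseteq 1$ works. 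The reason this should absorb $G_{n+1}$: passing a subnormal series through $F_G^r$ replaces each quotient by its $r$-th $\s$-Frobenius twist, and the $\s$-stably simple benign quotients $G_i/G_{i+1}\simeq[\s]_k\G_i$ are absolutely $\s$-reduced (Corollary \ref{cor: algebraic group is absolutely sreduced}), so by Lemma \ref{lemma: quotients and sFrobenius for absolutely sreduced} we get $H_i/H_{i+1}\simeq{\hsr(G_i/G_{i+1})}=[\s]_k{\hsr\G_i}$, which is again $\s$-stably simple benign (since ${\hsr\G_i}$ is $\s$-stably simple). The bottom quotient $G_n/G_{n+1}$, being absolutely $\s$-reduced, produces $H_{n-1}/H_n$ of the same type. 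The crucial point is that the old $G_{n+1}$, which was $\s$-infinitesimal, gets swallowed: $H_n$ should turn out to be $\s$-infinitesimal because $\ker(F^r_G)=G_{(r)}\supseteq G_{n+1}$ and one more application of Lemma \ref{lemma: inverse image of sinfinitesimal is sinfintesimal} handles $G/G_1$.

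First I would carry out the bookkeeping carefully. Set $r$ large enough that both $G_{n+1}\subseteq G_{(r)}$ and also $G/G_1$ is controlled; more precisely, since $G/G_1$ is $\s$-infinitesimal, Lemma \ref{lemma: sinfinitesimal} applied to the quotient gives $\s^s(\m_{G/G_1})=0$ for some $s$, and I would take $r$ at least $\max(r_0,s)$ where $\s^{r_0}(\m_{G_{n+1}})=0$. Then define $H_i=(F^r_G)^{-1}({\hsr G_i})$; by the naturality diagram for $F^r$ (the commutative square relating $\f$ and $F^r$ displayed before Example \ref{ex: explicit sfrob}), each ${\hsr G_i}$ is normal in ${\hsr G_{i-1}}$ and hence $H_i\unlhd H_{i-1}$. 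Lemma \ref{lemma: quotients and sFrobenius for absolutely sreduced} with $H=G_{i-1}$, $N=G_i$ (valid since $G_{i-1}/G_i$ is absolutely $\s$-reduced for $i=2,\ldots,n$) gives $H_{i-1}/H_i\simeq{\hsr(G_{i-1}/G_i)}$, and for the top, applying the same lemma with $H=G$, $N=G_1$ — wait, $G/G_1$ is not absolutely $\s$-reduced — so instead for the top I would argue directly: $H_1=(F^r_G)^{-1}({\hsr G_1})$ contains $G_{(r)}=\ker F^r_G$, and $G/H_1\simeq {\hsr G}/{\hsr G_1}={\hsr(G/G_1)}$, which is still $\s$-infinitesimal since $\s^s(\m_{G/G_1})=0$ forces $\s$ acting on a Frobenius twist to be nilpotent on the augmentation ideal too. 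Hmm — I need $G/H_1$ to be $\s$-stably simple benign, not $\s$-infinitesimal. So I must instead arrange that the $\s$-infinitesimal top quotient $G/G_1$ is itself absorbed. The fix: apply $F^r_G$ to the whole series including the $G/G_1$ layer, and note that ${\hsr(G/G_1)}$ is a quotient of ${\hsr G}$ whose kernel contains the image of $G_{(r)}$; since $G_{(r)}$ maps onto... Actually the clean statement is: $G/H_1 = F^r_G(G)/F^r_G(G_1) \cong {\hsr G}/{\hsr G_1}$, and $F^r$ being a quotient map modulo $G_{(r)}$ means $G/G_{(r)}\cdot G_1$... I would reconcile this by choosing $r$ so large that additionally $G_{(r)}\supseteq$ (a $\s$-closed subgroup whose image absorbs $G/G_1$), using that $G/G_1$ $\s$-infinitesimal means $\m_{G}\subseteq [\s^r(\m_{G_1}), \ker] $ — this is the delicate point.

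The main obstacle I anticipate is precisely the top of the series: making sure the $\s$-infinitesimal quotient $G/G_1$ does not survive as a separate $\s$-infinitesimal layer at the top of the new series (the proposition demands $G/H_1$ be $\s$-stably simple benign, with only $H_n$ allowed to be $\s$-infinitesimal). The resolution should be that after applying $F^r_G$ with $r$ large, the preimage $H_1$ already contains all of the "$\s$-infinitesimal slack" both at the top and at the bottom, because $F^r_G$ kills $G_{(r)}$ and by Lemma \ref{lemma: Gr sinfinitesimal} a sufficiently high $G_{(r)}$ contains every $\s$-infinitesimal $\s$-closed subgroup in sight; one then has to check that the resulting $G/H_1$ coincides with one of the genuinely benign quotients $G_i/G_{i+1}$ twisted, or trivial, which requires comparing $H_1$ with $G_1$ and using that $G/G_1$ being $\s$-infinitesimal makes $F^r$ collapse it. I would verify this by a direct computation on coordinate rings: $k\{H_1\}=k[\s^r(k\{G_1\})]$ inside $k\{G\}$ via formula (\ref{eqn: dual for sFrob}), and $\s^r(\m_{G/G_1})=0$ gives $\s^r(k\{G\})\subseteq k[\s^r(k\{G_1\})]$, whence $F^r_G$ factors through $G_1$ and $G/H_1$ is trivial or absorbed into the next layer. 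Once that reconciliation is clean, relabeling (dropping the now-trivial top term if $G/H_1=1$, or noting $G=H_1$ in the first place when $G/G_1$ is genuinely absorbed) yields the asserted series with $H_n$ $\s$-infinitesimal, and the lengths match because exactly the one extra $\s$-infinitesimal term $G_{n+1}$ has been eliminated.
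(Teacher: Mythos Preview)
Your overall framework---pull back the series along $F_G^r$---is exactly what the paper does, and the lemmas you cite (Corollary \ref{cor: algebraic group is absolutely sreduced}, Lemma \ref{lemma: quotients and sFrobenius for absolutely sreduced}, Lemma \ref{lemma: inverse image of sinfinitesimal is sinfintesimal}, Lemma \ref{lemma: Gr sinfinitesimal}) are the right ones. But your bookkeeping is tangled because you are aiming $r$ at the wrong target and your indexing is off by one.

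The point is \emph{not} to choose $r$ so that $G_{n+1}\subseteq G_{(r)}$. The bottom takes care of itself: for \emph{any} $r$, the group $(F^r_G)^{-1}({\hsr G_{n+1}})$ is $\s$-infinitesimal, simply because ${\hsr G_{n+1}}$ is $\s$-infinitesimal and $\ker F^r_G=G_{(r)}$ is $\s$-infinitesimal (Lemmas \ref{lemma: Gr sinfinitesimal} and \ref{lemma: inverse image of sinfinitesimal is sinfintesimal}). What $r$ must do is eliminate the \emph{top} layer $G/G_1$. Since $G/G_1$ is $\s$-infinitesimal, Lemma \ref{lemma: Gr sinfinitesimal} gives an $r$ with $(G/G_1)_{(r)}=G/G_1$, i.e.\ $F^r_{G/G_1}$ is the trivial map. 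The commutative square
\[
\xymatrix{
G \ar[r] \ar_{F^r_G}[d] & G/G_1 \ar^{F^r_{G/G_1}}[d] \\
{\hsr G} \ar[r] & {\hsr(G/G_1)}
}
\]
then shows $F^r_G(G)\subseteq {\hsr G_1}$, i.e.\ $(F^r_G)^{-1}({\hsr G_1})=G$. This is the clean statement you were groping for at the end of your proposal.

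Once you have that, shift the index: set $H_i=(F^r_G)^{-1}({\hsr G_{i+1}})$ for $i=0,\ldots,n$ (not $H_i=(F^r_G)^{-1}({\hsr G_i})$ as you wrote). Then $H_0=G$, Lemma \ref{lemma: quotients and sFrobenius for absolutely sreduced} gives $H_{i-1}/H_i\simeq{\hsr(G_i/G_{i+1})}=[\s]_k{\hsr\G_i}$ for $i=1,\ldots,n$, and $H_n=(F^r_G)^{-1}({\hsr G_{n+1}})$ is $\s$-infinitesimal as noted above. No relabeling, no ``trivial top term'' to drop.
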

\begin{proof}
	Since $G/G_1$ is $\s$-infinitesimal, there exists an $r\in\nn$ such that $(G/G_1)_{(r)}=G/G_1$ (Lemma~\ref{lemma: Gr sinfinitesimal}). It is then clear from the commutative diagram
	$$
	\xymatrix{
		G \ar[r] \ar[d]_{F^r_G} &  G/G_1 \ar[d]^{F^r_{G/G_1}} \\
		{\hsr G} \ar[r] & {\hsr (G/G_1)}
	}
	$$
	that $(F^r_G)^{-1}({\hsr G_1})=G$. Define
	$$H_i=(F^r_G)^{-1}({\hsr G_{i+1}}) \quad \text{ for } \quad i=0,\ldots,n.$$
	(So $H_0=G$.) Because benign $\s$-algebraic groups are absolutely $\s$-reduced (Corollary~\ref{cor: algebraic group is absolutely sreduced}), it follows from Lemma \ref{lemma: quotients and sFrobenius for absolutely sreduced} that $H_i/H_{i+1}\simeq {\hsr (G_{i-1}/G_i)}$ for $i=0,\ldots,n$.
	So, if $G_{i-1}/G_i\simeq [\s]_k\G_{i}$ with $\G_{i}$ a $\s$-stably simple \'{e}tale algebraic group, then $H_i/H_{i+1}\simeq {\hsr([\s]_k\G_i)}=[\s]_k{\hsr\G_i}$. 

	Finally, as $G_{n+1}$ is $\s$-infinitesimal, also ${\hsr G_{n+1}}$ is $\s$-infinitesimal and therefore the $\s$-algebraic group $H_n=(F^r_G)^{-1}({\hsr G_{n+1}})$ is $\s$-infinitesimal by Lemmas \ref{lemma: inverse image of sinfinitesimal is sinfintesimal} and \ref{lemma: Gr sinfinitesimal}.	
\end{proof}

\begin{lemma} \label{lemma: for Babbitt 1step}
	Let $G$ be a $\s$-\'{e}tale $\s$-algebraic group and $\G$ an \'{e}tale algebraic group. If $\f\colon G\to[\s]_k\G$ is a $\s$-closed embedding of $\s$-algebraic groups such that $\ld(G)=|\G|$, then $\f$ is an isomorphism. In particular, $G$ is benign.
\end{lemma}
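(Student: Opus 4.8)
We are given a $\s$-closed embedding $\f\colon G\to[\s]_k\G$ with $G$ $\s$-\'{e}tale and $\G$ \'{e}tale, and the numerical hypothesis $\ld(G)=|\G|$. The plan is to compare, order by order, the Zariski closures $G[i]$ of $G$ inside $\G$ (via the embedding $\f$) with the full algebraic groups $\G[i]=\G\times{\hs\G}\times\cdots\times{\hsi\G}$. Since $\f$ is a $\s$-closed embedding, $k\{G\}$ is a quotient of $k\{[\s]_k\G\}=[\s]_k k[\G]$, so each $G[i]$ is a closed subgroup of $\G[i]$, and the projections $\pi_i\colon G[i]\to G[i-1]$ are quotient maps of algebraic groups with kernel $\G_i$ a closed subgroup of ${\hsi\G}$. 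The goal is to show $G[i]=\G[i]$ for all $i$, which forces $\f^*\colon k\{[\s]_k\G\}\to k\{G\}$ to be an isomorphism, hence $\f$ an isomorphism by Corollary~\ref{cor: sclosed and quotient is isom} (it is already a $\s$-closed embedding; we will have shown it is also a quotient map).

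**The key counting step.** By the description of the limit degree in Proposition~\ref{prop: limit degree}, $\ld(G)=\lim_{i\to\infty}|\G_i|$ where $\G_i=\ker(\pi_i\colon G[i]\to G[i-1])\leq{\hsi\G}$, and this sequence is non-increasing. Since $G$ is $\s$-\'{e}tale, each $G[i]$ is \'{e}tale (Lemma~\ref{lemma: etale Zariski closures}), so $|\G_i|$ is just the order of the finite group $\G_i$, and $|\G_i|$ divides $|{\hsi\G}|=|\G|$ for every $i$ (the base change ${\hsi\G}$ has the same order as $\G$). Now I would argue: because $|\G_i|$ is a non-increasing sequence of positive integers each dividing $|\G|$, and its limit equals $|\G|$ by hypothesis, the sequence must be constantly equal to $|\G|$. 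Hence $|\G_i|=|\G|$ for all $i$, forcing $\G_i={\hsi\G}$ for every $i\geq 1$. Combined with $G[0]\leq\G[0]=\G$ — where I also need $|G[0]|=|\G|$, i.e. $\G_0:=G[0]$ equals $\G$; this follows since $\ld([\s]_k\G)=|\G|$ by Example~\ref{ex: benign} and $|\G_i|\le|G[0]|$, or more directly: $|G[i]|=\prod_{j=0}^i|\G_j|$ and if $|G[0]|<|\G|$ then $\ld(G)=\lim|\G_i|\le|G[0]|<|\G|$, a contradiction — we get $G[0]=\G$. Then inductively $G[i]=G[i-1]\times{\hsi\G}$ (using that $\pi_i$ is surjective with kernel ${\hsi\G}$, so $G[i]$ surjects onto $G[i-1]=\G[i-1]$ with kernel the whole ${\hsi\G}$, and a normal subgroup scheme argument over the \'{e}tale base gives the product), so $G[i]=\G[i]$ for all $i$.

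**Conclusion.** Taking the union over $i$ of the equalities $k[G[i]]=k[\G[i]]$ inside $k\{G\}\subseteq k\{[\s]_k\G\}$, we conclude $\f^*\colon k\{[\s]_k\G\}\to k\{G\}$ is surjective (it was already surjective, being a $\s$-closed embedding) and injective, hence an isomorphism, so $\f$ is an isomorphism of $\s$-algebraic groups. In particular $G\simeq[\s]_k\G$ is benign.

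**Main obstacle.** The numerical bookkeeping is routine once set up; the one place requiring care is the step $\G_i={\hsi\G}\Rightarrow G[i]=\G[i]$, i.e. reconstructing the product structure from the orders. The cleanest route avoids group-structure subtleties: simply observe $|G[i]|=\prod_{j=0}^i|\G_j|=|\G|^{i+1}=|\G[i]|$ (using $|\G_0|=|G[0]|=|\G|$ and $|\G_j|=|\G|$ for $j\ge1$), and since $G[i]$ is a closed subscheme of the \'{e}tale algebraic group $\G[i]$ of the same (finite) $k$-dimension, the inclusion $k[\G[i]]\hookrightarrow k[G[i]]$ is a surjection between vector spaces of equal finite dimension, hence an isomorphism, so $G[i]=\G[i]$. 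Thus the real content is entirely the observation that a non-increasing sequence of positive divisors of $|\G|$ with limit $|\G|$ is constant — the rest is formal.
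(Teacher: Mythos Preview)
Your proof is correct and follows essentially the same route as the paper: identify $G$ with its image, use Proposition~\ref{prop: limit degree} to see that the non-increasing sequence $(|\G_i|)$ is bounded above by $|\G|$ and converges to $|\G|$, hence is constantly $|\G|$; then the multiplicative formula $|G[i]|=\prod_{j=0}^i|\G_j|=|\G|^{i+1}=|\G[i]|$ forces $G[i]=\G[i]$ for all $i$. One cosmetic slip: in your final paragraph the map $k[\G[i]]\to k[G[i]]$ is a \emph{surjection} (coming from the closed immersion $G[i]\hookrightarrow\G[i]$), not an inclusion; the equal-dimension argument then makes it an isomorphism, as you intend.
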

\begin{proof}
	We identify $G$ with $\f(G)$. For $i\in\nn$ let $G[i]$ denote the $i$-th order Zariski closure of $G$ in $\G$ and let $\G_i\leq G[i]$ denote the kernel of the projection $\pi_i\colon G[i]\to G[i-1]$ ($\G_0=G[0]$). By Proposition \ref{prop: limit degree} the sequence $(|\G_i|)_{i\in\nn}$ is non-increasing and stabilizes with value $\ld(G)=|\G|$. But as $\G_0$ is an algebraic subgroup of $\G$, we must have $|\G_i|=|\G|$ for all $i\in\nn$.
	
	Since $\pi_i\colon G[i]\to G[i-1]$ is a quotient map with kernel $\G_i$, we have $|G[i]|=|G[i-1]|\cdot|\G_i|$ for $i\geq 1$. As $\G_0=\G$, we find 
	$$|G[i]|=|\G_i|\cdots|\G_0|=|\G|^{i+1}.$$ But $G[i]\leq \G\times {\hs\G}\times\cdots\times  {\hsi\G}=\G[i]$. Therefore $G[i]=\G[i]$ for all $i$ and it follows that $G=[\s]_k\G$ is benign.
\end{proof}

Let $G$ be a $\s$-\'{e}tale $\s$-algebraic group. Let $\G$ be an \'{e}tale algebraic group and $\f\colon G\to [\s]_k\G$ a morphism of $\s$-algebraic groups. Then $\f(G)$ is a $\s$-\'{e}tale $\s$-closed subgroup of $\G$ (Lemma \ref{lemma: subgroups and quotients of setale are setale}). Let $\f(G)[0]$ and $\f(G)[1]$ denote the Zariski closures of $\f(G)$ in $\G$ of order $0$ and $1$ respectively. Then $\f(G)[0]$ and $\f(G)[1]$ are \'{e}tale algebraic groups (Lemma \ref{lemma: etale Zariski closures}). Let $\pi_1\colon \f(G)[1]\to\f(G)[0]$ be the natural projection (as in the end of Section \ref{subsec: affine difference algebraic geometry}). We abbreviate
$$|\f|_1:=|\ker(\pi_1)|.$$
By Proposition \ref{prop: limit degree} we have $|\f|_1\leq\ld(\f(G))$. If $\ker(\f)$ is $\s$-infinitesimal (e.g., trivial), then $$\ld(\f(G))=\ld(G/\ker(\f))=\frac{\ld(G)}{\ld(\ker(\f))}=\ld(G)$$
by Corollary \ref{cor: sinfinitesimal has ld 1}. So $|\f|_1\leq\ld(G).$ 

\begin{defi}
	Let $G$ be a $\s$-\'{e}tale $\s$-algebraic group and $\G$ an \'{e}tale algebraic group. A morphism $\f\colon G\to[\s]_k\G$ of $\s$-algebraic groups is a \emph{standard embedding of $G$ (into $\G$)}\index{standard embedding} if
	\begin{itemize}
		\item $\f$ is a $\s$-closed embedding,
		\item $\f(G)$ is Zariski dense in $\G$ and
		\item $\ld(G)=|\f|_1$.
	\end{itemize}
\end{defi}

In the context of symbolic dynamics (cf. Remark \ref{rem: symbolic dynamics}) the construction in the proof of the following lemma is known as ``passing to a higher block shift'' (\cite[Section \S 1.4]{LindMarcus:IntroductionToSymbolicDynamisAndCoding}).

\begin{lemma} \label{lemma: exists standard embedding}
	Let $G$ be a $\s$-\'{e}tale $\s$-algebraic group. Then there exists a standard embedding of $G$.
\end{lemma}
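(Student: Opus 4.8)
The plan is to start from an arbitrary $\s$-closed embedding and then modify it so that the limit degree is achieved already at level $1$. First I would use Lemma \ref{lemma: etale Zariski closures} (via Proposition \ref{prop: linearization} and taking a Zariski closure) to fix \emph{some} $\s$-closed embedding $\psi\colon G\to[\s]_k\G_0$ with $G$ Zariski dense in $\G_0$, i.e.\ $\psi(G)[0]=\G_0$. Writing $G[i]$ for the $i$-th order Zariski closure of $\psi(G)$ in $\G_0$ and $\G_i=\ker(\pi_i\colon G[i]\to G[i-1])$, Proposition \ref{prop: limit degree} tells us the sequence $(|\G_i|)_{i\in\nn}$ is non-increasing and eventually equals $\ld(G)$. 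Choose $m\in\nn$ with $|\G_i|=\ld(G)$ for all $i\geq m$.

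Next I would replace $\G_0$ by the \'etale algebraic group $\G:=G[m]$ and define the new embedding as the composite $\f\colon G\xrightarrow{\psi} [\s]_k\G_0 \to [\s]_k\G$ given on points by $g\mapsto(g,\s(g),\ldots,\s^m(g))$, i.e.\ the map coming from the inclusion $k[G[m]]\hookrightarrow k\{G\}$ of Hopf algebras (Lemma \ref{lemma: Hopf induced}). This is a $\s$-closed embedding because its dual $k\{[\s]_k\G\}=[\s]_k k[G[m]]\to k\{G\}$ is surjective (the $k[G[i]]$ generate $k\{G\}$), and $\f(G)$ is Zariski dense in $\G$ by construction, since $\f(G)[0]=G[m]=\G$. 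The remaining point is the computation of $|\f|_1$. Here I would identify the $i$-th order Zariski closure of $\f(G)$ inside $\G=G[m]$: unwinding the definitions, $\f(G)[i]$ is the image of $G$ in $G[m]\times{\hs(G[m])}\times\cdots\times{\hsi(G[m])}$, which is precisely $G[m+i]$ sitting inside $\G[i]=\G\times\cdots\times{\hsi\G}$ via the natural ``overlapping'' embedding. Consequently $\ker(\pi_1\colon\f(G)[1]\to\f(G)[0])$ is the kernel of $G[m+1]\to G[m]$, which is $\G_{m+1}$, and $|\G_{m+1}|=\ld(G)$ by our choice of $m$. Hence $|\f|_1=\ld(G)$, and $\f$ is a standard embedding.

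The main obstacle I anticipate is the bookkeeping in identifying $\f(G)[i]$ with $G[m+i]$ and the projection $\pi_1$ with the projection $G[m+1]\to G[m]$: one has to be careful that the block-shift embedding $G[m]\times{\hs(G[m])}\times\cdots\times{\hsi(G[m])}\supseteq \f(G)[i]$ matches, inside $\G_0[m+i]$, the diagonal-type image of $G$, so that taking scheme-theoretic images is compatible with the two descriptions. This is exactly the ``higher block shift'' phenomenon referenced in the statement, and the verification is a diagram chase with coordinate rings using that $k\{G\}=\bigcup_i k[G[i]]$ and that scheme-theoretic image corresponds to taking the image subring. Once the identification $\f(G)[i]=G[m+i]$ is in place, everything else is immediate: Zariski density is $\f(G)[0]=\G$, the embedding property is surjectivity of the dual map, and $\ld(G)=|\f|_1$ follows from stabilization of $(|\G_i|)$.
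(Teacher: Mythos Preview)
Your proposal is correct and follows essentially the same approach as the paper's proof: start from an arbitrary $\s$-closed embedding, pass to the $m$-th order Zariski closure $\G=G[m]$ where $m$ is chosen so that the limit degree has stabilized, and use the inclusion $k[\G]\subseteq k\{G\}$ together with Lemma~\ref{lemma: Hopf induced} to obtain the new embedding. The paper states the key identification $\f(G)[i]=G[m+i]$ (in its notation $G[i]_\G=G[m+i]_\H$) without further comment, so the bookkeeping you flag as the main obstacle is exactly the one step the paper leaves to the reader.
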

\begin{proof}
	By Proposition \ref{prop: linearization} there exists an algebraic group $\H$ and a $\s$-closed embedding $G\to[\s]_k\H$ of $\s$-algebraic groups. By Lemma \ref{lemma: etale Zariski closures} the Zariski closures $G[i]_\H$ of $G$ in $\H$ are \'{e}tale algebraic groups. Let $\H_i$ denote the kernel of the projection $\pi_i\colon G[i]_\H\to G[i-1]_\H$. By Proposition \ref{prop: limit degree} there exists an $m\geq 0$ such that $|\H_{m+1}|=\ld(G)$.
	
	Set $\G=G[m]_\H$. The inclusion
	$k[\G]\subseteq k\{G\}$ of $k$-algebras, induces a surjective morphism $k\{\G\}=[\s]_kk[\G]\to k\{G\}$ of \ks-algebras.
	 Since $k[\G]\subseteq k\{G\}$ is an inclusion of Hopf algebras, $k\{\G\}\to k\{G\}$ is a morphism of \ks-Hopf algebras (Lemma \ref{lemma: Hopf induced}). Let $\f\colon G\to[\s]_k\G$ denote the corresponding $\s$-closed embedding of $\s$-algebraic groups and for $i\geq 0$ let $G[i]_\G$ denote the $i$-th order Zariski closure of $G$ in $\G$. Then $G[i]_\G=G[m+i]_\H$ for every $i\geq 0$. Let $\G_i$ denote the kernel of the projection $\pi_i\colon G[i]_\G\to G[i-1]_\G$. Then $\G_i=\H_{m+i}$ for $i\geq 1$. Therefore $|\f|_1=|\G_1|=|\H_{m+1}|=\ld(G)$.
	So $\f$ is a standard embedding.
\end{proof}


%

\begin{defi}
	Let $G$ be a $\s$-\'{e}tale $\s$-algebraic group and $\G$ an \'{e}tale algebraic group. A morphism $\f\colon G\to[\s]_k\G$ of $\s$-algebraic groups is a \emph{substandard embedding} of $G$ (into $\G$) if
	\begin{itemize}
		\item $\ker(\f)$ is $\s$-infinitesimal,
		\item $\f(G)$ is Zariski dense in $\G$ and
		\item $\ld(G)=|\f|_1$.
	\end{itemize}
	A substandard embedding $G\to[\s]_k\G$ of $G$ is \emph{minimal} if $|\G|\leq |\H|$ for any substandard embedding $G\to[\s]_k\H$ of $G$.
\end{defi}
Note that, despite the name, a substandard embedding need not be a $\s$-closed embedding. Since a standard embedding is a substandard embedding it is clear from Lemma~\ref{lemma: exists standard embedding} that for any $\s$-\'{e}tale $\s$-algebraic group there exists a minimal substandard embedding.

\begin{lemma} \label{lemma: for Babbitt step1 sr}
	Let $G$ be a $\s$-\'{e}tale $\s$-algebraic group and let $\f\colon G\to [\s]_k\G$ be a minimal substandard embedding. Then for every $r\in\nn$ the dimension of $k[\s^r(k[\G])]\subseteq k\{G\}$ as a $k$-vector space equals $|\G|$. 
\end{lemma}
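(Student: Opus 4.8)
Let $\f\colon G\to[\s]_k\G$ be a minimal substandard embedding. Recall that $k\{G/G_{(r)}\}=k[\s^r(k\{G\})]\subseteq k\{G\}$ and that, since $\f(G)$ is Zariski dense in $\G$, the subalgebra $k[\G]\subseteq k\{G\}$ (via $\f^*$, which we suppress in the notation) is precisely $k[\G]=k\{G\}\cap k[\G[0]]$, i.e. $k[\G]$ is the degree-$0$ Zariski closure. The plan is to show that $k[\s^r(k[\G])]\subseteq k\{G\}$ is itself the coordinate ring of a Zariski-dense $\s$-closed subgroup of ${\hsr\G}$, and then to use the minimality of $\f$ together with the hypothesis $\ld(G)=|\f|_1$ to force its $k$-dimension to be exactly $|\G|$ rather than larger.

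First I would set up the geometry. Let $\f_r\colon G\to G/G_{(r)}$ be the $\s$-Frobenius quotient map; by formula (\ref{eqn: dual for sFrob}) its dual identifies $k\{G/G_{(r)}\}$ with $k[\s^r(k\{G\})]$. Now $\s^r\colon k[\G]\to k\{G\}$ lands inside $k[{\hsr\G}[?]]$; more precisely, applying the $\s$-Frobenius to the embedding $\f$ yields a commutative square relating $\f$ and $F^r_G$, giving a morphism $G\to {\hsr\G}$ whose image is the $\s$-closed subgroup of ${\hsr\G}$ with coordinate ring $k[\s^r(k[\G])]$. Call this image $G'$; it is Zariski dense in ${\hsr\G}$ because $\f(G)$ is Zariski dense in $\G$ (applying $\s^r\colon k\to k$ preserves this: $k[\s^r(k[\G])]\cap k[{\hsr\G}[0]]=\s^r(k[\G])\cdot(\cdots)$, and the degree-$0$ Zariski closure of $G'$ is ${\hsr\G}$). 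The composite $G\to G'\hookrightarrow [\s]_k{\hsr\G}$ has kernel contained in $G_{(r)}$, hence $\s$-infinitesimal. So this composite is a candidate for a substandard embedding of $G$ into ${\hsr\G}$, provided we check the limit-degree condition $\ld(G)=|\cdot|_1$ for it.

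The key computation is the limit-degree / first-kernel bookkeeping. Write $G[i]$ for the $i$-th order Zariski closure of $G$ in $\G$ and $\G_i$ for $\ker(\pi_i\colon G[i]\to G[i-1])$; since $\f$ is substandard, $|\G_1|=|\f|_1=\ld(G)$, and by Proposition \ref{prop: limit degree} the non-increasing sequence $(|\G_i|)$ has already stabilized to $\ld(G)$ from index $1$ on. The Zariski closures of $G'$ in ${\hsr\G}$ are obtained from those of $G$ by shifting: the $i$-th order Zariski closure $G'[i]$ of $G'$ in ${\hsr\G}$ corresponds, under the shift, to a closed subgroup of $G[i+r]$ built from $\s^r$-translated data, and the kernel of its projection $G'[i]\to G'[i-1]$ has order $|\G_{i+r}|=\ld(G)$ for all $i\geq 1$ (by stability), so $|\;\cdot\;|_1$ for the embedding $G\to[\s]_k{\hsr\G}$ equals $\ld(G)$ as well. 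Thus $G\to[\s]_k{\hsr\G}$ is a substandard embedding of $G$. By minimality of $\f$ we get $|\G|\leq|{\hsr\G}|=|\G|$, so equality; and then $\dim_k k[\s^r(k[\G])]=\dim_k k[{\hsr\G}[0]\text{-part of }G']=|G'[0]|=|{\hsr\G}|=|\G|$, using that $G'$ is Zariski dense in ${\hsr\G}$ so its degree-$0$ closure is all of ${\hsr\G}$. This is the desired statement.

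The main obstacle I anticipate is making the "shift" identification between Zariski closures of $G$ and of $G'$ fully rigorous — i.e. proving cleanly that $G'[i]$ (the $i$-th order Zariski closure of $G'$ in ${\hsr\G}$) has defining data matching the $(i+r)$-th order data of $G$ in $\G$ after applying $\s^r$ to scalars, so that the kernel orders transfer. This amounts to tracking $k[\s^r(k[\G])]\cap k[{\hsr\G}[i]]$ against $\s^r\big(k[\G]\big)$-generated pieces inside $k[\G[i+r]]$, which is exactly the kind of "higher block shift" manipulation flagged in the remark preceding Lemma \ref{lemma: exists standard embedding}; once that indexing is pinned down, the limit-degree stability from Proposition \ref{prop: limit degree} and the minimality of $\f$ do the rest. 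An alternative, possibly cleaner route to the dimension count: directly show $\dim_k k[\s^r(k[\G])]\le |\G|$ by noting $\s^r\colon k[\G]\to k\{G\}$ has image $k$-spanning a quotient of ${\hsr(k[\G])}=k[\G]\otimes_k k$ which has dimension $|\G|$, and $\ge |\G|$ by Zariski density of $G'$ in ${\hsr\G}$; but the minimality argument above is what guarantees the image is not a proper quotient, so minimality enters one way or another.
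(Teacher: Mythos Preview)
Your overall strategy --- construct a new morphism $\f'\colon G\to[\s]_k({\hsr\G})$ via the $\s$-Frobenius, verify it is a substandard embedding, and invoke minimality --- is exactly the paper's approach. However, there is a genuine logical gap in your execution.

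You assert early on that the image $G'$ is Zariski dense in ${\hsr\G}$, justifying this by ``applying $\s^r\colon k\to k$ preserves this.'' But the Zariski closure of $G'$ in ${\hsr\G}$ is the \'{e}tale algebraic group $\G'$ with $k[\G']=k[\s^r(k[\G])]$, and the statement $\G'={\hsr\G}$ is \emph{precisely} the statement $\dim_k k[\s^r(k[\G])]=|\G|$ that you are trying to prove. So this step is circular. Correspondingly, your use of minimality --- ``$|\G|\leq|{\hsr\G}|=|\G|$, so equality'' --- is vacuous, since $|{\hsr\G}|=|\G|$ holds always; minimality is doing no work here.

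The fix, which is what the paper does, is to let $\G'\leq{\hsr\G}$ be the Zariski closure of the image (not assumed to be all of ${\hsr\G}$), and show that $\f'\colon G\to[\s]_k\G'$ is a substandard embedding. Then minimality gives the nontrivial inequality $|\G|\leq|\G'|$, which combined with $\G'\leq{\hsr\G}$ forces $\G'={\hsr\G}$. For the limit-degree condition $|\f'|_1=\ld(G)$, the paper does not use your ``shift'' identification $G'[i]\leftrightarrow G[i+r]$ (which, as you anticipate, is awkward to make precise), but rather directly exhibits a surjection ${\hsr(k[\f(G)[1]])}\twoheadrightarrow k[\f'(G)[1]]$ via $f\otimes\lambda\mapsto\s^r(f)\lambda$; this gives a closed embedding $\f'(G)[1]\hookrightarrow{\hsr(\f(G)[1])}$ sending $\G'_1$ into ${\hsr\G_1}$, whence $|\G'_1|\leq|\G_1|=\ld(G)$. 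The reverse inequality comes, as you note, from Proposition~\ref{prop: limit degree} and $\ld(\f'(G))=\ld(G)$.
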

\begin{proof}
	First of all, note that $k[\G]$ can be identified with a $k$-Hopf subalgebra of $k\{G\}$ because $\f(G)$ is Zarsiki dense in $\G$.
	By Corollary \ref{cor: algebraic group is absolutely sreduced} the $\s$-algebraic group $[\s]_k\G$ is absolutely $\s$-reduced. It therefore follows from Corollary \ref{cor: sFrobenius quotient for absolutely sreduced} that $$[\s]_k\G/([\s]_k\G)_{(r)}={\hsr ([\s]_k\G)}=[\s]_k({\hsr \G}).$$
	The image of $k[{\hsr \G}]$ under the dual map of $$\widetilde{\f}\colon G\xrightarrow{\f} [\s]_k\G\to[\s]_k\G/([\s]_k\G)_{(r)}=[\s]_k({\hsr \G})$$ equals $k[\s^r(k[\G])]$.
	Let $\G'\leq{\hsr \G}$ denote the Zariski closure of $\widetilde{\f}(G)$ in ${\hsr \G}$. Then $k[\G']=k[\s^r(k[\G])]$ and $\widetilde{\f}$ induces a morphism $\f'\colon G\to[\s]_k\G'$ of $\s$-algebraic groups.
	
	We will show that $\f'$ is a substandard embedding. By construction, $\f'(G)$ is Zariski dense in $\G'$. Because $\ker(\f)$ and $([\s]\G)_{(r)}$ are $\s$-infinitesimal (Lemma \ref{lemma: Gr sinfinitesimal}), it follows from Lemma \ref{lemma: inverse image of sinfinitesimal is sinfintesimal} that $\ker(\f')=\ker(\widetilde{\f})=\f^{-1}([\s]_k\G)_{(r)})$ is $\s$-infinitesimal.
	
	Let $\f(G)[1]\leq \G\times {\hs\G}$ denote the first order Zariski closure of $\f(G)$ in $\G$ and let $\G_1$ denote the kernel of $\pi_1\colon\f(G)[1]\to\G$. Then $|\G_1|=\ld(G)$ because $\f$ is a substandard embedding. Similarly, let $\f'(G)[1]\leq \G'\times {\hs(\G')}\leq {\hsr\G}\times {^{\sigma^{r+1}}\!\G}$ denote the first order Zariski closure of $\f'(G)$ in $\G'$ and let $\G'_1$ denote the kernel of $\pi_1\colon\f'(G)[1]\to\G'$.
	The surjective morphism of $k$-algebras
	$${\hsr (k[\f(G)[1]])}=k[k[\G],\s(k[\G])]\otimes_k k\to k[\s^r(k[\G]),\s^{r+1}(k[\G])],\ f\otimes\lambda\mapsto\s^r(f)\lambda $$ corresponds to a closed embedding $\f'(G)[1]\to {\hsr(\f(G)[1])}$ of algebraic groups, that maps $\G_1'$ into ${\hsr \G_1}$. Therefore $|\G'_1|\leq |{\hsr \G_1}|=|\G_1|=\ld(G)$.
	On the other hand, $|\G'_1|\geq\ld(\f'(G))$ by Proposition \ref{prop: limit degree} and
	$$\ld(\f'(G))=\ld(G/\ker(\f'))=\frac{\ld(G)}{\ld(\ker(\f'))}=\ld(G),$$
	because $\ker(\f')$ is $\s$-infinitesimal and therefore $\ld(\ker(\f'))=1$ by Corollary \ref{cor: sinfinitesimal has ld 1}.
	It follows that $|\G'_1|\geq \ld(G)$ and thus $|\G'_1|=\ld(G)$.
	
	So $\f'$ is a substandard embedding. Because $\f$ is a minimal substandard embedding, it follows that $|\G'|\geq |\G|$. But $\G'\leq {\hsr \G}$ and so $\G'={\hsr\G}$. Therefore $k[\s^r(k[\G])]=k[\G']={\hsr(k[\G])}$ has dimension $|\G|$ over $k$.	
\end{proof}

Finally, we are prepared for the proof of our decomposition theorem for $\s$-\'{e}tale $\s$-algebraic groups.

\begin{theo} \label{theo: Babbitt}
	Let $G$ be a $\s$-\'{e}tale $\s$-algebraic group. Then there exists a subnormal series
	\begin{equation} \label{eq: subnormal series G}
		G\supseteq G_1\supseteq G_2\supseteq \cdots\supseteq G_n\supseteq 1
	\end{equation}
	such that $G_1=G^\sc$, $G_n$ is $\s$-infinitesimal and $G_i/G_{i+1}\simeq [\s]_k\G_i$ for some $\s$-stably simple \'{e}tale algebraic group $\G_i$ for $i=1,\ldots,n-1$. 
	
	If 
	\begin{equation} \label{eq: subnormal series H}
	G\supseteq H_1\supseteq H_2\supseteq \cdots\supseteq H_m\supseteq 1
	\end{equation}	
	is another subnormal series such that $H_1=G^\sc$, $H_m$ is $\s$-infinitesimal and $H_i/H_{i+1}\simeq [\s]_k\H_i$ for some $\s$-stably simple \'{e}tale algebraic group $\H_i$ for $i=1,\ldots,m-1$, then $m=n$ and there exists a permutation $\tau$ such that $\G_i$ and $\H_{\tau(i)}$ are $\s$-stably equivalent.
\end{theo}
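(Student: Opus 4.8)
## Proof plan for Theorem~\ref{theo: Babbitt}

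\textbf{Existence.} The plan is to first produce \emph{some} subnormal series of the required shape, then show it can be arranged so that the top term is exactly $G^\sc$. By Lemma~\ref{lemma: exists standard embedding} (or rather its refinement, the minimal substandard embedding) I would embed $G$ into $[\s]_k\G$ for a suitable \'{e}tale algebraic group $\G$ with $\ld(G)=|\f|_1$. Using the order-$i$ Zariski closures $G[i]\leq\G[i]=\G\times{\hs\G}\times\cdots\times{\hsi\G}$ together with Lemma~\ref{lemma: etale Zariski closures}, each $G[i]$ is an \'{e}tale algebraic group, and the kernels $\G_i=\ker(\pi_i\colon G[i]\to G[i-1])$ are closed subgroups of ${\hsi\G}$. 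Refining a composition series of $\G_1$ inside $\G$ (as a group with $\Gal$-action) via the Jordan--H\"older theorem for \'{e}tale algebraic groups, and using the $\s$-Frobenius kernels $G_{(r)}$ (Lemma~\ref{lemma: Gr sinfinitesimal}) to peel off benign pieces $G_{(r)}/G_{(r+1)}\simeq$ a benign group coming from the $r$-th graded piece, one builds a subnormal series whose successive quotients are benign with $\s$-stably simple $\G_i$ at the top and a $\s$-infinitesimal tail at the bottom. Then Proposition~\ref{prop: exchange sinfinitesimal for Babbitt} is invoked to absorb any $\s$-infinitesimal quotient that appears near the top (in particular $G/G_1$ if $G_1$ were not already $G^\sc$), after which Proposition~\ref{prop: sconnected component unique} forces the top term to equal $G^\sc$, since the quotient above it is built from benign groups (which are $\s$-connected) and hence is \ssetale{} only if trivial --- so the chain must start at $G^\sc$. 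The main care is the bookkeeping with $\s$-stably simple versus merely simple Zariski closures, handled by Lemma~\ref{lemma: implies sstably simple} and Corollary~\ref{cor: normal ssubgroup of simple has ld 1}.

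\textbf{Uniqueness.} Given two such series \eqref{eq: subnormal series G} and \eqref{eq: subnormal series H}, both start at $G^\sc$ (which is canonical) and end at a $\s$-infinitesimal group. The strategy is the usual Schreier argument: by Theorem~\ref{theo: Schreier refinement} the two series have equivalent refinements. The obstacle is that a refinement of the series might introduce quotients that are \emph{not} benign, and a refinement of $[\s]_k\G_i$ by a normal $\s$-closed subgroup $N$ has quotient $[\s]_k\H$ for some $\s$-stably equivalent $\H$ only when $N$ is proper --- this is precisely Proposition~\ref{prop: key to uniqueness}. So I would argue that in any refinement, each factor $G_i/G_{i+1}=[\s]_k\G_i$ gets subdivided into pieces each of which is either $\s$-infinitesimal (limit degree $1$, by Corollary~\ref{cor: normal ssubgroup of simple has ld 1}) or again $[\s]_k\H$ with $\H$ $\s$-stably equivalent to $\G_i$. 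The limit-degree invariant (Proposition~\ref{prop: invariants and quotients}) is multiplicative, and $\ld([\s]_k\G_i)=|\G_i|>1$ while $\ld$ of a $\s$-infinitesimal group is $1$, so the benign factors cannot be further refined into more than one non-infinitesimal piece. Hence the ``multiset'' of $\s$-stable-equivalence classes of the $\G_i$ (with multiplicity) is read off from the common refinement and is therefore the same for both original series; comparing cardinalities gives $m=n$ and a permutation $\tau$.

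\textbf{Where the difficulty lies.} The genuinely delicate point is controlling what happens to a benign factor $[\s]_k\G_i$ under an arbitrary refinement: one needs that a normal $\s$-closed subgroup sitting strictly between $G_{i+1}$ and $G_i$ pulls back, via Theorem~\ref{theo: isom 3}, to a proper normal $\s$-closed subgroup of $[\s]_k\G_i$, and then Proposition~\ref{prop: key to uniqueness} applies to identify the subquotient. The commutative versus non-commutative dichotomy for $\s$-stably simple $\G_i$ (Corollary~\ref{cor: sstably simple non comm} versus the $\mu_q$-type examples) means this has to be checked in both cases, but both are already packaged in Proposition~\ref{prop: key to uniqueness}. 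Tracking that the $\s$-infinitesimal tails match up (they do not affect the equivalence class count because their limit degree is $1$) and that the permutation can be chosen coherently across the Schreier refinement is then a routine, if slightly tedious, matching argument.
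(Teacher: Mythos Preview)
Your uniqueness argument is essentially the paper's: Schreier refinement (Theorem~\ref{theo: Schreier refinement}), then Corollary~\ref{cor: normal ssubgroup of simple has ld 1} forces every proper normal $\s$-closed subgroup of $[\s]_k\G_i$ to have limit degree $1$, so in any refinement each benign factor contributes exactly one subquotient of limit degree $>1$, and Proposition~\ref{prop: key to uniqueness} identifies that subquotient up to $\s$-stable equivalence. (Minor slip: the $\ld=1$ pieces arising in the refinement need not be $\s$-infinitesimal, only of limit degree $1$; but this does not affect the count.)

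The existence argument, however, has a genuine gap. You write that one ``peels off benign pieces $G_{(r)}/G_{(r+1)}$'' via the $\s$-Frobenius kernels. But the $G_{(r)}$ form an \emph{increasing} chain of $\s$-infinitesimal subgroups, and the successive quotients $G_{(r+1)}/G_{(r)}$ are themselves $\s$-infinitesimal, not benign. For $[\s]_k\G$ one does have $([\s]_k\G)/([\s]_k\G)_{(r)}\simeq[\s]_k{\hsr\G}$, but you only have a $\s$-closed embedding $\f(G)\hookrightarrow[\s]_k\G$, and there is no mechanism in your outline that produces a benign quotient of $G$ (or of $\f(G)$) from the Frobenius filtration or from a Jordan--H\"older series of the finite group $\G_1$. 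The heart of the matter is precisely to show that $\f(G)$ \emph{equals} $[\s]_k\G$ (for a suitably chosen $\G$), and this is where the real work lies.

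The paper proceeds by induction on $\ld(G)$, after first passing to $G^\sc$. If some normal $\s$-closed $N\unlhd G$ has $1<\ld(N)<\ld(G)$, one applies induction to $N$ and $G/N$ and splices the two series using Proposition~\ref{prop: exchange sinfinitesimal for Babbitt}. The substantive case is when no such $N$ exists. There one takes a \emph{minimal} substandard embedding $\f\colon G\to[\s]_k\G$ and, via a delicate argument comparing $\f(G)[1]$ with an auxiliary quotient $\H=\f(G)[1]/\G'\G_1$ (and invoking Lemma~\ref{lemma: for Babbitt step1 sr} to control $\dim_k k[\s^r(k[\G])]$), shows that the dichotomy on $\ld(\ker(\f'))$ forces $|\G|=\ld(G)$. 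Then Lemma~\ref{lemma: for Babbitt 1step} gives $G/\ker(\f)\simeq[\s]_k\G$, and Lemma~\ref{lemma: implies sstably simple} shows $\G$ is $\s$-stably simple. Your outline invokes the minimal substandard embedding but never uses its minimality, and it is exactly this minimality (together with the absence of an intermediate $N$) that drives the argument.
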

\begin{proof}
	We first handle the existence part.	We will prove the existence of the required subnormal series by induction on $\ld(G)$. If $\ld(G)=1$, then $k\{G\}$ is finitely generated as a $k$-algebra by Lemma \ref{lemma: ld=1}. Because $G$ is $\s$-\'{e}tale, this implies that $G$ is finite.
	Thus $G^\sc$ is a finite $\s$-connected $\s$-\'{e}tale $\s$-algebraic group and therefore $\s$-infinitesimal by Lemma \ref{lemma: Gsc sinfinitesimal}. So the subnormal series $G\supseteq G^\sc\supseteq 1$ has the required properties.
	
	We may thus assume that $\ld(G)>1$. Replacing $G$ with $G^\sc$, we may also assume that $G$ is $\s$-connected (Corollary \ref{cor: Gsc is sconnected}).
	
	\medskip
	
	{\bf Step 1:} Assume that there exists no normal $\s$-closed subgroup $N$ of $G$ such that $1<\ld(N)<\ld(G)$. We will show that there exists a normal $\s$-infinitesimal $\s$-closed subgroup $G_2$ of $G$ such that $G/G_2\simeq[\s]_k\G$ for a $\s$-stably simple \'{e}tale algebraic group $\G$. (Thus the existence part is satisfied with $n=2$ and $G_1=G$.)
	
	Let $\f\colon G\to\G$ be a minimal substandard embedding. We will eventually show that
	\begin{equation} \label{eqn: Babbitt step 1}
	\ld(G)=|\G|.
	\end{equation}

	\medskip
	
	Let $\f(G)[1]\leq \G\times{\hs\G}$ denote the first order Zariski closure of $\f(G)$ in $\G$. We have morphisms of algebraic groups
	$$\G\times{\hs\G}\to \G,\ (g_0,g_1)\mapsto g_0 \quad
	\text{ and } \quad \G\times{\hs\G}\to {\hs\G},\ (g_0,g_1)\mapsto g_1$$ that induce morphisms
	$\pi_1\colon \f(G)[1]\to\G(=\f(G)[0])$ and $\s_1\colon \f(G)[1]\to {\hs\G}$. Set $\G_1=\ker(\pi_1)$ and $\G'=\ker(\s_1)$. Then $\G'\G_1$ is a normal closed subgroup of $\f(G)[1]$ and we define
	$\H=\f(G)[1]/\G'\G_1.$ We will next show that
	\begin{equation} \label{eqn: formula in Babbit step 1}
	|\G|=|\H|\cdot \ld(G). 	
	\end{equation}
	Because $\G'\G_1\simeq \G'\times\G_1$, it follows from $|\H|=\frac{|\f(G)[1]|}{|\G'\G_1|}$ that
	\begin{equation} \label{eqn: Babbitt 3}
	|\H|\cdot |\G_1|=\frac{|\f(G)[1]|}{|\G'|}=|\f(G)[1]/\G'|.
	\end{equation} As $\f$ is a substandard embedding, $|\G_1|=\ld(G)$. Since
	$k[\f(G)[1]/\G']=k[\s(k[\G])]\subseteq k\{G\}$, it follows from Lemma \ref{lemma: for Babbitt step1 sr} that $|\f(G)[1]/\G'|=|\G|$. Thus (\ref{eqn: formula in Babbit step 1}) follows from (\ref{eqn: Babbitt 3}).
	
	\medskip
	
	We have $k[\f(G)[1]/\G_1]=k[\G]$ and $k[\f(G)[1]/\G']=k[\s(k[\G])]$. Therefore $$k[\H]=k[\G]\cap k[\s(k[\G])]\subseteq k\{G\}.$$
	Since $k[\H]$ is a Hopf subalgebra of $k\{G\}$, we have an induced morphism $k\{\H\}\to k\{G\}$ of \ks-Hopf algebras (Lemma \ref{lemma: Hopf induced}) that corresponds to a morphism $\f'\colon G\to [\s]_k\H$ of $\s$-algebraic groups. Let $N\unlhd G$ denote the kernel of $\f'$. By our assumption on $G$, we have $\ld(N)=1$ or $\ld(N)=\ld(G)$. So we have to distinguish these two cases.
	
	\medskip
	
	{\bf Case 1:} Let us first suppose that $\ld(N)=1$. Then $\ld(G/N)=\ld(G)$. We will show that $\f'$ is a substandard embedding. Because $k[\H]\subseteq k\{G\}$, we see that $\f'(G)$ is Zariski dense in $\H$. Let $\f'(G)[1]\leq \H\times{\hs \H}$ denote the first order Zariski closure of $\f'(G)$ in $\H$ and let $\H_1$ denote the kernel of $\pi_1\colon \f'(G)[1]\to\H$.
	Using Proposition \ref{prop: limit degree} we see that
	\begin{equation} \label{eqn: Babbitt step1 inequality1}
	|\f'(G)[1]|=|\H|\cdot|\H_1|\geq |\H|\cdot \ld(\f'(G))=|\H|\cdot \ld(G/N)=|\H|\cdot \ld(G).
	\end{equation}
	We will show that $\f'(G)[1]=\f(G)[1]/\G'$. Because $$k[\f'(G)[1]]=k[k[\H],\s(k[\H])]\subseteq k[\s(k[\G])]=k[\f(G)[1]/\G'],$$ it suffices to show that $|\f'(G)[1]|=|\f(G)[1]/\G'|$. 
	But using Lemma \ref{lemma: for Babbitt step1 sr}, we find that
	\begin{equation} \label{eqn: Babbitt step 1 inequality2}
	|\f'(G)[1]|\leq |\f(G)[1]/\G'|=\dim_k(k[\s(k[\G])])=|\G|=|\H|\cdot\ld(G).
	\end{equation}
	The combination of equations (\ref{eqn: Babbitt step1 inequality1}) and (\ref{eqn: Babbitt step 1 inequality2}) shows that $|\f'(G)[1]|=|\H|\cdot\ld(G)$ and that $|\H_1|=\ld(\f'(G))$. It also follows that $k[k[\H],\s(k[\H])]=k[\s(k[\G])]$ and therefore $$k\{G/N\}=k\{k[\H]\}=k\{\s(k[\G])\}.$$
	We will next show that $N=\ker(\f')$ is $\s$-infinitesimal. As $k\{k[\G]\}=k\{\f(G)\}\subseteq k\{G\}$, it follows that
	$$k\{\s(k[\G])\}=k[\s(k\{\f(G)\})]=k\{\f(G)/\f(G)_{(1)}\}\subseteq k\{G\}.$$
	So $k\{G/N\}=k\{\f(G)/\f(G)_{(1)}\}$ and therefore $N$ is the kernel of $$G\xrightarrow{\f}\f(G)\to\f(G)/\f(G)_{(1)}.$$ Thus $N$ is $\s$-infinitesimal by Lemmas \ref{lemma: Gr sinfinitesimal} and \ref{lemma: inverse image of sinfinitesimal is sinfintesimal}. Therefore
	$$|\H_1|=\ld(\f'(G))=\ld(G/N)=\frac{\ld(G)}{\ld(N)}=\ld(G)$$
	by Corollary \ref{cor: sinfinitesimal has ld 1}. In summary, we have shown that $\f'$ is a substandard embedding. But $|\H|<|\H|\cdot\ld(G)=|\G|$ because $\ld(G)>1$. This contradicts the assumption that $\f$ is a minimal substandard embedding. Thus the case $\ld(N)=1$ cannot occur. 
	
	\medskip
	
	{\bf Case 2:} So we must have $\ld(N)=\ld(G)$. Then $\ld(G/N)=1$ and therefore $G/N$ is finite (Lemma \ref{lemma: ld=1}). Because $G$ is $\s$-connected, also $G/N$ is $\s$-connected (Proposition~\ref{prop: sconnected exact sequence}). So $G/N$ is a finite, $\s$-connected, $\s$-\'{e}tale difference algebraic group and must therefore be $\s$-infinitesimal by Lemma \ref{lemma: Gsc sinfinitesimal}. So by Lemma \ref{lemma: sinfinitesimal} there exists an $r\in\nn$ such that $\s^r(\m_{G/N})=0$. Since $k\{G/N\}=k\{k[\H]\}$, we have $\m_\H\subseteq \m_{G/N}$, in particular, $\s^r(\m_\H)=0$.
	
	On the other hand, $\m_\H\subseteq k[\H]\subseteq k[\G]$ and by Lemma \ref{lemma: for Babbitt step1 sr} the dimension of $k[\s^r(k[\G])]$ as a $k$-vector space equals $|\G|$. So no non-zero element of $k[\G]$ can map to zero under $\s^r$. Thus $\m_\H=0$. This means that $\H=1$. Thus $|\G|=\ld(G)$ by (\ref{eqn: formula in Babbit step 1}) and finally (\ref{eqn: Babbitt step 1}) is proved.

	\medskip
	
	The $\s$-algebraic group $G_2=\ker(\f)$ is $\s$-infinitesimal because $\f$ is a substandard embedding and Lemma \ref{lemma: for Babbitt 1step} applied to the induced embedding $G/G_2\to[\s]_k\G$ shows that $G/G_2\simeq [\s]_k\G$ is benign, because
	$$\ld(G/G_2)=\frac{\ld(G)}{\ld(G_2)}=\ld(G)=|\G|$$
	by Corollary \ref{cor: sinfinitesimal has ld 1}.
	It remains to show that $\G$ is $\s$-stably simple. 
	
	As $\ld(N)\in \{1,\ld(G)\}$ for every normal $\s$-closed subgroup $N$ of $G$, we also have $\ld(N)\in \{1,\ld(G/G_2)\}$ for ever normal $\s$-closed subgroup $N$ of $G/G_2$ by Theorem \ref{theo: isom 3}.
	Thus $\G$ is $\s$-stably simple by Lemma \ref{lemma: implies sstably simple} and $G=G_1\supseteq G_2\supseteq 1$ is the required subnormal series.

	%

	\medskip
	
	{\bf Step 2:} Assume that there exists a normal $\s$-closed subgroup $N$ of $G$ such that $1<\ld(N)<\ld(G)$. Because $N^\sc$ is a characteristic subgroup of $N$ (Theorem \ref{theo: Gsc is characteristic}), it follows that $N^\sc$ also is a normal $\s$-closed subgroup of $G$. We have $\ld(N)=\ld(N^\sc)$ by Proposition~\ref{prop: invariants for Gsc}. Replacing $N$ by $N^\sc$, we may thus assume that $N$ is $\s$-connected.
	
	Because $\ld(G/N)=\ld(G)/\ld(N)<\ld(G)$ we can apply the induction hypothesis to $G/N$. As $G$ is $\s$-connected, also $G/N$ is $\s$-connected (Proposition \ref{prop: sconnected exact sequence}). So we obtain a subnormal series
	$$G/N\supseteq G_1/N\supseteq\cdots\supseteq G_n/N$$ for $G/N$,
	where $G\supseteq G_1\supseteq\cdots\supseteq G_n\supseteq N$ is a subnormal series for $G$ such that $G_n/N$ is $\s$-infinitesimal and $(G_i/N)/(G_{i+1}/N)=G_i/G_{i+1}$ is $\s$-stably simple benign for $i=0,\ldots,n-1$, where $G_0:=G$ (Theorem \ref{theo: isom 3}).
	
	As $\ld(N)<\ld(G)$, we can also apply the induction hypothesis to $N$. Since $N$ is $\s$-connected, we obtain a subnormal series
	$$N\supseteq N_1\supseteq\cdots\supseteq N_m,$$
	with $N_m$ $\s$-infinitesimal and $N_i/N_{i+1}$ $\s$-stably simple benign for $i=0,\ldots,m-1$ ($N_0:=N$). By Proposition~\ref{prop: exchange sinfinitesimal for Babbitt}, the subnormal series
	$$G_n\supseteq N\supseteq N_1\supseteq\cdots\supseteq N_m$$
	can be replaced by a subnormal series
	$$G_n\supseteq H_1\supseteq\cdots\supseteq H_m,$$
	with $G_n/H_1$ and $H_i/H_{i+1}$ $\s$-stably simple benign for $i=1,\ldots,m-1$ and $H_m$ $\s$-infinitesimal. Then
	$$G\supseteq G_1\supseteq\cdots\supseteq G_n\supseteq H_1\supseteq\cdots\supseteq H_m$$ is a subnormal series for $G$ of the required form.
	
	\medskip
	
	We next address the uniqueness part. We may assume that $G$ is $\s$-connected, i.e., $G=G_1=H_1$. By Theorem \ref{theo: Schreier refinement} the subnormal series (\ref{eq: subnormal series G}) and (\ref{eq: subnormal series H}) have equivalent refinements. Let 
	\begin{equation} \label{eq: long subnormal series}
G=G_1\supseteq G_{1,1}\supseteq G_{1,2}\supseteq\ldots\supseteq G_{1,r_1}\supseteq G_2\supseteq\ldots\ldots\supseteq G_n\supseteq G_{n,1}\supseteq\ldots\supseteq G_{n,r_n}\supseteq 1 
\end{equation}
	be such a refinement (where all the inclusion are strict).
	Then, for $i=1,\ldots,n-1$, the $\s$-algebraic group $G_{i,1}/G_{i+1}$ is a normal proper $\s$-closed subgroup of $G_i/G_{i+1}\simeq[\s]_k\G_i$. By Corollary \ref{cor: normal ssubgroup of simple has ld 1} we have $\ld(G_{i,1}/G_{i+1})=1$. Thus $\ld(G_{i,j}/G_{i+1})=1$ for $j=1,\ldots,r_i$ and so $\ld(G_{i,1})=\ld(G_{i,2})=\ldots=\ld(G_{i,r_i})=\ld(G_{i+1})$. Therefore, of the factor groups of the subnormal series (\ref{eq: long subnormal series}), there are exactly $n-1$ with limit degree $>1$, namely $G_i/G_{i,1}$ for $i=1,\ldots,n-1$.
	
	The same argument applied to (\ref{eq: subnormal series H}), instead of (\ref{eq: subnormal series G}), shows that in the refinement of (\ref{eq: subnormal series H}), there are exactly $m-1$ factor groups with limit degree $>1$. Since the two refinements are equivalent, we deduce that $n=m$.
	
	Moreover, $G_i/G_{i,1}\simeq [\s]_k\G_i'$, where $\G_i'$ is $\s$-stably equivalent to $\G_i$ by Proposition \ref{prop: key to uniqueness}. A similar statement holds for the refinement of the subnormal series (\ref{eq: subnormal series H}). The equivalence of the two refinements combined with Proposition \ref{prop: sisom implies isom} yields the sought for permutation $\tau$.
\end{proof}

We conclude the article with some examples illustrating Theorem \ref{theo: Babbitt}.
The following simple example shows that in the conclusion of Theorem \ref{theo: Babbitt} one cannot replace ``$\s$-stably equivalent'' with ``isomorphic''.

\begin{ex}
	Let $\G$ be a $\s$-stably simple \'{e}tale algebraic group and $G=[\s]_k\G$. For every $r\geq 1$ the normal $\s$-closed subgroup $G_{(r)}$ of $G$ is $\s$-infinitesimal and $G/G_{(r)}\simeq {\hsr G}=[\s]_k{\hsr\G}$ (Lemmas \ref{lemma: Gr sinfinitesimal}, \ref{lemma: absolutely sreduced and sFrobenius} and Corollary \ref{cor: algebraic group is absolutely sreduced}). Thus, for every $r$ the subnormal series
	$G\supseteq G_{(r)}\supseteq 1$ is as required by Theorem \ref{theo: Babbitt}.
\end{ex}

\begin{ex}
	Let $\G$ be an \'{e}tale algebraic group. We would like to find a subnormal series for $G=[\s]_k\G$ as in Theorem \ref{theo: Babbitt}. There exists a subnormal series $\G=\G_0\supseteq \G_1\supseteq\ldots\supseteq \G_m=1$ for $\G$ such that $\G_i/\G_{i+1}$ is simple for $i=0,\ldots,m-1$. However, $\G_i/\G_{i+1}$ may not be $\s$-stably simple. Since the length of such a decomposition series for ${\hsr\G}$ is bounded by $|\G|$, there exists an $r\in\nn$ and a subnormal series  ${\hsr\G}=\G_0\supseteq \G_1\supseteq\ldots\supseteq \G_{n-1}=1$ for ${\hsr\G}$ such that $\G_i/\G_{i+1}$ is $\s$-stably simple for $i=0,\ldots,n-2$. Set $G_i=(F^r_G)^{-1}([\s]_k\G_{i-1})$ for $i=2,\ldots,n$. We claim that
	$$G=G_1\supseteq G_2\supseteq\ldots\supseteq G_{n}\supseteq 1$$ is a subnormal series as in Theorem \ref{theo: Babbitt}.
	
	First of all, note that $G$ is $\s$-connected by Example \ref{ex: algebraic group sconnected}. So $G=G_1$ is justified. As $F^r_G\colon G\to {\hsr G}$ is a quotient map (Lemmas \ref{cor: algebraic group is absolutely sreduced}, \ref{lemma: absolutely sreduced and sFrobenius}), we see, using Theorem \ref{theo: isom 3}, that $G_i/G_{i+1}\simeq [\s]_k\G_{i-1}/[\s]_k\G_i=[\s]_k(\G_{i-1}/\G_i)$ is $\s$-stably simple benign for $i=1,\ldots,n-1$. Moreover, $G_n=G_{(r)}$ is $\s$-infinitesimal by Lemma \ref{lemma: Gr sinfinitesimal}.
\end{ex}

\begin{ex}
	Let $G$ be the $\s$-algebraic group given by
	$$G(R)=\{g\in R^\times|\ g^4=1,\ \s(g)^2=1\}$$
	for any \ks-algebra $R$. We assume that the characteristic of $k$ is not equal to $2$, so that $G$ is $\s$\=/\'{e}tale. We already noted in Example \ref{ex: easy Babbitt is sconnected} that $G$ is $\s$-connected. Let $G_2$ be the $\s$-closed subgroup of $G$ given by $G_2(R)=\{g\in R^\times|\ g^4=1,\ \s(g)=1\}$ for any \ks-algebra $R$. Then $\G$ is $\s$-infinitesimal and the quotient $G/G_2$ is benign. Indeed, $G/G_2=[\s]_k\G$, where $\G$ is the algebraic group given by $\G(T)=\{g\in T^\times|\ g^2=1\}$ for any $k$-algebra $T$. (Cf. Example \ref{ex: easy Babbitt is sconnected}.) Since $\G$ is $\s$-stably simple, we see that
	$$ G=G^{\sc}=G_1\supseteq G_2\supseteq 1$$
	is a subnormal series as in Theorem \ref{theo: Babbitt}. 
\end{ex}

The following example is inspired by \cite[Example 4]{Kitchens:ExpansiveDynamicsOnZeroDimensionalGroups}.

\begin{ex} \label{ex: Kitchens}
	Let $G$ be the $\s$-closed subgroup of $\Gm^2$ given by
	$$G(R)=\left\{(g,h)\in\Gm^2(R)|\ g^4=1,\ h^2=1,\ \s(h)=g^2h\right\}$$
	for any \ks-algebra $R$.
	Let us assume that the characteristic of $k$ is not equal to two, so that $G$ is $\s$-\'{e}tale. Indeed, $G$ is a Zariski dense $\s$-closed subgroup of the \'{e}tale algebraic group $\G=\mu_4\times\mu_2$.
	
	Define a $\s$-closed subgroup $G_2$ of $G$ by
	$$G_2(R)=\{(g,h)\in G(R)|\ h=1\}=\{(g,1)\in\Gm^2(R)|\ g^2=1\}.$$ So $G_2\simeq[\s]_k\mu_2$ is $\s$-stably simple benign. 
	The morphism $\f\colon G\to [\s]_k\Gm,\ (g,h)\to h$ has kernel $G_2$ and $\f(G)=[\s]_k\mu_2$. Thus $G/G_2\simeq[\s]_k\mu_2$ is also $\s$-stably simple benign. So there exists a short exact sequence
	\begin{equation} \label{eq: exact sequence}
	1 \to [\s]_k\mu_2\to G\to [\s]_k\mu_2\to 1.
	\end{equation}
	By Proposition \ref{prop: sconnected exact sequence} and Example \ref{ex: algebraic group sconnected} the $\s$-algebraic group $G$ is $\s$-connected. So $$G=G_1\supseteq G_2\supseteq G_3=1$$ is a subnormal series as in Theorem \ref{theo: Babbitt}. 
\end{ex}

\begin{ex} \label{ex: babbitt long}
	Let $G$ be the $\s$-algebraic group given by
	$$G(R)=\big\{(g_1,g_2,g_3)\in (R^\times)^3|\ g_1^4=g_2^4=g_3^2=1,\ \s(g_1)=g_2^2,\ \s(g_3)=g_3\big\}\leq \Gm(R)^3$$
	for any \ks-algebra $R$. We assume that the characteristic of $k$ is not equal to $2$, so that $G$ is $\s$-\'{e}tale.
	Let $G_1,G_2$ and $G_3$ be the $\s$-closed subgroups of $G$ given by
	$$G_1(R)=\left\{(g_1,g_2,1)\in (R^\times)^3|\ g_1^4=g_2^4=1,\ \s(g_1)=g_2^2\right\}\leq G(R),$$
	$$G_2(R)=\{(g_1,g_2,1)\in (R^\times)^3|\ g_1^4=g_2^2=1,\ \s(g_1)=1\}\leq G(R)$$ and
	$$G_3(R)=\left\{(g_1,1,1)\in (R^\times)^3|\ g_1^4=1,\ \s(g_1)=1 \right\}\leq G(R)$$
	for any \ks-algebra $R$. We will show that
	$$G\supseteq G_1\supseteq G_2\supseteq G_3\supseteq 1$$
	is a subnormal series for $G$ as in Theorem \ref{theo: Babbitt}.
	
	The quotient map $\f\colon G_1\to [\s]_k\mu_4$ given by $$\f_R\colon G_1(R)\to \mu_4(R),\ (g_1,g_2,1)\mapsto g_2$$ has kernel $G_3$.
	So $G_1/G_3\simeq [\s]_k\mu_4$ is benign. As $\f(G_2)=[\s]_k\mu_2\leq[\s]_k\mu_4$, we see that $G_1/G_2\simeq [\s]_k\mu_2$ and $G_2/G_3\simeq [\s]_k\mu_2$ are $\s$-stably simple benign.

	 Clearly $G_3$ is $\s$-infinitesimal. So it remains to show that $G_1=G^\sc$.
	 We have an exact sequence
	 $$1\to G_3\to G_1\to [\s]_k\mu_4\to 1,$$
	 with $G_3$ and $[\s]_k\mu_4$ $\s$-connected (Corollary \ref{cor: sinfinitesimal has ld 1} and Example \ref{ex: algebraic group sconnected}). Therefore $G_1$ is $\s$-connected by Proposition \ref{prop: sconnected exact sequence}. To show that $G_1=G^\sc$, it suffices to show that $G/G_1$ is \ssetale{} (Proposition \ref{prop: sconnected component unique}). 
	 Let $H$ be the $\s$-closed subgroup of $\Gm$ given by 
	 $$H(R)=\{h\in R^\times|\ h^2=1,\ \s(h)=h\}$$
	 for any \ks-algebra $R$. The quotient map $G\to H,\ (g_1,g_2,g_3)\mapsto g_3$ has kernel $G_1$ and so $G/G_1\simeq H$ is \ssetale{} (Example \ref{ex: ssetale if and only if}).
\end{ex}

\bibliographystyle{alpha}
\bibliography{bibdata}

\end{document}